\newtheorem{thm}{Theorem}[section]
\newtheorem{prop}[thm]{Proposition}
\newtheorem{theorem}[thm]{Theorem}
\newtheorem{assumption}[thm]{Assumption}
\newtheorem{proposition}[thm]{Proposition}
\newtheorem{lem}[thm]{Lemma}
\newtheorem{lemma}[thm]{Lemma}
\newtheorem{corollary}[thm]{Corollary}
\newtheorem{claim}[thm]{Claim}
\newtheorem{definition}[thm]{Definition}
\newtheorem{remark}[thm]{Remark}
\newtheorem*{thm*}{Theorem}
\newtheorem*{cor*}{Corollary}
\newtheorem*{prop*}{Proposition}
\newcommand{\e}{\epsilon}
\renewcommand{\a}{\alpha}
\renewcommand{\b}{\beta}
\newcommand{\g}{\gamma}
\renewcommand{\o}{\omega}
\newcommand{\T}{\mathcal{T}}
\newcommand{\Z}{\mathbb{Z}}
\newcommand{\R}{\mathbb{R}}
\newcommand{\C}{\mathbb{C}}
\newcommand{\M}{\mathcal{M}}
\newcommand{\Fuk}{\mathcal{F}uk}
\newcommand{\E}{\mathcal{E}}
\renewcommand{\P}{\mathbb{P}}
\renewcommand{\P}{\mathbb{P}}
\newcommand{\bdm}{\begin{displaymath}}
\newcommand{\edm}{\end{displaymath}}
\newcommand{\bq}{\begin{equation}}
\newcommand{\eq}{\end{equation}}
\numberwithin{equation}{section}
\title{Lagrangian tori in four-dimensional Milnor fibres}
\author{Ailsa Keating}
\date{October 15, 2015} 
\begin{document}

\begin{abstract}
The Milnor fibre of any isolated hypersurface singularity contains many exact Lagrangian spheres: the vanishing cycles associated to a Morsification of the singularity. Moreover, for simple singularities, it is known that the only possible exact Lagrangians are spheres. We construct exact Lagrangian tori in the Milnor fibres of all non-simple singularities of real dimension four. This gives examples of Milnor fibres whose Fukaya categories are not generated by vanishing cycles. Also, this allows progress towards mirror symmetry for unimodal singularities, which are one level of complexity up from the simple ones.
\end{abstract}

\maketitle

\tableofcontents

\section{Introduction}
\label{sec:introductionB}

Let $f: \C^{n+1} \to \C$ be a holomorphic function. Suppose 
its differential has an isolated zero at the origin: 
 $df|_0 = 0$, but $df \neq 0$ on some punctured open ball $B_r^\ast(0)$. An \emph{isolated hypersurface singularity} is the equivalence class of the germ of such an $f$, up to holomorphic re-parametrisation. 
 Assume for simplicity that $f(0)=0$.
The \emph{Milnor fibre} of $f$, studied in \cite{MilnorSingularities}, is the smooth manifold
\bq \label{eq:Milnorfibre}
M_f:= f^{-1}(\e_\delta) \cap B_\delta(0)
\eq
for suitably small $\delta$ and $\e_\delta$.  This carries an exact symplectic structure, say $\o = d \theta$, inherited from $\C^{n+1}$.\footnote{The exact symplectic manifold in equation \ref{eq:Milnorfibre} depends, a priori, on several choices. One can always attach cylindrical ends to its boundary, which gives a (non-compact) Liouville domain. We shall see that this Liouville domain is independent of choices, including holomorphic re-parametrisation -- see Lemma \ref{th:Milnorfibreindepreparametrisation}.}

Perturb $f$ generically, say to $\tilde{f}$. The singularity at zero splits into a collection of complex Morse singularities near zero. As we've chosen an isolated singularity, there are finitely many of them; their count is called the \emph{Milnor number} of $f$. 
 Fix a regular value of $\tilde{f}$, say $a$, and a collection of paths between each of the singular values and $a$. 
 The fibre of $\tilde{f}$ above $a$ is itself a copy of the Milnor fibre of $f$. Each path determines a Lagrangian sphere in it: the vanishing cycle associated to that path. 
 Topologically, this already gives much information \cite{MilnorSingularities}: indeed, $M_f$ is homotopic to a wedge of half-dimensional spheres:
 \bq
 M_f \cong \bigvee_\mu S^n
 \eq
 where $\mu$ is the Milnor number of $f$, 
   and a basis for $H_n(M_f; \Z)$ is given by a distinguished collection of vanishing cycles. One immediate consequence is that there is a large supply of Lagrangian spheres in Milnor fibres. 
   For $n \geq 2$, any such sphere $L$ is automatically exact: the cohomology class  $[ \theta|_{L}] = 0 \in H^1(L)$ vanishes. We are interested in the following question:
\begin{center}
\emph{What are the possible exact Lagrangian submanifolds in a Milnor fibre?}
\end{center}
We will focus on compact Lagrangians, and the case $n=2$, which means the Milnor fibre has real dimension four.
For a word about higher dimensions, see Section \ref{sec:higherdimensions}. 
Note that non--exact Lagrangians are comparatively easier to come by, notably tori in Darboux charts. 
 It could also be interesting to consider Lagrangians with different forms of rigidity requirements -- e.g.~ones with self--Floer cohomology defined and non-zero; however, we shall not address such questions here.

Particular attention has been paid to the symplectic geometry of a distinguished 
 collection of singularities, known as \emph{simple} or \emph{ADE--type} singularities -- for a far from exhaustive sample of the  flavour of questions studied, see e.g.~\cite{KhovanovSeidel, IUU, Evans, Wu, Chan}.
 Any isolated hypersurface singularity $f$ has an invariant called its \emph{intersection form}. In the case $n=2$, it agrees with the usual intersection form on $H_2(M_f; \Z)$; our orientations are chosen such that  for any compact Lagrangian $L \subset M_f$, we have $L \cdot L = -\chi(L)$.
 
Classically, one criterion that distinguishes simple singularities is that they are precisely the ones whose the intersection form is negative definite \cite{Tjurina}. 
Together with work of Ritter \cite{Ritter}, this implies that the only possible exact Lagrangians are spheres. 
For all other singularities, the intersection form  is semi-definite or indefinite. In particular, in the case $n=2$, this leaves room for tori. Our first result is that these exist:

\begin{thm*}(Theorem \ref{th:tori}.)
The Milnor fibre of any non-simple isolated hypersurface singularity of three variables contains an exact Lagrangian torus $T$, primitive in homology, and with vanishing Maslov class.
\end{thm*}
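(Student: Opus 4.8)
The plan is to reduce the statement to three model singularities and to build the torus there by Lagrangian surgery on a cyclic configuration of vanishing cycles.

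\medskip\noindent\textbf{Step 1: reduction to the parabolic singularities.} By the classification of singularities, an isolated hypersurface singularity is simple exactly when it is not adjacent to one of the three parabolic (simple elliptic) singularities $\tilde E_6 = P_8$, $\tilde E_7 = X_9$, $\tilde E_8 = J_{10}$ in three variables --- equivalently, by the criterion recalled above, exactly when its intersection form is negative definite. Hence any non-simple $f$ of three variables deforms to a function carrying, among its critical points, one of $\tilde E_6,\tilde E_7,\tilde E_8$. Localising the (deformed) Milnor fibre near that critical point exhibits the Milnor fibre of the parabolic singularity as a Liouville subdomain of $M_f$, and restriction of $\theta$ keeps an exact Lagrangian exact. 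Since every Milnor fibre here is homotopy equivalent to a wedge of $2$-spheres we have $H^1 = 0$, so the trivialisation of the canonical bundle (trivial, by adjunction, via $df$) is unique up to homotopy; thus being graded, and in particular having vanishing Maslov class, is intrinsic and survives the embedding. Primitivity need not survive an embedding, so I will in addition produce, alongside the torus $T$, a Lagrangian sphere $S$ in the parabolic Milnor fibre meeting $T$ transversally in exactly one point: then $[T]\cdot[S] = \pm 1$ forces $[T]$ primitive in the parabolic fibre and hence in $M_f$. It therefore suffices to treat $\tilde E_6,\tilde E_7,\tilde E_8$.

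\medskip\noindent\textbf{Step 2: a cyclic configuration of vanishing cycles.} In the Milnor fibre of each parabolic singularity I would exhibit a chain of Lagrangian spheres $S_1,\dots,S_k$ (vanishing cycles of a Morsification), with $S_i$ meeting $S_{i+1}$ transversally in one point cyclically, all other pairs disjoint, together with one further vanishing cycle $S$ meeting exactly one $S_i$ once. This is the point at which the special nature of the parabolic singularities is used: their distinguished bases of vanishing cycles have intersection diagrams containing a loop --- visible from the explicit (Gabrielov-type) diagrams of the $T_{p,q,r}$ singularities, or from the geometry of the simple elliptic singularities, e.g.\ the anticanonical cycle of lines in an affine cubic surface for $\tilde E_6$ --- and one realises the relevant spheres as Lagrangians in minimal position intersecting according to the diagram.

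\medskip\noindent\textbf{Step 3: Lagrangian surgery.} Perform graded Polterovich surgery at each of the $k$ intersection points of the cycle; these points are distinct, so the surgeries are independent local modifications. The result $T$ is a connected closed Lagrangian surface with $\chi(T) = 2k - 2k = 0$, hence a torus, and $S$ still meets it transversally once. Performing the surgeries gradedly makes $T$ a graded Lagrangian, so its Maslov class vanishes. Lagrangian surgery does not change homology classes, so $[T] = \sum \pm[S_i]$ lies in the radical of the Milnor lattice (consistent with $T\cdot T = -\chi(T) = 0$), and $[T]\cdot[S] = \pm 1$ gives primitivity.

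\medskip\noindent\textbf{Main obstacle.} The delicate point is exactness. Surgery of exact Lagrangians at a single point is exact, but around the closed cycle the primitives of $\theta$ on the spheres, together with the one-signed contributions of the $k$ surgery handles, must sum to zero against the ``long'' $1$-cycle of $T$; there is a parallel $\mathbb{Z}/2$ condition (orientation/grading compatibility) to be met around the loop. I expect to arrange both by exploiting freedom in the construction --- Hamiltonian isotopies of the $S_i$ move the relevant periods, or one picks a configuration with enough symmetry that the period manifestly vanishes --- but making this precise, simultaneously for all three models, is the technical heart of the argument.
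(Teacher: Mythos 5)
Your Step 1 reduction (adjacency to a parabolic singularity, exactness and Maslov class surviving the embedding because $H^1$ of the Milnor fibre vanishes) is essentially the paper's argument, and your Step 2--3 idea of surgering a closed chain of vanishing cycles is in the right spirit: the paper's construction is the degenerate case $k=2$, namely two vanishing cycles $A$, $B$ meeting in \emph{two} points of the same sign. But there are two genuine gaps.

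First, your primitivity mechanism is self-contradictory. You correctly observe that $[T]=\sum\pm[S_i]$ lies in the radical of the Milnor lattice (for the parabolic singularities the form is negative semi-definite, so $[T]\cdot[T]=-\chi(T)=0$ forces this). But then $[T]\cdot[S]=0$ for \emph{every} class $S$, so the dual sphere with $[T]\cdot[S]=\pm1$ that you propose to exhibit cannot exist. Primitivity has to be argued differently: in the paper $[T]=[A]-[B]$ is a difference of two elements of a distinguished basis of $H_2$, hence primitive, and it stays a difference of basis elements under the adjacency embedding because vanishing cycles map to vanishing cycles.

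Second, the existence of a suitable cyclic configuration is not just a matter of reading off a loop from the Gabrielov diagram: Lagrangian surgery around a closed chain produces a torus only if the local intersection signs are globally consistent, and otherwise a Klein bottle. For the obvious $4$-cycles in the parabolic diagrams (e.g.\ $A$--$P_1$--$B$--$Q_1$) one can check that no choice of signs makes $\bigl(\sum\pm[S_i]\bigr)^2=0$, so the orientation condition fails and the surgered surface is non-orientable. The configuration that does work is the length-two ``cycle'' $A,B$ with both intersection points of agreeing sign, and establishing that this configuration exists in minimal position -- together with the exactness condition you rightly flag (in the paper it becomes an equality of areas of two explicit holomorphic discs in a fibre of an auxiliary Lefschetz fibration, arranged by an essentially local change of the symplectic form) -- is the technical heart of the paper (Proposition \ref{th:Tpqr} and Section \ref{sec:torusconstruction}), which your proposal defers rather than supplies.
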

The vanishing of the Maslov class  is useful from the perspective of Floer theory, as it allows one to use a Fukaya category with absolute $\Z$--gradings. (See Section \ref{sec:Fukayabackground}.) 

The simple singularities have \emph{modality zero}, whereas all other singularities have positive modality. 
Loosely speaking, the modality of a singularity $f$ is the dimension of a parameter space 
covering a neighbourhood of $f$ in the space of singularities \emph{after} holomorphic reparametrization (see Definition \ref{def:modality}). 
This means that suitably interpreted, the non-simple singularities are generic (the simple ones correspond to points). 

Our approach is to construct tori explicitly in strategically chosen Milnor fibres, and use embeddings from these Milnor fibres to get tori in all others. These embeddings are geometric consequences of a phenomenon known as \emph{adjacency} of singularities. See Section \ref{sec:adjacency}.

The bulk of this article focuses on the singularities for which we construct $T$ explicitly. In the classification of Arnol'd \cite{Arnold6}, they are known as $T_{p,q,r}$ singularities. They are of the form
\bq
T_{p,q,r}(x,y,z) = x^p + y^q +z^r + axyz
\eq
where $p$, $q$ and $r$ are integers such that
\bq
\frac{1}{p}+\frac{1}{q} + \frac{1}{r} \leq 1
\eq
and $a \in \C$ is a complex parameter, which  is allowed to take all but finitely many values for each triple $(p,q,r)$. In the case where $\frac{1}{p}+\frac{1}{q} + \frac{1}{r} < 1$, the condition is $a \neq 0$. Note that while the holomorphic germ depends on the choice of $a$, its Milnor fibre, as an exact symplectic manifold, will not (Lemma \ref{th:indepofa}).
For all of the $T_{p,q,r}$ singularities, we have the following further properties:

\begin{itemize}
\item There exists an exact Lagrangian torus with vanishing Maslov class in every primitive class in the nullspace of the intersection form (Theorem \ref{th:allparabolic}).

\item We  compute Floer cohomology between our torus and every vanishing cycle in a distinguished collection (Proposition \ref{th:floercohomology}). In particular, if we equip our torus with any spin structure and a generic complex flat line bundle, all of those Floer groups vanish. 
\end{itemize}

\begin{remark}
If one only wanted to prove Theorem \ref{th:tori}, it would have been enough to consider the cases $(p,q,r) = (3,3,3)$, $(2,4,4)$ and $(2,3,6)$. Note these are the three triples of integers for which $\frac{1}{p}+ \frac{1}{q} + \frac{1}{r} =1$. 
\end{remark}

As a consequence of Proposition \ref{th:floercohomology}, we show the following:
\begin{thm*}(Theorem \ref{th:nofukayageneration}) 
The Fukaya category of the Milnor fibre of $T_{p,q,r}$ is not split-generated by any collection of vanishing cycles.
\end{thm*}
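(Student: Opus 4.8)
The plan is to obtain Theorem~\ref{th:nofukayageneration} as a formal consequence of Proposition~\ref{th:floercohomology}. Work in the $\Z$-graded Fukaya category $\F := \F(M_{T_{p,q,r}})$ over $\C$; absolute gradings make sense because $M_{T_{p,q,r}}$ has trivial canonical bundle, and the torus $T$ is gradable since its Maslov class vanishes (Theorem~\ref{th:tori}). Fix a spin structure on $T$ and a flat complex line bundle $\mathcal{E} \to T$ that is generic in the sense of Proposition~\ref{th:floercohomology}, and write $T' = (T,\mathcal{E})$ for the resulting object of $\F$, and of its split-closed derived category $D^\pi\F$. The input we need is: (i) $HF^\ast(T',V) = 0$ for every $V$ in the distinguished collection of vanishing cycles considered in Proposition~\ref{th:floercohomology} -- this is that proposition; and (ii) $HF^\ast(T',T') \neq 0$. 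For (ii): since $T$ is exact in the exact symplectic manifold $M_{T_{p,q,r}}$ it bounds no non-constant holomorphic disc, so the self-Floer complex of $T'$ has no disc contributions and computes the ordinary cohomology of $T$ with coefficients in the (trivial) endomorphism bundle of $\mathcal{E}$; thus $HF^\ast(T',T') \cong H^\ast(T;\C)$, which is $4$-dimensional. In particular $T'$ is a non-zero object of $D^\pi\F$.

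Granting for the moment the stronger statement that $HF^\ast(T',V) = 0$ for \emph{every} vanishing cycle $V$ of $T_{p,q,r}$, the theorem follows immediately. Suppose a collection $\CC$ of vanishing cycles were to split-generate $\F$. Then $T'$ would be a direct summand, in $D^\pi\F$, of some object $Y$ built from finitely many grading shifts of objects of $\CC$ by iterated mapping cones. The functor $HF^\ast(T',-) = \mathrm{Hom}^\ast_{D^\pi\F}(T',-)$ sends exact triangles to long exact sequences and direct summands to direct summands; since it annihilates each object of $\CC$, it annihilates $Y$, hence also its summand $T'$, so $HF^\ast(T',T') = 0$, contradicting (ii).

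It remains to promote Proposition~\ref{th:floercohomology} from one distinguished collection to all vanishing cycles. Every vanishing cycle of $T_{p,q,r}$ occurs in some distinguished collection, and any two distinguished collections of vanishing cycles are related by a sequence of Hurwitz moves (up to orientations, and independently of the chosen Morsification). A single Hurwitz move replaces a consecutive pair $(V_i,V_{i+1})$ in a collection by one involving the image $\tau_{V_i}^{\pm 1}(V_{i+1})$ of $V_{i+1}$ under a Dehn twist along $V_i$, and this image fits into an exact triangle in $\F$ together with $V_{i+1}$ and a direct sum of grading-shifted copies of $V_i$. Applying $HF^\ast(T',-)$ to that triangle shows that if $HF^\ast(T',-)$ vanishes on $V_i$ and on $V_{i+1}$ then it vanishes on $\tau_{V_i}^{\pm 1}(V_{i+1})$. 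Starting from the distinguished collection of Proposition~\ref{th:floercohomology} and inducting on the number of Hurwitz moves gives $HF^\ast(T',V)=0$ for every vanishing cycle $V$.

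I expect essentially all of the real work to lie in Proposition~\ref{th:floercohomology}; the argument above is homological bookkeeping. The one point requiring care is the last paragraph: one must fix the precise meaning of ``vanishing cycle'' so that the Hurwitz orbit of a single distinguished collection really does exhaust them all, and one must check that the Dehn-twist exact triangle is compatible both with the $\Z$-gradings and with the flat line bundle carried by $T'$, so that vanishing of $HF^\ast(T',-)$ genuinely propagates along Hurwitz moves. The remaining ingredients -- exactness of $HF^\ast(T',-)$, the computation $HF^\ast(T',T') \cong H^\ast(T;\C)$, and the summand argument -- are standard.
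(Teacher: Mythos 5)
Your argument is correct and is essentially the paper's: Proposition \ref{th:floercohomology} supplies the vanishing of $HF^\ast(T',V)$ for a generic flat line bundle, the non-vanishing of $HF^\ast(T',T')\cong H^\ast(T^2)$ supplies the other hypothesis (the paper gets this from a PSS-type argument following Albers, you from exactness plus a small Hamiltonian perturbation reducing Floer to Morse theory -- both standard), and Lemma \ref{th:nosplitgeneration} is exactly your summand/long-exact-sequence bookkeeping. The one place you go beyond the written proof is the final paragraph: the paper deduces the theorem directly from the vanishing against the \emph{particular} distinguished collection of Proposition \ref{th:floercohomology}, and does not explicitly address arbitrary vanishing cycles; your propagation via Lemma \ref{th:mutationseq} and the Dehn-twist exact triangle (noting that $\tau_V V'$ is a cone on $HF(V,V')\otimes V \to V'$, so $HF(T',-)$-acyclicity is preserved under mutation, and that this triangle among untwisted spheres persists in the enlarged category with flat line bundles) is a legitimate and arguably necessary supplement if ``any collection of vanishing cycles'' is read in full generality. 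So: same strategy, with one extra step that closes a gap the paper leaves implicit.
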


In contrast, for all the previously understood examples of Milnor fibres, the Fukaya category of closed exact Lagrangians is generated by vanishing cycles. These examples include simple singularities, and most weighted homogeneous singularities -- see Theorem \ref{th:Seidelweighted}, due to Seidel.

Our key technical result is a detailed geometric description of the Milnor fibre of $T_{p,q,r}$, together with a distinguished collection of vanishing cycles (Proposition \ref{th:Tpqr}). In particular, it also gives enough information to answer a mirror symmetric question about $T_{p,q,r}$:

\begin{thm*}(Theorem \ref{th:mirrorsymmetry})
There is an equivalence 
\bq
D^b \Fuk^{\to} (T_{p,q,r}) \cong D^b Coh (\P^1_{p,q,r})
\eq
where
\begin{itemize}
\item 
 the left-hand side is the bounded derived directed Fukaya category of the singularity $T_{p,q,r}$. (The category $\Fuk^{\to} (T_{p,q,r})$ is associated to a Lefschetz fibration on $\C^3$ given by a morsification of the singularity $T_{p,q,r}$, together with a distinguished collection of vanishing paths. After passing to the bounded derived closure, this is inpendent of choices.)
\item the right-hand side is the bounded derived category of coherent sheaves on an orbifold $\P^1$, with orbifold points with isotropies of order $p$, $q$ and $r$.
\end{itemize}
\end{thm*}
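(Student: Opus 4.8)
The plan is to prove the equivalence by realising both sides as the bounded derived category of finite-dimensional modules over a single algebra, the Geigle--Lenzing \emph{canonical algebra} $\Lambda_{p,q,r}$. Recall $\Lambda_{p,q,r} = \C Q_{p,q,r}/I$, where $Q_{p,q,r}$ has a source vertex $v_-$, a sink vertex $v_+$, and three oriented arms from $v_-$ to $v_+$ of lengths $p$, $q$, $r$ (so with $p-1$, $q-1$, $r-1$ interior vertices), and $I$ is generated by the single relation that the three path compositions $v_- \to v_+$ sum to zero --- after the rescaling of arrows available precisely because $\P^1$ with three marked points carries no moduli, which is the algebraic shadow of the $a$-independence in Lemma \ref{th:indepofa}. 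Note $Q_{p,q,r}$ has $2 + (p-1)+(q-1)+(r-1) = p+q+r-1$ vertices, which equals both the Milnor number of $T_{p,q,r}$ (hence the number of objects of $\Fuk^{\to}(T_{p,q,r})$) and the rank of $K_0(\P^1_{p,q,r})$.

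For the coherent side this is classical: by Geigle--Lenzing, $\P^1_{p,q,r}$ carries a tilting bundle, the sum $\bigoplus_{0 \le \vec x \le \vec c} \mathcal{O}(\vec x)$ of the line bundles indexed by the elements of the rank-one grading group between $\vec 0$ and the dualising element $\vec c$, and its endomorphism algebra is exactly $\Lambda_{p,q,r}$. Hence $D^b \mathrm{Coh}(\P^1_{p,q,r}) \cong D^b(\mathrm{mod}\text{-}\Lambda_{p,q,r})$, and the rest of the argument takes place on the Fukaya side.

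Starting from the explicit model of $M_{T_{p,q,r}}$ and its distinguished collection of vanishing cycles $(V_1,\dots,V_\mu)$ supplied by Proposition \ref{th:Tpqr}, I would first apply a sequence of Hurwitz moves to bring the collection into the ``canonical'' order matching a linearisation of $Q_{p,q,r}$: two cycles playing the roles of $v_\pm$ and three chains of cycles for the arms, with each consecutive pair on an arm meeting once and non-consecutive ones on the same arm disjoint. One then reads off the directed $A_\infty$-category $\mathcal{A} = \bigoplus_{i \le j}\hom(V_i,V_j)$ from the geometry: the Floer complexes are determined by intersection points, the gradings are pinned down using the vanishing of the Maslov class so that $\mathcal{A}$ is concentrated in cohomological degree $0$ (whence $\mu_{\ge 3}$ vanishes for degree reasons and $\mathcal{A}$ is automatically formal), and the $\mu_2$-products are holomorphic-triangle counts. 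The outcome should be $H^0(\mathcal{A}) \cong \Lambda_{p,q,r}$: the arrows of $Q_{p,q,r}$ are the evident intersection points, and the single triangle count linking the three arms produces the canonical relation, with a nonzero coefficient on each arm that --- thanks to the rescaling freedom above --- need not be tracked precisely. Since for a finite directed $A_\infty$-category one has $D^b \Fuk^{\to}(T_{p,q,r}) = \mathrm{Perf}\,\mathcal{A}$, and $\Lambda_{p,q,r}$ has finite global dimension, this equals $D^b(\mathrm{mod}\text{-}\Lambda_{p,q,r})$; combining with the Geigle--Lenzing identification gives the theorem.

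The main obstacle is exactly this Fukaya-side computation: arranging the distinguished collection into canonical position by Hurwitz moves, and then honestly determining all Floer complexes, gradings and triangle products between the $V_i$ --- in particular confirming that there are no unexpected intersection points spoiling the degree-$0$ concentration (which is what makes formality free), and that the holomorphic triangles reassemble into precisely the canonical relation with all three coefficients nonzero. Everything downstream is bookkeeping plus the cited Geigle--Lenzing theorem.
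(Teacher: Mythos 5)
Your overall strategy (identify both sides with $D^b$ of modules over a finite-dimensional algebra, computing the Fukaya side from the explicit vanishing-cycle configuration of Proposition \ref{th:Tpqr}) is the same as the paper's, and using the Geigle--Lenzing tilting bundle with endomorphism algebra the canonical algebra $\Lambda_{p,q,r}$ is a legitimate alternative to the Chen--Krause presentation used in the paper. But there is a genuine gap in the A-side matching. You propose to put the distinguished collection in a position where each arm is an $A_n$-chain with ``each consecutive pair on an arm meeting once and non-consecutive ones on the same arm disjoint,'' and then to read off $H^0(\mathcal{A})\cong\Lambda_{p,q,r}$. This cannot work as stated: if non-consecutive cycles on an arm are disjoint, then $\hom(V_i,V_{i+2})=0$, so the composition of two consecutive arm arrows is forced to vanish, whereas in the canonical algebra the long path along each arm from $v_-$ to $v_+$ is \emph{nonzero} (indeed $\dim e_+\Lambda_{p,q,r}e_-=2$, and all $\mathrm{Hom}$'s between vertices on one arm are one-dimensional). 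So the algebra your configuration produces is not $\Lambda_{p,q,r}$ but the ``nilpotent-arm'' algebra the paper calls $\mathcal{B}$ (relations $p_{i+1}\circ p_i=0$, etc.); the two are derived equivalent but not isomorphic, and your final step ``$H^0(\mathcal{A})\cong\Lambda_{p,q,r}$, hence $\mathrm{Perf}\,\mathcal{A}=D^b(\mathrm{mod}\,\Lambda_{p,q,r})$'' does not follow. The missing step is a tilt: the paper replaces the arm objects by the iterated cones $P'_i=\{P_1\to\cdots\to P_{p-i}\}$ in $Tw\,\Fuk^{\to}$, whose morphism spaces between non-adjacent arm members are one-dimensional with nonvanishing compositions, and only then recovers the target quiver algebra. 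You would need either this twisted-complex step or a genuinely different distinguished basis realizing the canonical quiver geometrically (which would require non-consecutive arm cycles to intersect, contradicting your own normal form, and which you have not shown to exist).

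A second, related mismatch: in the actual geometry two of the vanishing cycles ($A$ and $B$) intersect in \emph{two} points, so $\hom(A,B)\cong\C^2$. This rank-two morphism space is exactly the double arrow $a_1,a_2$ of the Chen--Krause quiver, but the canonical quiver $Q_{p,q,r}$ has no double arrow; your dictionary assigning $A$ and $B$ the roles of $v_-$ and $v_+$ does not account for where these two generators go, nor for the three relations $b_1a_2=0$, $b_2a_1=0$, $b_3(a_1-a_2)=0$ that the four holomorphic triangles through $u$ and $v$ actually produce. These relations, not a single ``sum of three long paths'' relation, are what the triangle count gives; recovering the canonical relation from them again requires the tilting step above. (Your remark that degree-zero concentration gives formality for free is fine in principle, but it too must be checked against the actual graded intersection points rather than asserted from Maslov-class vanishing alone.)
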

For more information, see Section \ref{sec:mirrorsymmetry}. 
The directed Fukaya category of $T_{p,q,r}$ contains less information than the `full' Fukaya category, which was the one that we considered  in Theorem \ref{th:nofukayageneration}. 
We hope our techniques will enable us to understand a version of mirror symmetry for this too. Already, Theorem  \ref{th:mirrorsymmetry} complements existing results in the literature. 
In particular, it provides an answer to Conjecture 1 of \cite{EbelingTakahashi11} and Conjecture 7.4 of \cite{Takahashi}; both of these articles consider mirror-symmetric questions for these singularities, studying algebraic invariants. 
  Among other works, Ueda \cite{Ueda} proves a related statement when $\frac{1}{p} + \frac{1}{q} + \frac{1}{r} =1$, using the fact that $T_{p,q,r}$ is weighted homogenerous in those cases.
Recent work of Cho, Hong, Kim and Lau \cite{ChoHongLau, ChoHongKimLau} studies the same spaces, but with the $A$ and $B$--sides swapped: they take $\P^1_{a,b,c}$ as the $A$--side. They show that its derived Fukaya category corresponds to matrix factorizations of a Landau--Ginzburg potential whose leading terms agree with $T_{p,q,r}$ (see also \cite{Seidel-genus-2} for a special 
case).

\subsection{Constructing the Lagrangian torus $T$ in the Milnor fibre of $T_{p,q,r}$: a sketch}

Proposition \ref{th:Tpqr} describes the Milnor fibre of $T_{p,q,r}$ as the result of smoothing the corners of the total space of a Lefschetz fibration; vanishing cycles for $T_{p,q,r}$ are given by matching paths for that fibration. In the example of $T_{3,4,5}$, here is what we get. 
The base of the fibration is given in Figure \ref{fig:basisforT345}, with matching paths given in colour. The smooth fibre is a three-punctured surface of genus four. In the fibre above $\star$, where most of the matching paths meet, the matching cycles restrict to the curves of Figure \ref{fig:fibreforT345}.

\begin{figure}[htb]
\begin{center}
\includegraphics[scale=0.85]{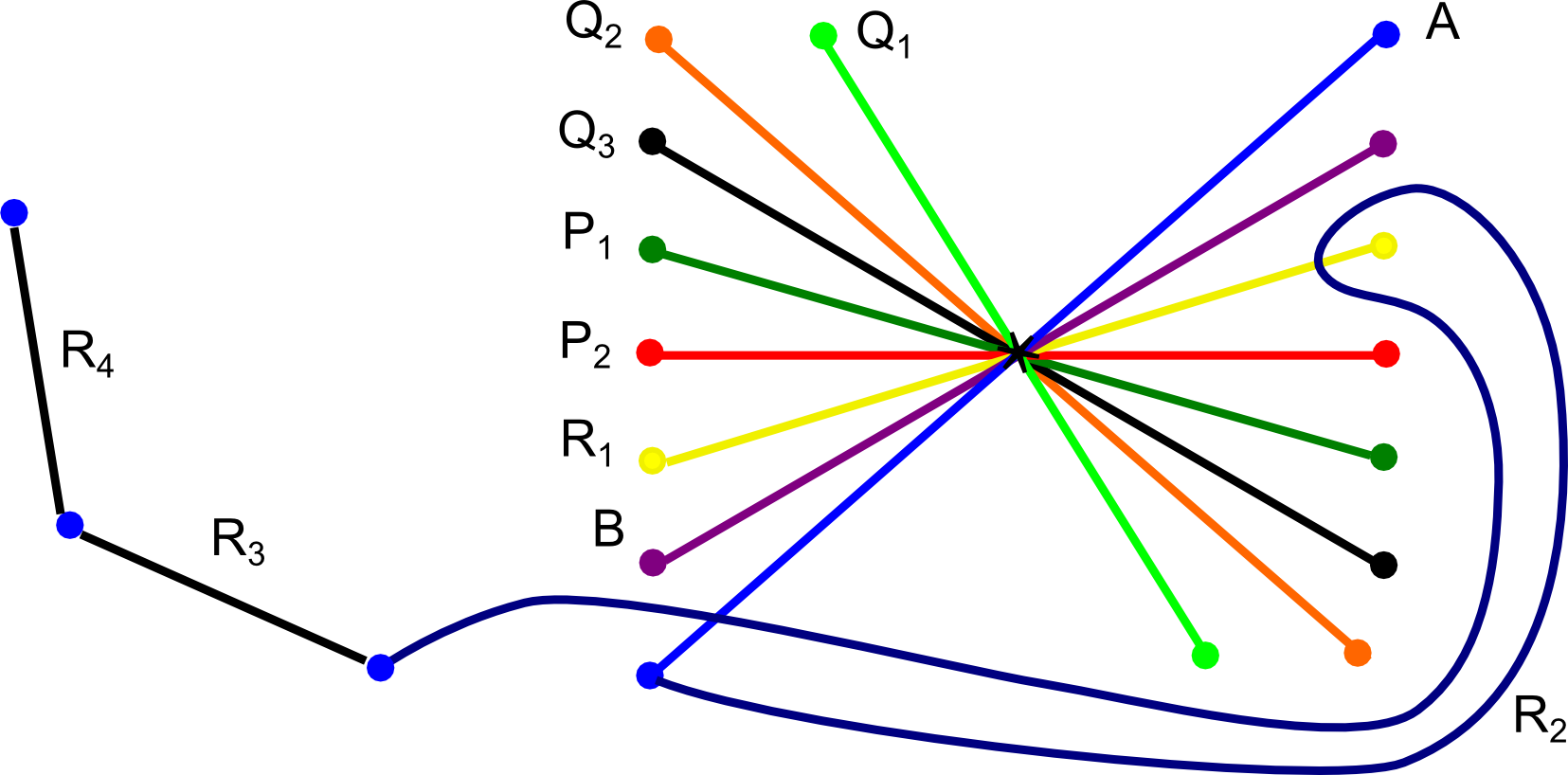}
% or [scale=0.85]
\caption{Basis of the Lefschetz fibration used to describe $T_{3,4,5}$.
}
\label{fig:basisforT345}
\end{center}
\end{figure}

\begin{figure}[htb]
\begin{center}
\includegraphics[scale=0.85]{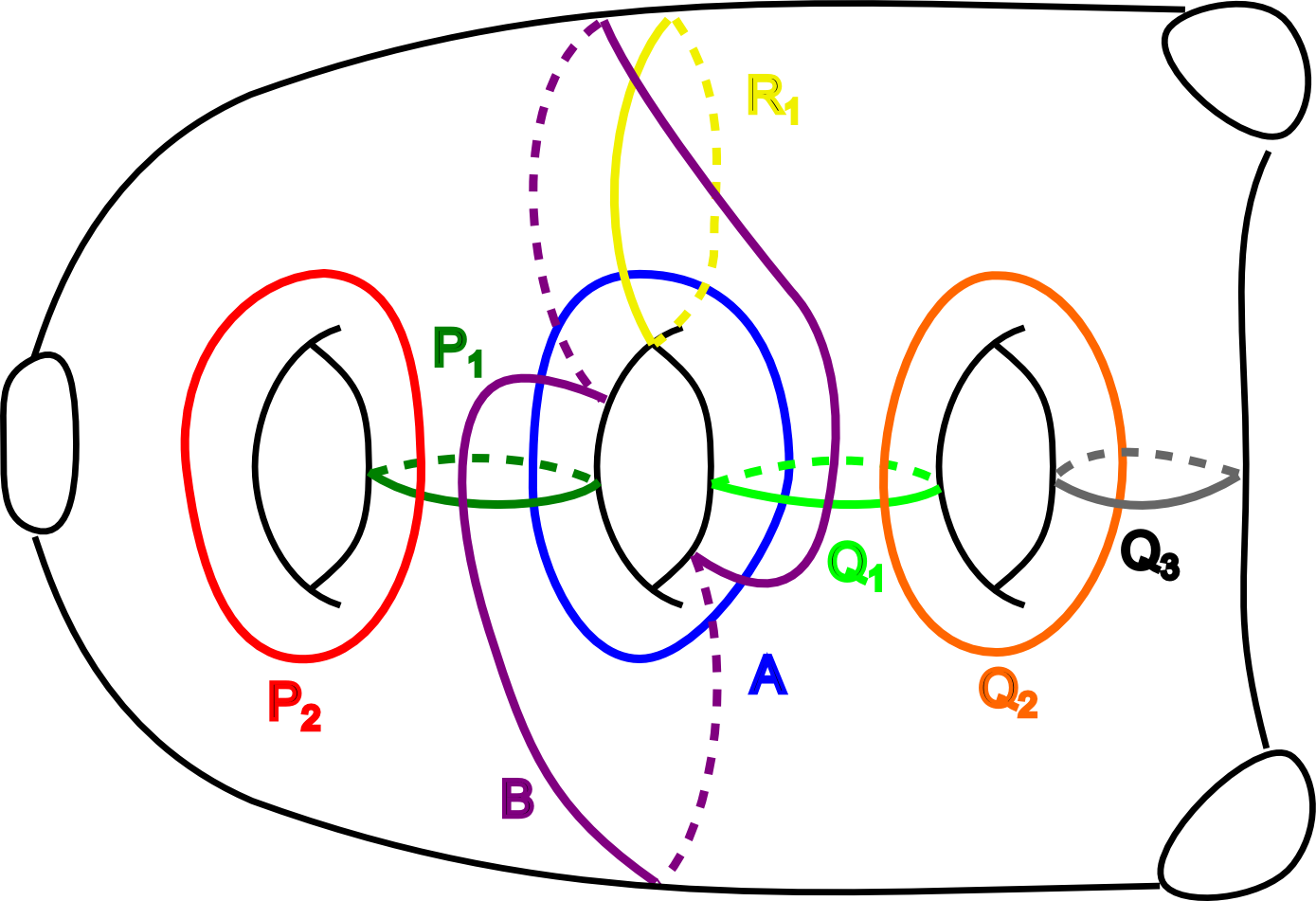}
% or [scale=0.85]
\caption{Fibre above $\star$ in the Lefschetz fibration used to describe $T_{3,4,5}$.
}
\label{fig:fibreforT345}
\end{center}
\end{figure}
The $P_i$'s and $Q_i$'s form chains of lengths $p-1$ and $q-1$ in the fibre direction. The $R_i$'s form a chain in the base direction.
The spheres $A$ and $R_2$ intersect twice, with opposite orientations. 
After Hamiltonian isotopy, we can arrange for them not to intersect (Lemma \ref{th:AR2disjoint}); all other intersections are already minimal. 
This description also recovers Gabrielov's description of the intersection form of $T_{p,q,r}$ \cite{Gabrielov1, Gabrielov2}. We use three tools:
\begin{itemize}
\item A symplectic version of work of A'Campo \cite{ACampo99}, which gives an algorithm for describing the Milnor fibre of a function of two variables, together with a distinguished collection of vanishing cycles. See Section \ref{sec:ACampo}. 

\item A Thom-Sebastiani--type technique, based on work of Gabrielov \cite{Gabrielov1}, which, given a distinguished collection of vanishing paths and cycles for $f(x_0, \ldots, x_n)$, gives one for $f(x_0, \ldots, x_n)+ x_{n+1}^d$. See Section \ref{sec:Gabrielovcyclic}. 

\item Deformation arguments,  which allow us to embed the Milnor fibres we care about into the fibres of functions of the form $g(x,y) + z^d$, which are covered by the techniques above. We use deformation arguments when $r \geq 3$. See Sections \ref{sec:T333} and \ref{sec:Tpqr}.
\end{itemize}

The spheres $A$ and $B$ of Figure \ref{fig:basisforT345} and \ref{fig:fibreforT345} intersect in two points, with agreeing orientation. Our torus $T$ is obtained by performing Lagrangian surgery \cite{Polterovich} at each of those two points. Note that in general, such a construction would not preserve exactness. The fact that we have that, as well as the vanishing of the Maslov class of $T$, are consequences of different geometric features of the Milnor fibre. Details are in Section \ref{sec:torusconstruction}.

To calculate Floer cohomology groups between $T$ and the vanishing cycles, or the $A_\infty$--products in the directed Fukaya category $\Fuk^{\to} (T_{p,q,r})$, we reduce the problem to counting holomorphic curves on the Riemann surface that is the preimage of $\star$. In the case involving $T$, this requires a little care; see Section \ref{sec:Fukayaunimodal}.

\subsection{Extension to higher dimensions: Lagrangian $S^1 \times S^{n-1}$'s}\label{sec:higherdimensions}

The present article focuses on the case when $M_f$ has real dimension four. However, let us make the following remark about the higher-dimensional case: Starting with our description of the Milnor fibre of 
$$
x^p+y^q+z^r+axyz
$$
one gets a description of the Milnor fibre of its stabilization to a function of more variables, also known as $T_{p,q,r}$:
$$
x_0^p+x_1^q+x_2^r+ax_0 x_1 x_2 + x_3^2 + \ldots + x_{n-2}^2.
$$
In particular, there will be two $n$--dimensional vanishing cycles, $A'$ and $B'$, which intersect in two points with agreeing orientation. Performing surgery at those two points, one gets a Lagrangian $S^1 \times S^{n-1}$. One can check that the argument for exactness in the $n=2$ case readily extends here. Moreover, the result we use about adjacency of singularities also holds for singularities of more variables. Thus we have the following:

\begin{proposition}
In higher dimensions, the Milnor fibre of any singularity of positive modality contains an exact Lagrangian $S^1 \times S^{n-1}$, primitive in homology.
\end{proposition}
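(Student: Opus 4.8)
The plan is to reduce everything to the three-variable case treated in the body of the paper, stabilise that construction, redo the Lagrangian surgery and the exactness argument, and then propagate the conclusion to all non-simple singularities by adjacency. First I would invoke the classification of isolated hypersurface singularities together with Arnol'd's adjacency theory, which (as noted above) goes through for functions of arbitrarily many variables: any singularity $g$ of positive modality in $n+1$ variables, $n\geq 3$, is adjacent to the $(n+1)$-variable stabilisation of at least one of the three parabolic singularities $T_{3,3,3}$, $T_{2,4,4}$, $T_{2,3,6}$. By the geometric consequences of adjacency developed in Section~\ref{sec:adjacency} --- again valid in all dimensions --- this yields an exact (Liouville) embedding of the corresponding stabilised Milnor fibre into $M_g$ carrying a distinguished collection of vanishing cycles of the parabolic singularity onto part of a distinguished basis of $g$. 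It therefore suffices to produce, in the Milnor fibre of the stabilised $T_{p,q,r}$ for $(p,q,r)\in\{(3,3,3),(2,4,4),(2,3,6)\}$ --- in fact the argument works for every $T_{p,q,r}$ --- an exact Lagrangian $S^1\times S^{n-1}$ whose homology class is part of such a basis.

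To build it I would stabilise the explicit description of Proposition~\ref{th:Tpqr}. Applying the Thom--Sebastiani/Gabrielov construction of Section~\ref{sec:Gabrielovcyclic} $n-2$ times, each time adding a square, converts the model of the three-variable Milnor fibre of $T_{p,q,r}$ and its distinguished collection of vanishing cycles into one for the $(n+1)$-variable fibre; since $\mu(x^2)=1$, each step leaves the Milnor number unchanged and simply suspends every vanishing cycle. In particular the two-dimensional spheres $A$ and $B$ of Figures~\ref{fig:basisforT345}--\ref{fig:fibreforT345} become Lagrangian $n$-spheres $A'$ and $B'$, and the Thom--Sebastiani intersection formula, whose only effect here is to multiply intersection numbers by a fixed sign, shows that $A'$ and $B'$ still meet transversally in exactly two points with agreeing orientation. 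Performing Lagrangian surgery (in the sense of Polterovich) at those two points produces a Lagrangian $L$; because the orientations agree, $L\cong S^1\times S^{n-1}$ --- equivalently, one smooths the two nodes of the configuration $A'\cup B'$, the first surgery giving the connected sum $A'\#B'\cong S^n$ and the second an orientation-compatible $1$-handle.

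It remains to check exactness and primitivity. Since $n\geq 3$ the spheres $A'$, $B'$ and the connected sum $A'\#B'$ are simply connected, hence automatically exact; after the second surgery $H^1(L;\R)\cong\R$, and exactness of $L$ comes down to the vanishing of $\int_\gamma\theta$, where $\gamma$ generates $H_1(L)$ and is represented by a loop running along a path in $A'$ between the two surgery points and back along a path in $B'$. As $A'$ minus two discs is $S^{n-1}\times[0,1]$, with vanishing first cohomology, this period equals --- up to the arbitrarily small contribution of the surgery handles --- the difference of the $\theta$-potentials of $A'$ and $B'$ evaluated at the two intersection points. Its vanishing is precisely the subtle step of the four-dimensional construction in Section~\ref{sec:torusconstruction}: it follows from a specific geometric feature of the Milnor fibre (a symmetry exchanging the two branches at the surgery points), not from any generic adjustment, and I would verify that this feature survives stabilisation. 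Finally $[L]=[A']\pm[B']$ in $H_n(M_g;\Z)$, and since $A'$ and $B'$ belong to a distinguished basis of vanishing cycles of $g$, this class has content one, hence is primitive, exactly as in the proof of Theorem~\ref{th:tori}.

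The main obstacle, I expect, is this last exactness verification. One cannot correct the period of $\theta$ by a Hamiltonian isotopy, since Hamiltonian isotopies preserve $[\theta|_L]$, so the vanishing must be extracted from the geometry: one has to identify the relevant symmetry of the stabilised Milnor fibre and show it forces the cancellation, just as --- but now in every dimension --- in Section~\ref{sec:torusconstruction}. A secondary point requiring care is the sign bookkeeping in the iterated Thom--Sebastiani step, which is what guarantees that $A'$ and $B'$ meet with agreeing rather than opposite orientation, and hence that $L$ is the product $S^1\times S^{n-1}$ rather than a twisted $S^{n-1}$-bundle over the circle.
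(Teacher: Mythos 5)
Your proposal follows essentially the same route as the paper's (very brief) argument: stabilise the explicit description of $\mathcal{T}_{p,q,r}$ so that $A$ and $B$ become $n$--spheres $A'$, $B'$ meeting in two points with agreeing orientation, surger them to an $S^1\times S^{n-1}$, check exactness and primitivity, and propagate by adjacency, which holds in any number of variables. The one point where your account diverges from the text is the mechanism for exactness: the paper does not appeal to a symmetry but arranges for the two relevant holomorphic discs in the fibre $M_\star$ to have equal symplectic area by an essentially local change of the symplectic form (Lemma \ref{th:changeomega} and the corollary following it), which leaves the completed Milnor fibre unchanged up to exact symplectomorphism --- so the ``generic adjustment'' you rule out is in fact exactly what is used, applied to the vanishing cycles before surgery rather than to $L$ itself.
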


%%%%%%%%%%%%%%%%%%%%%%
%%%%%%%%%%%%%%%%%%%%%%

\subsection{Contents}
Section \ref{sec:singularity} collects material on singularity theory, principally from a symplectic perspective. In particular, we re-visit the methods of A'Campo (Subsection \ref{sec:ACampo}) and Gabrielov (Subsection \ref{sec:Gabrielovcyclic}) for studying Milnor fibres and vanishing cycles, and set them up in a symplectic framework. 
Section \ref{sec:Fukayabackground} gives some background on the version of the Fukaya category that we use, and relevant properties. 
The description of the Milnor fibre of $T_{p,q,r}$ together with a distinguished collection of vanishing cycles is in Section \ref{sec:vcycles}.
 Section \ref{sec:torusconstruction} constructs tori in these Milnor fibres, and proves Theorem \ref{th:tori} and Proposition \ref{th:allparabolic}; it also presents a useful local model for the construction. Section \ref{sec:Fukayaunimodal} gives the results relating to Floer cohomology between the torus we construct and the vanishing cycles of the $T_{p,q,r}$, including Theorem \ref{th:nofukayageneration} relating to generation of the Fukaya categories of the Milnor fibres of these singularities. Finally, Section \ref{sec:mirrorsymmetry} proves Theorem \ref{th:mirrorsymmetry} on homological mirror symmetry for $T_{p,q,r}$.

 %%%%%%%%%%%%%%%%%%%%%%%%%%%%%%%
 %%%%%%%%%%%%%%%%%%%%%%%%%%%%%%%%

\subsection{Acknowledgements}

I thank my advisor, Paul Seidel, for many helpful conversations and suggestions. I first learnt of the techniques of A'Campo \cite{ACampo75, ACampo99} at a workshop in Symplectic and Contact Topology in Nantes in June 2011. I would like to thank the organisers for an enjoyable workshop, and Norbert A'Campo for stimulating lectures. 
   I am grateful to John Lesieutre and Tiankai Liu for explanations regarding del Pezzo surfaces, and to Bjorn Poonen for answering questions relating to properties of families of elliptic curves.  
   I also benefited from conversations with Mohammed Abouzaid, 
   Denis Auroux, Francesco Lin, Timothy Perutz and Umut Varolgunes.
   This project developed from a suggestion of Ivan Smith, whom I also thank for his continued interest in this work.

I was partially supported by NSF grants  DMS--1054622, DMS-1505798, and by a Junior Fellow award from the Simons Foundation.

 %%%%%%%%%%%%%%%%%%%%%%%%%%%%%%%
 %%%%%%%%%%%%%%%%%%%%%%%%%%%%%%%%

\subsection{Notation}
All singular (co)homology groups have coefficients in $\Z$ unless otherwise specified.

%%%%%%%%%%%%%%%%%%%%%%%%%%%%
 %%%%%%%%%%%%%%%%%%%%%%%%%%%%

 %%%%%%%%%%%%%%%%%%%%%%%%%%%%%%%
 %%%%%%%%%%%%%%%%%%%%%%%%%%%%%%%%

\section{Background on singularity theory} \label{sec:singularity}

\subsection{Conventions: Lefschetz fibrations and parallel transport isomorphisms} 

\begin{definition}
A Lefschetz fibration $\pi: E \to \C^n$ is as follows:
\begin{itemize}
\item $E$ is a manifold with corners equipped with an exact symplectic form $\omega_E = d \theta_E$ and a compatible almost complex structure $J$, and $\partial E$ is weakly convex with respect to $\theta_E$. 
\item After a small smoothing of the corners of $E$, the resulting exact symplectic manifold $E'$ is a Liouville domain (i.e., has contact type boundary).
\item $\pi$ is proper and pseudoholomorphic with respect to $(J, i)$, and $\pi|\partial E$ is a submersion.
\item $\pi$ has finitely many critical points, at most one of which is in each fibre.
\item $J$ is integrable near each critical point, and the complex Hessian at each critical point is non-degenerate.
\item Each smooth fibre of $\pi$ (which is automatically symplectic, equipped with the restriction of $\omega_E = d \theta_E$), is a Liouville domain.
\end{itemize}
We will call $B = \pi(E)$ the base of the fibration.
\end{definition}

At any smooth point $x$, the symplectic form defines a prefered horizontal tangent space $TE^h_x$, given by taking the symplectic orthogonal to $\textrm{ker} (D\pi_x)$. Note that our definition is essentially the same as the one in \cite[Section 5]{Seidel-FukayaLefschetzI}, with the exception that we allow for higher dimensional bases, and that instead of requiring the horizontal tangent spaces $TE_x^h$ to be parallel to the boundary, we simply require the fibres to be Liouville domains.

Fix a path $\gamma(t)$, $t \in [0,1]$, in the base of $\pi$, avoiding the singular locus. The horizontal tangent spaces defines a symplectic parallel transport map along $\gamma$. However, as the horizontal tangents spaces needn't be parallel to the boundary, this will not in general be defined for all time. We can address this as follows.

Fix a fibre $F^o_{\ast}$. Using the Liouville flow on a collar neighbourhood of $\partial F^o_\ast$, we can attach conical ends to $F^o_\ast$, to get a non-compact exact symplectic manifold, say $({F}_\ast, \omega_\ast = d \theta_\ast)$. Varying $\ast$, we can attach conical ends to all of the fibres of $\pi$. (Note that this is naturally `coherent': no choice is involved when attaching the ends.) We have the following.

\begin{lemma}\label{lem:symplectic_parallel}
Fix a path $\gamma(t)$ $t \in [0,1]$ in the base of $\pi$, avoiding the singular locus. Then there is a family of  symplectomorphisms
\[
\Phi_t: {F}_{\gamma(0)} \to {F}_{\gamma(1)}
\]
with the following properties:
\begin{enumerate}
\item[(a)] up to compactly supported Hamiltonian isotopy, $\Phi_t$ recovers the symplectic parallel transport maps along $\gamma$, in so far as they are defined;
\item[(b)] $\Phi_t$ is an exact symplectomorphism: $\Phi^\ast (\theta_t) = \theta_0 + df_t$, some compactly supported $f_t$. 
\end{enumerate}
\end{lemma}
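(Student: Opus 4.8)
The plan is to build $\Phi_t$ by starting from the symplectic parallel transport along $\gamma$ and correcting it near $\partial E$, where it may fail to be defined, using the weak convexity of $\partial E$ together with the Liouville structure on the fibres; exactness is then read off from a Cartan-formula computation. This is a variant, adapted to the present (weaker) boundary hypotheses, of the construction in \cite{Seidel-FukayaLefschetzI}.

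First I would record the infinitesimal picture and note that genuine parallel transport is automatically exact. Write $\omega_E = d\theta_E$. Along $\pi^{-1}(\gamma(t))$ let $X_t$ be the horizontal lift of $\dot\gamma(t)$, the unique section of $TE^h$ projecting to $\dot\gamma(t)$; its (a priori only partially defined) time-$t$ flow, restricted to fibres, is the parallel transport $\rho_t\colon F_{\gamma(0)}\to F_{\gamma(t)}$. Because $TE^h$ is the $\omega_E$-orthogonal complement of the vertical distribution, $\omega_E(X_t,v)=0$ for every vertical $v$, so $\iota_{X_t}\omega_E$ vanishes after restriction to each fibre. Feeding this into $\mathcal L_{X_t}\theta_E = \iota_{X_t}\omega_E + d\bigl(\theta_E(X_t)\bigr)$ and restricting to fibres gives
\[
\tfrac{d}{dt}\bigl(\rho_t^{\ast}\theta_{\gamma(t)}\bigr) \;=\; d\bigl(\rho_t^{\ast}K_t\bigr),\qquad K_t := \theta_E(X_t)\big|_{\pi^{-1}(\gamma(t))},
\]
so wherever it is defined $\rho_t$ is exact with primitive $\int_0^t\rho_s^{\ast}K_s\,ds$. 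Exactness is thus for free; the two obstructions are that $\rho_t$ need not exist for all $t$, and that this primitive need not be compactly supported.

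The technical heart is to repair the behaviour near $\partial E$. Part of setting up the coherent conical ends is to arrange that on the cones $\theta_E$ is a fibrewise conical form carrying no cross terms between the conical directions and the base; there the $\omega_E$-orthogonal lift of $\dot\gamma(t)$ is simply the lift in the base directions, so parallel transport is trivial — hence complete — on the ends, and along it $\theta_E(X_t)$ restricts to a fibrewise constant on each fibre. It then remains to modify $X_t$ inside a compact collar of $\partial F^o$ in each fibre. Weak convexity of $\partial E$ furnishes a contact-type collar there, and I would use the (now standard) interpolation between the $\omega_E$-orthogonal connection and the trivial product connection; crucially this must be done so that the fibrewise symplectic form is preserved throughout the collar, so a naive cutoff of the horizontal lifts will not do — one interpolates at the level of the data defining the two connections near the contact boundary. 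The outcome is a vector field $\tilde X_t$ projecting to $\dot\gamma(t)$, with fibrewise-symplectic flow, agreeing with $X_t$ over the core of the fibre and with the trivial lift near the boundary and on the cones. Its time-$t$ flow $\Phi_t\colon F_{\gamma(0)}\to F_{\gamma(t)}$ is then a symplectomorphism defined for all $t\in[0,1]$, smooth in $t$, and equal to the tautological identification of the conical ends.

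The two properties now follow. For (b), rerunning the computation above with $\tilde X_t$ gives $\Phi_t^{\ast}\theta_{\gamma(t)} = \theta_{\gamma(0)} + df_t$ with $f_t = \int_0^t \Phi_s^{\ast}\bigl(\theta_E(\tilde X_s)|_{\text{fibre}}\bigr)\,ds$; since $\tilde X_s$ is the trivial lift on the cones, where $\theta_E(\tilde X_s)$ restricts to a fibrewise constant, $f_t$ is constant near infinity, and subtracting that constant makes it compactly supported. For (a), the modification was confined to the compact collar, so wherever $\rho_t$ is defined $\Phi_t$ and $\rho_t$ differ only on a compact set; since both are exact symplectomorphisms, the discrepancy is a compactly supported exact symplectic isotopy, hence Hamiltonian, which is exactly the assertion. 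I expect the main obstacle to be the middle step: producing the collar interpolation so that it simultaneously (i) preserves all the Lefschetz-fibration axioms, in particular the fibrewise symplectic structure, (ii) is trivial at the contact boundary so as to extend over the cones, and (iii) is compactly supported, uniformly in $t$. Everything else is either a formal computation or a standard argument.
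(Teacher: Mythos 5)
Your outline is correct and, once the deferred step is unpacked, essentially coincides with the paper's argument, though you phrase it at the level of connections where the paper works at the level of two-forms. The paper's proof consists of citing \cite[Lemma 9.3]{Keating13}, which produces a modified symplectic form $\Omega'_E$ agreeing with $\omega_E$ on every fibre and outside a neighbourhood of $\partial E$, and whose horizontal distribution is \emph{invariant under the fibrewise Liouville flow} in a collar of $\partial F^o_\ast$; parallel transport for $\Omega'_E$ then extends canonically over the conical ends (by equivariance under the Liouville flow, plus the bounded-norm argument for completeness), is fibrewise a symplectomorphism for the original forms, and agrees with the original transport away from the boundary. Your step of ``interpolating between the $\omega_E$-orthogonal connection and the trivial product connection so that the fibrewise symplectic form is preserved'' is not a separate route around this: a fibrewise-symplectic connection is equivalent to a closed two-form extending the fibrewise forms, so the only way to carry out your interpolation is to interpolate the primitives of the two two-forms with a cutoff and add a large multiple of $\pi^\ast\omega_b$ to restore nondegeneracy --- which is exactly the content of the cited lemma (and of Lemmas \ref{th:changeomega} and \ref{th:changeomegatoproduct} later in the paper). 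You correctly identify this as the crux, but you should be aware that the paper's condition is the weaker and more flexible one (Liouville-invariance of the horizontal distribution near the boundary, rather than literal triviality of the connection there); triviality requires first choosing a symplectic trivialization of the boundary fibration over $\gamma$, which exists since $\gamma$ is contractible but is an extra step. What your write-up adds that the paper leaves implicit is the explicit Cartan-formula computation showing that fibrewise parallel transport of a symplectic connection on an exact fibration is automatically exact, with primitive $\int_0^t\rho_s^\ast\bigl(\theta_E(X_s)|_{\mathrm{fibre}}\bigr)\,ds$, and the observation that on the ends this primitive is fibrewise constant and can be normalized away; this is a correct and useful expansion of the paper's one-line justification of (b). For (a), both you and the paper are equally brief; strictly, one should note that the discrepancy vector field $\tilde X_t-X_t$ is vertical, compactly supported, and fibrewise symplectic, and that its flux vanishes because both transports are exact with primitives agreeing outside a compact set, so the discrepancy isotopy is indeed compactly supported Hamiltonian.
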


\begin{proof}
The key is \cite[Lemma 9.3]{Keating13}. This explains how to construct a symplectic form $\Omega'_E$ on $E$ such that
\begin{enumerate}
\item For any $\ast$, $\Omega_E |_{F^o_\ast} = \Omega_E ' |_{F^o_\ast}$;
\item $\Omega'_E$ and $\Omega_E$ agree outside a neighbourhood of $\partial E$;
\item The horizontal tangent space determined by $\Omega'_E$ in invariant under the fibre-wise Liouville flow in some collar neighbourhood of $\partial F^o_\ast$;
\end{enumerate}
Now consider the symplectic parallel transport maps defined by $\Omega'_E$; again, these will not in general be defined for all time. However, Property (3) allows us to extend the parallel transport to the completed fibres ${F}_t$. (Note that, by construction, no point can escape to infinity in finite time: for any compact set $U$ in the base and any choice of metric, the lifts to the horizontal tangent space of norm-one tangent vectors to points in $U$  have bounded norms.) Point (1) implies that this parallel transport is a symplectomorphism for the original symplectic structure on ${F}_t$, and point (2) implies (a). 
Claim (b) follows from the usual computation for symplectic parallel transport combined with (3) (outside a compact set, $\Phi_t$ preserves Liouville vector fields, and hence the $\theta_t$). 
\end{proof}

Modifying the path $\gamma$ by a smooth isotopy relative the the boundary (still avoiding singular values) changes the map $\Phi_1$ by pre-composing with an exact symplectic isotopy.

Fix a singular value $s$, a smooth regular value $\e$, and  and a smooth path between them that otherwise does not go through any singular value. This is called a \emph{vanishing path} for $x_s$. It determines a map from $F_\e$ to ${F}_{x_s}$.  The preimage of the singular point in ${F}_{x_s}$ is a Lagrangian sphere, called the \emph{vanishing cycle} associated to this singular value and path. The map on the fibres is an exact symplectomorphism from the complement of the vanishing cycle to the complement of the singular point.

      As well as a Lagrangian sphere in the fibre, a vanishing path gives a Lagrangian disc in the total space, by taking the union of all the vanishing cycles above that path.
      We will call such a Lagrangian disc a \emph{Lefschetz thimble}.    

Suppose that there are two vanishing paths $x_{s_1}$ and $x_{s_2}$, from $\e$ to critical values $s_1$ and $s_2$, which only intersect at the end-point $\e$. Suppose moreover that the corresponding two vanishing cycles are Hamiltonian isotopic. Then the union of the two $x_{s_i}$ is know as a \emph{matching path}, and, after  Hamiltonian isotopies, the two Lefschetz thimbles  can be glued together to form a Lagrangian sphere in the total space, called a \emph{matching cycle}. (We will want some control over the Hamiltonian isotopies -- see Section  \ref{sec:local_changes} for details.)

\subsection{Basic properties}

Unless otherwise specified, the singularity theory facts stated here are taken from the excellent survey \cite{Arnold6}.

\begin{definition}
Let $f: \C^{n+1} \to \C$ be a holomorphic function such that $f(0)=0$, $df|_0 =0$ and $df \neq 0$ on the punctured ball $B^\ast_r (0)$, some sufficiently small $r$.  An isolated hypersurface singularity at zero is the equivalence class of the germ of such an $f$, up to biholomorphic changes of coordinates (fixing $0$).
\end{definition}

Whenever we use the term \emph{singularity}, unless otherwise specified, we shall be referring to an isolated hypersurface singularity. We shall make repeated use of the following notion:

\begin{definition}
Let $(M, \omega_M, \theta_N)$ and $(N, \omega_N, \theta_N)$ be exact symplectic manifolds. An exact symplectic embedding $M \hookrightarrow N$ is an embedding $ f: M \to N$ such that 
\bq
f^\ast (\theta_N) = \theta_M + dh
\eq
where $h$ is a compactly supported function on the interior of $M$. 
An exact symplectomorphism is defined similarly.
\end{definition}

\subsubsection{Milnor fibres: definition and independence of choices} 

\paragraph{\textit{Smooth category.}}
Let $h: \C^{n+1} \to \R$ be the function given by
\bq
h(x_0, \ldots, x_n) = |x_0|^2 + \ldots + |x_{n+1}|^2.
\eq
Let $f$ be a singularity. 
The restriction $h: \,  f^{-1}(0) \to \R$ is a real algebraic function.
By the Curve Selection Lemma (\cite[Chap. 3]{MilnorSingularities}), it has isolated critical values. 
Let $\delta_f$ be the smallest positive one. 
For any $\delta < \delta_f$ and sufficiently small $\e_\delta$, we have $f^{-1} (\e_\delta) \pitchfork S_{\sqrt{\delta}}(0)$, where $S_{\sqrt{\delta}} (0)$ is the sphere of radius $\sqrt{\delta}$ about the origin. 

\begin{definition}\cite{MilnorSingularities} \label{def:Milnorfibre}
The Milnor fibre of $f$ is the smooth manifold with boundary $f^{-1}(\e_\delta) \cap \overline{B}_{\sqrt{\delta}}(0)$, for any such $\delta < \delta_f$ and $\epsilon_\delta \neq 0$. As a smooth manifold, this is independent of choices.
\end{definition} 
 Instead of $h$, consider any real algebraic function
\bq
\tilde{h}: \C^{n+1} \to [0 , \infty)
\eq
such that $\tilde{h}^{-1} (0) = 0$. 
The Curve Selection Lemma still applies: 
the restriction of $\tilde{h}$ to $f^{-1}(0)$ also has isolated critical values.
 For sufficiently small $\delta$ and $\e_\delta$, $f^{-1}(\e_\delta)$ is transverse to the $\delta$--level set of $\tilde{h}$, and 
  the manifolds  
  \bq
  f^{-1}(\e_\delta) \cap \{ \tilde{h} (x_0 ,\dots, x_{n+1}) \leq \delta  \}
  \eq
  are also (diffeomorphic) copies of the Milnor fibre. One can show this by linearly interpolating between $h$ and $\tilde{h}$, and noting that all the intermediate functions are real algebraic, with the same properties as used above.
  
  \paragraph{\textit{Symplectic category.}}
  The affine space $\C^{n+1}$ comes with a `standard' exact symplectic form, which is  the usual Kaehler form on $\C^{n+1}$: $\o = d \theta$, where $\theta = \frac{i}{4} \sum_{i} x_i d \bar{x}_i -  \bar{x}_i d x_i$. This restricts to an exact symplectic form on any of the Milnor fibres. By construction, the associated negative Liouville flow is the gradient flow of $h(x)$ with respect to the standard Kaehler metric. Suppose we use the cut-off function
  \bq
  h_A (\mathbf{x} ) = || A \mathbf{x} ||^2
  \eq
  for some $A \in GL_{n+1}(\C)$. On $\C^{n+1}$, the negative gradient flow of $h$ points strictly inwards at any point of the real hypersurface  $ || A \mathbf{x} ||^2 = \delta$. (This is of course true for any $\delta$, though we only need it for $\delta < \delta_{f,A}$.) In particular, for sufficiently small $\delta$ and $\e_\delta$, the copy of the Milnor fibre given by
  \bq
  f^{-1}(\e_\delta) \cap \{  h_A ( \mathbf{x} ) \leq  \delta  \}  
  \eq
is an exact symplectic manifold with contact type boundary (i.e.~a Liouville domain), and we can attach cylindrical ends to it using the Liouville flow on a collar neighbourhood of the boundary.  We call this the \emph{completed Milnor fibre} of $f$, and denote it $M_f$.

\paragraph{\textit{Independence of choices.}}
To define $M_f$, we chose $A \in GL_{n+1}(\C)$, then $\delta$,  then $\epsilon$. We show below that $M_f$ is independent of those choices (Lemma \ref{th:Milnorfibreindepchoices}) and of the holomorphic representative of the singularity (Lemma \ref{th:Milnorfibreindepreparametrisation}).

\begin{lemma}\label{th:Milnorfibreindepchoices}
Given a holomorphic function $f: \C^{n+1} \to \C$, the completed Milnor fibre $M_f$ is independent of the choice of $A$, $\delta$ and $\epsilon$ up to exact symplectomorphism.
\end{lemma}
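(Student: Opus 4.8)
The plan is to show independence in three stages, one for each choice, working from the innermost ($\epsilon$) to the outermost ($A$). The basic principle throughout is the same: whenever two candidate constructions can be joined by a path of "admissible" data (admissible cut-off function $h_A$, admissible radius $\delta$, admissible regular value $\epsilon$), the corresponding completed Milnor fibres are connected by a smooth family of Liouville domains, and a Liouville domain that varies smoothly in a compact family has a fixed exact symplectomorphism type (a Moser-type argument on completions). So the real work is to produce such paths and check admissibility along them.

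First I would fix $A$ and $\delta < \delta_{f,A}$ and vary $\epsilon$. For $\epsilon$, $\epsilon'$ both sufficiently small regular values, I would choose a path $\epsilon(t)$ from $\epsilon$ to $\epsilon'$ inside the punctured disc of radius $\max(|\epsilon|,|\epsilon'|)$ (small enough that all such values are regular and the transversality $f^{-1}(\epsilon(t)) \pitchfork \{h_A = \delta\}$ holds). Symplectic parallel transport for the map $f$ over this path — or, more simply, the observation that the negative gradient flow of $h$ (equivalently $h_A$) still points strictly inward along $\{h_A = \delta\}$ for every fibre $f^{-1}(\epsilon(t))$ — gives a smooth family of Liouville domains $M_f^{\epsilon(t)}$, hence an exact symplectomorphism. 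Next, fixing $A$, I would vary $\delta$: for $\delta_0 < \delta_1 < \delta_{f,A}$, the region $f^{-1}(\epsilon) \cap \{\delta_0 \le h_A \le \delta_1\}$ is (for $\epsilon$ small) a collar on which the Liouville flow points inward, so the larger completed Milnor fibre deformation-retracts onto the smaller one along the Liouville flow; since both are completed by attaching the \emph{same} cylindrical end, they are exactly symplectomorphic.

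Finally, and this is where I expect the main difficulty, I would vary $A$. Given $A_0, A_1 \in GL_{n+1}(\C)$, connect them by a path $A_t$ in $GL_{n+1}(\C)$ (which is connected). For each $t$ the function $h_{A_t}(\mathbf{x}) = \|A_t \mathbf{x}\|^2$ is real algebraic, vanishes only at $0$, and its negative gradient flow on $\C^{n+1}$ points strictly inward along every level set, so the Curve Selection Lemma argument sketched in the text applies uniformly: there is $\delta_\ast > 0$ and, for each $\delta < \delta_\ast$, an $\epsilon_\ast(\delta) > 0$ such that for all $t$ the slice $f^{-1}(\epsilon) \cap \{h_{A_t} \le \delta\}$ is a Liouville domain. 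The subtlety is that $\delta_{f, A_t}$ and the permissible $\epsilon$ vary with $t$, so one must extract these bounds uniformly on the compact interval $[0,1]$; this is the genuine obstacle, and it is handled exactly as the excerpt indicates — by noting that all the intermediate functions $h_{A_t}$ (and more generally any real algebraic $\tilde h$ with $\tilde h^{-1}(0) = \{0\}$) have the same qualitative properties, and that the relevant transversality and inward-pointing conditions are open, hence persist on a neighbourhood in $t$; compactness of $[0,1]$ then yields uniform $\delta$ and $\epsilon$. With a uniform choice in hand, $\{M_f^{A_t}\}_{t \in [0,1]}$ is a smooth family of Liouville domains, and the Moser argument on their completions delivers the exact symplectomorphism $M_f^{A_0} \cong M_f^{A_1}$. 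Combining the three stages gives the lemma.
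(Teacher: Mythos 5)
Your proposal is correct and follows essentially the same strategy as the paper: connect any two admissible choices by a path of admissible data (using connectedness of $GL_{n+1}(\C)$ and openness-plus-compactness to keep $\delta$ and $\epsilon$ admissible along the path) and conclude by deformation invariance of the completed Liouville structure. The only cosmetic difference is that where you invoke a Moser-type argument for a smooth family of Liouville domains, the paper packages the same conclusion as symplectic parallel transport in an auxiliary Lefschetz fibration over $\C \times GL_{n+1}(\C)$ via Lemma \ref{lem:symplectic_parallel}, and it outsources the $\delta$- and $\epsilon$-independence to \cite{Keating13}.
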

 
 \begin{proof}
 One can use symplectic parallel transport arguments, with varying choices of total spaces and bases, appealing to Lemma \ref{lem:symplectic_parallel}.
% The key technical tool in all of these arguments is Lemma 9.4 of \cite{Keating13}.  
 To understand independence of $\delta$ and $\e$ in that framework, see  \cite[Section 9]{Keating13}, where it is addressed explicitly.   Let's look at independence of $A$. Consider the map: 
    \begin{eqnarray}
\pi: \C^{n+1} \times GL_{n+1}(\C) & \to & \C \times GL_{n+1}(\C)  \\
 ( \mathbf{x} , A ) & \mapsto  & (f (\mathbf{x}), A)
\end{eqnarray}
where $GL_{n+1} (\C) \subset \C^{(n+1)^2}$ inherits the Kaehler structure. Fix a smooth path of matrices $A(t) \in GL_{n+1}(\C)$, for $t \in [0,1]$. Now choose a smooth family $\delta(t)$ with $0 < \delta(t) < \delta_{f, A(t)}$. We can find a smooth family of sufficiently small $\e(t)$ such that 
\bq
f^{-1} (\e(t)) \pitchfork \{ h_{A(t)} (\mathbf{x}) = \delta(t) \}
\eq
and for each $t$, the space 
\bq
f^{-1} (\e(t)) \cap \{ h_{A(t)} (\mathbf{x}) \leq \delta(t) \}
\eq
is a copy of the Milnor fibre of $f$. 
Consider the path $\gamma(t) = (\e(t), A(t))$ in the base. 
Choose an open neighbourhood $U$ of that path, such that all the fibres above points of $U$ are smooth. 
We already have smooth choices of cutoffs for the fibres above $\gamma(t)$; extend this smoothly to all fibres above $U$. 
Call the resulting total space (that is, the union of truncated fibres) $E$. This is now the total space of a Lefschetz fibration. We now appeal to Lemma \ref{lem:symplectic_parallel}.
%Lemma 9.3 of \cite{Keating13} allows us to modify the symplectic form on $E$ so that on some collar neighbourhood of the boundary, horizontal tangent spaces are invariant under the Liouville flow on each fibre. 
%(Also, restricted to any fibre, the symplectic form in unchanged.) We can  now  use symplectic parallel transport to get an exact symplectomorphism between any of the completed fibres above points of $U$. 
\end{proof}

\begin{lemma}\label{th:Milnorfibreindepreparametrisation}
The completed Milnor fibre $M_f$ does not depend on the choice of holomorphic representative of $f$. More precisely, suppose  $f = g \circ \rho$, some holomorphic change of coordinates $\rho$. Then there is an exact symplectomorphism from $M_f$ to $M_g$. 
\end{lemma}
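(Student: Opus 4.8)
The plan is to reduce the statement to Lemma~\ref{th:Milnorfibreindepchoices} by interpolating the change of coordinates $\rho$ with the identity, exactly as in the proof of that lemma but over a path in the space of biholomorphisms rather than in $GL_{n+1}(\C)$. First I would recall that $\rho$ is a germ of a biholomorphism fixing $0$; after shrinking, we may assume $\rho$ is defined on a neighbourhood of $0$ and $\rho(0)=0$. The key observation is that if $M_f$ is built using the cutoff $h_A$ for $f = g\circ\rho$, then $\rho$ maps $f^{-1}(\e_\delta)\cap\{h_A\le\delta\}$ diffeomorphically onto $g^{-1}(\e_\delta)\cap\{h_A\circ\rho^{-1}\le\delta\}$. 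The function $\tilde h := h_A\circ\rho^{-1}$ is real-analytic, vanishes only at $0$, and is nonnegative near $0$, so by the discussion preceding Lemma~\ref{th:Milnorfibreindepchoices} (linear interpolation between $h$-type cutoffs and any such $\tilde h$, all real-algebraic/analytic with the right transversality properties) the space $g^{-1}(\e_\delta)\cap\{\tilde h\le\delta\}$ is, for $\delta,\e_\delta$ small, a copy of the Milnor fibre of $g$. Thus $\rho$ gives a diffeomorphism between a model for $M_f$ and a model for $M_g$.

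Next I would upgrade this to an exact symplectomorphism. The subtlety is that $\rho$ is not symplectic for the standard Kähler form $\o$ on $\C^{n+1}$. To handle this, I would use a Lefschetz-fibration/parallel-transport argument in the spirit of Lemma~\ref{th:Milnorfibreindepchoices}. Choose a smooth path $\rho_t$, $t\in[0,1]$, of germs of biholomorphisms fixing $0$ with $\rho_0 = \mathrm{id}$ and $\rho_1 = \rho$ (e.g.\ by composing with a contraction $z\mapsto z/s$ and rescaling, using that the space of such germs is contractible, or more concretely interpolating $\rho$ to its linear part $D\rho|_0$ inside $GL_{n+1}(\C)$ and then $D\rho|_0$ to the identity). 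Set $g_t := g\circ\rho_t$, so $g_0 = g$ and $g_1 = f$. Each $g_t$ is a holomorphic representative of the same singularity, so it has well-defined Milnor data; choose smooth families $A(t)$, $\delta(t)$, $\e(t)$ realising the Milnor fibre of $g_t$. Now form the total space $E$ over a neighbourhood $U$ of the path $t\mapsto(\e(t),t)$ in $\C\times[0,1]$ (thickened to be open), consisting of the truncated fibres $g_t^{-1}(w)\cap\{h_{A(t)}\le\delta(t)\}$ for $w$ near $\e(t)$, with the standard $\o$ restricted. This is the total space of a Lefschetz fibration in the sense of the definition, and Lemma~\ref{lem:symplectic_parallel} provides an exact symplectomorphism from the fibre over $(\e(0),0)$ — a model for $M_g$ — to the fibre over $(\e(1),1)$ — a model for $M_f$. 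Combined with Lemma~\ref{th:Milnorfibreindepchoices} to eliminate dependence on the auxiliary choices, this yields the desired exact symplectomorphism $M_f\cong M_g$.

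The main obstacle I expect is not conceptual but bookkeeping: one must check that the construction of $E$ above genuinely satisfies the axioms of a Lefschetz fibration — in particular that $\partial E$ is weakly convex (the negative Liouville flow of $h$ points strictly inward along $\{h_{A(t)}=\delta(t)\}$ for each $t$, as in the symplectic-category discussion, and this persists in the family), that no critical point can appear on the boundary, and that the fibres stay Liouville domains throughout. One must also ensure the family $\rho_t$ can be chosen so that all the $g_t$ are genuine isolated-singularity germs on a common domain and the Milnor numbers do not jump (they cannot, since all $g_t$ are holomorphically equivalent to $g$). Once these routine verifications are in place, Lemmas~\ref{lem:symplectic_parallel} and~\ref{th:Milnorfibreindepchoices} do all the real work. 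An alternative, slightly slicker route would be to observe directly that $\rho^\ast\o$ and $\o$ are both Kähler forms for integrable complex structures compatible with the same fibration $g$, interpolate them by a path of such forms via a Moser-type argument adapted to the Liouville setting, and again invoke Lemma~\ref{lem:symplectic_parallel}; I would keep this in reserve in case the family-of-biholomorphisms argument runs into trouble with the boundary behaviour.
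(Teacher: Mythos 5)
Your proposal is correct and follows essentially the same route as the paper: the paper also factors $\rho$ through its linear part $D\rho|_0$, interpolates (first along a path $A(t)$ in $GL_{n+1}(\C)$, then via $D\rho|_0 + c(\rho - D\rho|_0)$), carries a smoothly varying family of cut-offs along the path, and invokes the symplectic parallel transport of Lemma~\ref{lem:symplectic_parallel}. Your reserve Moser-type argument is not what the paper does, but the main argument matches.
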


\begin{proof} We'll use symplectic parallel transport again, and proceed in two steps. First, assume that $\rho$ is linear: $\rho (\mathbf{x}) = D \rho|_0 (\mathbf{x})$, with $D \rho|_0  \in GL_{n+1}(\C)$. Consider the map:
\begin{eqnarray}
\sigma: \C^{n+1} \times GL_{n+1}& \to & \C \times GL_{n+1}(\C) \\
( \mathbf{x}, A) & \mapsto & ( f(A \mathbf{x} ), A ).
\end{eqnarray}
Pick a path $A(t) \in GL_{n+1}(\C)$, with $t \in [0,1]$, such that $A(0) = Id$, $A(1) =  D \rho|_0$. Suppose that $\delta$ and $\e$ are such that 
\bq
f^{-1} (\e) \pitchfork \{ || \mathbf{x} ||^2 = \delta \}
\eq
and the space
$
f^{-1} (\e) \cap \{ || \mathbf{x} ||^2 \leq \delta \}
$
is a copy of the Milnor fibre of $f$. Then for all $t$, we have that
\bq
(f \circ A(t))^{-1} (\e) \pitchfork \{ || A(t) \mathbf{x} ||^2 = \delta \}
\eq
and, moreover, the space 
\bq
(f \circ A(t))^{-1} (\e) \cap \{ || A(t) \mathbf{x} ||^2 \leq \delta \}
\eq
is a copy of the Milnor fibre of $f \circ A(t)$, which we know to be smoothly isomorphic to a Milnor fibre of $f$. This gives smooth choices of cutoffs along the path $(\e, A(t))$ in the base of $\sigma$. Proceeding similarly to the previous Lemma, we see that the completed Milnor fibres of $f$ and $f \circ D \rho |_0$ are exact symplectomorphic. 

We're left with the case of a general $\rho$. Consider the following map:
\begin{eqnarray}
\tau: \C^{n+1} \times B_2 (0) & \to & \C \times B_2(0) \\
( \mathbf{x}, c) & \mapsto & ( f \circ (D \rho|_0 + c (\rho - D \rho|_0))(\mathbf{x}), c)
\end{eqnarray}
where $B_2(0) \subset \C$ is the disc of radius 2. There exists a (sufficiently small) $\delta>0$ such that
\bq
(f \circ (D \rho|_0 + c (\rho - D \rho|_0)))^{-1} (0) \pitchfork \{ || D \rho|_0 \mathbf{x} ||^2 = \delta \}
\eq
for all $c \in [0,1]$. This allows us find a path between copies of the Milnor fibres of $f \circ \rho$ and $f \circ D \rho|_0$, with smooth choices of cutoffs along those paths. We can then proceed with a symplectic parallel transport construction as before, again appealing to Lemma \ref{lem:symplectic_parallel}.
\end{proof}

As much as possible, we have tried to refer to  the exact manifold with cylindrical ends as the  \emph{completed} Milnor fibre of $f$, and to the manifolds with boundary simply as copies of the Milnor fibre of $f$. (The reader should be aware that there is little practical difference for our purposes -- in particular, we will only be considering exact compact Lagrangians.)
Finally, it will be useful to note the following:

\begin{remark}\label{rk:weightedcutoffs}
Suppose that the function $f$ has a single isolated singularity at $0$, and is weighted homogeneous. Then the completed Milnor fibre $M_f$ is exact symplectomorphic to any of the hypersurfaces $f^{-1}(\epsilon)$, for any $\e \in \C^\ast$, equipped with the standard Kaehler exact symplectic form. This can be done, for instance, by starting with the trivial identification of $f^{-1}(\e) \cap B_\delta$ with itself, and then using the Liouville flows one each of $M_f$ and $f^{-1}(\e)$ to identify the two infinite ends. (This uses weighted homogeneity of $f$, which implies that the negative Liouville flow will take any point in $f^{-1}(\e) \backslash B_\delta$ into $f^{-1}(\e) \cap B_\delta$ in finite time.)
\end{remark}

\subsubsection{Milnor numbers, intersection forms and sufficient jets}

Fix a singularity $f$. A generic small perturbation to a Morse function $\tilde{f}$ is called a \emph{Morsification} of $f$. It has a collection of non-degenerate critical points near $0$. 
The number of these critical points is independent of the Morse perturbation, and finite. 
It is called the \emph{Milnor number} $\mu$ of $f$. 
Moreover, $M_f$ is homotopy-equivalent to a wedge of $\mu$ half-dimensional spheres (\cite{MilnorSingularities}).

One can equip the middle-homology of $M_f$, $H_n(M_f) \cong \Z^{\mu}$, with an \emph{intersection form} (for instance, formally, using cohomology with compact support). When the fibre has real dimension four ($n=2$), the form is symmetric, and for any compact Lagrangian $L$, we have
\bq
L \cdot L = -\chi (L).
\eq
One can obtain a singularity in $n+2$ variables from a singularity in $n+1$ variables by adding a $z_{n+2}^2$ term, a process known as \emph{stabilization}. This changes the intersection form; the resulting sequence of intersection forms has period four \cite[Chapter II, Section 1.7]{Arnold6}.

Milnor numbers have another important application, as follows. Consider the polynomial expansion of a singularity $f$ at the singular point $0$. The $k$-jet of $f$ is called \emph{sufficient} if any two functions with that jet are equivalent (that is, there exist a biholomorphic change of coordinates between them). 

\begin{theorem}\cite{Tougeron, Arnold68}\label{th:tougeron}
The $(\mu+1)$--jet of a function at an isolated critical point with Milnor number $\mu$ is sufficient.
\end{theorem}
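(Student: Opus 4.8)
The plan is to reduce the statement to the algebraic finite-determinacy criterion of Mather and Tougeron and then run the standard homotopy argument. Write $\mathcal{O}$ for the local ring of germs at $0$ of holomorphic functions on $\C^{n+1}$, $\mathfrak{m} \subset \mathcal{O}$ for its maximal ideal, and $J_f = (\partial f/\partial x_0, \ldots, \partial f/\partial x_n) \subset \mathcal{O}$ for the Jacobian ideal of $f$. By Milnor's algebraic description of the Milnor number (see \cite{MilnorSingularities, Arnold6}), the hypothesis that $0$ is an isolated critical point is equivalent to $\dim_{\C} \mathcal{O}/J_f < \infty$, and in that case $\dim_{\C} \mathcal{O}/J_f = \mu$. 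With this in hand the assertion becomes: $f$ is \emph{$(\mu+1)$-determined}, i.e.\ every $g$ having the same $(\mu+1)$-jet as $f$ can be written $g = f \circ \phi$ for some biholomorphic germ $\phi$ fixing $0$.

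First I would record a short commutative-algebra fact: $\mathfrak{m}^{\mu} \subseteq J_f$. The ring $A := \mathcal{O}/J_f$ is Artinian and local, of $\C$-length $\mu$, with maximal ideal $\overline{\mathfrak{m}}$; the descending chain $A \supseteq \overline{\mathfrak{m}} \supseteq \overline{\mathfrak{m}}^{2} \supseteq \cdots$ is strictly decreasing until it becomes $0$, since $\overline{\mathfrak{m}}^{j} = \overline{\mathfrak{m}}^{j+1}$ would force $\overline{\mathfrak{m}}^{j}=0$ by Nakayama. As $\dim_{\C} A = \mu$, the chain reaches $0$ after at most $\mu$ steps, so $\overline{\mathfrak{m}}^{\mu} = 0$, that is $\mathfrak{m}^{\mu} \subseteq J_f$. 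Multiplying by $\mathfrak{m}^{2}$ gives the inclusion I actually want:
\[
\mathfrak{m}^{\mu+2} \subseteq \mathfrak{m}^{2} J_f .
\]

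The second ingredient is the infinitesimal determinacy criterion: \emph{if $\mathfrak{m}^{k+1} \subseteq \mathfrak{m}^{2}J_f$ then $f$ is $k$-determined}; applied with $k = \mu+1$ it finishes the proof. I would prove the criterion by the Moser-type homotopy method. Given $g$ with the same $(\mu+1)$-jet as $f$, set $h := g-f \in \mathfrak{m}^{\mu+2}$ and $f_t := f + th$ for $t \in [0,1]$; the goal is a smooth family $\phi_t$ of biholomorphic germs fixing $0$, with $\phi_0 = \mathrm{id}$ and $f_t \circ \phi_t = f$. Differentiating in $t$ reduces this to finding a time-dependent holomorphic vector-field germ $X_t$ vanishing at $0$ with $df_t(X_t) = -h$, i.e.\ an expression $h = \sum_i a^{(t)}_i\, \partial f_t/\partial x_i$. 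Since $\partial(th)/\partial x_i \in \mathfrak{m}^{\mu+1}$, we have $J_{f_t} + \mathfrak{m}^{\mu+1} = J_f + \mathfrak{m}^{\mu+1}$, so $\mathfrak{m}^{\mu+2} \subseteq \mathfrak{m}^{2} J_f \subseteq \mathfrak{m}^{2} J_{f_t} + \mathfrak{m}^{\mu+3}$, whence $\mathfrak{m}^{\mu+2} \subseteq \mathfrak{m}^{2} J_{f_t}$ by Nakayama, uniformly in $t$. Thus one can solve for $X_t$ with coefficients $a^{(t)}_i \in \mathfrak{m}^{2}$; in particular $X_t$ vanishes to second order at $0$, its flow $\phi_t$ fixes $0$ and is tangent to the identity there, and $\phi := \phi_1$ is the required change of coordinates.

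I expect the only genuine difficulty to be the analytic bookkeeping in this last step. One must arrange the coefficients $a^{(t)}_i$ to depend smoothly (better: holomorphically) on $t$ and to be defined on a neighbourhood of $0$ independent of $t$ — this is where a parametrised version of Nakayama's lemma, or equivalently the coherence of the relevant sheaves of $\mathcal{O}$-modules, enters — and then one must check that the flow of $X_t$ stays inside a fixed neighbourhood of $0$ for all $t \in [0,1]$, which follows from compactness of $[0,1]$ together with the second-order vanishing of $X_t$ at the fixed point. None of this is new: for the complete argument I would simply cite Tougeron \cite{Tougeron} and Arnol'd \cite{Arnold68} (or the standard textbook treatments of finite determinacy). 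One could even extract $\mu$-determinacy from the sharper criterion $\mathfrak{m}^{k+1} \subseteq \mathfrak{m}J_f$, but the inclusion above already suffices for the stated bound and carries the cleaner homotopy.
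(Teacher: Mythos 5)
The paper does not prove this statement; it is quoted directly from Tougeron and Arnol'd. Your sketch is correct and is precisely the standard argument behind those references: Nakayama applied to the Artinian ring $\mathcal{O}/J_f$ of length $\mu$ gives $\mathfrak{m}^{\mu}\subseteq J_f$, hence $\mathfrak{m}^{\mu+2}\subseteq \mathfrak{m}^2 J_f$, and the Mather--Tougeron infinitesimal criterion together with the homotopy-method integration of the vector fields $X_t$ yields $(\mu+1)$-determinacy, so there is nothing to add beyond the analytic bookkeeping you already flag.
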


% Tougeron came first, but deals with the real case. If you're looking for a place where the complex case is written down explicitly, try p 27 of the Arnold reference. There's also a related article by M Artin, also 1968 (pub. IHES).

In particular, all of the singularities we consider are equivalent to polynomials.

\subsubsection{Modality}

Suppose you have a Lie group $G$ acting on a manifold $\mathcal{M}$, and $p\in \mathcal{M}$. 

\begin{definition}\cite[Section 1.6]{Arnold6} \label{def:modality}
The modality of $p$ under the action of $G$ is the least integer $m$ such that a sufficiently small neighbourhood of $p$ is covered by a finite number of $m$--parameter families of orbits. 
\end{definition}

The group of biholomorphic coordinate changes $(\C^{n+1},0) \to (\C^{n+1}, 0)$ acts on the space of holomorphic functions $f: (\C^{n+1}, 0) \to (\C, 0)$. This induces an action on the $k$--jet space, for any fixed $k$.
\begin{definition}
The modality of a singularity $f$ is the modality of any of its $k$--jets, for $k \geq \mu(f)+1$.
\end{definition}
Loosely, the reader can think of modality as the ``number of complex parameters'' of the family that the singularity belongs to. For example, a modality one singularity is
\bq
x^4+y^4+ax^2y^2
\eq
where $a$ is a complex parameter such that $a^2\neq 4$, associated to the cross-ratio of four lines.

Modality is unchanged under stabilization; this follows for instance from the discussion at the end of \cite[Chapter I, Section 1.9]{Arnold6}.

\subsubsection{Adjacency and embeddings}\label{sec:adjacency}

Suppose $[f]$ and $[g]$ are isolated hypersurface singularities, described as equivalences classes of germs. We say that $[f]$ is \emph{adjacent} to $[g]$ if there exists an
arbitrarily small perturbation $p$ such that $[f + p] = [g]$. For instance, $[x^m]$ is adjacent to $[x^n]$ for all $m \geq n$. Symplectically, we have that:

\begin{lemma}\cite[Lemma 9.9]{Keating13}\label{th:adjacentembedding}
Suppose that $[f]$ and $[g]$ are isolated hypersurface singularities such that $[f]$ is adjacent to $[g]$. Then there exists an exact symplectic embedding from a non-completed Milnor fibre of $g$ into a completed Milnor fibre of $f$.
\end{lemma}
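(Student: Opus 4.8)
\emph{Proof proposal.} The plan is to realise the adjacency by an explicit small holomorphic perturbation of $f$ and then to read off a Milnor fibre of $g$ directly as a small codimension-zero subdomain of the resulting perturbed Milnor fibre, using symplectic parallel transport to identify that perturbed Milnor fibre with $M_f$.

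First I would fix $\delta < \delta_f$, and use the definition of adjacency to choose a holomorphic perturbation $p$, as small as we like on $\overline{B}_{\sqrt{\delta}}(0)$, such that $f+p$ has a critical point $q_0 \in B_{\sqrt{\delta}}(0)$ whose germ is a holomorphic representative of the singularity $[g]$; adding a constant to $p$ we may assume $(f+p)(q_0)=0$. Next I would fix a small $\delta''>0$ with $\overline{B}_{\sqrt{\delta''}}(q_0) \subset B_{\sqrt{\delta}}(0)$ and with $\delta''$ below the first positive critical value of $\|\,\cdot\, - q_0\|^2$ restricted to $(f+p)^{-1}(0)$ near $q_0$. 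Finally I would choose $\epsilon \neq 0$ small and generic, so that: $(f+p)^{-1}(\epsilon) \cap \overline{B}_{\sqrt{\delta}}(0)$ is a copy of the Milnor fibre of $f$; $(f+p)^{-1}(\epsilon) \cap \overline{B}_{\sqrt{\delta''}}(q_0)$ is a Milnor fibre of the germ of $f+p$ at $q_0$; and $(f+tp)^{-1}(\epsilon)$ is transverse to $S_{\sqrt{\delta}}(0)$ and misses every critical value of $f+tp$ for all $t \in [0,1]$. All of these amount to an upper bound on $|\epsilon|$ together with a genericity condition (the last one failing only on a measure-zero subset of a small disc in $\C$), so they can be met simultaneously.

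Two observations then finish the argument. For the first, choose a neighbourhood of the path $t \mapsto (\epsilon, t)$ in the base of $(\mathbf{x},t) \mapsto (f(\mathbf{x})+tp(\mathbf{x}),t)$ over which all fibres are smooth (possible since $\epsilon$ is a regular value of $f+tp$ for each $t$); truncating by $\overline{B}_{\sqrt{\delta}}(0)$ and attaching conical ends fibrewise, the union of these fibres is a Lefschetz fibration with no critical points, each fibre a Liouville domain, and symplectic parallel transport along the path --- exactly as in the proof of Lemma \ref{th:Milnorfibreindepchoices}, via Lemma \ref{lem:symplectic_parallel} --- identifies the completed fibre over $t=1$ with $M_f$ up to exact symplectomorphism. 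In particular the non-completed fibre $(f+p)^{-1}(\epsilon)\cap\overline{B}_{\sqrt{\delta}}(0)$ embeds exact-symplectically into $M_f$. For the second observation, $(f+p)^{-1}(\epsilon)\cap\overline{B}_{\sqrt{\delta''}}(q_0)$ is a codimension-zero submanifold-with-boundary of $(f+p)^{-1}(\epsilon)\cap\overline{B}_{\sqrt{\delta}}(0)$, and, carrying the restriction of the standard exact Kähler form, it is by construction --- Definition \ref{def:Milnorfibre} applied to the germ of $f+p$ at $q_0$, which represents $[g]$ --- a non-completed Milnor fibre of $g$. Composing this inclusion with the embedding from the first observation yields the claim.

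The conceptual content --- ``$f+p$ contains a $g$-singularity, and a Milnor fibre of $g$ is precisely what one sees near it'' --- is immediate; the work is entirely symplectic bookkeeping, and the one genuinely delicate step is the first observation, namely that the perturbed Milnor fibre is exact-symplectically, and not merely diffeomorphic to, $M_f$. This is an instance of the parallel-transport machinery of Lemmas \ref{lem:symplectic_parallel} and \ref{th:Milnorfibreindepchoices}; the singular fibres of the family $f+tp$ never enter, since one works over the locus where fibres are smooth, exactly as in that proof. If one prefers to identify the subdomain with a Milnor fibre of the fixed representative $g$ rather than of the germ of $f+p$ at $q_0$, one appends a short parallel-transport argument of the kind used to prove Lemma \ref{th:Milnorfibreindepreparametrisation}.
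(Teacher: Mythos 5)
The paper does not prove this lemma in the text --- it is quoted from \cite[Lemma 9.9]{Keating13} --- but your argument is correct and is essentially the one given there and implicit in the surrounding discussion (cf.\ Lemma \ref{th:adjacentembeddingcycles} and the proofs of Lemmas \ref{th:Milnorfibreindepchoices} and \ref{lem:fibre_Morse}): perturb $f$ to create a critical point of type $[g]$, identify the perturbed truncated fibre with $M_f$ by the parallel-transport machinery of Lemma \ref{lem:symplectic_parallel} applied to the family $f+tp$, and observe that a small ball about the new critical point cuts out a non-completed Milnor fibre of $g$ sitting tautologically exactly inside it. Your handling of the two delicate points --- choosing a single $\epsilon$ that is simultaneously regular for all $f+tp$ and small enough for both cut-off radii, and reducing ``Milnor fibre of the germ at $q_0$'' to ``Milnor fibre of $g$'' via Lemma \ref{th:Milnorfibreindepreparametrisation} --- is exactly right.
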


Moreover, suppose $f+p= \widetilde{g}$, some representative $\widetilde{g}$ of the class of $[g]$. A Morsification of $f$ can be obtained by picking a Morsification of the function $\widetilde{g}$ on some neighbourhood of $0$. In particular, it follows that:

\begin{lemma}\label{th:adjacentembeddingcycles}
Suppose that $[f]$ and $[g]$ are singularities such that $[f]$ is adjacent to $[g]$. Then, under the exact symplectic embedding from a non-completed Milnor fibre of $g$ into a completed Milnor fibre of $f$, vanishing cycles for $g$ get mapped to (Hamiltonian displacements of) vanishing cycles for $f$. 
\end{lemma}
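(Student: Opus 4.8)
The plan is to upgrade the construction behind Lemma \ref{th:adjacentembedding} so that it tracks vanishing cycles, not merely the underlying exact symplectic manifold. Recall from the discussion preceding the statement that if $[f]$ is adjacent to $[g]$, then we may pick a small perturbation $p$ with $f + p = \widetilde g$ a representative of $[g]$ on some neighbourhood of $0$, and that a Morsification $\widehat f$ of $f$ can be chosen so that, on a smaller ball $B$ around a point $0'$ near $0$, $\widehat f$ restricts to a Morsification $\widehat g$ of $\widetilde g$. Fix a regular value $a$ of $\widehat f$ (close to the value of $f$ at $0'$, so that it is simultaneously a regular value of $\widehat g$ on $B$), and a distinguished collection of vanishing paths from $a$ to the critical values of $\widehat f$. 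Inside this collection, the critical values coming from the critical points in $B$ form a sub-collection, and we may choose the paths so that the sub-collection is distinguished for $\widehat g|_B$.

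First I would recall, from the proof of Lemma 9.9 in \cite{Keating13}, the precise geometric picture: a copy of the non-completed Milnor fibre $M_g^o$ sits inside a fibre of the Lefschetz fibration associated to $\widehat f$ as (an exact symplectic deformation of) $\widehat g|_B^{-1}(a) \cap B$, and the exact symplectic embedding into $M_f$ is induced by this inclusion of fibres together with symplectic parallel transport, appealing to Lemma \ref{lem:symplectic_parallel}. Next I would observe that a vanishing cycle of $\widehat g|_B$ associated to one of the sub-collection paths is, by construction, literally a vanishing cycle of $\widehat f$ associated to the corresponding path in the larger base: the parallel transport that collapses it is the restriction of the parallel transport for $\widehat f$, because the horizontal distributions agree on $B$ (the symplectic forms agree there, $\widehat f = \widehat g$ on $B$). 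The only discrepancy between ``the vanishing cycle for $g$, pushed into $M_f$'' and ``the vanishing cycle for $f$'' is therefore: (i) the modification of the symplectic form near $\partial E$ used to make parallel transport well-defined on completed fibres, which changes Lagrangians only up to compactly supported Hamiltonian isotopy; and (ii) the choice of vanishing path within its homotopy class rel endpoints, which by the remark after Lemma \ref{lem:symplectic_parallel} changes the vanishing cycle by an exact symplectic isotopy. Assembling these gives that the image of each vanishing cycle for $g$ is a Hamiltonian displacement of a vanishing cycle for $f$, as claimed.

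The main obstacle — and the step requiring the most care — is matching the two parallel-transport constructions precisely enough to conclude agreement up to \emph{Hamiltonian} isotopy rather than merely smooth Lagrangian isotopy. Concretely, one must check that the auxiliary two-form $\Omega'_E$ produced by Lemma \ref{lem:symplectic_parallel} for the fibration of $\widehat f$ can be chosen to restrict, on the sub-region corresponding to $B$, to the one used for $\widehat g|_B$; since that construction modifies $\Omega_E$ only in a collar of $\partial E$ and $B$ is compactly contained in the interior, this compatibility is available, but it should be stated. One also needs the completed fibre $F_a$ for $\widehat f$ to contain $M_g^o$ as an exact symplectic submanifold in a way compatible with both completions — again this is essentially the content of Lemma 9.9 of \cite{Keating13}, so I would cite it and only indicate the compatibility of the cylindrical-end constructions. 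With that in place, the identification of vanishing cycles is then a matter of bookkeeping with parts (a) and (b) of Lemma \ref{lem:symplectic_parallel} and the isotopy-of-paths remark, and the proof is short.
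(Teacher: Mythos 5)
Your proposal is correct and takes essentially the same route as the paper, which derives the lemma directly from the observation that a Morsification of $f$ can be obtained by Morsifying $\widetilde g = f+p$ near the $[g]$-singularity, so that the critical points and vanishing paths for $g$ sit inside those for $f$ (the paper offers no further detail beyond a later remark about enriching the parallel-transport fibrations with a Morsification parameter). Your extra care in matching the auxiliary forms of Lemma \ref{lem:symplectic_parallel} on the sub-region, and in tracking where the Hamiltonian displacement arises, supplies exactly the bookkeeping the paper leaves implicit.
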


In particular, using Liouville flows at both ends, we can find an exact symplectic embedding of any compact subset of the (completed) Milnor fibre of $g$ into the Milnor fibre of $f$.
We shall make use of the following:

\begin{theorem}\cite[Theorem 10.1]{Durfee} \label{th:Durfee}
Any positive modality singularity is adjacent to a modality one singularity.
\end{theorem}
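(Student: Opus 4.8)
The plan is to deduce the statement from Arnol'd's classification of singularities of small modality, using two soft properties of the adjacency relation to reduce it to a finite verification in the classification tables. First, adjacency is transitive: if $[f]$ is adjacent to $[g]$ and $[g]$ is adjacent to $[h]$, then $[f]$ is adjacent to $[h]$, since one may perform the second small perturbation on top of the first. Second, modality does not increase under adjacency: if $[f]$ is adjacent to $[g]$ then $\mathrm{mod}(g)\le \mathrm{mod}(f)$, because an $m$-parameter family of orbits covering a neighbourhood of the jet of $[f]$ in jet space restricts to one covering a neighbourhood of the jet of $[g]$. By Theorem~\ref{th:tougeron} we may fix a sufficiently high jet throughout, so these are assertions about orbits of a finite-dimensional group action.

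Granting this, I would induct on $m=\mathrm{mod}(f)\ge 1$. If $m=1$ there is nothing to do, as $[f]$ is adjacent to itself. If $m\ge 2$ it suffices to produce \emph{some} $[g]$ adjacent to $[f]$ with $1\le \mathrm{mod}(g)\le m-1$, for then the inductive hypothesis applied to $[g]$, together with transitivity, finishes the argument. The theorem thus reduces to two claims: (i) a singularity of modality $\ge 2$ is adjacent to one of modality $1$ or $2$; and (ii) a bimodal singularity is adjacent to a unimodal one. Claim (ii) is the crux, and I would establish it by direct inspection of Arnol'd's list of bimodal singularities (the infinite series together with the fourteen exceptional bimodal families): each comes equipped with an explicit normal form and adjacency diagram, from which one reads off a deformation to a unimodal singularity -- as a rule to a hyperbolic $T_{p,q,r}$ or to one of the exceptional unimodal singularities. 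Claim (i) is needed only for $m\ge 3$; here the locus of modality $\le k$ in jet space is closed, so a generic small deformation of $[f]$ already has modality $<m$, and the structure of Arnol'd's stratification (with adjacency diagrams extended past modality $2$ by Brieskorn and others) ensures that this drop can be arranged to land on the modality-$2$ or modality-$1$ stratum, i.e.\ on a bimodal or a unimodal singularity.

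The main obstacle is claim (ii) -- and, within claim (i), the guarantee that the modality drops onto the unimodal or bimodal stratum rather than overshooting straight to the simple singularities. Semicontinuity alone only forces the modality to decrease under a generic deformation; excluding a jump over the unimodal stratum is exactly what requires the explicit normal forms, not any abstract argument. From the perspective of this paper the same point can be phrased lattice-theoretically: one must locate, among the deformations of a non-simple, non-unimodal singularity, one whose intersection form has the corank and signature that distinguish the unimodal case in Gabrielov's computations -- which again is a consequence of the classification rather than a shortcut around it.
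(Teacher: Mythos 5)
The paper does not prove this statement; it is quoted from Durfee, and the underlying argument (Durfee's, which repackages Arnol'd's classification of the simple singularities) is quite different from yours: one shows directly that every non-simple germ is adjacent to one of the three parabolic singularities $P_8=T_{3,3,3}$, $X_9=T_{4,4,2}$, $J_{10}=T_{6,3,2}$, all unimodal, by a finite case analysis on the corank and the first nonzero jet. If the corank is at least $3$, a small generic perturbation of the cubic part produces a nondegenerate cubic in three variables, i.e.\ $P_8$; if the corank is $2$ and the $3$-jet vanishes, perturbing the $4$-jet to a generic binary quartic gives $X_9$; if the corank is $2$ and the $3$-jet is degenerate, one either lands in $D_k$, $E_6$, $E_7$, $E_8$ or deforms to $J_{10}$; and corank $\le 1$ gives $A_k$. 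No induction on modality, and no classification beyond the simple singularities, is required.

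Your proposal has a genuine gap exactly where you flag the difficulty. Your preliminary reductions (transitivity of adjacency, upper semicontinuity of modality) are fine, but they only prevent the modality from increasing; they provide no mechanism for a \emph{controlled} strict decrease. Indeed a generic small deformation of any germ is Morse, so genericity pushes the modality all the way to $0$ and overshoots the unimodal stratum --- avoiding this is the entire content of the theorem. You propose to supply the missing step by ``inspection of the classification tables,'' but this can only work for claim~(ii), since Arnol'd's classification is complete only through modality $2$; there is no list of normal forms or adjacency diagrams covering all singularities of modality $\ge 3$, so claim~(i) cannot be closed by table lookup, and the induction never gets started for large $m$. The repair is to abandon the induction on modality and argue on the germ itself via its corank and low-order jets, as above: this handles all non-simple singularities at once, which is why Durfee's theorem (and the paper's Corollary~\ref{th:adjacenttoparabolic}) can name the three specific parabolic targets.
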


\subsubsection{Singularities of modality zero and one}  \label{sec:simpleunimodalintro}
\label{sec:unimodalreps}

Modality zero singularities are also known as \emph{simple} singularities. They are also sometimes known as \emph{elliptic} singularities.  Their intersection forms correspond to Dynkin diagrams of the form $A_m$, $D_m$ and $E_6$, $E_7$ and $E_8$. They are characterized by the following property.

\begin{theorem}\cite{Tjurina} For $n = 2 \,(mod \, 4)$, simple singularities have a negative definite intersection form. Moreover, they are the only singularities whose intersection form is definite.

\end{theorem}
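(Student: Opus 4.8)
The plan is to reduce the statement to three variables and then to combine the explicit intersection-form computations produced by the symplectic versions of A'Campo's and Gabrielov's methods (Sections~\ref{sec:ACampo} and~\ref{sec:Gabrielovcyclic}) with the adjacency embeddings of Section~\ref{sec:adjacency}. The reduction is routine: the intersection form depends only on the smooth Milnor fibre (Definition~\ref{def:Milnorfibre}), the sequence of intersection forms is $4$-periodic under stabilization \cite[Ch.~II, \S1.7]{Arnold6}, and modality — hence simplicity — is a stabilization invariant \cite[Ch.~I, \S1.9]{Arnold6}; so for any $n\equiv 2\pmod 4$ both the intersection lattice and the property of being simple agree with those of the two-variable ``root'' of the singularity. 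Thus it suffices to treat $f\colon\C^3\to\C$, where the intersection form is the ordinary symmetric form on $H_2(M_f;\mathbb{Z})$ and $L\cdot L=-\chi(L)$ for compact Lagrangians, and a distinguished collection of vanishing cycles is a $\mathbb{Z}$-basis of $H_2(M_f;\mathbb{Z})$ \cite{MilnorSingularities}.

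For the first assertion I would write each simple singularity in the form $g(x,y)+z^2$, e.g. $A_m=x^{m+1}+y^2+z^2$, $D_m=x^{m-1}+xy^2+z^2$, $E_6=x^3+y^4+z^2$, $E_7=x^3+xy^3+z^2$, $E_8=x^3+y^5+z^2$. Running A'Campo's algorithm on $g$ (Section~\ref{sec:ACampo}) and then the Thom--Sebastiani construction for adding $z^2$ (Section~\ref{sec:Gabrielovcyclic}) produces a distinguished collection of vanishing cycles whose intersection matrix, with our sign conventions, is minus the Cartan matrix of the corresponding Dynkin diagram — this is Gabrielov's computation \cite{Gabrielov1}. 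Since ADE Cartan matrices are positive definite, the intersection form is negative definite.

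For the second assertion, let $[f]$ be non-simple. By Theorem~\ref{th:Durfee} it is adjacent to a modality-one singularity $[g]$. By Lemmas~\ref{th:adjacentembedding} and~\ref{th:adjacentembeddingcycles} there is an exact symplectic embedding $\iota\colon M_g\hookrightarrow M_f$ carrying a distinguished collection of vanishing cycles for $g$ into a distinguished collection for $f$; as the latter is a basis of $H_2$, the sub-collection coming from $g$ is linearly independent, so $\iota_*\colon H_2(M_g)\to H_2(M_f)$ is injective, and being induced by an embedding of manifolds it preserves intersection numbers — i.e. $\iota_*$ is an isometric embedding of lattices. It therefore suffices to check that no modality-one singularity has a definite intersection form. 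By Arnol'd's classification these are the parabolic singularities $T_{3,3,3},T_{2,4,4},T_{2,3,6}$, the hyperbolic family $T_{p,q,r}$ with $\frac1p+\frac1q+\frac1r<1$, and the fourteen exceptional unimodal families, whose intersection forms are computed in \cite{Gabrielov1,Gabrielov2} (and, for the $T_{p,q,r}$, recovered by Proposition~\ref{th:Tpqr}): the parabolic ones are negative semi-definite with nonzero radical, and the rest are indefinite, so none is negative definite; and since each contains a vanishing cycle of self-intersection $-2$, none is positive definite either. Transporting a nonzero vector of non-negative square, resp.\ a vanishing cycle, along the isometric $\iota_*$ then shows that $H_2(M_f)$ is neither negative nor positive definite.

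I expect the main work to sit in the explicit intersection-form computations — verifying that the ADE forms are exactly minus the Cartan matrices, and running through the finite list of modality-one forms to confirm none is definite. The only step that is not purely formal is that adjacency carries a distinguished basis of $g$ to \emph{part of} a distinguished basis of $f$; this is what delivers injectivity of $\iota_*$ and is implicit in the discussion preceding Lemma~\ref{th:adjacentembeddingcycles}, but it deserves to be spelled out, since everything else in the reduction is bookkeeping with the $4$-periodicity of stabilization.
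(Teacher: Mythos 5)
The paper does not prove this statement; it is quoted from \cite{Tjurina}, and the surrounding sections merely assemble the ingredients (Gabrielov's Dynkin diagrams, Durfee's adjacency theorem, Lemmas \ref{th:adjacentembedding} and \ref{th:adjacentembeddingcycles}) that your argument also uses. Your proof follows the standard route, and its two substantive halves are sound: the ADE diagrams give a form equivalent to minus a Cartan matrix, hence negative definite; and a non-simple singularity is adjacent to a unimodal one, whose form is semi-definite or indefinite, with adjacency inducing an isometric embedding of the sublattice spanned by (part of) a distinguished basis, so definiteness is obstructed. The observation that no form is ever positive definite because every vanishing cycle has square $-2$ is also correct and in fact disposes of the "positive definite" case for all singularities at once.

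The one step that fails as written is the opening reduction "it suffices to treat $f\colon\C^3\to\C$". You justify it by saying the intersection lattice and simplicity agree with those of the two-variable "root" of the singularity, but a general singularity of $n+1$ variables has no such root: by the splitting lemma only singularities of corank at most $2$ (resp.\ $3$) are stabilizations of two- (resp.\ three-) variable ones, and for $n\equiv 2 \pmod 4$ with $n>2$ there are non-simple singularities of large corank (e.g.\ $x_0^3+\cdots+x_n^3$) that are not stabilizations of anything in three variables. The reduction is legitimate for the forward implication, since every simple singularity \emph{is} a stabilization of an ADE curve singularity. For the converse you should bypass it: apply Durfee's theorem directly in $n+1$ variables to obtain adjacency to a unimodal singularity $g$ of $n+1$ variables; by Arnol'd's classification as quoted in Section \ref{sec:unimodalreps}, $g$ is a stabilization of a three-variable unimodal singularity, so the $4$-periodicity of stabilization identifies its intersection form (for $n\equiv 2\pmod 4$) with the degenerate or indefinite three-variable one; then run your embedding argument unchanged. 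With that rerouting, and with the promised justification that a distinguished basis for $g$ sits inside one for $f$, the proof is complete.
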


 Modality one singularities are also known as \emph{unimodal}. They are the ones we shall be primarily concerned with. They are classified into three families \cite[Chapter I, Section 2.3]{Arnold6}. Three-variable representatives are as follows.
 
 \begin{itemize}
\item Three parabolic singularities:

\vspace{3pt}
\begin{tabular}{c}
 \qquad 
$\left\{\begin{array}{l}
 T_{3,3,3}: \quad x^3+y^3+z^3+axyz  \qquad \quad  \hspace{7pc} a^3+27 \neq 0 \\
 T_{4,4,2}: \quad  x^4+y^4+z^2+axyz \qquad \quad a \in \C \text{ such that }\,\,\, a^2-9 \neq 0 \\
T_{6,3,2}: \quad   x^6+y^3+z^2+axyz \qquad \quad\hspace{7pc}a^6-432 \neq 0 
  \end{array}\right. $
\end{tabular}
\vspace{3pt}
\item The hyperbolic series $T_{p,q,r}$:
$$x^p+y^q+z^r+axyz$$
 where $a \in \C^\ast$, and  $p,q,r \in \mathbb{N}$ are such that $1/p+1/q+1/r <1$. 
 
 \vspace{3pt}
\item 14 `exceptional' singularities. These are the objects, for instance, of strange or Arnol'd duality. We shall not consider them further here.
\end{itemize}
Unimodal singularities of more variables are stabilizations of the above; unimodal singularities of one or two variables are the singularities which stabilize to one of the above.

 The singularity  $T_{p,q,r}$ has Milnor number $p+q+r-1$ \cite[Theorem 2]{Gabrielov2}.

\begin{lemma}\label{th:indepofa}
The Milnor fibre of each $T_{p,q,r}$ is independent of $a$.
\end{lemma}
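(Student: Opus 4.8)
The plan is to treat $a$ as a parameter and run a symplectic parallel transport argument in the total space of the resulting family, in the spirit of the proofs of Lemmas \ref{th:Milnorfibreindepchoices} and \ref{th:Milnorfibreindepreparametrisation}. Note that one cannot simply invoke Lemma \ref{th:Milnorfibreindepreparametrisation}: for distinct allowed values of $a$ the germs $x^p+y^q+z^r+axyz$ are in general \emph{not} biholomorphically equivalent --- this is exactly what modality one means --- so the content of the statement is that the Liouville domain does not see this modulus. (In the parabolic cases $1/p+1/q+1/r = 1$ one could alternatively exploit weighted homogeneity via Remark \ref{rk:weightedcutoffs}, but the family argument treats all cases at once.)

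Write $f_a(x,y,z) = x^p+y^q+z^r+axyz$, and let $U \subset \C$ denote the set of allowed parameters: the complement of a finite set, intersected with $\C^\ast$ when $1/p+1/q+1/r < 1$. For $a \in U$ the function $f_a$ has an isolated hypersurface singularity at the origin, and $U$ is path-connected (the complement of finitely many points in $\C$, or in $\C^\ast$, is path-connected). So, given allowed values $a_0$ and $a_1$, fix a smooth path $a : [0,1] \to U$ from $a_0$ to $a_1$; it suffices to produce an exact symplectomorphism $M_{f_{a_0}} \to M_{f_{a_1}}$. Consider the map
\[
\Pi : \C^3 \times U \longrightarrow \C \times U, \qquad (x,y,z,a) \longmapsto (f_a(x,y,z),\, a),
\]
with $\C \times U \subset \C^2$ carrying the standard Kaehler structure. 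For each $t$ choose, smoothly in $t$, a small $\delta(t) > 0$ and then a small $\e(t) \neq 0$ such that
\[
f_{a(t)}^{-1}(\e(t)) \cap \{\, |x|^2 + |y|^2 + |z|^2 \leq \delta(t) \,\}
\]
is a copy of the Milnor fibre of $f_{a(t)}$ with the restricted exact Kaehler form.

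Set $\gamma(t) = (\e(t), a(t))$, a path in $\C \times U$. Since $\e(t) \neq 0$ and, for $a \in U$, the origin is the only critical point of $f_a$ in a fixed small ball, there is an open neighbourhood $V$ of the image of $\gamma$ over which every fibre of $\Pi$ --- truncated by a small ball, with cutoff data extended smoothly from its values along $\gamma$ --- is smooth with contact-type boundary; the resulting total space $E$ over $V$ is a Lefschetz fibration (with no critical points over $V$) whose fibres $F_t$ are the completed Milnor fibres $M_{f_{a(t)}}$. Applying Lemma \ref{lem:symplectic_parallel} to $\gamma$ yields an exact symplectomorphism $M_{f_{a_0}} = F_{\gamma(0)} \to F_{\gamma(1)} = M_{f_{a_1}}$. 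Combined with independence of the remaining choices (Lemma \ref{th:Milnorfibreindepchoices}), this gives the lemma.

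The one genuinely delicate point is the same one that already arises in the proofs of Lemmas \ref{th:Milnorfibreindepchoices}--\ref{th:Milnorfibreindepreparametrisation}: arranging the cutoff data $(\delta(t), \e(t))$ to vary smoothly and to fit together into an honest Lefschetz fibration over $V$. Concretely one needs the ``Milnor radius'' $\delta_{f_{a(t)}}$ to stay bounded away from $0$ along the compact path, and the origin to remain the only critical point of $f_{a(t)}$ in a fixed small ball; both follow from the semicontinuity considerations handled in \cite[Section 9]{Keating13}, which I would invoke rather than reprove.
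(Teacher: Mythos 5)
Your strategy --- parallel transport in the family $(x,y,z,a)\mapsto(f_a(x,y,z),a)$, after observing that one cannot simply quote Lemma \ref{th:Milnorfibreindepreparametrisation} because the germs $f_a$ are mutually inequivalent --- is exactly the paper's strategy, and your framing of the problem is accurate. The issue is the step you yourself flag as ``the one genuinely delicate point'' and then defer: producing cutoff hypersurfaces that vary smoothly along the path in $a$, equivalently a uniform lower bound on the Milnor radius and a uniform ball containing no extraneous critical points of $f_{a(t)}$. The reference you invoke, \cite[Section 9]{Keating13}, is used in Lemma \ref{th:Milnorfibreindepchoices} to handle independence of $\delta$ and $\e$ for a \emph{fixed} germ; it does not supply semicontinuity of the Milnor radius across a family of non-equivalent singularities, and in the hyperbolic case $f_a$ genuinely has critical points away from the origin whose positions depend on $a$, so ``the origin is the only critical point in a fixed small ball'' is precisely the assertion that needs an argument rather than one that can be assumed.

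The paper closes this gap with a concrete quasi-homogeneous rescaling rather than a semicontinuity principle: substituting $x\mapsto\xi^{1/p}x$, $y\mapsto\xi^{1/q}y$, $z\mapsto\xi^{1/r}z$ shows that transversality of $\{x^p+y^q+z^r+xyz=0\}$ to a single sphere implies transversality of $\{x^p+y^q+z^r+\xi^{1/p+1/q+1/r-1}xyz=0\}$ to an explicit ellipsoid $|\xi|^{2/p}|x|^2+|\xi|^{2/q}|y|^2+|\xi|^{2/r}|z|^2=r$. Since $a=\xi^{1/p+1/q+1/r-1}$ sweeps out all of $\C^\ast$, this produces, for every allowed $a$, an explicit cutoff of the form $h_A$ with $A$ diagonal and depending smoothly on $a$, which is exactly the input Lemma \ref{lem:symplectic_parallel} needs (the parabolic cases being handled separately via weighted homogeneity). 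So your proof has the right architecture but is missing this computation, or some substitute for it; as written, the deferral to semicontinuity is not justified by the cited source.
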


\begin{proof}
 We want to use a symplectic parallel transport argument, with the map
\begin{eqnarray}
\C^{4} & \to & \C^2 \\
(x,y,z, a) & \mapsto & (x^p + y^q+z^r+axyz, a)
\end{eqnarray}
The singular locus $\mathcal{C} \subset \C^2$ is algebraic, so its complement is connected. In order to use symplectic parallel transport, one just needs to be able to make choices of cutoffs  that vary smoothly over a path in the smooth locus. In the case of the three parabolic singularities, whose standard representative germs (the weighted homogeneous polynomials chosen here) have no singularity apart from the origin, one can readily make such a choice. 

For the other singularities, this follows from the following observation: suppose that for some $r>0$, 
 \begin{equation}
 \{ x^p+y^q+z^r + xyz =0 \} \pitchfork \{ |x|^2 + |y|^2 + |z|^2 =r \}.
 \end{equation}
 Then, for any complex constant $\xi \neq 0$, and choices of $p^{th}$, $q^{th}$ and $r^{th}$ roots for $\xi$, we have
  \begin{equation}
 \{ x^p+y^q+z^r +\xi^{\frac{1}{p} + \frac{1}{q} + \frac{1}{r} -1} xyz =0 \} \pitchfork \{ |\xi|^{\frac{2}{p}}|x|^2 + |\xi|^{\frac{2}{q}}|y|^2 + |\xi|^{\frac{2}{r}}|z|^2 =r \}.
 \end{equation}
 This gives choices of cut-off functions $h_A$, for diagonal matrices $A$. Now again appeal to Lemma \ref{lem:symplectic_parallel}.
\end{proof}

There are many alternative standard representatives for these singularities; notably, in the case of the form $T_{p,q,2}$, we have \cite{Arnold73, Arnold75}: 
\begin{eqnarray}
T_{4,4,2}: &    x^4+y^4+z^2+a x^2 y^2 & \quad a^2 \neq 4  \\
 T_{6,3,2}:&  x^6+y^3+z^2+ a x^2 y^2 &  \quad  4a^3+ 27 \neq 0 \\
T_{p,q,2 }:  &  x^p + y^q + z^2 + a x^2 y^2 &  \quad  a \neq 0 \quad  (1/p+1/q < 1/2)
\end{eqnarray}
(The attentive reader might wonder why the numbers of values of $a$ that are excluded are different for the previous presentations; this is due to redundancies for some of the descriptions.)
Using Tougeron's result on sufficient jets (Theorem \ref{th:tougeron}), we are free to add monomials of degree at least $p+q+1$ to these. Thanks to classification results of Arnol'd \cite[Section 14]{Arnold75}, one can actually do even more than that. We shall use the representation
\bq
T_{p,q,2}: (x^{p-2}-y^2) (x^2-\lambda y^{q-2})+ z^2
\eq
for any $\lambda \in \C$ such that the resulting polynomial has an isolated singularity. 

Whenever $p' \geq p$, $q' \geq q$ and $r' \geq r$, the singularity $T_{p',q',r'}$ is adjacent to $T_{p,q,r}$. Moreover, each of the fourteen exceptional singularities is adjacent to a parabolic -- see e.g.~\cite[Chapter I, Section 2.7]{Arnold6}. An immediate consequence of Theorem \ref{th:Durfee} is the following.

\begin{corollary}\label{th:adjacenttoparabolic}
Any positive modality singularity is adjacent to at least one of the parabolic singularities. 
\end{corollary}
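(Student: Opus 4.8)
The plan is to combine Theorem~\ref{th:Durfee} with the classification of unimodal singularities recalled above and the two adjacency facts stated just before the corollary (monotonicity of adjacency within the $T_{p,q,r}$ family, and that each exceptional singularity is adjacent to a parabolic). The first thing to record is that adjacency is transitive: if an arbitrarily small perturbation makes $[f]$ equivalent to $[g]$ (say $(f+p)\circ\phi = g$ as germs, $\phi$ a biholomorphism fixing $0$) and an arbitrarily small perturbation makes $[g]$ equivalent to $[h]$, then $f + p + q\circ\phi^{-1}$, which differs from $f$ by an arbitrarily small perturbation, is equivalent to $h$. One also notes that adjacency is compatible with stabilization (split off the nondegenerate quadratic factor using the splitting lemma), so it is enough to argue with the three-variable representatives listed above.

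Let $f$ be a singularity of positive modality. By Theorem~\ref{th:Durfee}, $f$ is adjacent to some unimodal singularity $g$. By the classification of unimodal singularities, a three-variable representative of $g$ is one of the three parabolics $T_{3,3,3}$, $T_{4,4,2}$, $T_{6,3,2}$, or a hyperbolic $T_{p,q,r}$ with $\tfrac1p+\tfrac1q+\tfrac1r<1$, or one of the fourteen exceptional singularities. In the first case there is nothing to prove. In the third case, $g$ is adjacent to a parabolic by the stated fact, and transitivity of adjacency finishes the argument.

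It remains to handle $g = T_{p,q,r}$ with $\tfrac1p+\tfrac1q+\tfrac1r<1$. Since $T_{p',q',r'}$ is adjacent to $T_{p,q,r}$ whenever $p'\ge p$, $q'\ge q$, $r'\ge r$, and since $T_{p,q,r}$ is symmetric in its indices, it suffices to show that, after relabelling so that $p\le q\le r$, the triple $(p,q,r)$ dominates one of the parabolic triples $(3,3,3)$, $(2,4,4)$, $(2,3,6)$ entrywise. If $p\ge 3$ then $q,r\ge 3$ and $(3,3,3)$ is dominated. If $p=2$, then $q\ge 3$ (otherwise $\tfrac1p+\tfrac1q+\tfrac1r>1$); the inequality $\tfrac12+\tfrac1q+\tfrac1r<1$ forces $r\ge 7$ when $q=3$, forces $r\ge 5$ when $q=4$, and already gives $q\ge 5$ otherwise, so $(p,q,r)$ dominates $(2,3,6)$, $(2,4,4)$ and $(2,4,4)$ respectively. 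Hence $T_{p,q,r}$ is adjacent to a parabolic, and by transitivity so is $f$.

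The only real content beyond quoting Theorem~\ref{th:Durfee} and the classification is the last paragraph's elementary inequality that every hyperbolic triple dominates a parabolic one, together with the routine observations that adjacency is transitive and survives stabilization; none of this is deep, which is why the statement is a corollary rather than a theorem.
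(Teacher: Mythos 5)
Your proof is correct and follows essentially the same route as the paper, which treats the corollary as an immediate consequence of Theorem~\ref{th:Durfee} together with the two adjacency facts stated just before it (monotonicity of adjacency in $(p,q,r)$ and the adjacency of each exceptional singularity to a parabolic). You have merely made explicit the transitivity of adjacency and the elementary check that every hyperbolic triple dominates a parabolic one, both of which the paper leaves implicit.
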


For the intersection forms of parabolic and hyperbolic singularities, see Section \ref{sec:GabrielovDynkin}.

\subsection{Some Picard-Lefschetz theory}\label{sec:PicardLefschetz}

References for this section are the survey book  of Arnol'd and collaborators \cite{Arnold6} for classical material (in the smooth category), and Seidel's book \cite{Seidel08} for a detailed exposition in the symplectic category. We use a small variation on his framework.

%\emph{Technical note:} All the symplectic parallel transport arguments posited here can be obtained, for instance, by adapting constructions used in \cite[Section 9]{Keating13} and in the previous section. We shall not provide details here. 

Fix a singularity $f$, and a Morsification $\tilde{f}$.  Unless otherwise stated, Morsifications are assumed to have distinct critical values.
  Pick a regular value $\e$ of $\tilde{f}$.  

\begin{lemma}\label{lem:fibre_Morse}
Let $\delta$ and $\e_\delta$ be as in Definition \ref{def:Milnorfibre}. Then provided the perturbation is sufficiently small,   $\tilde{f}^{-1}(\e_\delta)$ intersects $S_{\sqrt{\delta}}(0)$ transversely, $\tilde{f}^{-1}(\e) \cap \overline{B}_{\sqrt{\delta}}(0)$ is a Liouville domain, and moreover, after attaching conical ends, this is exact symplectomorphic to the Milnor fibre of $f$. Call this space $F_{\e_\delta}$.
\end{lemma}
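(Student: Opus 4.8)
## Proof Proposal for Lemma \ref{lem:fibre_Morse}

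The plan is to run the same symplectic parallel transport machinery used in the proofs of Lemmas \ref{th:Milnorfibreindepchoices} and \ref{th:Milnorfibreindepreparametrisation}, now applied to the path in parameter space that interpolates between $f$ and its Morsification $\tilde f$. First I would set up the relevant fibration: consider the family $F_s := (1-s) f + s\, \tilde f$ for $s\in[0,1]$ (or, more safely, $f + s\, p$ where $\tilde f = f+p$ and $p$ is the chosen small perturbation), and the associated map
\[
\pi: \C^{n+1}\times [0,1] \to \C\times[0,1], \qquad (\mathbf{x},s)\mapsto (F_s(\mathbf{x}), s).
\]
The key quantitative input is that, because the perturbation is small, the transversality statement $f^{-1}(\e_\delta)\pitchfork S_{\sqrt\delta}(0)$ from Definition \ref{def:Milnorfibre} is an open condition: for $\|p\|$ sufficiently small, $F_s^{-1}(\e_\delta)\pitchfork S_{\sqrt\delta}(0)$ for every $s\in[0,1]$, and moreover the negative gradient flow of $h$ still points strictly inward along $F_s^{-1}(\e_\delta)\cap S_{\sqrt\delta}(0)$ — this is exactly the local-to-global argument of the ``smooth category'' and ``symplectic category'' paragraphs preceding Definition \ref{def:Milnorfibre}. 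Hence each $F_{\e_\delta}^{(s)} := F_s^{-1}(\e_\delta)\cap \overline B_{\sqrt\delta}(0)$ is a Liouville domain, giving a smoothly varying choice of cut-offs along the constant path $s\mapsto (\e_\delta, s)$ in the base of $\pi$.

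Next, I would observe that the critical values of $F_s$ depend continuously on $s$ and, for $s>0$ (or more precisely for $\tilde f$ a genuine Morsification), are finite in number and distinct; by choosing $\e$ (and the small $\e_\delta$) to avoid the resulting discriminant, the truncated total space $E = \bigcup_s F_{\e_\delta}^{(s)}$ becomes the total space of a Lefschetz fibration over a neighbourhood of $\{\e_\delta\}\times[0,1]$ in the sense of the definition in Section \ref{sec:singularity}. I would need the standard checks here — weak convexity of $\partial E$, integrability of $J$ near the (finitely many) critical points, nondegeneracy of the complex Hessians — all of which are inherited from the standard Kähler structure on $\C^{n+1}$ exactly as in the ambient discussion. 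Then Lemma \ref{lem:symplectic_parallel} applies directly to the path $s\mapsto (\e_\delta, s)$: parallel transport yields an exact symplectomorphism from the completed fibre over $s=0$, which is $M_f$ by Definition \ref{def:Milnorfibre} and Lemma \ref{th:Milnorfibreindepchoices}, to the completed fibre over $s=1$, which is by construction the completion $F_{\e_\delta}$ of $\tilde f^{-1}(\e_\delta)\cap\overline B_{\sqrt\delta}(0)$. Part (b) of Lemma \ref{lem:symplectic_parallel} gives exactness of this symplectomorphism.

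The main obstacle — and the only point requiring genuine care — is making the three ``sufficiently small perturbation'' conditions mutually compatible: one must first fix $\delta<\delta_f$, then choose $\|p\|$ small enough that transversality with $S_{\sqrt\delta}(0)$ and inward-pointing of the Liouville flow persist along the whole family and that no critical value of any $F_s$ collides with the chosen $\e_\delta$, and only then shrink $\e_\delta$ further if necessary. This is a finite nesting of open conditions, so such choices exist, but the order of quantifiers matters and should be spelled out; once it is, the argument is a routine instance of the parallel transport technique already established. One should also remark that the statement is robust under the usual freedom in $\delta$ and $\e_\delta$, since independence of those choices is precisely Lemma \ref{th:Milnorfibreindepchoices}.
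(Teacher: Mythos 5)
Your proposal is correct and follows essentially the same route as the paper: the paper's proof cites Siersma for the smooth statement and then invokes symplectic parallel transport for the fibration $(\mathbf{x},\gamma)\mapsto\big(f(\mathbf{x})+\gamma(\tilde f(\mathbf{x})-f(\mathbf{x})),\gamma\big)$, which is exactly your interpolating family $F_s$. Your additional care about the order of quantifiers for $\delta$, $\e_\delta$ and the perturbation size, and about the persistence of transversality and of the inward-pointing Liouville flow along the family, fills in details the paper leaves implicit but does not change the argument.
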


\begin{proof}
Smoothly, this was shown by Siersma -- see e.g.~the account in \cite[Lemma 3.3]{Dimca}. For a symplectic argument, you could for instance  use symplectic parallel transport with a piece of the fibration $\C^{n+2} \to \C^2$ given by $(\mathbf{x}, \gamma) \mapsto \big( f(\mathbf{x})+\gamma \big(\tilde{f}(\mathbf{x})- f(\mathbf{x})  \big) , \gamma \big) $.
  \end{proof}

    \begin{definition}  Fix a regular value $\e$ of the Lefschetz fibration $\tilde{f}$.
A distinguished collection of vanishing paths is a cyclically ordered family of  vanishing paths $\gamma_i$ between $\e$ and the singular values $x_s$, with $i=1, \ldots, \mu(f)$, such that:

\begin{itemize}

\item  They only intersect at $\e$.

\item Their starting directions $\R_+ \cdot \gamma_i'(0)$  are distinct.

\item They are cyclically ordered by clockwise exiting angle at $\e$.

\end{itemize}

\end{definition}

\begin{remark} This differs slightly from tradition: it is more common to use an absolute ordering compatible with the one described above. This is necessary if the smooth value  lies on the boundary of a domain. For the mutations we consider in the subsequent section, we only need the cyclic ordering.
\end{remark}

The resulting vanishing cycles give a cyclically ordered, so-called \emph{distinguished} basis for the middle-dimension homology of the Milnor fibre.
    
    \begin{remark}
Vanishing cycles for $f$ are independent of the choices of made for defining the Milnor fibre of $f$, in the following sense: under any of the parallel transports used to construct symplectomorphisms between representatives of the Milnor fibre (Lemmas \ref{th:Milnorfibreindepchoices} and \ref{th:Milnorfibreindepreparametrisation}), vanishing cycles get mapped to vanishing cycles. Moreover, under the embeddings provided by adjacencies (Lemma \ref{th:adjacentembedding}), vanishing cycles also get taken to vanishing cycles. This can be shown in all cases by enriching the Lefschetz fibrations used for the parallel transport argument in the proof of the lemmas with a parameter to introduce the Morsification. In the case of Lemma \ref{th:Milnorfibreindepchoices}, we use the same Morsification for the different cut-off choices; in the case of Lemma \ref{th:Milnorfibreindepreparametrisation}, we reparametrize the Morsification along with $f$. If $f$ is adjacent to $g$, we can use a Morsification of $f$ given by picking small, generic perfurbation of germs $p$ and $g$, where $f = g-p$. 

   % This story behaves well with respect to the parallel transport arguments of Lemmas \ref{th:Milnorfibreindepchoices}, \ref{th:Milnorfibreindepreparametrisation} and \ref{th:adjacentembedding} -- in particular, vanishing cycles get taken to vanishing cycles.
    \end{remark}

   \subsubsection{Dehn twists and mutations}
   
 Fix a  distinguished collection of vanishing paths, say $\gamma_1, \ldots, \gamma_\mu$. 
 Pre-compose $\gamma_i$ with a clockwise loop around $\gamma_{i-1}$. Call the resulting vanishing path $\tau_{i-1} (\gamma_i)$. See Figure \ref{fig:mutation}. 
  \begin{figure}[htb]
\begin{center}
\includegraphics[scale=0.9]{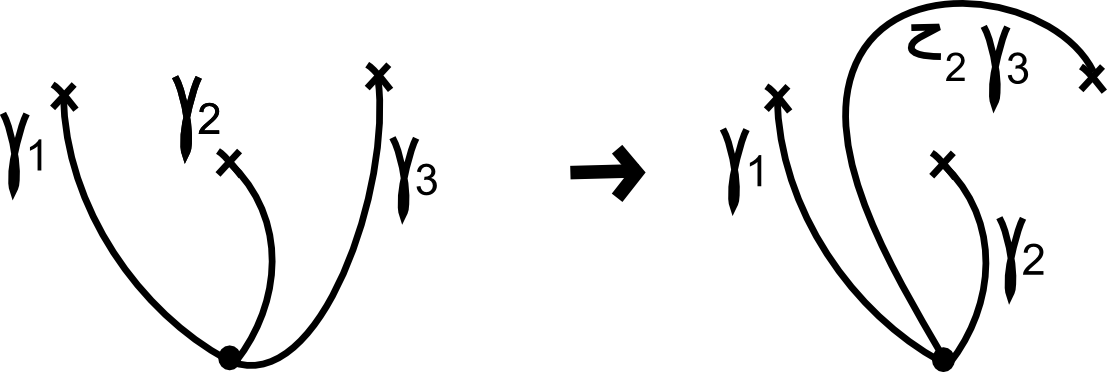}
\caption{ A mutation of vanishing paths
}
\label{fig:mutation}
\end{center}
\end{figure}
  The ordered collection of paths 
 \bq
 \gamma_1, \ldots, \gamma_{i-2}, \tau_{i-1}( \gamma_i), \gamma_{i-1}, \gamma_{i+1}, \ldots, \gamma_{\mu}
 \eq
  is a new distinguished collection of vanishing paths. One could also consider the inverse operation: pre-compose $\gamma_{i-1}$ with an anti-clockwise loop around $\gamma_{i}$. We call the resulting vanishing path $\tau^{-1}_i (\gamma_{i-1})$. Similarly, the ordered collection
 \bq
   \gamma_1, \ldots, \gamma_{i-2}, \gamma_{i}, \tau^{-1}_i( \gamma_{i-1}), \gamma_{i+1}, \ldots, \gamma_{\mu}
  \eq
  is a new distinguished collection. (Remember that we only have a cyclic ordering, so you can do this for $\gamma_1$ and $\gamma_\mu$ too. However, if you try using non-consecutive paths, you get intersection points.) These two operations are called \emph{mutations}. They are defined up to isotopy relative to marked points. What is the effect on vanishing cycles?
  
  \begin{theorem}\cite[Chapter 3]{Seidel08}
  Let $L_j$ be the vanishing cycle associated to $
  \gamma_j$. Then the vanishing cycle associated to $\tau_{i-1} (\gamma_i)$, say $L'_{i-1}$, is given by
  \bq
  L'_{i-1} = \tau_{L_{i-1}} (L_i)
  \eq
  where $ \tau_{L_{i-1}}$ is the Dehn twist about $L_{i-1}$.
  \end{theorem}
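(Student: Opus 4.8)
The statement to prove is the Picard--Lefschetz formula for mutations: that the vanishing cycle attached to $\tau_{i-1}(\gamma_i)$ is the Dehn twist $\tau_{L_{i-1}}(L_i)$. Here is how I would approach it.

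\medskip

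The plan is to localise the problem to a neighbourhood of the singular value $x_{s_{i-1}}$ and its vanishing path $\gamma_{i-1}$. First I would recall that the vanishing cycle $L_i$ depends only on the homotopy class (rel endpoints, and avoiding singular values) of the vanishing path, by the remark following Lemma \ref{lem:symplectic_parallel}; hence we are free to replace $\gamma_i$ and $\tau_{i-1}(\gamma_i)$ by convenient representatives. Choose a small disc $D$ around $x_{s_{i-1}}$ containing no other singular value, and arrange that $\tau_{i-1}(\gamma_i)$ agrees with $\gamma_i$ outside $D$, and that inside $D$ it is obtained from $\gamma_i$ by pushing it once around $\partial D$ (the clockwise loop). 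Then the two vanishing cycles $L_i$ and $L'_{i-1}$, viewed in a fixed fibre $F_\e$, differ by the monodromy symplectomorphism of the fibration restricted to the loop $\partial D$ based at the point where $\gamma_i$ enters $D$.

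\medskip

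The key step is therefore to identify that local monodromy. Over $D$, the fibration has exactly one critical point, with non-degenerate complex Hessian (by the definition of a Lefschetz fibration) and with vanishing cycle Hamiltonian isotopic to $L_{i-1}$. The classical symplectic Picard--Lefschetz theorem --- which is exactly \cite[Chapter 3]{Seidel08}, and whose proof uses the standard $A_1$ model $(z_1,\dots,z_n)\mapsto \sum z_j^2$ near the critical point together with the fact that away from the vanishing cycle the monodromy is trivial up to compactly supported Hamiltonian isotopy --- states that the monodromy around $\partial D$ is, up to Hamiltonian isotopy, the Dehn twist $\tau_{L_{i-1}}$ about the vanishing cycle. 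Composing: transport $L_i$ from $\e$ along $\gamma_i$ up to the boundary of $D$, apply the monodromy $\tau_{L_{i-1}}$, and transport back; this yields $L'_{i-1}=\tau_{L_{i-1}}(L_i)$ in $F_\e$, well-defined up to Hamiltonian isotopy. One should also note the orientation bookkeeping: the clockwise loop around $\gamma_{i-1}$ produces a \emph{positive} Dehn twist with our conventions, and the inverse mutation gives $\tau_{L_i}^{-1}$, matching the second displayed collection.

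\medskip

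The main obstacle is not conceptual but a matter of carefully matching conventions and keeping the Hamiltonian-isotopy ambiguities under control: our Lefschetz fibrations have non-compact completed fibres and horizontal distributions not parallel to the boundary, so one must invoke Lemma \ref{lem:symplectic_parallel} to make sense of parallel transport and monodromy on the completed fibres, and check that the standard Picard--Lefschetz computation (carried out near the critical point, where everything is compactly supported) survives the completion. Since all the relevant symplectomorphisms agree with parallel transport up to \emph{compactly supported} Hamiltonian isotopy, and the Dehn twist $\tau_{L_{i-1}}$ is itself compactly supported, this causes no difficulty; the identity $L'_{i-1}=\tau_{L_{i-1}}(L_i)$ holds as an equality of Hamiltonian isotopy classes of exact Lagrangian spheres, which is all that is needed.
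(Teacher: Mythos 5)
The paper does not prove this statement: it is quoted directly from Seidel's book \cite{Seidel08}, and your argument is precisely the standard one underlying that citation --- reduce to the local monodromy around the critical value $x_{s_{i-1}}$ (conjugated by parallel transport along $\gamma_{i-1}$, since the mutation loop encircles the whole path rather than just a small disc about the critical value), identify that monodromy with $\tau_{L_{i-1}}$ via the symplectic Picard--Lefschetz theorem in the local $A_1$ model, and keep the Hamiltonian-isotopy ambiguities compactly supported so the identity survives completion of the fibres. This is correct and consistent with the paper's conventions; the only nitpick is the phrasing suggesting $\gamma_i$ itself enters the disc $D$ around $x_{s_{i-1}}$, which it need not, but this is repaired by the homotopy-invariance you already invoke.
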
  
    
 It turns out that these are the only moves that one needs:
 
 \begin{lemma}\label{th:mutationseq}
Given a Morsification of $f$, and a marked regular value $\e$, one can get from any collection of distinguished vanishing paths to any other through a sequence of mutations.
 \end{lemma}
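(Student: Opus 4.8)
The statement we must prove (Lemma \ref{th:mutationseq}) is a transitivity result: given a fixed Morsification of $f$ with marked regular value $\e$, any two distinguished collections of vanishing paths can be connected by a finite sequence of mutations. My plan is to reduce this to a purely combinatorial-topological statement about ordered families of disjoint embedded arcs from the $\mu$ critical values to $\e$ in the base disc, since the classical theory (the analogue of this statement in the smooth category, see \cite{Arnold6}, and the symplectic treatment in \cite{Seidel08}) already establishes precisely this. First I would fix one reference distinguished collection $\gamma_1,\dots,\gamma_\mu$ and observe that any other distinguished collection $\gamma_1',\dots,\gamma_\mu'$ determines, by reading the cyclic order of exit angles at $\e$, a cyclic ordering of the critical values together with a choice of homotopy class (rel endpoints, in the complement of the other critical values) of arc to each. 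The key point is that mutations act transitively on this set of data.

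The main steps, in order, would be: (1) Recall that the \emph{braid group} (or rather the appropriate surface braid / mapping class group of the punctured disc) acts on the set of distinguished collections of vanishing paths, and that mutations $\tau_i^{\pm 1}$ generate exactly the subgroup realizing all such reorderings and arc-replacements — this is the combinatorial heart, and it is classical (Gusein-Zade, Humphries; see \cite[Chapter 3]{Seidel08} for the symplectic version). (2) Show the action is transitive: given two distinguished collections, first use mutations to match the cyclic order of critical values they induce (a mutation swaps two consecutive entries and twists one, so composing mutations realizes any cyclic permutation), then, with the ordering fixed, use further mutations to correct the homotopy classes of the individual arcs (the difference between two distinguished arcs to the same critical value, with the same position in the ordering, is a product of loops around the other critical values, each of which is effected by a round-trip mutation $\tau_i \tau_{i-1}^{-1}$ or similar). (3) Transport this from the level of paths to the level of the actual geometric vanishing paths: any distinguished collection is isotopic rel marked points to one in ``standard position'', and mutations are defined up to such isotopy, so no information is lost. (4) Finally, note that the symplectic parallel transport maps (Lemma \ref{lem:symplectic_parallel}) are what let us conclude that mutations of paths induce the correct Dehn-twist relations on vanishing cycles, so that the combinatorial statement about paths upgrades to the statement as phrased; but since the lemma is stated purely about vanishing \emph{paths}, step (3) essentially suffices.

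I expect the main obstacle to be step (2), and within it the second half: correcting the homotopy class of an individual arc once the ordering is fixed. Showing that any two embedded arcs from a given critical value to $\e$ that agree as the $i$-th entry of a distinguished collection differ by a product of ``elementary'' loops, and that each such loop is realized by a composition of mutations without disturbing the rest of the collection, requires a careful picture (the relevant moves temporarily create crossings that are then undone). One must be attentive to the fact that we only have a \emph{cyclic} ordering, not an absolute one — as the remark before the lemma notes — which is actually an advantage here since it gives extra flexibility at the ``seam'', but it needs to be handled consistently. The cleanest route is probably to invoke the known generation statement for the relevant mapping class group / braid group action on curve systems directly from \cite[Chapter 3]{Seidel08} rather than re-deriving it, and simply to verify that the hypotheses of that result are met in our setup; the argument then becomes a short citation plus a compatibility check, which is likely what the author intends.
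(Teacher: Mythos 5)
Your proposal is correct and follows essentially the same route as the paper: the paper's proof likewise reduces the statement to the fact that the mapping class group of the disc preserving the marked critical values is the braid group, generated by elementary braids each of which is realized by a mutation (citing Arnol'd et al.\ for the detailed account rather than working out the transitivity by hand as in your step (2)). Your closing suggestion to invoke the known braid-group generation statement directly is exactly what the paper does.
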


\begin{proof}
This follows from the fact that the group of isotopy classes of diffeomorphisms of the disc preserving a collection of marked points is given by the braid group on those points; one can arrange for each elementary braid to correspond to a mutation on the initial distinguished collection of vanishing paths (and this to a composition of mutations for subsequent collections).  For a detailed account, see \cite[Chapter II, Section 1.9]{Arnold6}.
\end{proof}
 
 What if we had chosen a different Morsification? W.l.o.g.~assume you want to compare two Morsifications given by polynomial expressions. Consider the parameter space for the coefficients of these polynomials, a complex vector space $\C^N$. Those deformations which are not Morse belong to  a space of the parameter space $\C^N$ that is itself cut out by finitely many (complex) polynomial equations. In particular, its complement is path connected. Thus one can deform any Morsification to any other through Morsifications.  In particular, for the purpose of studying vanishing cycles, it does not matter which Morsification we pick.

\subsubsection{Distinguished bases for unimodal singularities}\label{sec:GabrielovDynkin}

The intersection forms, calculated by Gabrielov, are given by the Dynkin diagrams of Figure \ref{fig:Tpqr'} \cite{Gabrielov1, Gabrielov2}. The numbered dots represent an ordered basis of vanishing cycles, which give a basis of $\Z^\mu$. Each vanishing cycle is a Lagrangian sphere, and so has self-intersection $-2$. Full lines represent an intersection of $+1$, and double dashed lines an intersection of $-2$. 

\begin{figure}[htb] \label{fig:Tpqr'}
\begin{center}
\includegraphics[scale=0.4]{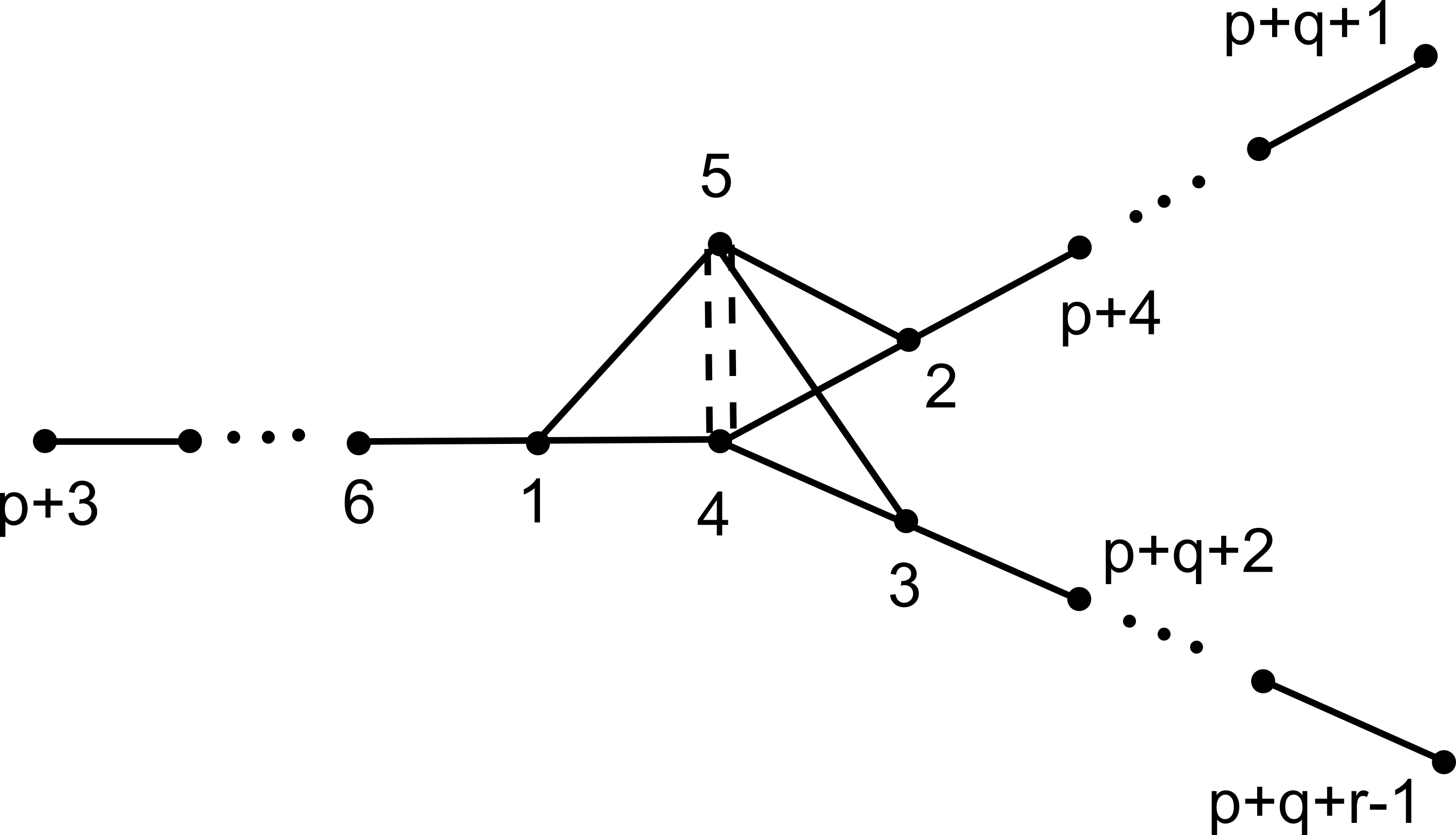}
\caption{ Dynkin diagram for the intersection form of $T_{p,q,r}$
}
\label{fig:Tpqr'}
\end{center}
\end{figure}

The intersection forms of the parabolic singularities in semi-definite, with a rank two null-space. The intersection forms of the hyperbolics is indefinite, with a rank one null-space.

 \subsubsection{Essentially local changes of the symplectic form}\label{sec:local_changes}
 
 First, an observation:
 \begin{claim}
 Let $\pi: \C^{n+1} \to \C$ be a holomorphic complex Morse function, with distinct critical values. Let $\o$ be the usual Kaehler symplectic form on $\C^{n+1}$, and $\o_b$ the one on the base $\C$. Let $c>0$ be any positive constant. Then
 \bq
 \o' = \o + c \pi^{\ast} \o_b
 \eq
 is also a symplectic form. Moreover, restricted to any smooth fibre of $\pi$, it agrees with $\o$.
 \end{claim}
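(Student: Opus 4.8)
The plan is to verify the two assertions separately, both by pointwise linear algebra on each tangent space $T_x\C^{n+1}$. First I would establish that $\omega' = \omega + c\,\pi^\ast\omega_b$ is nondegenerate. At a regular point $x$ of $\pi$ the kernel of $D\pi_x$ is a symplectic subspace $V = \ker(D\pi_x)$ of $(T_x\C^{n+1},\omega)$ — this is exactly the statement that the smooth fibre through $x$ is symplectic, which holds because $\pi$ is pseudoholomorphic and the horizontal/vertical splitting is $\omega$-orthogonal. On $V$, the pullback term $\pi^\ast\omega_b$ vanishes (since $D\pi_x|_V = 0$), so $\omega'|_V = \omega|_V$ is nondegenerate. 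On the horizontal complement $TE^h_x$ (the $\omega$-orthogonal to $V$), $D\pi_x$ is an isomorphism onto $T_{\pi(x)}\C$, so $\pi^\ast\omega_b|_{TE^h_x}$ is a nonzero $2$-form that agrees with a positive multiple of the standard area form pulled back, hence is nondegenerate there; since $\omega|_{TE^h_x}$ is also nondegenerate and compatible (both tame the same complex structure), $\omega|_{TE^h_x} + c\,\pi^\ast\omega_b|_{TE^h_x}$ is again nondegenerate for $c > 0$ because it tames $i$ as well (a sum of two $i$-taming forms tames $i$). Finally, since $\omega'(V, TE^h_x) = \omega(V,TE^h_x) + 0 = 0$, the splitting $T_x\C^{n+1} = V \oplus TE^h_x$ is $\omega'$-orthogonal, and nondegeneracy on each summand gives nondegeneracy of $\omega'$. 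At a critical point $x$ of $\pi$ one has $D\pi_x = 0$, so $\pi^\ast\omega_b = 0$ there and $\omega'_x = \omega_x$ is trivially nondegenerate.

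Next I would check closedness: $d\omega' = d\omega + c\,\pi^\ast(d\omega_b) = 0$ since both $\omega$ and $\omega_b$ are closed (indeed both are Kähler) and $d$ commutes with pullback. Combined with nondegeneracy at every point, this shows $\omega'$ is a symplectic form.

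For the second assertion — that $\omega'$ restricted to any smooth fibre $F = \pi^{-1}(\mathrm{pt})$ agrees with $\omega$ — the point is immediate: for $x \in F$, $T_xF = \ker(D\pi_x) = V$, and as noted above $(\pi^\ast\omega_b)|_{T_xF} = \omega_b(D\pi_x(\cdot), D\pi_x(\cdot)) = 0$ because $D\pi_x$ annihilates $T_xF$. Hence $\omega'|_{T_xF} = \omega|_{T_xF}$.

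The only genuinely non-formal point, and thus the main thing to get right, is the nondegeneracy argument on the horizontal subspace: one must make sure that $\omega + c\,\pi^\ast\omega_b$ remains nondegenerate there for \emph{all} $c > 0$ rather than just small $c$. The clean way, as sketched, is to observe that both summands tame the standard integrable $J = i$ on $\C^{n+1}$ — $\omega$ does by construction, and $\pi^\ast\omega_b$ does because $\pi$ is holomorphic (so $\pi^\ast\omega_b(v, Jv) = \omega_b(D\pi(v), i\,D\pi(v)) \geq 0$, with equality iff $D\pi(v) = 0$, i.e.\ $v \in V$) — and a positive combination of two $J$-taming $2$-forms is $J$-taming, hence nondegenerate, with the taming being strict on $TE^h_x$ since the $\pi^\ast\omega_b$ term is strictly positive there. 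This avoids any smallness hypothesis on $c$ and handles regular and critical points uniformly (at a critical point the $\pi^\ast\omega_b$ term simply drops out).
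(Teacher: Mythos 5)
Your proof is correct and follows essentially the same route as the paper: split $T_x\C^{n+1}$ into the vertical subspace (where $\pi^\ast\omega_b$ vanishes, giving both nondegeneracy there and the fibre-restriction claim) and its two-dimensional $\omega$-orthogonal complement, on which both summands are positive area forms. Your taming-of-$i$ formulation is just a cleaner packaging of the paper's observation that $\omega$ and $\pi^\ast\omega_b$ "give area forms with the same sign" on the horizontal space, and it correctly explains why no smallness assumption on $c$ is needed.
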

 
 \begin{proof}
 We need to check that $\o'$ is non-degenerate at each point $x \in \C^{n+1}$. If $x$ is a critical point of $\pi$, we simply have $\o'_x = \o_x$. Now suppose  $x \in \C^{n+1}$ is distinct from the critical points.
 The two-form $c\pi^{\ast} \o_b$ vanishes on the `vertical' tangent space at $x$, and gives a symplectic form on the `horizontal' tangent space determined by $\o$. In this case, the horizontal tangent space is just two-dimensional, and $\o$ and $\pi^\ast \o_b$ give area forms with the same sign, which implies the claim about $\o'$.
 \end{proof}

 We shall later use the following technical result.
 
 \begin{lemma}\label{th:changeomega}
 Suppose $f: \C^{n+1} \to \C$ is a singularity, $\tilde{f}$ a Morsification of it, and $\e$ a regular value. 
Fix a vanishing path $\gamma_i$
between $F_\e$ and a critical value $x_s$, and any
  compactly supported Hamiltonian isotopy of the Milnor fibre $F_\e$, say  $\phi_t$. Pick any open set $U$ in the base, intersecting $\gamma_i$ and not containing $x_s$. Then we can modify the symplectic form to get a new symplectic form $\omega'$ such that:
  
  \begin{itemize}
  \item $\omega' = \omega + c\tilde{f}^\ast \o_b$ outside a compact set, where $c \geq 0$ is some constant, and $\o_b$ the standard symplectic form on $\C$. 
  \item $\omega'|_{\tilde{f}^{-1}(a)} = \omega|_{\tilde{f}^{-1}(a)}$ for all regular values $a$.
  \item The parallel transport on $\gamma_i$ from $\e$ to any point past $U$ (using $\omega'$)  is obtained by pre-composing the parallel transport for $\omega$ with $\phi_1$.
  
  \end{itemize}

 \end{lemma}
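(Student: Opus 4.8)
The plan is to realise the prescribed isotopy $\phi_1$ by ``tilting'' the symplectic connection of $\tilde f$ in a thin neighbourhood of a sub-arc of $\gamma_i$ lying over $U$, and to keep the total two-form symplectic by a Thurston-type argument, enlarging the base as in the preceding Claim.

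\emph{Set-up.} First write $\phi_t$ as the time-$t$ flow of a time-dependent Hamiltonian $H_t$ on $F_\e$, compactly supported in the interior (possible since $\phi_t$ is a compactly supported Hamiltonian isotopy). Choose a small embedded bidisc $D\subset U$, with coordinates $(s,t)$, meeting $\gamma_i$ in exactly one arc $\{s=0\}$, traversed from $t=0$ to $t=1$, with $\overline D$ avoiding $x_s$ and all critical values; after reparametrising $\gamma_i$ we may take the Hamiltonian time and the path parameter to agree on this arc. Using $\omega$-parallel transport along $\gamma_i$ (legitimate on the compact support of $H_t$, or on completed fibres by Lemma \ref{lem:symplectic_parallel}) spread $H_t$ to a function $\hat H$ on $\pi^{-1}(\{s=0\})$, and extend $\hat H$ to $\pi^{-1}(D)$ by declaring it to be $s$-independent in the trivialisation of $\pi$ over $D$ obtained by $\omega$-parallel transport in the $s$-direction. (Transporting $H_t$ forward from $F_\e$ conjugates it by the transport up to the arc, which is exactly what is needed for the net insertion to be precomposition by $\phi_1$ rather than by a conjugate of it.)

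\emph{The new form.} Let $\rho\colon E\to[0,1]$ equal $1$ near $\{s=0\}$ and be supported in $\pi^{-1}(D)$, and set
\[
\eta=\omega+d\bigl(\rho\,\hat H\,\pi^\ast dt\bigr),\qquad \omega'=\eta+K\,\tilde f^\ast\omega_b
\]
for a constant $K\geq 0$. Then $\omega'$ is closed, and since $\pi^\ast dt$ and $\tilde f^\ast\omega_b$ annihilate vertical vectors, both $\eta$ and $\omega'$ restrict to $\omega$ on every fibre; this gives the middle bullet, and also the first bullet because $\eta=\omega$ outside the compact set $\pi^{-1}(\overline D)$. For nondegeneracy: outside $\pi^{-1}(\overline D)$ we have $\omega'=\omega+K\tilde f^\ast\omega_b$, which is symplectic for all $K$ by the preceding Claim; on $\pi^{-1}(\overline D)$, $\eta$ is a closed form restricting to a symplectic form on each fibre, so the Thurston trick (of which the preceding Claim is the baby case) makes $\omega'$ symplectic once $K$ is large enough. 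Since $\rho$ is supported away from the critical points, the compatible complex structure and the integrability near them are unaffected, so $\tilde f$ is still a Lefschetz fibration for $\omega'$.

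\emph{Effect on parallel transport, and the main point.} Adding $K\tilde f^\ast\omega_b$ does not change $(\ker D\pi)^{\perp}$, so the $\omega'$-connection agrees with the $\omega$-connection away from $\pi^{-1}(D)$; it remains to analyse $\pi^{-1}(D)$. In the trivialisation $\pi^{-1}(D)\cong D\times F$ adapted to $\gamma_i$, near $\{s=0\}$ one has $\eta=\omega_F+d_{F}H_{t}\wedge dt$ and hence $\omega'=\omega_F+K\,ds\wedge dt+d_{F}H_{t}\wedge dt$; a direct computation then shows that the $\omega'$-horizontal lift of $\partial_t$ is $\partial_t+X_{H_t}$ and that of $\partial_s$ is $\partial_s$. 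Flowing along the arc from $t=0$ to $t=1$ therefore contributes exactly $\phi_1$, while the $s$-direction contributes nothing; composing with the unchanged transport before and after $D$ yields the third bullet. The only genuinely delicate point is the nondegeneracy estimate: the term $d(\rho\hat H\,\pi^\ast dt)=d(\rho\hat H)\wedge\pi^\ast dt$ has a vertical-to-horizontal (connection) part and a horizontal-horizontal part, and one must check that on the compact set $\pi^{-1}(\overline D)$ a large $K$ lets $\tilde f^\ast\omega_b$ dominate the latter while $\omega_F$ controls the vertical directions — this is precisely where compact support of $\phi_t$ enters, and everything else (existence of $H_t$, the local trivialisation away from critical points, the horizontal-lift computation) is routine.
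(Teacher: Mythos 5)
Your construction is correct and is essentially the paper's own proof (which follows Lemma 15.3 of \cite{Seidel08}): both insert a term of the form $d(\beta H(t)\,dt)$ supported over a small disc inside $U$ and then add a large multiple of $\tilde f^\ast\omega_b$ to restore nondegeneracy via the Thurston trick. Your write-up simply supplies more of the details that the paper leaves implicit, namely the conjugation of $H_t$ by the transport up to the arc (so that the net effect is pre-composition by $\phi_1$ itself) and the explicit horizontal-lift computation.
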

 
 \begin{proof} 
 
 This is essentially Lemma 15.3 of \cite{Seidel08}.
 Assume without loss of generality that $U$ is a disc centered on a point of $\gamma_i$, and does not contain a singular value. 
  
 Let $U_{1/2}$ be a disc with same centre, and half radius. 
  Let $H(t): F_\e \to \R$ be a smooth family of functions whose Hamiltonian flow is $\phi_t$. We assume that it has compact support. Using a suitable chosen smooth family of paths (including $\gamma_i$), this gives a Hamiltonian function on each fibre above $U$. Let $t$ be the coordinate along $\gamma_i$, smoothly extended to $U$. Pick a bump function $\beta$ on $U_{1/2}$ whose integral along $\gamma_i$ is one. Let $\omega_b$ be the symplectic form on the base. For a sufficiently large constant $k$,   
   \bq
   \omega + d(\beta H(t) \cdot dt) + k \tilde{f}^{\ast} \omega_b
    \eq 
    is a suitable symplectic form  on $\tilde{f}^{-1}(U_{1/2})$. 
 \end{proof} 
 
 The following is then immediate.
 
 \begin{corollary}\label{th:movevcycle}
 Changing $\omega$ in ways prescribed by the previous lemma, we can replace any vanishing cycle by a Hamiltonian isotopic Lagrangian sphere.
 \end{corollary}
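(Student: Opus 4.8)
The plan is to unwind the definition of the vanishing cycle attached to $\gamma_i$ and to track how it transforms when $\omega$ is replaced by the modified form $\omega'$ furnished by Lemma~\ref{th:changeomega}. Recall the recipe: to build the vanishing cycle $L_i \subset F_\e$ one fixes a small disc $D$ around the critical value $x_s$ on which $\tilde f$ is given by its integrable complex Morse model, takes the standard Lagrangian sphere $V_s$ in the fibre $F_p$ over a boundary point $p \in \partial D$ lying on $\gamma_i$, transports it back along $\gamma_i$, and sets $L_i = (\mathrm{PT}^{\omega}_{\e \to p})^{-1}(V_s)$, where $\mathrm{PT}^{\omega}_{\e \to p}$ is the symplectic parallel transport along $\gamma_i$ for $\omega$ (made sense of on the completed fibres via Lemma~\ref{lem:symplectic_parallel}). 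The new vanishing cycle $L_i'$ is defined by the same recipe with $\omega$ replaced by $\omega'$, and the goal is to prove $L_i'$ is compactly supported Hamiltonian isotopic to $L_i$.

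First I would check that the local picture over the disc $D$ is untouched, so that the \emph{same} sphere $V_s$ may be used in both constructions. By the first bullet of Lemma~\ref{th:changeomega} the modification $d(\beta H(t)\,dt)$ is supported over $U_{1/2}$, which misses $x_s$; choosing $D$ disjoint from $\overline U$ we therefore have $\omega'|_{\tilde f^{-1}(D)} = \omega|_{\tilde f^{-1}(D)} + c\,\tilde f^\ast \omega_b$. As in the Claim preceding Lemma~\ref{th:changeomega} this is symplectic, and — crucially — since every vertical tangent vector lies in the kernel of $\tilde f^\ast\omega_b$, the symplectic orthogonal to the vertical distribution, hence the horizontal distribution and the parallel transport over $D$, is \emph{literally unchanged}. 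Thus the standard vanishing cycle $V_s \subset F_p$ is the same for $\omega$ and $\omega'$.

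Now I would invoke the third bullet of Lemma~\ref{th:changeomega}: choosing $p \in \partial D$ on the part of $\gamma_i$ beyond $U$ (possible since $x_s \notin \overline U$ and $D$ can be taken as small as we like), it gives $\mathrm{PT}^{\omega'}_{\e \to p} = \mathrm{PT}^{\omega}_{\e \to p} \circ \phi_1$. Hence
\[
L_i' = (\mathrm{PT}^{\omega'}_{\e \to p})^{-1}(V_s) = \phi_1^{-1}\bigl((\mathrm{PT}^{\omega}_{\e \to p})^{-1}(V_s)\bigr) = \phi_1^{-1}(L_i).
\]
Since $\{\phi_t\}$ is a compactly supported Hamiltonian isotopy generated by $H(t)$, the family $\{\phi_t \circ \phi_1^{-1}\}_{t \in [0,1]}$ is again a compactly supported Hamiltonian isotopy (generated by the same $H(t)$) running from $\phi_1^{-1}$ to the identity; therefore $L_i' = \phi_1^{-1}(L_i)$ is compactly supported Hamiltonian isotopic to $L_i$. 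Conversely, $\phi_1$ may be prescribed to be the time-one map of an arbitrary compactly supported Hamiltonian isotopy of $F_\e$, so every Lagrangian sphere Hamiltonian isotopic to $L_i$ is realised as $L_i'$ for a suitable choice of $\omega'$, which is exactly the assertion.

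I do not expect a serious obstacle here; essentially the whole content is already packaged in Lemma~\ref{th:changeomega}. The one point that genuinely needs the remark above is that adding a pullback $\tilde f^\ast\omega_b$ preserves not merely the fibrewise symplectic form but the entire horizontal distribution, so that the complex Morse model of the vanishing cycle near $x_s$ is genuinely unperturbed; after that, everything is bookkeeping with Lemmas~\ref{lem:symplectic_parallel} and~\ref{th:changeomega}.
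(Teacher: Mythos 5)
Your argument is correct and is precisely the unwinding the paper leaves implicit: Corollary \ref{th:movevcycle} is stated as an immediate consequence of Lemma \ref{th:changeomega}, and your bookkeeping — the horizontal distribution is unchanged by adding $c\tilde f^\ast\omega_b$ near the critical value, the transport past $U$ picks up a pre-composition by $\phi_1$, hence the new vanishing cycle is $\phi_1^{-1}(L_i)$, and $\phi_1$ was arbitrary — is exactly the intended reasoning. No gaps.
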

Whenever we modify a symplectic form in such a way that $\o'-\o$ is exact, and $\o'-\o = c \pi^\ast \o_b$ outside a compact set, we shall refer to the process as an \emph{essentially local change of the symplectic form}.
We shall want to modify the almost-complex structure accordingly.

\begin{lemma}\label{th:changej}
Suppose $\omega'$ is obtained from $\omega$ as in lemma \ref{th:changeomega}, and that $J$ is an $\omega$--compatible almost complex structure such that $\pi:=\tilde{f}$ is $(J,i)$--holomorphic. Then we can find an $\omega'$--compatible almost complex structure $J'$ such that:
\begin{itemize}
\item $J$ and $J'$ agree outside a compact set, which does not contain any critical point. 

\item $J$ and $J'$ agree when restricted to any fibre of $\pi$.

\item $\pi$ is also $(J', i)$--holomorphic.

\end{itemize}

\end{lemma}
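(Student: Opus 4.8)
The plan is to build $J'$ by an interpolation argument that takes place entirely in the region where $\omega$ and $\omega'$ differ, using the fact that $\omega' - \omega = d(\beta H(t)\,dt) + k\,\pi^\ast\omega_b$ is supported over a disc $U_{1/2}$ in the base avoiding the critical values. First I would recall the standard fact that the space of almost complex structures compatible with a fixed symplectic form is contractible, and more to the point, nonempty and convex in a suitable local sense; the subtlety here is that I must produce a $J'$ compatible with $\omega'$ that (i) equals $J$ outside a compact set missing the critical points, (ii) restricts to $J$ on every fibre of $\pi$, and (iii) keeps $\pi$ pseudoholomorphic. Conditions (ii) and (iii) together are really the constraint that $J'$ preserve the splitting $TE = TE^v \oplus TE^h$ into vertical and horizontal parts in the appropriate way, where the vertical distribution $TE^v = \ker D\pi$ is fixed and intrinsic, and the horizontal distribution is the $\omega'$-orthogonal complement of $TE^v$.

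The key steps, in order, would be: (1) Observe that because $\omega'$ agrees with $\omega$ on each fibre $\pi^{-1}(a)$, the vertical distribution carries the same symplectic form as before; so I may take $J'|_{TE^v} = J|_{TE^v}$, which immediately gives (ii). (2) On the horizontal distribution $TE^{h'}_x$ determined by $\omega'$ — which is two-real-dimensional over each point, being the $\omega'$-orthocomplement of $TE^v$ inside a space where $D\pi$ restricts to an isomorphism onto $T\C$ — note that $\pi$ identifies $TE^{h'}_x \cong T_{\pi(x)}\C = \C$, and we must set $J'$ on $TE^{h'}$ to be the pullback of $i$ under $D\pi$; this forces (iii) and is automatically $\omega'$-compatible on the horizontal part provided $\omega'$ restricts to a positive area form there, which is exactly the content of the Claim proved just above (the horizontal $\omega'$-area and $\pi^\ast\omega_b$ have the same sign). (3) Define $J'$ on all of $TE$ by declaring it to act as in (1) on $TE^v$, as in (2) on $TE^{h'}$, and to be block-diagonal with respect to this $\omega'$-orthogonal splitting; check $\omega'$-compatibility tautologically from the block structure. (4) Finally, verify that outside a compact set (where $\omega' = \omega + c\,\pi^\ast\omega_b$ with the Hamiltonian term absent, or even $c=0$), the horizontal distribution for $\omega'$ coincides with that for $\omega$ — this uses that adding a multiple of $\pi^\ast\omega_b$ does not change the $\omega$-orthocomplement of the vertical distribution, since $\pi^\ast\omega_b$ annihilates $TE^v$ — so the block-diagonal construction returns exactly $J$ there, giving (i).

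The main obstacle I anticipate is step (4), namely pinning down precisely where $J' = J$. The horizontal distribution is unchanged by the $k\,\pi^\ast\omega_b$ term, but it \emph{is} perturbed by the $d(\beta H(t)\,dt)$ term, which is genuinely supported over $U_{1/2}$; so I need to be careful that the compact set on which $J'$ differs from $J$ is contained in $\pi^{-1}(\overline{U_{1/2}})$, which by hypothesis contains no critical value, and that the construction is smooth across the boundary of this region — this requires that the block-diagonal $J'$ glues smoothly to $J$, which follows because both the splitting and the fibrewise data vary smoothly and agree on the overlap. A secondary point to handle with care is that $TE^{h'}$ degenerates in a controlled way only away from critical points; near the critical points $\omega' = \omega$ exactly (the perturbation vanishes there), so $J' = J$ near the critical locus and integrability of $J$ near the critical points is inherited for free. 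Once these smoothness and support bookkeeping points are dispatched, the compatibility and holomorphicity claims are formal consequences of the block-diagonal construction.
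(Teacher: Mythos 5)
Your construction is correct and is essentially the paper's own proof: the paper likewise decomposes $T_xE = \ker D\pi \oplus (\ker D\pi)^{\perp\omega'}$ and takes $J'$ block-diagonal, with the vertical block forced to equal $J$ by the fibrewise agreement and the horizontal block forced to be $i$ by pseudoholomorphicity. Your additional bookkeeping in step (4) — that $\pi^\ast\omega_b$ annihilates the vertical distribution and hence does not move the orthocomplement, so $J'=J$ wherever the Hamiltonian term vanishes — simply fills in the paper's one-line assertion that the resulting $J'$ has the required properties.
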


\begin{proof}
Decompose the tangent space at any point $x$ in the total space $M$ using symplectic orthogonal complements:
\bq
T_xM  = (\text{ker} \pi ) \oplus  (\text{ker} \pi )^{\perp \omega'}.
\eq
With respect to such a basis for this decomposition, $J'$ must be of the form
\bq
J'  = \begin{pmatrix} 
\ast & 0 \\
0 & i
\end{pmatrix}
\eq
and as $J$ and $J'$ agree on fibres, $\ast$ is uniquely determined. Moreover, the structure $J'$ thus defined has the required properties.
\end{proof}

One can also modify the symplectic form so that it becomes a `product' in the neighbourhood of a fixed fibre. More precisely, we have the following:

\begin{lemma}\label{th:changeomegatoproduct}
Let $\pi: \C^{n+1} \to \C$ be a holomorphic complex Morse function, with distinct critical values. Let $p$ be a regular value of $\pi$, $\Sigma_p$ the fibre above $p$, and $B_r(p) \subset \C$ an open ball of radius $r$ about $p$, whose closure does not contain any critical values. As before, let $\o$ be the usual Kaehler form on $\C^{n+1}$, and $\o_b$ be the one on the base. 
Let $\phi$ be the diffeomorphism 
\begin{equation}
\phi: \, \pi^{-1}(B_r(p)) \to   \Sigma_p \times  B_r(p)
\end{equation}
given by using symplectic parallel transport with respect to $\o$ along straight-line segments starting at $p$. (Note $\phi$ is compatible with projection to $B_r(p)$.) Fix a compact subset $K \subset \Sigma_p$. We claim that we can find a symplectic form $\o'$ on $\C^{n+1}$ such that:
\begin{itemize}
\item on $\phi^{-1} \big( K \times B_{r/2}(p) \big)$, $\o'$ is a `product' symplectic form:
\bq
\o' = \phi^\ast \big( (\o|_{\Sigma_p} , c\, \o_b) \big)
\eq
for some constant $c>0$;
\item $\o' = \o + c\pi^\ast \o_b$ outside a compact set;
\item $\o'$ and $\o$ agree when restricted to any smooth fibre of $\pi$.
\end{itemize}

\end{lemma}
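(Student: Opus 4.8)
The plan is to move the picture over $B_r(p)$ across to the product $P:=\Sigma_p\times B_r(p)$ by means of $\phi$, interpolate there, and transport back. Set $\omega_1:=(\phi^{-1})^\ast\omega$, so $\phi$ carries $\omega$ to $\omega_1$ and $\omega+c\pi^\ast\omega_b$ to $\omega_1+c\,\mathrm{pr}_2^\ast\omega_b$ (because $\pi\circ\phi^{-1}=\mathrm{pr}_2$), while the target product form is $\omega_2:=\mathrm{pr}_1^\ast(\omega|_{\Sigma_p})+c\,\mathrm{pr}_2^\ast\omega_b$. I would first record two facts: (i) since the symplectic parallel transport maps $\Sigma_q\to\Sigma_p$ used to build $\phi$ are symplectomorphisms, $\omega_1$ and $\omega_2$ both restrict to $\omega|_{\Sigma_p}$ on every slice $\Sigma_p\times\{q\}$, so any $2$--form obtained from them by adding something that vanishes on all such slices will too; (ii) $\gamma_0:=\omega_1-\mathrm{pr}_1^\ast(\omega|_{\Sigma_p})$ is closed, vanishes on every slice, and — because at $q=p$ the map $\phi$ sends the base directions of $P$ to the $\omega$--horizontal subspace, so $\gamma_0$ is purely horizontal there — restricts to the zero $2$--form on the central slice $\Sigma_p\times\{p\}$. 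As that slice includes into $P$ as a homotopy equivalence, $[\gamma_0]=0$ in $H^2(P)$, hence $\gamma_0=d\alpha_0$; concretely one may take $\alpha_0=(\phi^{-1})^\ast\theta-\mathrm{pr}_1^\ast(\theta|_{\Sigma_p})$.

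Next I would interpolate. Fix a compact $K'$ with $K\subset\mathrm{int}(K')\subset\Sigma_p$ and a cutoff $\chi\colon P\to[0,1]$ that is $\equiv 1$ on $K\times B_{r/2}(p)$ and $\equiv 0$ off $K'\times B_{3r/4}(p)$, and set $\omega':=\phi^\ast\big(\mathrm{pr}_1^\ast(\omega|_{\Sigma_p})+c\,\mathrm{pr}_2^\ast\omega_b+d((1-\chi)\alpha_0)\big)$ over $B_r(p)$ and $\omega':=\omega+c\pi^\ast\omega_b$ elsewhere. These agree on their overlap — where $\chi\equiv 0$ one has $\mathrm{pr}_1^\ast(\omega|_{\Sigma_p})+d\alpha_0=\omega_1$, so the formula there is the transported $\omega+c\pi^\ast\omega_b$ — giving a well-defined closed $2$--form on $\C^{n+1}$. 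It equals $\omega+c\pi^\ast\omega_b$ off the compact set $\phi^{-1}(K'\times\overline{B_{3r/4}(p)})$, and on $\phi^{-1}(K\times B_{r/2}(p))$, where $\chi\equiv 1$ so that $d((1-\chi)\alpha_0)=0$, it equals $\phi^\ast(\omega|_{\Sigma_p},\,c\,\omega_b)$, the product form. For non-degeneracy: off that compact set $\omega'=\omega+c\pi^\ast\omega_b$ is symplectic for every $c>0$ by the Claim above (that $\omega+c\pi^\ast\omega_b$ is symplectic for $\pi$ complex Morse and $c>0$); on it, $\omega'$ differs from $\mathrm{pr}_1^\ast(\omega|_{\Sigma_p})+c\,\mathrm{pr}_2^\ast\omega_b$ by a fixed, bounded $2$--form independent of $c$, so the usual fibration estimate (non-degenerate of bounded size in the fibre directions, non-degenerate of size $c$ on the complementary plane field) makes $\omega'$ symplectic once $c$ is taken large enough — which is permitted, again by the Claim.

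What needs genuine care, and what I expect to be the main obstacle, is the last bullet: arranging that $\omega'$ agree with $\omega$ \emph{exactly} on every smooth fibre. The slice restriction of the $\omega'$ above is $\omega|_{\Sigma_p}-d\chi_q\wedge(\alpha_0)_q$, which is $\omega|_{\Sigma_p}$ precisely where the cutoff is constant in the fibre direction, and in general differs from $\omega|_{\Sigma_p}$ by an exact $2$--form supported in the compact transition collar containing $\mathrm{supp}(d\chi)$, disjoint both from $K$ and from the region where $\omega'=\omega+c\pi^\ast\omega_b$. I would dispose of this exactly as in the proof of Lemma \ref{th:changeomega} (i.e.\ of \cite[Lemma 15.3]{Seidel08}): either correct $\omega'$ by a fibrewise Moser isotopy $(x,q)\mapsto(\psi_q(x),q)$, with $\psi_q$ supported in the transition collar and pulling $\omega'|_{\Sigma_p\times\{q\}}$ back to $\omega|_{\Sigma_p}$ — being the identity on $K$ and off $K'$, this does not disturb the product region or the region where $\omega'=\omega+c\pi^\ast\omega_b$, so the first two conclusions survive — or, before interpolating, perform an essentially local change of $\omega$ of the type in Lemma \ref{th:changeomega} making the symplectic parallel transport maps exact symplectomorphisms, so that $\alpha_0$ may be chosen to vanish on every slice and the fibrewise identity then holds on the nose. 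Apart from this reconciliation, the argument is a routine variant of the proof of Lemma \ref{th:changeomega}.
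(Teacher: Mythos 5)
Your construction is the paper's construction, transported to the product $\Sigma_p\times B_r(p)$: both interpolate between $\omega+c\pi^\ast\omega_b$ and the product form by cutting off an exact correction term, and both get non-degeneracy by taking $c$ large. The only real difference is the choice of primitive, and that is exactly where your argument is weakest. The paper does not choose $\alpha_0=(\phi^{-1})^\ast\theta-\mathrm{pr}_1^\ast(\theta|_{\Sigma_p})$; it first corrects the primitive so that it vanishes identically on every fibre, and then the third bullet is automatic with no further patching. Concretely: since each parallel transport $\phi_q:\Sigma_q\to\Sigma_p$ satisfies $\phi_q^\ast(\theta_p)=\theta_q+d\rho_q$ for a compactly supported $\rho_q$, these assemble into a function $\rho$ on $\pi^{-1}(B_r(p))$, and the paper uses the primitive $\theta^{pr}-\theta$ with $\theta^{pr}:=\phi^\ast\big((\theta|_{\Sigma_p},0)\big)-d\rho$. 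In your notation this amounts to replacing $\alpha_0$ by $\alpha_0+d\tilde\rho$ (where $\tilde\rho=(\phi^{-1})^\ast\rho$), which changes nothing about $d\alpha_0=\gamma_0$ or about the gluing, but makes the one-form restrict to zero on every slice, so $d\big((1-\chi)(\alpha_0+d\tilde\rho)\big)$ restricts to zero on every fibre and the last bullet holds on the nose.

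Your two proposed patches are both slightly off target. The second one misdiagnoses the problem: the parallel transport maps are already exact symplectomorphisms in the sense used in this paper (that is precisely the statement $\phi_q^\ast\theta_p=\theta_q+d\rho_q$), and no essentially local change of $\omega$ à la Lemma \ref{th:changeomega} is needed; the data $\rho_q$ you already have is what feeds the correction above. The first one (fibrewise Moser) would also need an extra check you do not supply: before you can run Moser on a fibre, the slice restriction $\omega|_{\Sigma_p}-d\chi\wedge(\alpha_0)|_{\mathrm{slice}}$ of your uncorrected $\omega'$ must be a non-degenerate form inducing the correct orientation, and since the error term $d\chi\wedge d\tilde\rho_q$ is a fixed bounded $2$--form with no small parameter attached, this is not automatic. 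Both issues evaporate once the primitive is corrected by $d\tilde\rho$, so I would simply fold that correction into the definition of $\omega'$ and delete the patching paragraph.
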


\begin{proof}
Let $\theta_p \in \Omega^1(\Sigma_p)$ be the restriction of the standard one-form $\theta$ to $\Sigma_p$. Fix $q \in B_r(p)$, and let $\Sigma_q$ and $\theta_q$ be defined analogously. Let $\phi_q: \Sigma_q \to \Sigma_p$ be the result of symplectic parallel transport along a straight-line segment. This has the feature that
\begin{equation}
\phi_q^\ast (\theta_p) = \theta_q + d \rho_q
\end{equation}
for some compactly supported smooth function $\rho_q$ on $\Sigma_q$. Varying over $q$, these give a smooth function on $\pi^{-1}(B_r(p))$, with bounded support, say $\rho$.

Let $\psi$ be a bump function on $\C^{n+1}$ with bounded support inside $\pi^{-1} (B_r(p))$, and such that $\psi=1$ on $\phi^{-1} \big( K \times   B_{r/2}(p) \big)$. 
Let $\o^{pr} = \phi^\ast \big( (  \o|_{\Sigma_p} , 0) \big)$ be a degenerate `product' form on $\pi^{-1}(B_r(p))$. We have  $\o^{pr} = d \theta^{pr}$, where $\theta^{pr}$ is defined to be:
\bq
\theta^{pr} : = \phi^\ast \big( ( \theta|_{\Sigma_p}, 0) \big) - d\rho . 
\eq
Now set
\bq
\o' = \o + d \big(\psi (\theta^{pr} - \theta) \big) + c \pi^\ast \o_b
\eq
for some constant $c$ to be determined. This is certainly closed. As $\theta^{pr} - \theta$ vanishes fibre-wise, $\o'$ agrees with $\o$ when restricted to any smooth fibre. Thus we just need to pick a constant $c$ large enough to ensure that the form is non-degenerate at each point.
\end{proof}

Analogously to Lemma \ref{th:changej}, one can show the following.

\begin{lemma}\label{th:changejtoproduct}

Suppose $\omega'$ is obtained from $\omega$ as in Lemma \ref{th:changeomegatoproduct}, and that $J$ is an $\omega$--compatible almost complex structure such that $\pi$ is $(J,i)$--holomorphic. Then we can find an $\omega'$--compatible almost complex structure $J'$ such that:
\begin{itemize}
\item $J$ and $J'$ agree outside a compact set.

\item $J$ and $J'$ agree when restricted to any fibre of $\pi$.

\item $\pi$ is also $(J', i)$--holomorphic.

\end{itemize}
These requirements determine $J'$ uniquely. Moreover, on the set $\phi^{-1} \big( K \times B_{r/2}(p) \big)$, on which $\o'$ is a product, $J'$ will correspond to the product $( J|_{\Sigma_p} , i)$.
\end{lemma}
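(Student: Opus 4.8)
The plan is to construct $J'$ by the same fibrewise-splitting argument used in the proof of Lemma \ref{th:changej}, and then verify the extra product conclusion by bookkeeping on the region where $\omega'$ is already a product. First I would fix a point $x$ in the total space and decompose the tangent space with respect to $\omega'$:
\[
T_x \C^{n+1} = \ker(D\pi_x) \oplus \ker(D\pi_x)^{\perp \omega'} .
\]
Since $\pi$ is to remain $(J',i)$-holomorphic, $J'$ must preserve $\ker(D\pi_x)$, and on the horizontal complement it is forced to agree with the pullback of $i$ under $D\pi_x$; writing $J'$ in block form with respect to this splitting, the lower-right block is $i$, the off-diagonal blocks vanish, and the upper-left block is an almost complex structure on $\ker(D\pi_x)$. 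The requirement that $J'$ restrict to $J$ on each fibre then pins down the upper-left block uniquely. The one thing to check for existence is that the resulting $J'$ is $\omega'$-compatible: on $\ker(D\pi_x)$ this holds because there $\omega'|_{\text{fibre}} = \omega|_{\text{fibre}}$ and $J|_{\text{fibre}}$ is $\omega|_{\text{fibre}}$-compatible; on the horizontal piece one uses that $\pi^\ast\omega_b$ restricted to the horizontal space is a positive area form (the argument is identical to the Claim preceding Lemma \ref{th:changeomega}, or to the last paragraph of the proof of Lemma \ref{th:changeomegatoproduct}); and the mixed terms vanish because the splitting is $\omega'$-orthogonal and $J'$ respects it. That $J'$ agrees with $J$ outside a compact set is immediate since $\omega' = \omega$ there (up to the $c\pi^\ast\omega_b$ term, which does not affect the fibrewise-determined structure away from where $\psi$ is supported), and agreement on fibres and $(J',i)$-holomorphicity of $\pi$ are built in.

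The only genuinely new point compared with Lemma \ref{th:changej} is the final sentence: on $\phi^{-1}(K \times B_{r/2}(p))$ the structure $J'$ equals the product $(J|_{\Sigma_p}, i)$. Here I would use that on this set $\omega' = \phi^\ast((\omega|_{\Sigma_p}, c\,\omega_b))$ is literally a product, so the $\omega'$-symplectic-orthogonal splitting $\ker(D\pi) \oplus \ker(D\pi)^{\perp\omega'}$ coincides with the product splitting $T\Sigma_p \oplus TB_{r/2}(p)$ coming from $\phi$. On the $TB$ factor $J'$ is $i$ by construction. On the $T\Sigma_p$ factor, the product almost complex structure $(J|_{\Sigma_p}, i)$ restricts on each fibre to $J|_{\Sigma_p}$; but $\phi$ is symplectic parallel transport for the \emph{original} $\omega$, hence the original $J$ also restricts to (the pushforward under parallel transport of) $J|_{\Sigma_p}$ on fibres over $B_{r/2}(p)$ only up to the identification $\phi$ — so one must be slightly careful and phrase the conclusion in terms of the model $\Sigma_p \times B_{r/2}(p)$ with its product structure. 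Since $J'$ is \emph{uniquely} determined by restricting to $J|_{\Sigma_p}$ on fibres, preserving $\ker(D\pi)$, being $i$ in the base direction, and being compatible with the product $\omega'$, and since the product structure $(J|_{\Sigma_p}, i)$ satisfies all four of these, uniqueness forces $J' = (J|_{\Sigma_p}, i)$ there.

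I expect the main obstacle to be purely bookkeeping: keeping straight which $\omega$-parallel transport, which diffeomorphism $\phi$, and which restriction-to-fibre one is using, and making sure that the "agree when restricted to any fibre of $\pi$" condition is interpreted consistently (as $J'$ and $J$ inducing the same almost complex structure on each fibre $\pi^{-1}(a)$, which is an intrinsic statement not referring to $\phi$). There is no hard analysis — compatibility is a pointwise linear-algebra check, the forced block form handles existence and uniqueness simultaneously, and the product conclusion follows from uniqueness once one observes the model product structure satisfies the defining properties. So the proof is essentially a one-paragraph reprise of Lemma \ref{th:changej} plus a short uniqueness remark for the product region.
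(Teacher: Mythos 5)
Your construction of $J'$ from the three bullet points is correct and is exactly the paper's (the paper itself only says ``analogously to Lemma \ref{th:changej}'' and gives no further detail): split $T_x$ as $\ker(D\pi_x)\oplus\ker(D\pi_x)^{\perp\omega'}$, observe that $\omega'$-compatibility together with preservation of the vertical subspace forces $J'$ to preserve the horizontal one, so $J'$ is block-diagonal with the two blocks pinned down by fibrewise agreement with $J$ and by $(J',i)$-holomorphicity; compatibility then reduces to a check on the fibre (where $\omega'=\omega$) and on the two-dimensional horizontal space (where $\omega'$ and $\pi^\ast\omega_b$ induce area forms of the same sign). Your observation that $J'=J$ outside the support of the cut-off, because there the $\omega$- and $\omega'$-horizontal distributions coincide, is also right.

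The final step --- deducing $J'=\phi^\ast\big((J|_{\Sigma_p},i)\big)$ on $\phi^{-1}(K\times B_{r/2}(p))$ by uniqueness --- does not go through as written, and you have in fact put your finger on exactly why without resolving it. Uniqueness would force $J'$ to equal the pullback product only if that product satisfied all three defining conditions; but it fails the second one: on the fibre $\Sigma_q=\pi^{-1}(q)$ the pullback product restricts to $\phi_q^\ast(J|_{\Sigma_p})$, whereas $J$ restricts to $J|_{\Sigma_q}$, and these agree only if the parallel transport $\phi_q$ is $(J|_{\Sigma_q},J|_{\Sigma_p})$-holomorphic --- which symplectic parallel transport is not, in general. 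So the three bullets uniquely determine a $J'$ that is block-diagonal for the product splitting, with $i$ on the base factor and the \emph{varying} structures $J|_{\Sigma_q}$ on the fibre factor; this is genuinely different from the constant product $(J|_{\Sigma_p},i)$ except on $\Sigma_p$ itself. The tension is already present in the statement of the lemma, so this is not purely your error, but a complete proof must choose: either (a) weaken the last sentence to ``$J'$ splits as (fibrewise restriction of $J$) times $i$ with respect to the product decomposition,'' or (b) take $J'$ on the product region to be the literal pullback $\phi^\ast\big((J|_{\Sigma_p},i)\big)$ and only require agreement with $J$ on the central fibre $\Sigma_p$ --- which is the only fibre on which agreement is actually used later, since the vanishing cycles and holomorphic polygons of Section \ref{sec:Fukayaunimodal} all live in $M_\star$. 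You cannot have both fibrewise agreement on every fibre and the literal product structure simultaneously, so the uniqueness argument in your last paragraph needs to be replaced by one of these two fixes.
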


\subsubsection{Remarks on more general Lefschetz fibrations}\label{sec:moregeneralLefschetzfibrations}
Most of the features described previously in Section \ref{sec:PicardLefschetz} are also found in more general Lefschetz fibrations (we shall only be concerned with some whose total spaces are complex hypersurfaces, or open subsets thereof). One can for instance consult \cite[Chapter III]{Seidel08}. 
 In particular, we will use the concepts of symplectic parallel transport, distinguished collections of vanishing paths and vanishing cycles, and Lefschetz thimbles in such settings. 
They are defined completely analogously, and their properties are similar, with the caveat that vanishing cycles may not generate the homology of a smooth fibre, and that two different singular values may give the same vanishing cycle. 
One can still perform essentially local changes to the symplectic form to realise Hamiltonian isotopies of vanishing cycles, with prescriptions analogous to Lemmas \ref{th:changeomega} or \ref{th:changeomegatoproduct}.

\subsection{Milnor fibres of functions of two variables, following A'Campo}\label{sec:ACampo}

Given a singularity of two variables, A'Campo \cite{ACampo99} presents a way of describing the Milnor fibre of that singularity, together with favourite vanishing cycles and paths. This uses certain totally real deformations of the singularity; such deformations had earlier been considered by A'Campo \cite{ACampo75} and independently Gusein-Zade \cite{GuseinZade74} to calculate intersection forms of singularities. We give a brief outline here, while highlighting some features we shall use.

\subsubsection{Divides and real deformations}

\begin{definition}\cite{ACampo75}
Let $R$ be the disjoint union of $r$ copies of $[0,1]$, and $D_\e \subset \R^2$ the disc of radius $\e$ centred at 0. An $r$--branched divide (`partage') of $D_\e$ is an immersion $\alpha: R \to D_\e$ such that:
\begin{itemize}
\item  $\alpha(\partial R) \subset \partial D_\e$, $\alpha(\mathring{R}) \subset \mathring{D_\e}$, and $\alpha(R)$ is connected.
\item $\alpha(R)$ only has ordinary double points, none of which lie on $\partial D_\e$.
\item A \emph{region} is a connected component of $D_\e \backslash \alpha(R)$ that does not intersect the boundary of $D_\e$. For any two regions $A$ and $B$, we have $\bar{A} \cap \bar{B} = \alpha(I)$, where $I \subset R$ is a connected segment (possibly empty, or a point).
\end{itemize}

\end{definition}

A'Campo considers real deformations of certain singularities whose zero--loci give divides.

\begin{proposition}\cite[Th\'eor\`eme 1]{ACampo75}\label{th:gooddivides}
Let $f(x,y)$ be a polynomial such that
\begin{itemize}
\item $f(0)=0$, and $f$ has an isolated singularity at the origin.
\item $f$ decomposes into a product of irreducible factors at $0$ (over $\C$), each of which are polynomials with real coefficients.
\end{itemize}
Then $f$ has a real polynomial deformation $f(x,y; t)$, $t \in \R$, such that $f(x,y;0)=f(x,y)$ and for all sufficiently small $t \neq 0$, we have that:
\begin{itemize}
\item The real curve $C_t=\{ (x,y) \, | \, f(x,y;t) =0 \}$ is an $r$--branched divide.
\item  The number $k$ of double points of $C_t$ satisfies $2k-r+1 = \mu$, where $\mu$ is the Milnor number of $f$ at zero.
\end{itemize}
\end{proposition}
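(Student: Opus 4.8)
\emph{Proof proposal.} The plan is to split the statement into two parts: the \emph{existence} of a real deformation whose zero locus is an $r$-branched divide, and the \emph{count} $2k-r+1=\mu$, which should follow fairly formally once the deformation is in hand. For the bookkeeping, recall that the Milnor fibre of a plane curve singularity is homotopy equivalent to a wedge of $\mu$ circles, so $\chi(M_f)=1-\mu$; equivalently, the classical formula $\mu=2\delta-r+1$ holds, with $\delta$ the $\delta$-invariant and $r$ the number of branches. Thus the whole proposition reduces to producing a real deformation $f(x,y;t)$ for which $C_t$ is an $r$-branched divide with exactly $k=\delta$ ordinary double points.

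I would set up the deformation via the resolution. Since every irreducible complex factor of $f$ has real coefficients, an embedded resolution of $\{f=0\}$ can be carried out by a sequence of point blow-ups equivariant under complex conjugation; upstairs the strict transform is a disjoint union of $r$ smooth real curves and the total transform is a real normal-crossings divisor. The construction then proceeds by induction down the resolution tree: in each chart one replaces the local equation by a real one-parameter deformation that (a) turns the strict transforms of the branches meeting that chart into smooth real arcs crossing pairwise transversally, and (b) is compatible with the deformations already chosen on the charts below it. The two base cases are a single smooth branch (no deformation of the branch itself is needed) and an ordinary $m$-fold point $\prod(\text{real lines})$, which is already a divide or is made one by a small real tilt of the lines. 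The prototype of the basic move is the cusp: $y^2-x^3$ deforms to $y^2-x^2(x+t)$, whose complex zero locus has a single node, necessarily the real one at the origin, and whose real locus is the lasso-shaped one-branch divide with $k=1=\delta$. Blowing the deformed family back down to $D_\e$ produces a real plane curve $C_t$, and the $\delta$ intersection points of the separated branches upstairs descend to $\delta$ ordinary double points of $C_t$.

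It then remains to verify that $C_t$ really is a divide --- connected, with ordinary double points only, and with the closures of any two regions meeting along a connected segment --- and to run the count. For the count I would argue in Morse-theoretic terms: for small $t\neq 0$ one arranges $f(\cdot,\cdot;t)\colon\R^2\to\R$ to be Morse near the origin with exactly $\mu$ critical points, all real, the double points of $C_t$ being precisely the saddles lying on $\{f_t=0\}$ (critical value $0$) and every other critical point an extremum with nonzero critical value, one in each bounded component of $D_\e\setminus C_t$, i.e.\ one per region. Hence $\mu=(\#\text{saddles})+(\#\text{extrema})=k+(\#\text{regions})$. An Euler-characteristic count on the disc --- the divide gives a CW decomposition of $D_\e$ with $k+2r$ vertices, $2k+3r$ edges (including the $2r$ boundary arcs), hence $1+k+r$ faces, of which $2r$ meet $\partial D_\e$ by the divide axioms --- yields $\#\text{regions}=k-r+1$, so $\mu=2k-r+1$. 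Alternatively one may simply invoke $\mu=2\delta-r+1$ together with $k=\delta$.

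The hard part is the inductive construction of the second paragraph: one must control the one-parameter family simultaneously in \emph{all} the blow-up charts so that, after blowing down, (a) all $\delta$ nodes of the complex curve stay \emph{real}, (b) no tangencies or worse singularities are created, and (c) the resulting real curve is connected and satisfies the region-adjacency axiom. Each of these fails for a careless choice, and it is exactly the hypothesis that the branches have real equations that leaves enough room to keep everything real throughout; making the choice of deformation parameters in the charts quantitatively compatible (in sign and size) is the technical heart of the argument \cite{ACampo75}.
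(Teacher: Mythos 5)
This proposition is an imported result: the paper states it with a citation to A'Campo's Th\'eor\`eme 1 and gives no proof of its own, so there is no argument in the paper to compare yours against. Judged as a reconstruction of A'Campo's proof, your outline is on the right track and the bookkeeping is sound: the equivalence of $2k-r+1=\mu$ with ``$k$ plus the number of regions equals $\mu$'' via the Euler count $\#\text{regions}=k-r+1$ for a connected divide is correct (and is exactly the reformulation the paper records after the statement), the reduction to Milnor's formula $\mu=2\delta-r+1$ together with $k=\delta$ is the right way to isolate the content, and the strategy of deforming through a conjugation-equivariant embedded resolution and blowing down is indeed the method of A'Campo's original paper (Gusein-Zade's independent proof proceeds differently, by induction on the Newton/Puiseux data).

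That said, what you have is a roadmap rather than a proof, and you say as much yourself: the entire content of the theorem sits in the inductive construction you defer --- choosing the chartwise deformation parameters compatibly so that all $\delta$ nodes of the deformed complex curve are real, no worse singularities or tangencies appear, and the resulting real curve is connected and satisfies the region-adjacency axiom of a divide. Without that, the Morse-theoretic step is circular: ``one arranges $f_t$ to be Morse with all $\mu$ critical points real'' is precisely the assertion to be proved, not a normalization one can impose. Since the paper treats this as a black box, the honest conclusion is that your proposal correctly identifies the structure of A'Campo's argument and the reductions involved, but does not supply the technical heart; for that one must consult \cite{ACampo75} itself.
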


Using that fact that $C_t$ is a divide, the second condition is equivalent to:
\bq
\# \text{  (regions of the divide } C_t) + k = \mu. \label{eq:divide}
\eq
What is the significance of this? Let $\tilde{f}(x,y) := f(x,y;1)$. Consider the zero-locus of $\tilde{f}$ in the \emph{real} $x-y$ plane. Each crossing corresponds to a saddle--type critical value for \emph{real} variables, and thus, also, to a critical value for $\tilde{f}$ as a function of complex variables. In each region of the divide, the real function $\tilde{f}$ must attain at least one maximum or minimum. Similarly, these extrema also give critical values of the complex function. Thus to each region corresponds at least one critical point of $\tilde{f}$. On the other hand, because of equation \ref{eq:divide}, it must be that each region actually corresponds to exactly one critical point of $\tilde{f}$. Informally speaking, this real deformation of $f$ ``sees'' a full Milnor--number's worth of critical points. We shall call such deformations \emph{good real deformations}. 

\begin{remark}
While $\tilde{f}$ is non-degenerate, it does not in general have pairwise distinct critical values. If needed, one can remedy this by making a small further perturbation.
\end{remark}

\begin{remark}
The reader might be concerned about the second assumption in Proposition \ref{th:gooddivides}. However, A'Campo notes that for the purposes of understanding the topological type of a singularity (including the construction of Milnor fibres below), it is not actually restrictive. For instance, one can use work of L\^e D\~ung Tr\'ang and Ramanujam \cite{LeRamanujam}, who show that within a smooth one-parameter family of isolated hypersurfaces singularities in two variables with constant Milnor numbers,  the topological type of the singularity does not change.
\end{remark}

\subsubsection{Associating Milnor fibres to divides}

Consider the polynomial 
\bq f(x,y) = xy(x-y)(x+y). \eq
 A good real deformation is given by:
\bq
f(x,y;t)= xy(x-y+2t)(x+y+t).
\eq
Let $\tilde{f}(x,y) = f(x,y;1)$. The associated divide is given in Figure \ref{fig:44divide}. The $+$ regions in the figure correspond to maxima, the $-$ regions to minima, and the double intersection points correspond to saddle points.
\begin{figure}[htb]
\begin{center}
\includegraphics[scale=0.6]{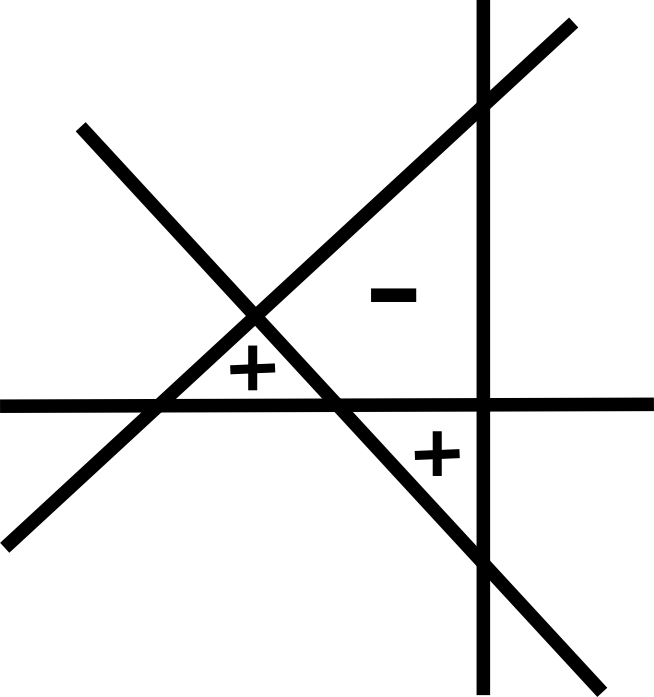}
\caption{A divide for the product of four linear functions
}
\label{fig:44divide}
\end{center}
\end{figure}
Given a divide, A'Campo algorithmically associates to it an oriented (topological) Riemann surface $S$, which is the Milnor fibre of $f$. (See also Example 1 in \cite{ACampo75}, which explains how to get a Dynkin diagram for the intersection form from this example.) Consider the divide as a planar graph, and proceed as follows:
\begin{itemize}
\item To each edge of the graph, associate a ribbon--like strip with one half twist.
\item Replace each of the intersections of the graph by a cylinder, embedded into $\R^3$ with four half twists, as inside the dotted regions in  Figure \ref{fig:44embedded}, which continues our example. Given this embedding, the projection to $\R^2$ given in the figure presents the cylinder with four outer boundary segments. To these we attached the ends of the incoming ribbons (corresponding to edges), of which there are up to four.  This is also performed in Figure \ref{fig:44embedded}.
\end{itemize}
Note that changing the direction of a half twist alters the embedding into $\R^3$, but not the actual surface.

The surface $S$ has Euler characteristic $1-\mu$, i.e.
\bq
\chi (S) = 1- \# \text{  (regions of the divide }) - k 
\eq

\begin{figure}[htb]\label{fig:fibre44}
\begin{center}
\includegraphics[scale=0.7]{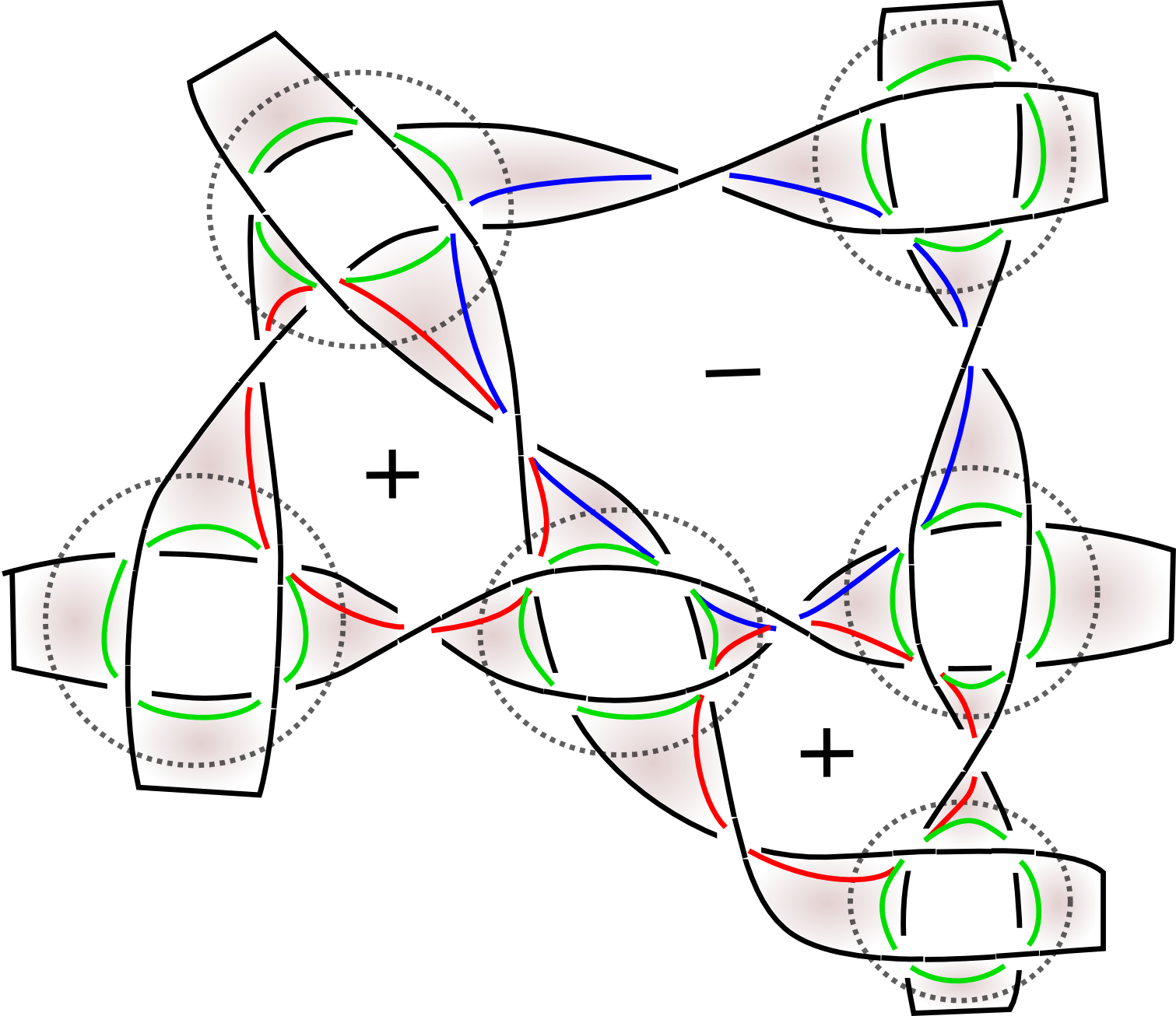}
\caption{Milnor fibre of $x^4 + y^4$. Cylinders associated to crossings are inside the dotted circles. The vanishing cycles corresponding to the minimum, saddles, and maxima, are, respectively, in blue, green and red.
}
\label{fig:44embedded}
\end{center}
\end{figure} 

Pick a regular value of the form $-i \eta$, where $\eta \in \R_+$ is sufficiently small. As vanishing paths, take straight lines between $-i \eta$ and the critical values. (To be rigorous, you need to pick a very small Morse perturbation of $\tilde{f}$ such that all the critical values are distinct, and, w.l.o.g., real -- as the are perturbations of real critical values, since by assumption we start with a good real deformation. It will turn out that it does not matter what the resulting orderings are within any of the types of critical values: it only matter that all the minima be first, then saddles, then maxima.) For these choices,  vanishing cycles are given as follows:
\begin{itemize}
\item The cycles corresponding to real saddles are given by meridional curves of the cylinders introduced for the associated double points.
\item The cycles corresponding to real extrema are given by going along the ribbons corresponding to the boundary of the region the extremum lies over.
\end{itemize}
(For our example, see Figure \ref{fig:44embedded}.)

\subsubsection{Properties of A'Campo's construction}

This description gives preferred orientations for each of the vanishing cycles: anticlockwise in the plane that the projection of the surface lives in (e.g., the paper). The Milnor fibre itself carries a natural orientation, as a complex curve. 

\begin{proposition}\cite[p.~4]{ACampo75}
Let $V_1, \ldots, V_\mu$ be the vanishing cycles given by A'Campo's algorithm. We have the following intersection numbers:
\begin{itemize}
\item $V_i \cdot V_j = 1$ if $V_i$ corresponds to a region with a maximum, and $V_j$ to a saddle in the boundary of that region.
\item $V_i \cdot V_j = -1$ if $V_i$ corresponds to a region with a minimum, and $V_j$ to a saddle in the boundary of that region.
\item $V_i \cdot V_j = 1$ if $V_i$ corresponds to a maximum, $V_j$ to a minimum, and the two regions share an edge.
\item $V_i \cdot V_j = 0$ otherwise. (In particular, if $V_i$ and $V_j$ correspond to distinct critical points of the same real type.)
\end{itemize}

\end{proposition}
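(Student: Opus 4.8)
The plan is to read off every intersection number directly from A'Campo's explicit model of the Milnor fibre as a surface $S \subset \R^3$ assembled from half-twisted ribbons, one per edge of the divide, and four-times-twisted cylinders, one per double point, glued along the ribbon-attaching arcs (Figures~\ref{fig:44divide} and~\ref{fig:44embedded}). In this model each vanishing cycle has a concrete representative: a saddle cycle $V_j$ is a meridian of the cylinder inserted at the associated double point, and an extremum cycle $V_i$ is a curve that runs once along each ribbon bounding its region, threading the neighbouring cylinders. Because the model is visibly a union of these local pieces, both the intersection count and the sign are local questions, around one crossing or one ribbon, and the global answer is the sum of the local contributions.

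The first step is the \emph{unsigned} count, obtained by putting the representatives in general position and enumerating which pairs can meet. A saddle meridian $V_j$ lies inside a single cylinder, hence is disjoint from every other saddle meridian, and meets an extremum cycle $V_i$ iff its double point is on the boundary of the region of $V_i$; in that case it meets it in exactly one point, since the region cycle threads that cylinder once. Two extremum cycles $V_i,V_k$ share a ribbon only if their regions are adjacent across an edge; but that edge is an arc of $\{\tilde f = 0\}$, and crossing it reverses the sign of $\tilde f$, so adjacent regions carry opposite signs. Hence $V_i$ and $V_k$ can meet only if one is a maximum and the other a minimum, and then in one point, on the shared ribbon. (Regions of the same sign that merely touch at a double point sit in opposite quadrants there and contribute disjoint arcs on that cylinder, so we get $0$; every other pair is disjoint for obvious reasons — distinct cylinders, or a cycle that never enters the relevant ribbon or cylinder.) Since each nonzero pair meets in a single point, there are no cancellations, and everything reduces to computing the sign at that one point.

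The remaining, and genuinely delicate, step is the sign computation. I would orient $S$ by its complex structure as a smooth affine curve and each $V_i$ by A'Campo's ``anticlockwise in the projection plane'' rule, and then evaluate the local intersection sign at each model intersection point. The content is to see why a region cycle meets a boundary saddle meridian with sign $+1$ for a maximum and $-1$ for a minimum: the cylinder meridian has a fixed orientation, whereas a maximum region and a minimum region lie on opposite sides of the crossing (the $\tilde f>0$ versus $\tilde f<0$ locus), so their cycles wind around that cylinder in opposite senses, flipping the local sign; a similar local check gives $+1$ for an adjacent maximum–minimum pair along their common ribbon. I expect this to be the main obstacle, since the bookkeeping depends on the conventions fixed earlier — the regular value $-i\eta$ with $\eta>0$, the ordering with all minima before all saddles before all maxima, and the directions of the half-twists (which change the embedding but not $S$). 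The cleanest way to pin it down is to verify the three signs explicitly in the running example $x^4+y^4$ of Figure~\ref{fig:44embedded}, where all three nonzero types occur, and then note that every crossing and every ribbon in a general divide is locally modelled on exactly one of those configurations, so the signs propagate verbatim; the usual real-morsification and Picard–Lefschetz picture of vanishing cycles (\cite{ACampo75, GuseinZade74}) provides an independent check.
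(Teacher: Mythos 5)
The paper does not actually prove this proposition; it is quoted from A'Campo \cite{ACampo75} with only a citation, so any argument you give is necessarily "a different route from the paper". Your reconstruction of the standard argument is structurally right, and the unsigned count is essentially complete: saddle meridians live in disjoint cylinders; a region cycle threads a boundary cylinder once; and your observation that $\tilde f$ changes sign across each branch of its zero locus, so that regions sharing an edge have opposite real type, is exactly the reason same-type extrema contribute $0$ and only max--min pairs can meet along a ribbon. (Implicitly you are also using the divide axiom that $\bar A\cap\bar B$ is a \emph{connected} arc, which is what prevents two regions from sharing several separated edges and producing intersection numbers larger than $1$; it is worth saying so.)

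The one place where your proposal stops short of a proof is the sign step, and that is where the entire content of A'Campo's theorem sits. Your plan --- verify the three signs in Figure \ref{fig:44embedded} and "propagate verbatim by locality" --- presupposes that every cylinder and every ribbon of a general divide, \emph{equipped with the complex orientation of $S$ and the anticlockwise orientations of the cycles}, is isomorphic as an oriented local configuration to the corresponding piece of the model. That is not automatic: the anticlockwise convention is defined relative to the planar projection of a particular embedding into $\R^3$, the directions of the half-twists are not canonical (the paper notes that reversing one changes the embedding but not the surface), and the projection has front and back sheets on which the complex orientation and the "paper-facing" orientation disagree. So the local models are only determined up to an orientation ambiguity that must be fixed coherently across the whole surface before signs can be read off. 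The paper itself acknowledges this by running the logic backwards: "one often uses the preferred orientations of the vanishing cycles, together with the signs of the intersection numbers described above, to determine this orientation for the surface." To close the gap honestly one should either carry out A'Campo's and Gusein-Zade's original computation --- identify the extremum and saddle cycles with boundaries of the local real (un)stable discs of $\tilde f$ and compute the signs from the real Morse data, which is what \cite{ACampo75, GuseinZade74} do --- or prove that the abstract ribbon surface with a fixed global orientation convention is orientation-preservingly identified with the Milnor fibre. Your closing sentence gestures at the former as an "independent check", but it is really the proof, not a check.
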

In practice, one often uses the preferred orientations of the vanishing cycles, together with the signs of the intersection numbers described above, to determine this orientation for the surface given by A'Campo's algorithm.

Now consider the Milnor fibre equipped with its exact symplectic structure. Examining A'Campo's argument, one notices the following.

\begin{proposition}\label{th:minintersection} \label{th:minimalintersection}
After compactly supported Hamiltonian isotopies, we can arrange for the vanishing cycles given by A'Campo's algorithm to intersect minimally. (In this case, in either zero or one point.)
\end{proposition}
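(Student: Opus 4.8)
The plan is to examine A'Campo's explicit model of the Milnor fibre as a ribbon surface and to upgrade the topological minimality of his vanishing cycles to the Hamiltonian-isotopy-minimality required here. First I would recall that in a surface, two simple closed curves in minimal position realise their geometric intersection number, so it suffices to show: (i) that the A'Campo curves are, after compactly supported Hamiltonian isotopy, in minimal topological position; and (ii) that for curves on a surface, being smoothly isotopic to minimal position already implies Hamiltonian isotopic to minimal position. Point (ii) is a standard two-dimensional fact: on an exact symplectic surface, any smooth isotopy of an embedded curve to one disjoint from (or meeting minimally) a fixed curve can be promoted to an ambient Hamiltonian isotopy, because area is the only invariant and the relevant complementary regions can be matched up by a compactly supported symplectomorphism; I would cite the analogous discussion in Seidel's book. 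So the real content is (i).

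For (i), I would go case by case through the types of A'Campo vanishing cycle pairs, using the combinatorics of the divide as recorded in the Proposition preceding this one. Two curves coming from distinct critical points of the same real type (two minima, two maxima, two saddles) have algebraic intersection zero; I would argue that in the ribbon model they are in fact disjoint up to isotopy — meridians of distinct crossing-cylinders are visibly disjoint, and the boundary-of-region curves for two distinct extremal regions can be pushed apart along the ribbons, since the divide condition $\bar A\cap\bar B=\alpha(I)$ for an arc $I$ means two regions meet along at most a single segment, and a curve "hugging the boundary" of $A$ can be isotoped off the boundary of $B$. A saddle meridian and an extremum curve intersect once when the crossing lies on the boundary of that region and zero otherwise, and in both situations the ribbon picture already exhibits the intersection number geometrically. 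The maximum-minimum adjacent case again is a single transverse point in the model. In each case I would observe that the intersections appearing in A'Campo's picture are already the minimal ones, so no isotopy is needed beyond the passage from smooth to Hamiltonian in step (ii).

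The main obstacle I anticipate is making step (i) genuinely rigorous rather than "clear from the figure": one must check that the curves A'Campo writes down on the abstract ribbon surface, before any perturbation, are honestly in minimal position, i.e.\ that there are no hidden bigons created by the half-twists in the ribbons or the four-half-twist crossing cylinders. The twists change the embedding into $\R^3$ but not the surface or the isotopy classes of the curves, as the excerpt notes, so the bigon count can be read off from the planar graph; still, one should verify that when several ribbons feed into one crossing cylinder the meridian does not pick up spurious intersections with a faraway region curve routed through that cylinder. I would handle this by noting that any such pair has algebraic intersection number matching the count from A'Campo's proposition, and that on a surface a pair of simple closed curves with no bigons is automatically minimal — so it remains only to rule out bigons, which one does locally at each crossing cylinder and each ribbon, a finite check. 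Finally I would invoke step (ii) to conclude the statement as phrased, with compactly supported Hamiltonian isotopies.
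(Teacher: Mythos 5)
The paper does not actually supply a proof of this proposition: it is introduced with the sentence ``Examining A'Campo's argument, one notices the following'' and left as an observation, so your reconstruction is necessarily more detailed than anything in the text. Your two-step structure --- (i) verify via the bigon criterion that the curves in A'Campo's ribbon model are already in topologically minimal position, by a finite local check at each ribbon and each crossing cylinder, and (ii) upgrade smooth minimal position to Hamiltonian minimal position using exactness of the vanishing cycles --- is a sensible way to fill the gap, and step (i) in particular is exactly the kind of combinatorial verification the author is implicitly relying on. You are also right that the algebraic intersection numbers from A'Campo's table do not by themselves give minimality, and that the bigon criterion is the correct tool.

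The one place where your argument is too quick is step (ii). ``Area is the only invariant'' gets you a symplectic isotopy, but the proposition asks for a \emph{compactly supported Hamiltonian} isotopy, and cancelling a bigon of area $A$ sweeps flux $\pm A$ which must be compensated by sweeping area $\mp A$ somewhere else along the curve without creating new intersections. This requires an adjacent complementary region with at least that much area available. Compare the paper's own treatment of the analogous issue in the proof of Lemma \ref{th:AR2disjoint}: there the author explicitly invokes the fact that the relevant curves are separated by punctures, whose neighbourhoods have infinite symplectic area in the completed fibre, to guarantee that the compensating sweep can be performed with compact support. You should add the corresponding justification here --- for each pair of A'Campo cycles, an arc of the moving cycle borders a complementary region containing a puncture or boundary component of the Milnor fibre (or at least a region of sufficiently large area), so the flux correction can be absorbed. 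With that point supplied, and the exactness of the target representative arranged by the same device, your argument goes through at the same level of rigour as the rest of the paper.
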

(It is now immediately clear that the ordering within each real--type class of singular points does not matter, as none of the corresponding cycles intersect  -- so one can trivially swap them using mutations.)

\begin{remark}
While the algorithm described above is inherently restricted to functions of two variables, there exist some generalisations of A'Campo's work in higher dimensions. For instance, one can relate the flow category of a real Morse function with the directed Fukaya category of its complexification -- see \cite{Johns}.
\end{remark}

\subsection{Distinguished bases for $P(x_0,\ldots, x_n) + x_{n+1}^{d+1}$} \label{sec:Gabrielovcyclic}

\subsubsection{Background}

Suppose you have two singularities, say $P(x)$  and $Q(y)$, with $x = (x_0,$ \ldots, $x_n)$ and $y=(y_0, \ldots y_m)$. 
We will want to use theorems   that describe vanishing cycles for the join singularity, $P(x) + Q(y)$, following Thom and Sebastiani \cite{ThomSebastiani}.  
Let $\mu$ be the Milnor number of $P$, and $\nu$ that of $Q$. 
Suppose you are given Morsifications $\widetilde{P}$ and $\widetilde{Q}$. Let $p_1, \ldots, p_\mu$ be the critical values of $\widetilde{P}$, and $q_1, \ldots, q_\nu$ the critical values of $\widetilde{Q}$.  
Suppose also that you have chosen two regular values, say, respectively, $p_\ast$ and $q_\ast$, and made choices of distinguished collections of vanishing paths between $p_\ast$ and the $p_i$ (respectively, $q_\ast$ and the $q_j$).
 Gabrielov \cite{Gabrielov1} describes how to use this data to construct a distinguished basis of vanishing paths and cycles for $P(x) + Q(y)$, in the smooth category. 
We shall later use such a construction, in the case that $Q$ is simply a function of one variable, in the symplectic category. (When $Q(y)=y^2$, the process of adding $Q$ is known as a \emph{stabilization}; 
the Milnor number is unchanged, and the corresponding operation for vanishing cycles is well-known to experts; see e.g.~\cite[Chapter 3]{Seidel08}.) 
This is also considered by Futaki and Ueda \cite{FutakiUeda}, Section 2. For a more general case, with functions of arbitrarily many variables, the interested reader might consult \cite[Section 6.3]{AKO}.

\subsubsection{Assumptions and vanishing paths}

Let $\widetilde{P}$, $\widetilde{Q}$, $p_i$ and $q_i$ be as in the introductory paragraph. Assume that the sets $\{ p_i \}_{1 \leq i \leq \mu}$ and $\{ q_j \}_{1 \leq j \leq \nu}$ are disjoint. 
Denote the vanishing path from $p_\ast$ to $p_i$ by  $\gamma_i$ ($1 \leq i \leq \mu$), and the vanishing path from 
from $q_\ast$ to $q_j$ by  $\zeta_j$ ($1 \leq j \leq \nu$). 
We make the following (non-restrictive) assumptions:
\begin{assumption}
There exists positive constants $\e$ and $r$ such that 
\begin{itemize}
\item $\gamma_i(t) \in B_\e(p_\ast)$ for all $i$ and $t$.
\item $\zeta_j(t) \in B_r(0)$ for all $j$ and $t$.
\item $B_\e(q_j)$ doesn't intersect any of the $\zeta_k$, for $k \neq j$.
\item There are no other critical values of $\widetilde{P} + \widetilde{Q}$ in $B_{r+\e}(0)$. (The functions $P$ and $Q$ are representatives of germs: they could have critical points -- and values -- away from zero, which their Morsifications $\widetilde{P}$ and $\widetilde{Q}$ would inherit. We are assuming that these remain sufficiently far away.)
\end{itemize}

\end{assumption}
The critical values of $\widetilde{P}+ \widetilde{Q}$ are given by $\tau_{ij} = p_i+q_j$, and our assumption implies that they are distinct. 
Further deform the paths $\zeta_j$ such that:
\begin{itemize}
\item $\zeta_j + p_\ast $ only intersects the path $q_j+\gamma_i$ (where $(q_j+\gamma_i)(t) := q_j+\gamma_i(t)$) at $q_j + p_\ast$;
\item Ordered anti-clockwise, $\zeta_j + p_\ast$ arrives at $q_j +p_\ast$ between the paths $q_j+\gamma_\mu$ and $q_j+ \gamma_1$.
\end{itemize}
(See Figure \ref{fig:joinvpaths}.)
 We define a vanishing path between $p_\ast+ q_\ast$ and $p_i+q_j$ as follows. First, concatenate the paths $p_\ast+\zeta_j$ (from $p_\ast + q_\ast$ to $p_\ast + q_j$) and $\gamma_i+q_j$ (from $p_\ast + q_j$ to $p_i + q_j$). Second, smooth out the results so that they only intersect at their starting points $p_\ast+q_\ast$, and the results still lie in $B_{r+\e}(0)$. These form a distinguished collection of vanishing paths for $P+Q$. See Figure  \ref{fig:joinvpaths}.
 We shall describe vanishing cycles for this collection below. 
 
 \begin{figure}[htb]
\begin{center}
\includegraphics[scale=0.9]{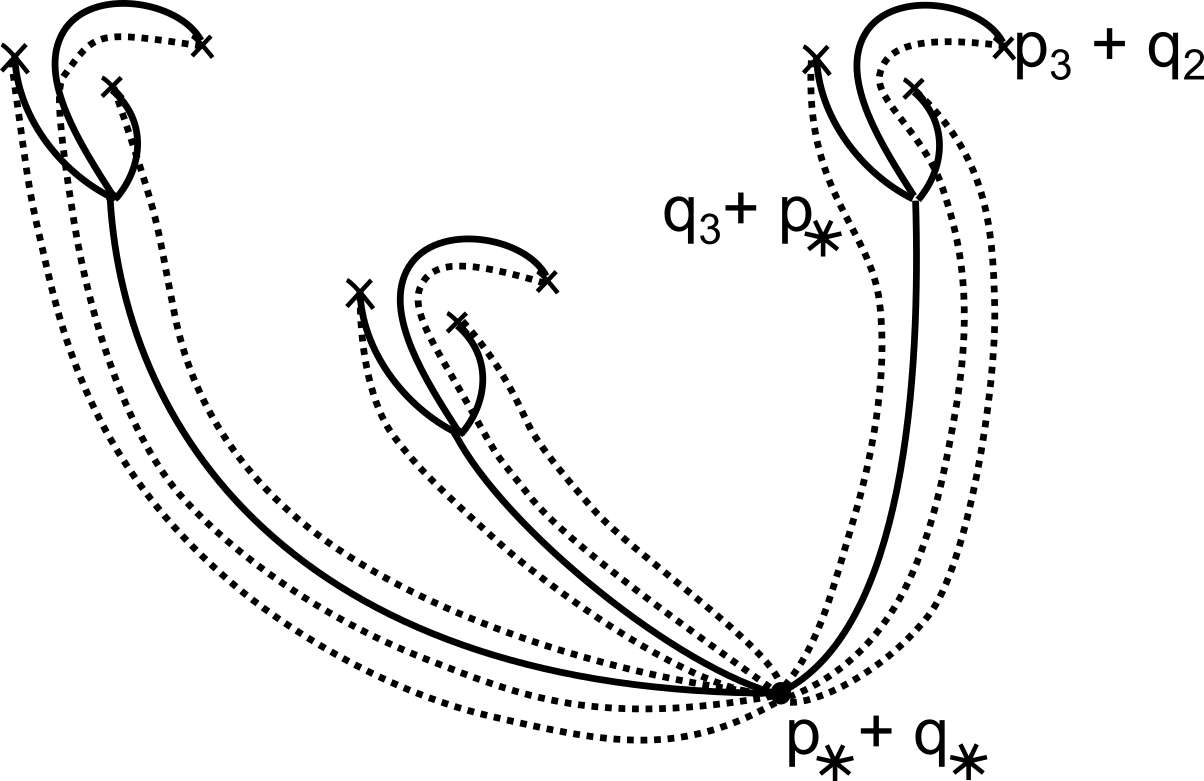}
\caption{ Vanishing paths for $P(x) + y^{d+1}$: concatenations of existing paths (full lines) and smoothings (dotted lines)
}
\label{fig:joinvpaths}
\end{center}
\end{figure}
 
 What if the original data did not satisfy the assumptions we made? Well, we can certainly deform it (in particular, scaling the perturbations of $P$ and $Q$) until it does. Then take the vanishing paths described above, and deform them back.

\subsubsection{Matching paths and vanishing cycles}
We will treat $\widetilde{Q}$ as a branched cover from $\C$ to $\C$ (say $\C_{\mathrm{cover}}$ to $\C_{\mathrm{base}}$), branched over the critical values $q_i \in \C_{\mathrm{base}}$. Let $\bar{q}_i$ denote the corresponding critical points in $\C_{\mathrm{cover}}$: $\widetilde{Q}(\bar{q}_i) = q_i$.

There exists some $\delta > 0$ such that the ball $B_\delta (q_\ast) \subset \C_{\mathrm{base}}$ does not intersect the branch points.  
We assume that we have chosen deformations such that for all $i$, the path $\gamma_i + q_\ast - p_\ast$, from $q_\ast$ to the translated critical value $p_i +q_\ast -p_\ast$, lies entirely in $B_\delta (q_\ast)$. 
Let the pre-images of $q_\ast$ under $\widetilde{Q}$ be $\mathfrak{q}_1, \ldots, \mathfrak{q}_{d+1}$. We have now arranged that for each $j$, each of the paths $\gamma_i + q_\ast - p_\ast$ has a unique, well-defined lift starting at $\mathfrak{q}_j$. Calls its other end-point $\mathfrak{p}_{ij}$. 

Consider the map:
\begin{eqnarray*}
\pi:  \{ (x,y) \, | \, \widetilde{P}(x)+ \widetilde{Q}(y)= p_\ast + q_\ast \} & \to & \C \\
(x,y) \qquad \qquad & \mapsto & y.
\end{eqnarray*}
By starting with the preimage of a small ball about a smooth value (truncated to copies of the Milnore fibre of $P$) and extending, we can find a smooth manifold with corners $V \subset \C^{n+2}$ such that, when restricted to $V$, $\pi$ give a Lefschetz fibration, 
with smooth fibre the Milnor fibre of $P$, exactly $d\mu$ critical points, and critical values $y$ such that 
$$
p_i + \widetilde{Q}(y) = q_\ast + p_\ast
$$
some $i$, $1 \leq i \leq \mu$. 
Moreover, smoothing the corners of $V$ to $V'$, we have that 
\[
 \{ (x,y) \, | \, \widetilde{P}(x)+ \widetilde{Q}(y)= p_\ast + q_\ast \} \cap V'
\]
is a copy of the Milnor fibre of $P+Q$. (This can be understood by deforming $\widetilde{P}$ and $\widetilde{Q}$ back to $P$ and $Q$ and simultaneously deforming $V'$ back to a standard ball, while keeping the Louiville flow on the Milnor fibre transverse to the boundary of the cut-off set.)

Consider the vanishing path $\zeta_j$ from $q_\ast$ to $q_j$.
Now $\zeta_j$ determines a (unique) path through $\bar{q}_j$ between two of the pre-images of $q_\ast$, say $\mathfrak{q}_{j_1}$ and $\mathfrak{q}_{j_2}$. Moreover, each pre-image of $q_\ast$ lies at the end of at most two such paths, and any two paths intersect at most at one end-point. 
See Figure \ref{fig:joingeneral} for an example. 

\begin{figure}[htb]
\begin{center}
\includegraphics[scale=0.9]{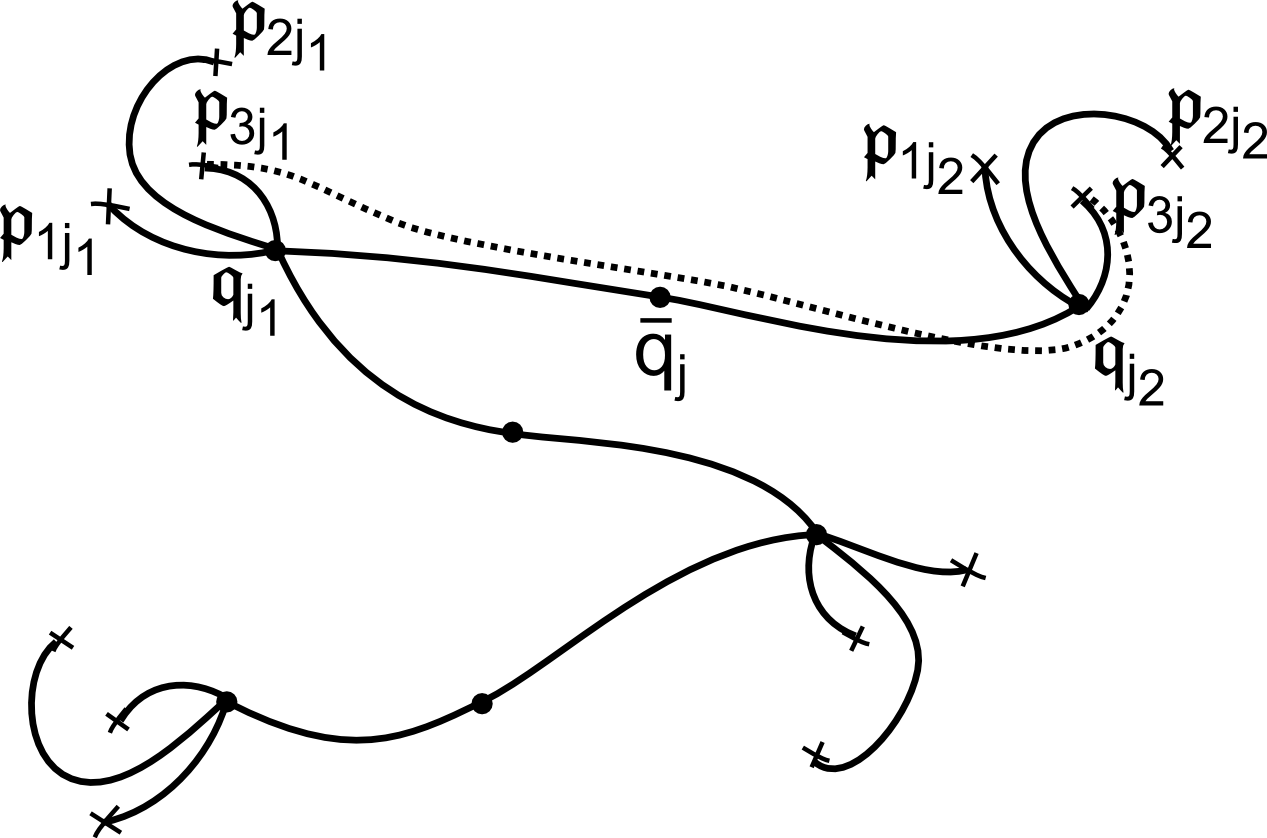}
\caption{ Singular values and matching paths for $\widetilde{P}+ \widetilde{Q}$
}
\label{fig:joingeneral}
\end{center}
\end{figure}

Fix $i$ and $j$, and consider the path from $\mathfrak{p}_{i j_1}$ to $\mathfrak{p}_{i j_2}$ obtained by concatenating the paths we have described from $\mathfrak{p}_{i j_1}$ to $\mathfrak{q}_{j_1}$,  $\mathfrak{q}_{j_1}$ to  $\mathfrak{q}_{j_2}$, and  $\mathfrak{q}_{j_2}$ to  $\mathfrak{p}_{i j_2}$, and making a very small perturbation to get a smooth path. 
(See Figure \ref{fig:joingeneral}; the smoothing should not intersect any of the other critical values.) This is a matching cycle: if you start with the fibre above a point in the interior of the path, the vanishing cycle obtained by deforming the fibre to  $\mathfrak{p}_{i j_1}$ is Hamiltonian isotopic to the vanishing cycle obtained by deforming to  $\mathfrak{p}_{i j_2}$.  
This follows from the construction: the path is obtained by concatenating two paths which are lifts of the same $\gamma_i + q_\ast - p_\ast$ with, in between then a path which is a double cover of a path in the base.
After an essentially local change of the symplectic form (as in Corollary \ref{th:movevcycle}), can arrange for the vanishing cycles for both directions to agree exactly. One can then glue together the corresponding Lefschetz thimbles to obtain Lagrangian spheres in the total space.  The sphere above the path between $\mathfrak{p}_{i j_1}$ and $\mathfrak{p}_{i j_2}$ is a vanishing cycle for $p_i+q_j$, with the vanishing path described in the previous subsection.

\begin{remark}\label{rem:twodescriptions}
 In \cite{Gabrielov1}, Gabrielov uses the projection to $\widetilde{Q}(y)$ -- in general, $y$ consists of multiple variables -- and constructs the vanishing cycle in $\{\widetilde{P}(x)+\widetilde{Q}(y)=p_\ast +q_\ast \}$ by using the product of the cycles for $P$ and $Q$ above a preferred path in the range of $Q(y)$. We only care about the case where $y$ is a single variable, in which case a vanishing cycle for $Q$ is just the union of two points. One can then check that the two descriptions are equivalent. In particular, the equivalence between the two constructions makes it immediate to check that the paths we describe are indeed matching paths.
\end{remark}

\subsubsection{A cyclically symmetric scenario}\label{sec:cyclicsymmetry}
In general, it might be complicated to study the branched cover $\widetilde{Q}: \C \to \C$ to understand what the paths between $\mathfrak{q}_{j_1}$ and $\mathfrak{q}_{j_2}$ are. However, notice that all choices of Morsifications and vanishing cycles for $Q$ are actually equivalent after an isotopy of the base. In particular, one can choose only to think about perturbations of $Q$ of the form 
 \bq
 \widetilde{Q}(y) = y^{d+1} -ky
 \eq
for some constant $k$, with $q_\ast=0$, and, as vanishing paths, straight lines between the origin and the scaled roots of unity. 

 We shall use the following case: suppose that all of the $p_i$ are real, $p_\ast=-i \eta$ (some $\eta \in \R_+$), and vanishing paths are given by straight line segments. (This is the scenario handed to us by A'Campo's construction.) Then
 \bq
  \{ (x,y) \, | \, \widetilde{P}(x)+ y^{d+1}= c \} \cap B_{R_c}
 \eq
is a copy of the Milnor fibre of $P+Q$, for any suitably chosen $c$ and $R_c$. The projection to $y$ has a cyclic symmetry of order $d+1$, and we can arrange for singular values to be positive scalings of the $(d+1)^{th}$ roots of unity. See Figure \ref{fig:joincyclic}.

 \begin{figure}[htb]
\begin{center}
\includegraphics[height=1.6in, width=1.5in,angle=0]{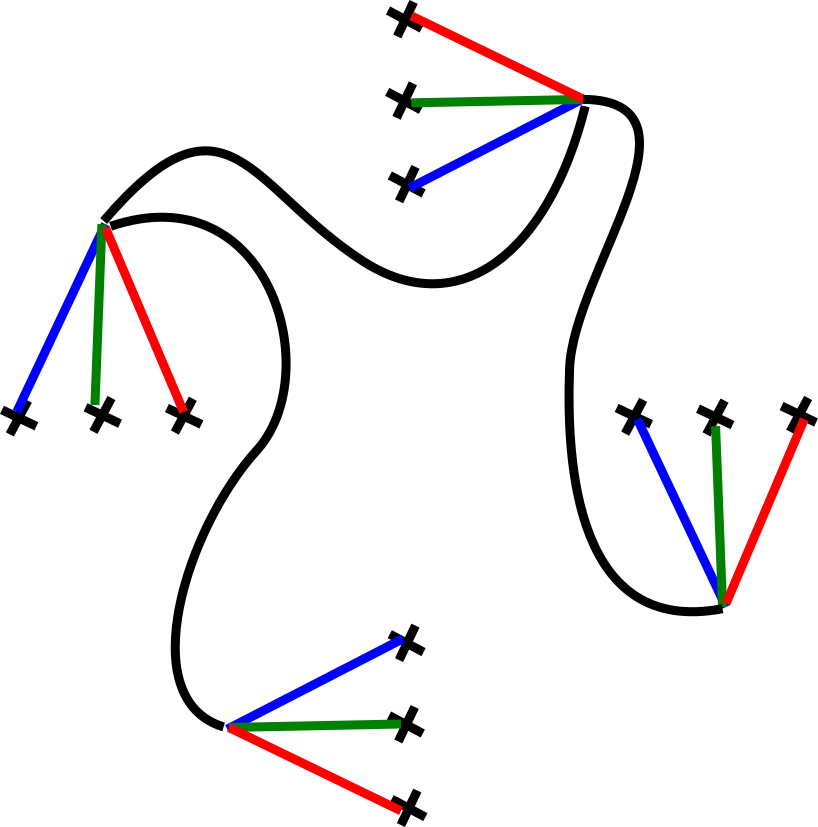} \qquad \quad
\includegraphics[height=1.6in, width=1.5in,angle=0]{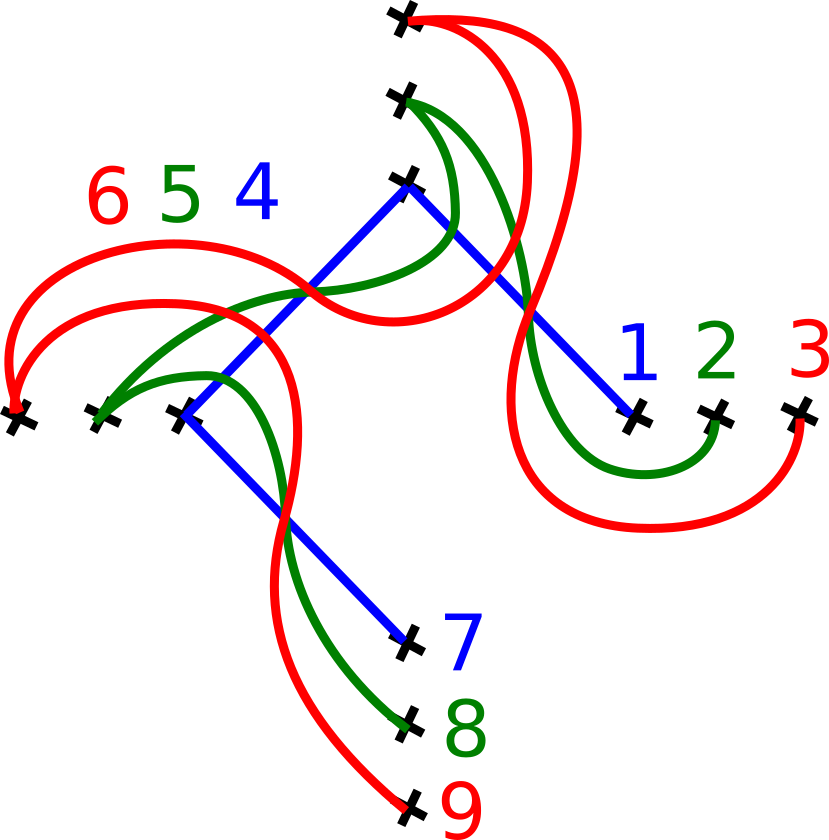}
\caption{Cyclically symmetric scenario: $P(x)+y^{4}$, case $\mu=3$. The left-hand side gives the equivalent of Figure \ref{fig:joingeneral} for this case. The right-hand side gives matching paths and order of the corresponding vanishing cycles.
}
\label{fig:joincyclic}
\end{center}
\end{figure}
 Deforming to $  \{ (x,y) | \widetilde{P}(x)+ y^{d+1}-ky= c' \} $
we get that the paths drawn on that figure are matching paths, and give our distinguished collection of vanishing cycles.

Notice that after performing an isotopy of the base, the desciption of Figure \ref{fig:joingeneral} will always take this form.

 \begin{remark}\label{rmk:changinglexicographicordering}
 The ordering on the $\tau_{ij}$ is a lexicographic ordering induced by those on the $q_j$ and $p_i$. While we usually assign a cyclic ordering to distinguished collections of paths, this construction actually requires an auxiliary absolute ordering for the $p_i$; one can of course interpolate between different choices through series of mutations, e.g.~by Lemma \ref{th:mutationseq}.
 
 The reader might also be wondering about the apparent lack of symmetry between the roles of the $p_i$ and $q_j$. This is an artefact of presentation;
  in particular, it turns out that one can make a series of trivial mutations (the twists involved are between spheres that do not intersect) to get a distinguished basis of vanishing cycles with the other lexicographic ordering. How? This follows from the fact that for $i_1<i_2$, and $j_1 > j_2$, the vanishing cycles assciated to $\tau_{i_1 j_1}$ and $\tau_{i_2 j_2}$ do not intersect. For instance, with the notation of Figure \ref{fig:joincyclic}, we can make the series of trivial mutations:
\begin{multline}
1,2,3,4,5,6,7,8,9 \to 1,4,2,3,5,6,7,8,9 \to 1,4,2,5,3,6,7,8,9 \\
\to 1,4,7,2,5,3,6,8,9 \to 1,4,7,2,5,8,3,6,9
\end{multline}

 \end{remark}

\subsubsection{Dehn twists revisited}\label{sec:matchingmutations}
Suppose you have a Lefschtez fibration with a matching path in the base. The Dehn twist in the corresponding Lagrangian sphere can be described as an automorphism of the fibration: after a Hamiltonian isotopy, the Dehn twist can be assumed to respect the fibration (i.e.~to map fibres to fibres, possibly above different points). The automorphism of the base is given as follows. Let $B_\e$ be a small neighoubrhood of the matching path which does not contain any other critical values (say, all points at distance at most $\e$ from a point in the matching path). Positively rotate $B_{\e/2}$ by a half-twist (so that the two critical values which are the ends of the matching path are swapped),  fix the complement of $B_\e$, and interpolated smoothly between the two on $B_\e \backslash B_{\e/2}$. The automorphisms of each fibre are determined by requiring that the automorphism of the total space be compactly supported; for fibres of points outside $B_\e$, it is the identity, and for the fibre above the mid-point of the matching path (which can be taken to be fixed), it is given by the Dehn twist in the vanishing cycle associated to the matching path. One can prove this using the local model of the standard Lefschetz fibration on the total space of the $A_1$ Milnor fibre (which has two critical points with one matching path between them, and fibre the $A_1$ Milnor fibre with one fewer variable), and e.g.~imposing suitable equivariance conditions, similar to the argument in \cite[Section 6]{Thomas-Yau}. 

We shall make use of the following two  consequences. First, suppose that you have a Lefschetz fibration with two matching paths which intersect at one point, away from the critical values. Let $L$ and $L'$ be the corresponding Lagrangian spheres. Then $\tau_L L'$ can also be described by a matching path; see Figure \ref{fig:matchingmutation} for the local model (compare with Figure \ref{fig:mutation}). Note that the change to the matching path for $L'$ happens in an arbitrarily small neighbourhood of the matching path for $L$. See \cite[Remark 16.14]{Seidel08}.

 %For further details and a proof, see \cite[Chapter 3]{Seidel08}.

 \begin{figure}[htb]
\begin{center}
\includegraphics[scale=0.85]{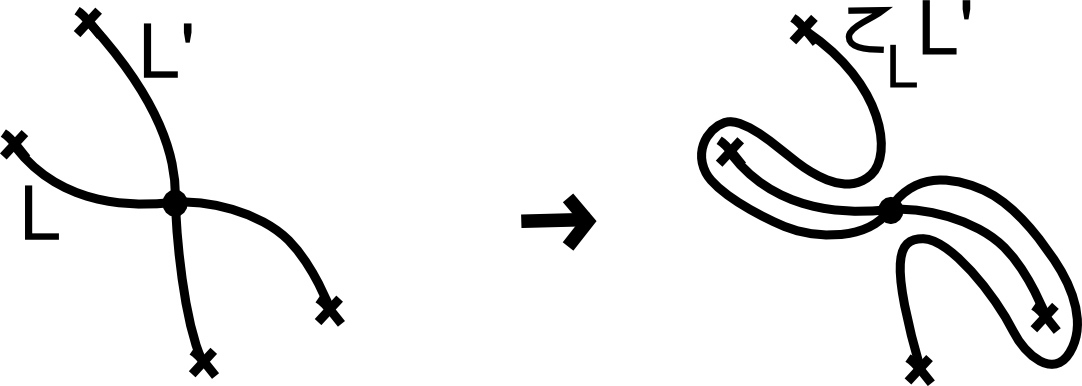}
\caption{Dehn twist of a matching cycle by another: the paths intersect in their interior.
}
\label{fig:matchingmutation}
\end{center}
\end{figure}

Second, suppose instead that the matching paths share a critical value (but do not otherwise intersect). Then $\tau_L L'$ can again be described by a matching path, as in Figure \ref{fig:matchingmutationends}. See \cite[Lemma 16.13]{Seidel08}.

\begin{figure}[htb]
\begin{center}
\includegraphics[scale=0.85]{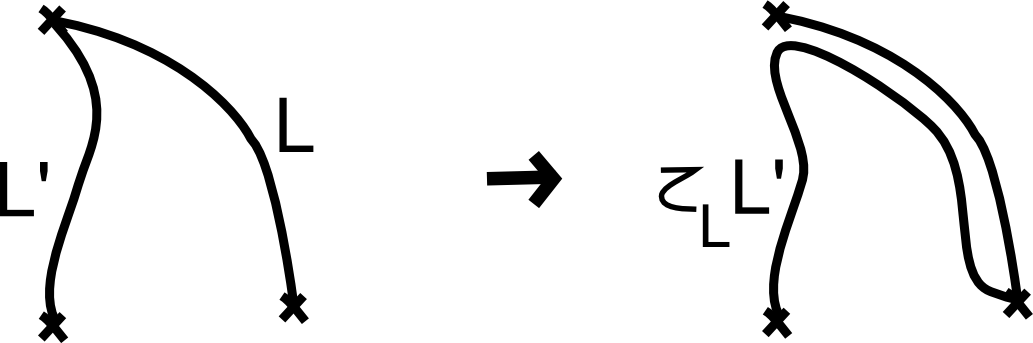}
\caption{Dehn twist of a matching cycle by another: the paths intersect at an end.
}
\label{fig:matchingmutationends}
\end{center}
\end{figure}

\subsection{Some extensions}\label{sec:generalizedextensions}

Many of the results and techniques described in previous parts of this section extend to somewhat wider frameworks. We collect here those extensions that we shall later make use of. These will be used in Section \ref{sec:T333}  where we  describe the Milnor fibre of $T_{3,3,3}$ with vanishing cycles, and in Section \ref{sec:Tpqr}, which considers the general $T_{p,q,r}$.

\subsubsection{Generalized Milnor fibres}\label{sec:generalizedMilnorfibre}
Suppose we have a polynomial $f: \C^{n+1} \to \C$ with finitely many isolated critical points, and distinct critical values. Let $f^{-1} (c)$ be a smooth fibre. For all sufficiently large $R_c$, the sphere $S_{R_c}$ intersects $f^{-1} (c)$ transversally (see e.g.~\cite[Corollary 2.8]{MilnorSingularities}). We shall call this the \emph{generalized Milnor fibre} of $f$. (As with actual Milnor fibres, one could also attach conical ends to get an open exact symplectic manifold.) If $f$ is a Morsification of the representative of a singularity with a single critical point, this is the same as the Milnor fibre. There's a  Lefschetz fibration 
\bq
f: V \subset  \C^{n+1} \to D \subset \C
\eq
for a suitably  chosen manifold with corners $V$, and manifold with boundary $D$ (containing all the critical values), with fibre the generalized Milnor fibre. This can be obtained by first picking $D$ to contain all of the critical values, and the picking $V$ by taking the union over $f^{-1} \cap D_{R_c}$, $c\in D$, noting that we can choose the $R_c$ to be varying smoothly with $c$ and be bounded. We shall consider such generalized Milnor fibres later, starting in Section \ref{sec:T333}. Among other things, they allow us to consider some functions that are not  Morsifications of any singularity. 

\subsubsection{More on A'Campo's techniques}
Suppose we have a polynomial $f$ as above, and that $n=1$. Moreover, suppose $f$ is real, and has a real polynomial deformation $f(x,y;t)$, where $t \in \R$, with the same properties as good real deformations, as described in Proposition \ref{th:gooddivides}: $f(x,y;0)=f(x,y)$, and for all sufficiently small $t\neq 0$, we have that:
\begin{itemize}
\item The real curve $C_t= \{ (x,y) | f(x,y;t)=0 \} $ is an $r$-branched divide.
\item  The number $k$ of double points of $C-t$ satisfies $2k-r+1=\mu$, where $\mu$ is the \emph{total} number of critical points of a Morse perturbation of $f$.
 \end{itemize} 
The reader may wish to keep in mind that often the zero locus of $f$ will itself be a suitable $r$-branched divide, and one can just take the constant deformation.
 A'Campo's techniques extend to understanding how to construct a copy of the generalized Milnor fibre of $f$, together with preferred vanishing paths and corresponding vanishing cycles. The algorithm is the same as previously; one simply needs to check that the construction of Section 3 and proof of Theorem 1 in \cite{ACampo99} do not use the fact that the divide originated from a single isolated singularity.

\subsubsection{Extending the Gabrielov construction to other Lefschetz fibrations}

Suppose $U \subset \C^{n+1}$, and $g: U \to \C$ is a Lefschetz fibration with $\mu$ critical points. (We are not assuming that $g$ is the Morsification of a singularity.) Let $Q$ and $\tilde{Q}$ be as in Section \ref{sec:Gabrielovcyclic}: for instance, $Q(y) = y^{d+1}$, and $\tilde{Q}$ is a Morsification of $Q$.   We can define a new Lefschetz fibration by taking
\begin{eqnarray}
g + \tilde{Q} :   U' \subset \C^{n+2}& \to & \quad \C \\
 (x,y) & \mapsto & g(x) + \tilde{Q}(y)
\end{eqnarray}
for a suitable open $U' \subset \C^{n+2}$, depending on $U$. This is a Lefschetz fibration with $d \mu$ critical points. 
Gabrielov's construction in \cite[pp.184-186]{Gabrielov1}, which we strengthened in Section \ref{sec:Gabrielovcyclic}, does not require $g$ to be a Morsification: it only requires the data of a Lefschetz fibration on an open $U \subset \C^{n+1}$, together with the auxiliary choices of a smooth base-point and a distinguished choice of vanishing paths.
In particular, the descriptions of Section \ref{sec:Gabrielovcyclic} extend to this setting, giving us choices of vanishing paths, and explicit descriptions of the associated cycles, as matching cycles in an auxiliary fibration.  (With care, it seems one could also apply the discussion to even wider collections of Lefschetz fibrations, though we will not need that.)

\section{Background on Floer theory and Fukaya categories} \label{sec:Fukayabackground}

\subsection{What flavour of the Fukaya category do we use?}

We will only consider real four-dimensional exact symplectic manifolds that are \emph{Liouville domains} (compact, contact type boundary), or Liouville domains with an infinite cylindrical end attached to the contact boundary. Moreover, these will all have vanishing $c_1$. (This is true of any Milnor fibre, and more generally of any smooth hypersurface.) We will use variations of the Fukaya category set up in Seidel's book \cite[Chapter II; Section 12]{Seidel08}.

\subsubsection{$\Z/2$--graded version} The objects of the Fukaya category, \emph{Lagrangian branes}, consist of triples $(L, \mathfrak{s}_L, E)$ where:
\begin{itemize}
\item $L$ is a compact, exact, orientable Lagrangian surface (notice any such $L$ will be spin);
\item $\mathfrak{s}_L$ is a spin structure on $L$;
\item $E$ is a flat complex line bundle on $L$.
\end{itemize}
We will see after defining the $A_\infty$--maps that there are redundancies within these choices. As a preliminary, notice the following: Suppose $L$, as above, has genus $g$. Fix an ordered basis for $H_1(L)$ (recall our homology groups have $\Z$--coefficients unless otherwise specified). This determines an isomorphism between the space of flat complex line bundles on $L$ and $(\C^{\ast})^{2g}$, given by holonomies about oriented simple closed curves representing the basis. On the other hand, this choice of basis also determines an identification of the space of spin structures on $L$ and $\{\pm 1\}^{2g}$: any spin structure on $L$ is determined by its restriction to the simple closed curves, which can either be trivial (to which we associated $1$) or non-trivial ($-1$). For more details, the reader could consult e.g.~\cite{Johnson}. Note that there will actually be some redundancy in the data $(L, \mathfrak{s}_L, E)$, as further detailed at the end of this subsection.

The difference with the set-up in \cite{Seidel08} is that we are also using flat complex line bundles. 
We'll explain how to define Floer chain complexes and the $A_\infty$--structure in our case by twisting the maps in \cite{Seidel08}. 
Suppose we have made universal choices of strip-like ends and consistent choices of regular Floer and perturbation data, as in \cite{Seidel08}.
Fix 
compact exact orientable Lagrangians $L_0$ and $L_1$, with spin structures $\mathfrak{s}_0$ and $\mathfrak{s}_1$.  To keep notation simple, assume that $L_0 \pitchfork L_1$, and that no Hamiltonian perturbation is made for that pair. We take our coefficient field to be $\C$. Using Seidel's notation, we have
$$
CF\big( (L_0, \mathfrak{s}_0), (L_1, \mathfrak{s}_1) \big) = \bigoplus_{x \in L_0 \cap L_1} |o_x| \cdot \C.
$$
Each of the pseudo-holomorphic discs $u$ contributing to the differential
give a map
$$
d_u: |o_x| \cdot \C \to |o_y| \cdot \C
$$
for some $x, y \in CF(L_0, L_1)$, and similarly for higher $A_\infty$--products. In the twisted case, we can use whatever regular data choices we have already made for the untwisted case. (In particular, we shall use the same pseudo-holomorphic discs to calculate differentials and $A_\infty$--operations.) Suppose $L_0$ and $L_1$ are further decorated with flat complex line bundles $E_0$ and $E_1$. We define the Floer complex to be:
\bq \label{eq:Floer_differential}
CF\big( (L_0, \mathfrak{s}_0, E_0) , (L_1, \mathfrak{s}_1, E_1) \big)  = \bigoplus_{x \in L_0 \cap L_1} |o_x| \otimes Hom (E_0 |_x, E_1|x).
\eq
To define $A_\infty$--products, we need to use the parallel transport maps associated to the $E_i$: 
$$
\pi_i^{\partial u}: E_i |_x \mapsto E_i |_y
$$
for $i=0,1$. The differential is given by:
$$
d(|o_x| \otimes \phi) = \sum_{u} d_u |o_x| \otimes \pi^{\partial u}_1 \circ \phi \circ 
(\pi^{\partial u}_0)^{-1}.
$$
The higher $A_\infty$--maps are defined analogously. 
To see that we do indeed get well-defined operations, and an $A_\infty$--category, one could go through the construction in \cite{Seidel08} and decorate the Banach bundles and operators that appear with the appropriate twists. 
Alternatively, notice the following: in the untwisted case, each of the $A_\infty$--relations, including $d^2=0$, holds homotopy class by homotopy class: each $A_\infty$--relation is obtained by considering the boundary of a moduli space of holomorphic discs with boundary conditions on a collection of Lagrangians, and there is a different component for each homotopy class of discs with such boundary conditions. 

Finally, an observation: if you start with a brane $(L, \mathfrak{s}_L, E)$, modifying $E$ by the action of an element of $\{  \pm 1 \} ^{2g} \subset (\C^{\ast})^{2g}$ is equivalent to keeping $E$ fixed and modifying the spin structure.
Essentially, this is because of the tensor products on the right-hand side of Equation \ref{eq:Floer_differential}: a change of sign in the orientation $|o_x|$ is equivalent to a change of sign in $Hom(E_0|_x, E_1|_x)$. 
 When carrying out computations later, we shall fix a basis for $H_1(L)$, assume that we are using the corresponding favourite spin structure (i.e.~the one that restricts to the trivial spin structure for each curve in the basis; we will suppress it from the notation), and specify $E$ by giving monodromy with respect to this basis.

\subsubsection{Absolutely $\Z$--graded version} 

We are considering exact symplectic manifolds $M$ with $c_1= 0$. Choose a lift
\bq
LGr(M) \to \widetilde{LGr}(M)
\eq
where $LGr(M)$ is the Lagrangian Grassmanian of $M$, and $\widetilde{LGr}(M)$ its universal cover. If $L$ is a Lagrangian with vanishing Maslov class, then we can find a consistent lift of the tangent planes to $L$. We denote such a lift by $\widetilde{L}$. 

Consider the category with objects given by triples $( \widetilde{L}, \mathfrak{s}_L , E)$, where:
\begin{itemize}
\item $L$ is a compact, exact, orientable Lagrangian of Maslov class zero, and $\widetilde{L}$ as above;
\item $\mathfrak{s}_L$ is a spin structure on $L$;
\item $E$ is a flat complex line bundle on $L$.
\end{itemize}
Again, one can define morphisms spaces and $A_\infty$--operations by starting with the set-up of \cite{Seidel08}, and twisting morphisms spaces and $A_\infty$--operations by the contributions of the flat line bundles. We get an $A_\infty$--category with an absolute $\Z$--grading, which recovers the $\Z_2$--grading of the previous section. 

Given two graded Lagrangians $\widetilde{L}_0$ and $\widetilde{L}_1$, and transverse intersection point $x$ naturally comes equipped with an absolute grading, which we denote by $\widetilde{I}(\widetilde{L}_0, \widetilde{L}_1, x)$, following \cite[Section 2d]{Seidel00}.

\subsection{Generation and split-generation}

The Fukaya category $\Fuk (M)$ with either gradings is an $A_\infty$ category. One can extend this to the category of twisted complexes, $Tw \Fuk (M)$; see e.g~\cite{Hasegawa} for a detailed account.
We shall use the following two definitions:

\begin{definition}
A collection of objects $B_1, \ldots B_m$ in $\Fuk(M)$ are said to \emph{generate} $\Fuk (M)$ if in $Tw \Fuk(M)$, every object of $\Fuk(M)$ is quasi-isomorphic to a twisted complex built from copies of $B_1, \ldots, B_m$. This means that every object in $\Fuk(M)$ can be obtained from $B_1, \ldots, B_m$ by using iterated mapping cones, and, if gradings are involved, grading shifts.

The collection $B_1, \ldots, B_m$ is said to  \emph{split-generate} $ \Fuk (M)$  if every object of $\Fuk(M)$ is quasi-isomorphic to a direct summand of a twisted complex built from copies of $B_1, \ldots, B_m$.

\end{definition}

We shall use the following:
\begin{lemma}\label{th:nosplitgeneration}
Suppose that $L \in \text{Ob } \Fuk(M)$ is such that 
\begin{itemize}
\item $HF(L,L) \neq 0$, and
\item $HF(L,B_i) =0$ for all $i=1, \ldots, m$.
\end{itemize}
Then the collection $B_1, \ldots, B_m$ cannot generate or even split-generate $\Fuk(M)$. 
\end{lemma}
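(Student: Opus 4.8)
The plan is to argue by contradiction using the basic functoriality of Floer cohomology under the operations that build twisted complexes. Suppose $B_1, \ldots, B_m$ split-generate $\Fuk(M)$. Then $L$ itself is quasi-isomorphic in $Tw\,\Fuk(M)$ to a direct summand of a twisted complex $C$ built out of shifted copies of the $B_i$. The key homological-algebra fact I would invoke is that $HF(L, -)$, viewed as a cohomological functor on $Tw\,\Fuk(M)$ (equivalently, on the derived category $H^0(Tw\,\Fuk(M))$), sends mapping cones to exact triangles and is additive under direct sums and grading shifts. Since each building block $B_i$ satisfies $HF(L, B_i) = 0$, an induction on the number of cones used to assemble $C$ shows $HF(L, C) = 0$: at each stage we have an exact triangle $HF(L, X) \to HF(L, Y) \to HF(L, \mathrm{Cone}(X\to Y)) \to$, and if the first two groups vanish so does the third; grading shifts of $B_i$ also have vanishing $HF$ from $L$ since shifting just reindexes the (zero) groups.

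Having established $HF(L, C) = 0$, I would then use that $L$ is a direct summand of $C$: in $H^0(Tw\,\Fuk(M))$ there are morphisms $\iota: L \to C$ and $\pi: C \to L$ with $\pi \circ \iota = \mathrm{id}_L$. Applying the functor $HF(L, -)$ gives maps $HF(L, L) \xrightarrow{\iota_*} HF(L, C) \xrightarrow{\pi_*} HF(L, L)$ whose composite is the identity on $HF(L,L)$. But the middle group is zero, forcing $HF(L,L) = 0$, which contradicts the hypothesis $HF(L,L) \neq 0$. (For the weaker statement about generation rather than split-generation, one does not even need the retract argument: $L \cong C$ outright, so $HF(L,L) = HF(L,C) = 0$ directly.) This contradiction shows no such collection $B_1, \ldots, B_m$ can split-generate, hence none can generate either.

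The only genuinely substantive point — and the step I would treat with the most care — is the claim that $HF(L, -)$ is a well-defined cohomological (i.e.\ exact-triangle-preserving, additive) functor out of the triangulated category $H^0(Tw\,\Fuk(M))$. This is standard in the theory of $A_\infty$-categories and their triangulated envelopes: the Yoneda-type module $\hom_{Tw\,\Fuk(M)}(L, -)$ is an $A_\infty$-module, taking cohomology gives an ordinary functor to graded vector spaces, and the triangulated structure on twisted complexes is designed precisely so that this functor is exact. I would cite the account of twisted complexes referenced earlier in the excerpt (\cite{Hasegawa}, and \cite{Seidel08}) for this, rather than reproving it. Everything else — the induction over the number of mapping cones, the behaviour under grading shifts and direct sums, and the retract argument — is formal once that functoriality is in hand, so no detailed computation is required.
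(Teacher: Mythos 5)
Your proof is correct, and it is the standard argument: the paper actually states this lemma without proof, treating it as a formal consequence of the fact that $HF(L,-)$ is a cohomological functor on $H^0(Tw\,\Fuk(M))$, which is exactly the content you supply. The induction over cones and shifts, followed by the retract argument $\pi_*\circ\iota_*=\mathrm{id}$ on $HF(L,L)$ for the split-generation case, is precisely what is intended, so there is nothing to add.
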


Seidel studied the Fukaya category of Milnor fibres of weighted homogeneous singularities. 

\begin{theorem}\label{th:Seidelweighted}[Seidel]
Let $f$ be a weighted homogeneous polynomial with weights $w_1, \ldots, w_d$, such that
$$
1/w_1 + \ldots + 1/w_d \neq 1.
$$
Then the absolutely graded Fukaya category of the Milnor fibre of $f$ is generated by vanishing cycles.
\end{theorem}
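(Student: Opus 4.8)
The plan is to deduce generation from the classical fact that the monodromy of a weighted homogeneous singularity is a finite-order symplectomorphism whose graded lift is, on a suitable power, a shift by a nonzero integer \emph{exactly} when $\sum_i 1/w_i\neq 1$. Concretely, I would show that no nonzero object of the (split-closed) derived Fukaya category $D^b\Fuk(M)$, $M=M_f$, has vanishing Floer cohomology with every member of a distinguished basis of vanishing cycles; by the standard split-generation criterion for proper $A_\infty$--categories this gives split-generation by the vanishing cycles (ruling out precisely the obstructing objects of the kind appearing in Lemma \ref{th:nosplitgeneration} and in Theorem \ref{th:nofukayageneration}), and the stronger statement of honest generation in the theorem follows from the additional structure of the associated directed Fukaya category and is part of the cited work of Seidel. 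To set up: fix a Morsification $\tilde f$ of $f$ and the associated Lefschetz fibration $\pi\colon E\to\C$ over a disc containing all $\mu$ critical values, with smooth fibre exact symplectomorphic to $M$ and a distinguished basis of Lagrangian spheres $V_1,\dots,V_\mu\subset M$. Because $f$ has an isolated singularity, $M\simeq\bigvee_\mu S^n$ is $(n-1)$--connected ($n=\dim_\C M$) and $c_1(M)=0$, so the graded framework of Section \ref{sec:Fukayabackground} applies once a grading structure on $M$ has been chosen (this is canonical up to homotopy when $H^1(M)=0$, e.g.\ for $f$ of at least three variables); each $V_i$ then carries a unique brane structure.

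The key input is the global monodromy. By symplectic Picard--Lefschetz theory \cite[Chapter 3]{Seidel08}, the monodromy of $\pi$ around a large loop enclosing all critical values is, up to Hamiltonian isotopy, a composition $\tau_{V_1}\circ\cdots\circ\tau_{V_\mu}$ of Dehn twists (in the order dictated by the distinguished basis), and it coincides with the geometric monodromy $h$ of the Milnor fibration of $f$. Moreover Seidel's exact triangle identifies each $(\tau_{V_j})_\ast$ with the twist functor $T_{V_j}(X)=\mathrm{Cone}\bigl(HF^\ast(V_j,X)\otimes V_j\to X\bigr)$ on $D^b\Fuk(M)$, so $h_\ast$ is a composition of these functors $T_{V_j}$ (the precise order, and whether one gets $T_{V_j}$ or its inverse, will be immaterial below). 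Now I would use weighted homogeneity: normalising the weights so that $f$ has weighted degree $1$ with $\deg x_i=1/w_i$, the geometric monodromy may be taken to be the linear map $x_i\mapsto e^{2\pi i/w_i}x_i$ restricted to $M=f^{-1}(1)$; this is unitary, hence a genuine symplectomorphism of $M$. For $N=\operatorname{lcm}(w_1,\dots,w_d)$ one has $h^N=\mathrm{id}_M$ as a map, so as a \emph{graded} symplectomorphism $h^N$ is the identity map equipped with a shift of the grading structure, i.e.\ $h_\ast^N=[\sigma]$ as a functor. The integer $\sigma$ is read off from the weight of the $\C^\ast$--scaling on the Poincar\'e residue complex volume form $dx_1\wedge\cdots\wedge dx_d / df$ of the hypersurface $f=1$, which equals $\sum_i 1/w_i-1$; hence $\sigma$ is a fixed nonzero integer multiple of $N\bigl(\sum_i 1/w_i-1\bigr)$, and the hypothesis $\sum 1/w_i\neq 1$ is precisely the assertion $\sigma\neq 0$.

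The rest is formal. Suppose $X\in D^b\Fuk(M)$ satisfies $HF^\ast(V_j,X)=0$ for every $j$. Then for each $j$ the exact triangle $HF^\ast(V_j,X)\otimes V_j\to X\to T_{V_j}(X)$ has vanishing left-hand term, so $T_{V_j}(X)\cong X$ (and likewise for $T_{V_j}^{-1}$); composing, $h_\ast(X)\cong X$, and iterating, $X\cong h_\ast^N(X)=X[\sigma]$. Since $X$ is a bounded twisted complex of compact exact Lagrangians, $HF^\ast(X,X)$ is finite-dimensional and concentrated in a bounded range of degrees; an isomorphism $X\cong X[\sigma]$ with $\sigma\neq 0$ then forces $HF^\ast(X,X)=0$, so $X\cong 0$. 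Thus no nonzero object is Floer-orthogonal to all the $V_j$, which is what was needed.

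The hard part --- and the only place the hypothesis $\sum 1/w_i\neq 1$ is used --- is the graded bookkeeping in the second paragraph: that $h^N$, the identity as a diffeomorphism, is as a graded symplectomorphism the shift by a \emph{nonzero} integer precisely when $\sum 1/w_i\neq 1$, and that the monodromy functor $h_\ast$ really is the relevant composition of twist functors. Once the graded Picard--Lefschetz package is in place, the conclusion is just Seidel's exact triangle together with the elementary observation that a bounded object isomorphic to a nontrivial shift of itself must vanish.
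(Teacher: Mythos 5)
Your proposal is correct and follows essentially the same route as the paper, whose proof is precisely the combination you reconstruct: the nontrivial grading shift of a power of the global monodromy from \cite[Section 4c]{Seidel00} together with the twist-functor/exact-triangle generation criterion of \cite[Proposition 18.17]{Seidel08}. Your ``formal'' paragraph establishes only that the right-orthogonal of the vanishing cycles vanishes and defers honest generation to Seidel's cited result, which is exactly what the paper does as well.
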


\begin{proof}
This follows from combining \cite[Section 4c]{Seidel00} and \cite[Proposition 18.17]{Seidel08}: the first of these shows that a power of the global monodromy of $f$ acts by a non-trivial grading shift on the Fukaya category; the second gives generation in such circumstances.
The results are stated for the Fukaya category with objects Lagrangian branes consisting of an absolutely graded Lagrangian submanifold together with a $Pin$ structure. (For our set-up, this corresponds to only allowing line bundles with monodromies of the form $\{ \pm 1 \} ^{2g} \subset (\C^\ast)^{2g}$ -- we only consider orientable Lagrangian surfaces, so we think about spin structures instead of $Pin$ structures.) However, the proofs extend to the case of Lagrangian submanifolds decorated with flat complex line bundles.
\end{proof}

\subsection{Convexity arguments}

In the descriptions of Section \ref{sec:vcycles}, we will usually present the Milnor fibre of $T_{p,q,r}$ as an open subset of a larger exact symplectic manifold. To calculate $A_\infty$--products between compact Lagrangians inside the Milnor fibre, we can use the following:

\begin{lem}{(Abouzaid, see e.g.~\cite{AbouzaidSeidel} or \cite[Lemma 7.5]{Seidel08})}
   Suppose $(N, \omega_N, \theta_N)$ is an arbitrary exact symplectic manifold, $(U, \omega_U, \theta_U)$ an exact symplectic manifold of the same dimension with contact type boundary, and $\iota: U \hookrightarrow$ int($N$) an exact sympletic embedding. Suppose that we also have an $\omega_N$-compatible almost complex structure $J$, whose restriction to $U$ is of contact type near $\partial U$.
Let $R$ be a compact connected Riemann surface with boundary. Suppose $u: R \to N$ a $J$-holomorphic map such that $u(\partial R) \subset$ int($U$). Then we have that $u(R) \subset$ int($U$) as well.
Moreover, this only requires $u$ to be $U$-holomorphic in a neighbourhood of $\partial U$.
\end{lem}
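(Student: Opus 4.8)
The plan is to prove the statement by the \emph{integrated maximum principle} (following Abouzaid--Seidel), the point being that the contact-type hypothesis on $J$ forces a sign on a boundary integral which is incompatible with the nonnegativity of symplectic area for a pseudoholomorphic curve.

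\emph{Setup near $\partial U$.} Using the Liouville flow of $\theta_U$, fix a collar of $\partial U$ inside $U$, identified with $\{1-\epsilon < r \le 1\}\times\partial U$ with $\partial U = \{r=1\}$ and $\theta_U = r\alpha$, where $\alpha = \theta_U|_{\partial U}$ is a contact form. Shrink $\epsilon$ so that three things hold on this collar: (i) $J$ is of contact type, which after the standard normalisation means $\theta_U\circ J = -dr$ there (equivalently, $r$ is strictly $J$-plurisubharmonic); (ii) writing $\iota^*\theta_N = \theta_U + dh$ with $h$ compactly supported in $\mathrm{int}(U)$, the collar is disjoint from $\mathrm{supp}(h)$, so $\iota^*\theta_N = \theta_U$ on it; (iii) $u$ is $J$-holomorphic over the preimage of this collar. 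Record the elementary topological observation that for any $c\in(1-\epsilon,1)$ the set $\mathcal{O}_c := \iota(\{c<r\le 1\})\cup\big(N\setminus\iota(\overline U)\big)$ is \emph{open} in $N$ (a point of $\partial U$ has a full $N$-neighbourhood contained in it), with topological boundary contained in $\iota(\{r=c\})$.

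\emph{Extracting the bad subsurface.} Suppose, for contradiction, $u(R)\not\subset\mathrm{int}(U)$. Since $R$ is connected, $u(\partial R)\subset\mathrm{int}(U)$, and $\mathrm{int}(U)$ and $N\setminus\overline U$ are disjoint open sets whose union is $N\setminus\partial U$, the curve must meet $\partial U$: there is $z_0\in\mathrm{int}(R)$ with $u(z_0)\in\partial U$. As $u(\partial R)$ is a compact subset of $\mathrm{int}(U)$, Sard's theorem lets us pick a regular value $c\in(1-\epsilon,1)$ of $r\circ u$ on $u^{-1}(\iota(\{1-\epsilon<r<1\}))$ with $r\circ u < c$ on the part of $\partial R$ landing in the collar. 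Let $\Sigma$ be the closure in $R$ of the connected component of $u^{-1}(\mathcal{O}_c)$ containing $z_0$. Then $\Sigma$ is a compact surface with $\Sigma\cap\partial R = \emptyset$, with $\partial\Sigma\subset u^{-1}(\iota(\{r=c\}))$, with $u(\partial\Sigma)$ contained in the collar, and such that near $\partial\Sigma$ the curve maps into $\iota(\{r>c\})$, while its interior maps into $\mathcal{O}_c$.

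\emph{Stokes and the contradiction.} Since $J$ is $\omega_N$-compatible, $u^*\omega_N\ge 0$ pointwise on $R$, so $\int_\Sigma u^*\omega_N\ge 0$. On the other hand, writing $v=\iota^{-1}\circ u$ near $\partial\Sigma$ and using $\iota^*\theta_N=\theta_U$ and $\iota^*\omega_N=\omega_U=d\theta_U$ on the collar, Stokes gives $\int_\Sigma u^*\omega_N = \int_{\partial\Sigma} v^*\theta_U = -\int_{\partial\Sigma} v^*(dr\circ J)$. For $w$ a positively oriented tangent vector to $\partial\Sigma$, $jw$ is the inward normal of $\Sigma$, so by $J$-holomorphicity $(dr\circ J)(du(w)) = dr(du(jw))$ equals the inward normal derivative of $r\circ u$ along $\partial\Sigma$; this is $\ge 0$ because $r\circ u\equiv c$ on $\partial\Sigma$ and $r\circ u>c$ just inside. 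Hence $\int_\Sigma u^*\omega_N\le 0$, so $\int_\Sigma u^*\omega_N = 0$, forcing $du\equiv 0$ on the connected set $\Sigma$; but then $u(\Sigma)$ is a single point, which must lie simultaneously in $\iota(\{r=c\})$ (image of $\partial\Sigma$) and in $\mathcal{O}_c$ (image of an interior point) --- absurd. This contradiction proves $u(R)\subset\mathrm{int}(U)$. Finally, holomorphicity of $u$ was used only over $\Sigma$, and by taking $c$ close to $1$ the analysis is confined to the collar of $\partial U$ together with the region it bounds; this is the sense in which the hypothesis can be weakened to $u$ being $U$-holomorphic near $\partial U$.

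\emph{Main obstacle.} The routine parts are Sard's theorem and the bookkeeping with $\mathcal{O}_c$; the delicate point is pinning down the precise form of the contact-type condition for $J$ and tracking orientations carefully enough that the boundary term $-\int_{\partial\Sigma} v^*(dr\circ J)$ comes out with the correct ($\le 0$) sign, since it is this single sign that makes the whole argument run.
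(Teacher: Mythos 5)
The paper does not actually prove this lemma --- it is quoted from Abouzaid--Seidel and \cite[Lemma 7.5]{Seidel08} --- so there is no in-paper argument to compare against. Your proof is the standard integrated maximum principle from those sources: extracting the subsurface $\Sigma$ lying beyond a regular level $\{r=c\}$ of the collar coordinate, applying Stokes, and playing the sign of the resulting boundary term against the pointwise nonnegativity of $u^\ast\omega_N$. The topological bookkeeping (openness of $\mathcal{O}_c$, Sard, $\Sigma\cap\partial R=\emptyset$, the endgame via $du\equiv 0$) is all correct, and your reading of the ``moreover'' clause --- that holomorphicity is only needed on the part of the domain mapping beyond the level $\{r=c\}$ --- is the intended one.

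There is one internal inconsistency, located exactly at the point you yourself flagged as delicate. You normalise the contact-type condition as $\theta_U\circ J=-dr$, but two lines later use $v^\ast\theta_U=-v^\ast(dr\circ J)$ on $\partial\Sigma$. These are not the same statement: applying $J$ once more and using $J^2=-\mathrm{id}$, the identity $\theta_U\circ J=-dr$ is equivalent to $\theta_U=+(dr\circ J)$, under which your boundary integral would come out $\ge 0$ and the contradiction would evaporate. The computation you actually carry out corresponds to the convention $\theta_U\circ J=+dr$, equivalently $\theta_U=-(dr\circ J)$, and this is the correct one (it is what makes $r$ strictly $J$-plurisubharmonic, since then $-d(dr\circ J)=d\theta_U=\omega_U$). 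So the argument as executed is sound; only the displayed normalisation needs to be corrected to match it.
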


%%%%%%%%%%%%%%%%%%%%%%%%%%%%%%%%%%%%%%%
%%%%%%%%%%%%%%%%%%%%%%%%%%%%%%%%%%%%%%%

\section{Vanishing cycles for $T_{p,q,r}$} \label{sec:Tpqrvcycles} \label{sec:vcycles}

\subsection{Case of $T_{p,q,2}$}\label{sec:Tpq2}

We shall take the following defining equation for $T_{p,q,2}$ (given in Section \ref{sec:unimodalreps}):
$$
\tau_{p,q,2} (x,y,z)=  (x^{p-2}-y^2)(x^2-\lambda y^{q-2}) +z^2
$$
where $\lambda \in \C$ is any constant such that the polynomial has an isolated singularity at zero. (In particular, we can choose $\lambda =1$, except when $p=q=4$, in which case $\lambda=2$ works.)

We can choose a real deformation $m_p(x,y)$ of $x^{p-2}-y^2$ and a real deformation $n_q(x,y)$ of $x^2-y^{q-2}$ such that 
$$
h_{p,q} (x,y) := m_p(x,y) n_q(x,y)
$$
has only non-degenerate critical points, and is a good real deformation of $\sigma_{p,q}(x,y):=(x^{p-2}-y^2)(x^2-y^{q-2})$, in the sense of section  \ref{sec:ACampo}, with divide given by Figure \ref{fig:Tpq2Morse}. (This was also considered by Guse\u \i n- Zade -- see \cite[Figure 3]{GuseinZade74}.)

\begin{figure}[htb]
\begin{center}
\includegraphics[scale=0.85]{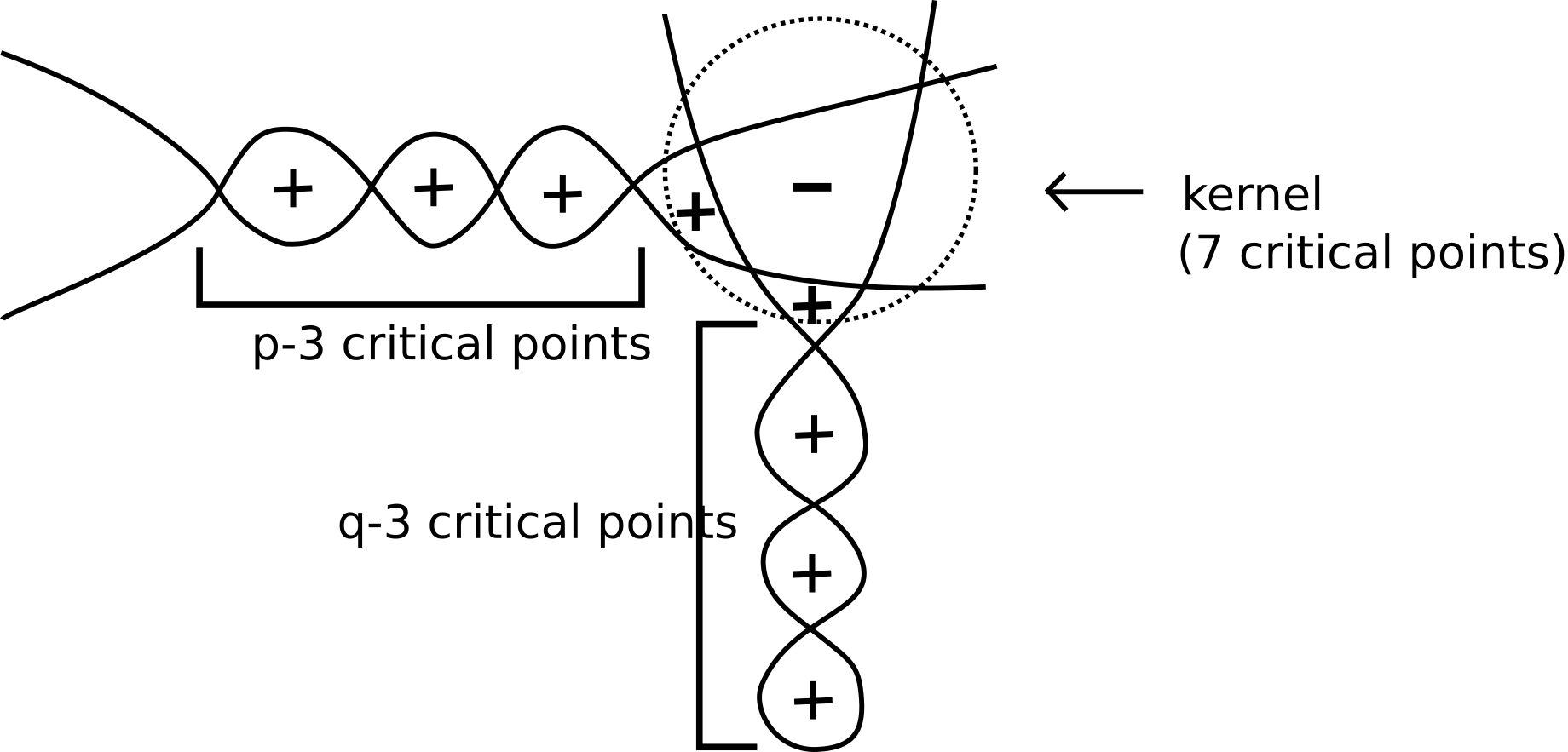}
\caption{The divide associated to $h_{p,q}$. Whether each branch has one or two strands depends on the parity of $p$ or $q$.}

\label{fig:Tpq2Morse}
\end{center}
\end{figure}

Regardless of the values of $p$ and $q$, we can arrange for the two branches to intersect so as to give a fixed `kernel' of seven critical points; the remainder of the critical points are arranged along two $(A_m)$--like chains. 

Using A'Campo's algorithm, we associate to this divide a copy of the Milnor fibre of $\sigma$, with vanishing paths and cycles. See Figures \ref{fig:45embedded} and \ref{fig:RS245ACampo} 
for the case where $p=4$ and $q=5$. The seven coloured cycles (labelled $a$ through $g$) correspond to the kernel; the three grey and black ones (labelled $p_3$, $q_3$ and $q_4$), to the two chains of length $p-3$ and $q-3$. The orientation is as follows: the face closest to the reader is oriented anti-clockwise. (Deleting $q_4$ and considering the remaining configuration of curves on a four-punctured genus three surface, one precisely gets the configuration of Figure \ref{fig:fibre44}.)

\begin{figure}[htb]
\begin{center}
\includegraphics[scale=0.60]{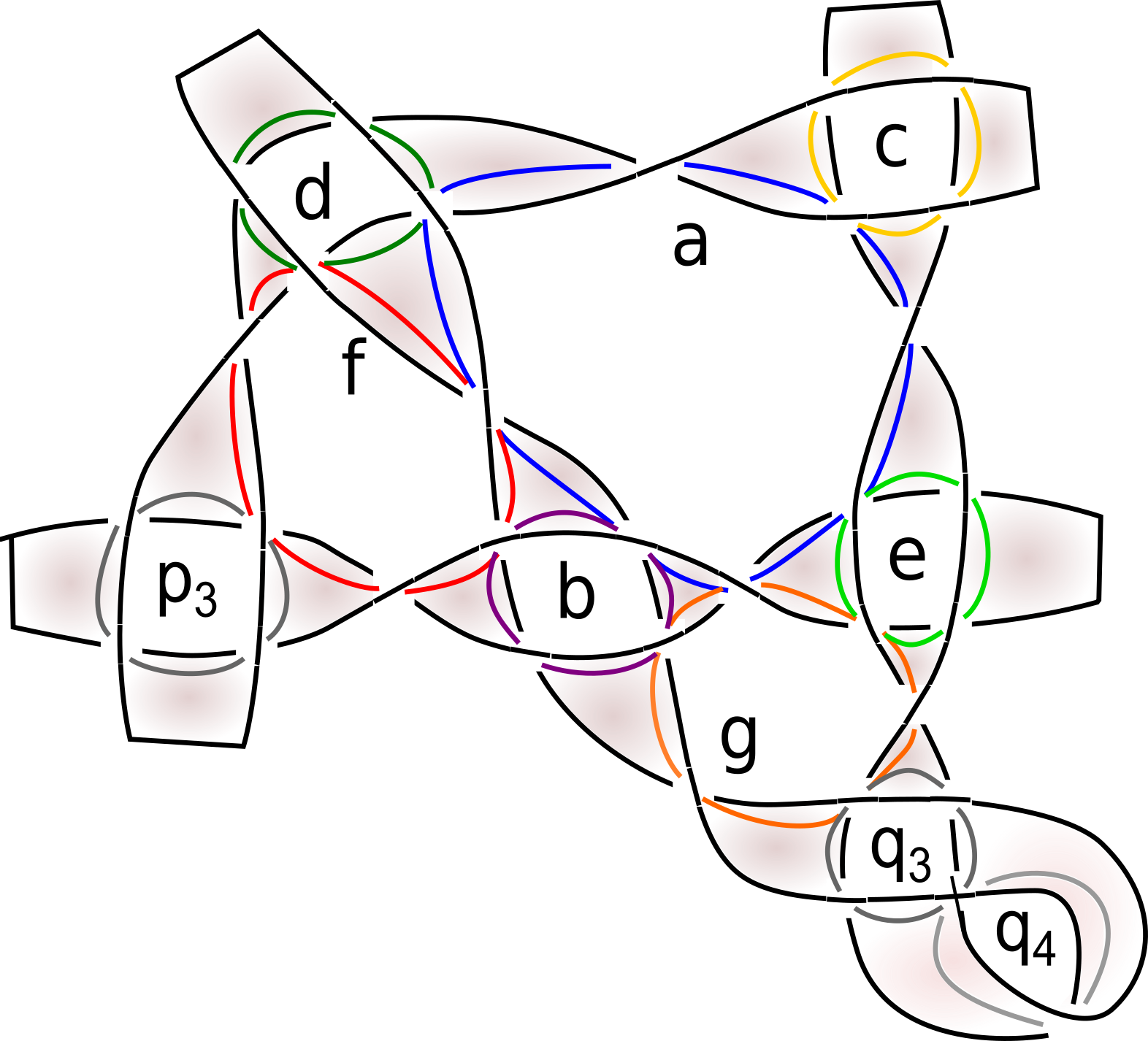}
\caption{Milnor fibre of $\sigma_{4,5}$, with the embedding coming from the A'Campo algorithm.}
\label{fig:45embedded}
\end{center}
\end{figure}

\begin{figure}[htb]
\begin{center}
\includegraphics[scale=0.85]{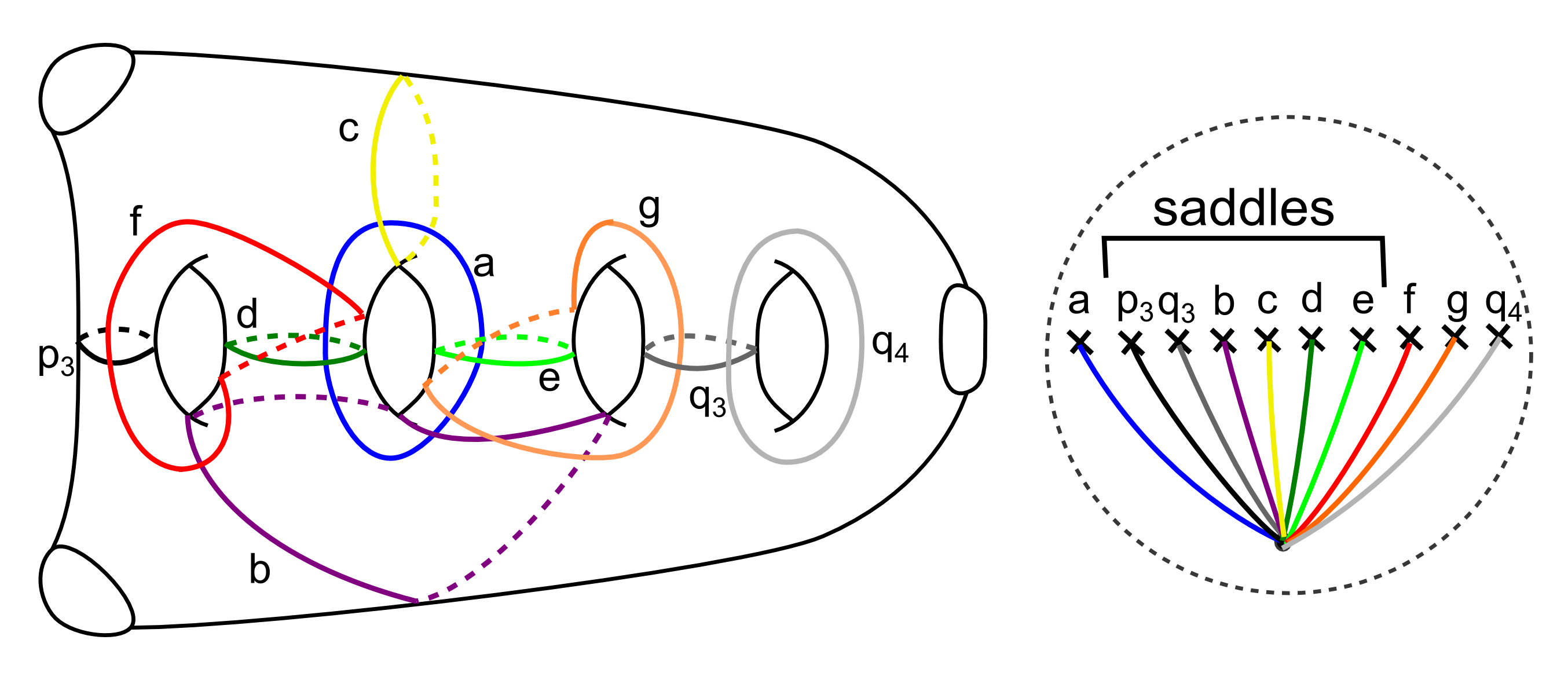}
\caption{Milnor fibre of $\sigma_{4,5}$, vanishing paths and cycles.}
\label{fig:RS245ACampo}
\end{center}
\end{figure}

\subsubsection{Mutations on the Milnor fibre of $\sigma_{p,q}$}\label{sec:spqmutations}\label{sec:Tpq2mutations}
We perform mutations on the Milnor fibre of $\sigma_{p,q}$. As per Proposition \ref{th:minimalintersection}, after an essentially local change to the symplectic form, we assume that to start with, the vanishing cycles intersect minimally. Moreover, after each mutation, we can also arrange for the new vanishing cycles cycles to intersect minimally  after Hamiltonian isotopy. (This is easy to check by hand as one performs the mutations, both in this section and in later ones, in which we find descriptions of more general $\mathcal{T}_{p,q,r}$. We will hereafter assume we do so.\footnote{It seems one might hope to prove a more general result about vanishing cycles for a two-dimensional Milnor fibre and minimal intersections, perhaps using hyperbolic geometry.}) Performing essentially local changes of the symplectic form as needed, we shall always assume that this is the case.

We shall include trivial mutations (involving vanishing cycles that do not intersect) for the case of $p=4$ and $q=5$. These are described in parenthesis. The reader might prefer to think of them as permissible re-orderings of the vanishing cycles. We make the following mutations. 
\begin{eqnarray}
f \mapsto ( f^1 =  \tau_e f  )\mapsto  f^2 =\tau_d f^1 %\qquad \qquad \qquad 
\\
g \mapsto g^1 = \tau_e g  %\qquad \qquad \qquad \qquad
\\ 
b \mapsto  ( b^1 = \tau_{q_3} b)
\mapsto (  b^2 = \tau_{p_3} b^1)
\mapsto  b^3 = \tau_a b^2
\mapsto ( b^4 = \tau_{q_4} b^3)
  \mapsto  b^5 = \tau_e b^4\\
b^5  \mapsto (  b^6 = \tau_{g^1} b^5)
   \mapsto b^7 =  \tau_d b^6
   \mapsto ( b^8 = \tau_{f^2} b^7)
     \mapsto  b^9 = \tau_c b^8 %\qquad 
\end{eqnarray}

Note that all the effective (that is, non-trivial) mutations just involve the seven cycles in the kernel. For different values of $p$ and $q$, there will be trivial mutations following the same pattern as above. 

Relabelling, the resulting collection of vanishing paths and cycles is given by Figure \ref{fig:RS245Mutated}. We only keep track of the order of the vanishing paths, and not the exact trajectories. (After isotopy of the base, this is the only data that matters.) 
\begin{figure}[htb]
\begin{center}
\includegraphics[scale=0.85]{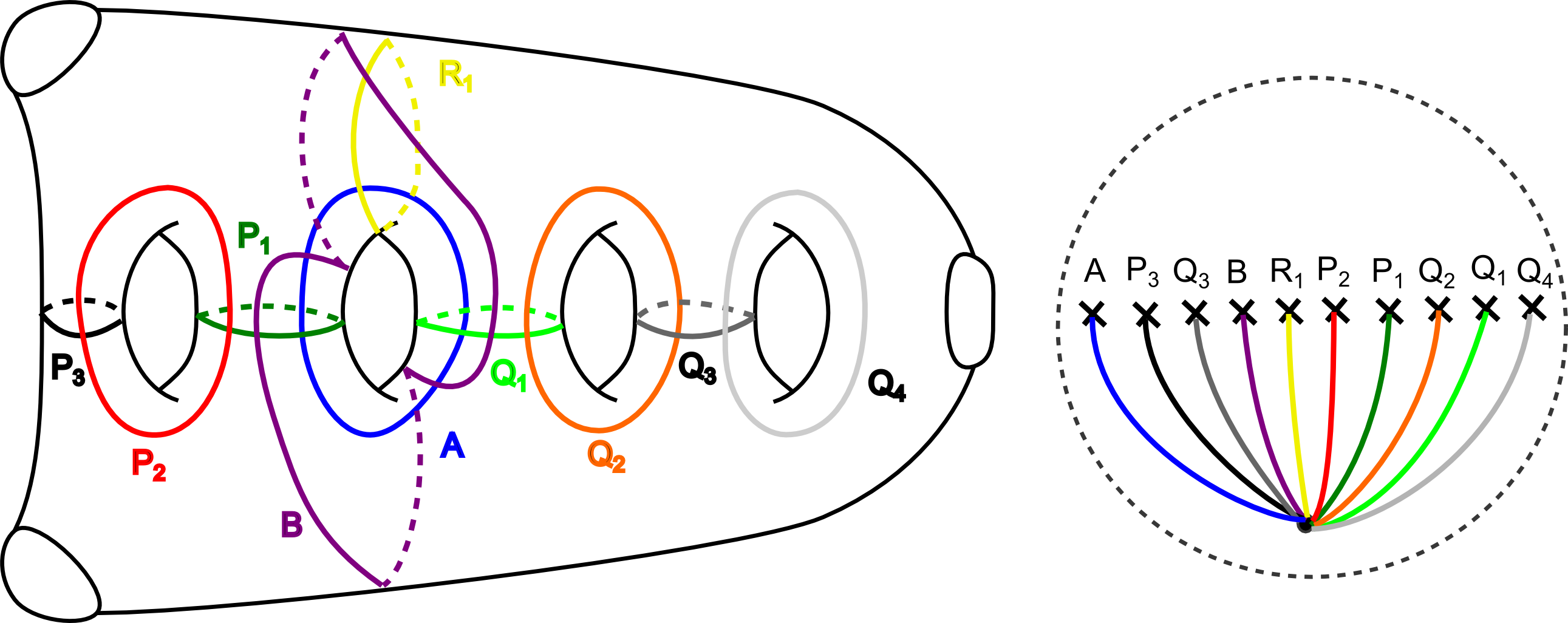}
\caption{Milnor fibre of $\sigma_{4,5}$, vanishing paths and cycles after some mutations.}
\label{fig:RS245Mutated}
\end{center}
\end{figure}
One can then make a sequence of trivial mutations, for instance as follows, to get the ordering of vanishing cycles given in Figure \ref{fig:RS245MutatedBase}. 

\begin{eqnarray}
Q_1 \mapsto \tau^{-1}_{Q_4} Q_1 \\
P_1 \mapsto \tau^{-1}_{Q_4} \tau^{-1}_{Q_2} P_1 \\
P_3 \mapsto \tau_{Q_2}\tau_{Q_4} \tau_{P_1}\tau_{Q_1}\tau_A P_3 \\
Q_3 \mapsto \tau_{P_1}\tau_{Q_1}\tau_A Q_3 \\
Q_4 \mapsto \tau_{P_1}\tau_{Q_1}\tau_{A}\tau_{B}
\tau_{R_1}\tau_{P_2}\tau_{P_3}\tau_{Q_2} Q_4 \\
R_1 \mapsto \tau^{-1}_{Q_1}\tau^{-1}_{P_1}\tau^{-1}_{Q_4}\tau^{-1}_{Q_3}
\tau^{-1}_{Q_2}\tau^{-1}_{P_3}\tau^{-1}_{P_2} R_1
\end{eqnarray}

\begin{figure}[htb]
\begin{center}
\includegraphics[scale=0.85]{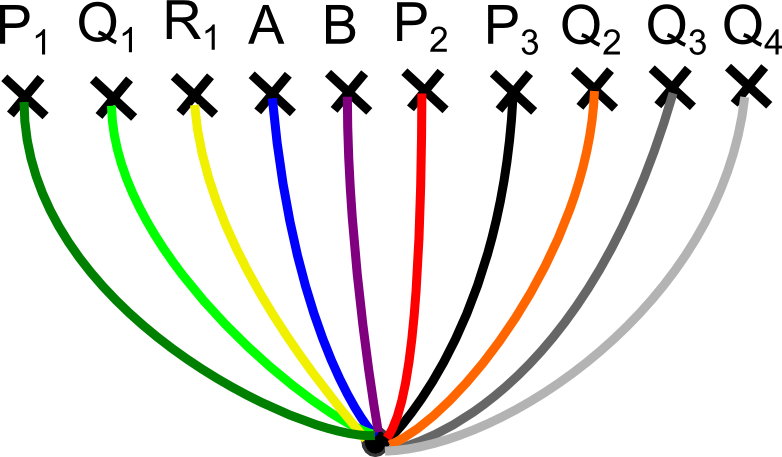}
\caption{Vanishing paths for $\sigma_{4,5}$ after further mutations (all trivial).}
\label{fig:RS245MutatedBase}
\end{center}
\end{figure}

For different values of $p$ and $q$, one can proceed similarly. We label cycles as suggested by Figure \ref{fig:RS245Mutated}. In general the indices of the $P_i$ run from $1$ to $p-1$, and those of the $Q_i$ from 1 to $q-1$. For all of these cases, we have $r=2$, and there is only $R_1$.

\subsubsection{Stabilization to $T_{p,q,2}$}\label{sec:Tpq2vcycles}
We now use Section \ref{sec:Gabrielovcyclic} to get a description of the vanishing cycles for $\tau_{p,q,2} (x,y,z) = \sigma_{p,q}(x,y) + z^2$. Use the Morsification of $\sigma_{p,q}$ to get a Morsification of $\tau_{p,q,2}$. Under the projection $\pi$ of a Milnor fibre to the $z$ coordinate, the resulting vanishing cycles are given by the matching paths of Figure \ref{fig:245MutatedBase}.
\begin{figure}[htb]
\begin{center}
\includegraphics[scale=1.2]{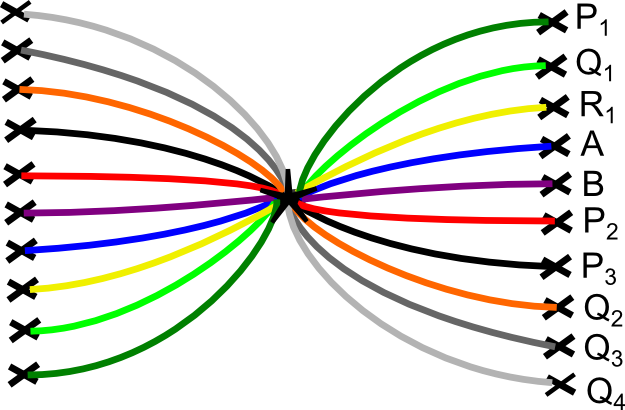}
\caption{Matching paths for the Milnor fibre of $\tau_{4,5,2}$, after isotopy.}
\label{fig:245MutatedBase}
\end{center}
\end{figure}
Their order is the same as the order for the vanishing cycles of $\sigma_{p,q}$, and we shall use the same notation. After isotopy, we may assume that the matching paths all intersect in exactly one point, say $\star$. The fibre above that point, $\pi^{-1} (\star)$, is the Milnor fibre of $\sigma_{p,q}$, and each vanishing cycle restricts to the corresponding simple closed curve on $\pi^{-1} (\star)$ (as on Figure  \ref{fig:RS245Mutated}). In particular, our cycles precisely give the intersection form described by Gabrielov (Figure \ref{fig:Tpqr'}).

\subsection{Case of $T_{3,3,3}$}\label{sec:T333}

\subsubsection{Overview and strategy}

The key will be the following construction:

\begin{proposition}

There exists a one parameter family of polynomial maps $M(x,y,z;t)$, $t \in [0,1]$, such that:
\begin{itemize}
\item $M( x,y,z ;t): \C^3 \to \C$ has distinct non degenerate critical points for all $t$.
\item $M(x,y,z;t)$, for all $t \in [0,1)$, has 14 critical points.
\item $M(x,y,z;1)$ is a deformation of $T_{3,3,3}$, and has eight critical points. (The six other critical values have increasing large absolute values as $t \to 1$, and have `escaped' to infinity at $t=1$.)
\item $M(x,y,z;0)$ is of the form 
$$
\check{m}(x,y) + Q(z)
$$
where $Q$ is a Morsification of $z^3$, and $\check{m}(x,y)$ is a real polynomial with seven critical points, whose real zero locus gives a divide satisfying the conditions used in section \ref{sec:ACampo}. 
\end{itemize}

\end{proposition}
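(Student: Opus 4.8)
The plan is to construct the family $M(x,y,z;t)$ explicitly as a deformation interpolating between a Thom--Sebastiani-type sum $\check m(x,y) + Q(z)$ at $t=0$ and a deformation of $T_{3,3,3}$ at $t=1$, and then to verify the four bulleted properties by hand. The natural ansatz, motivated by the shape of $T_{3,3,3}(x,y,z) = x^3 + y^3 + z^3 + axyz$, is to take something of the form
\[
M(x,y,z;t) = \check m(x,y) + z^3 + t\cdot c\, x y z + (\text{lower-order corrections in }z),
\]
where $\check m(x,y)$ is chosen to be a good real deformation of (a suitable representative of) the two-variable $D_4$-type singularity $x^3 - xy^2$ or $x(x^2-y^2)$ — more precisely a real polynomial with exactly seven critical points whose real zero locus is a $3$-branched divide with three concurrent lines, as in Figure~\ref{fig:44divide} but with three lines rather than four. (Concretely one can take $\check m(x,y) = xy(x-y+\eta)$ for small $\eta$, or a real Morsification thereof; one checks $2k - r + 1 = \mu$ with $k=3$ crossings, $r=3$ branches, $\mu = 4$, and that it ``sees'' a full Milnor number after the A'Campo count of regions.) Then $Q(z)$ is a Morsification of $z^3$, say $z^3 - \kappa z$, giving two critical points in the $z$-direction; the Thom--Sebastiani count gives $7 \cdot 2 = 14$ critical points for $M(\,\cdot\,;0)$, matching the first two bullets.

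The heart of the argument is the third bullet: as $t \to 1$ the $xyz$ term is switched on with full coefficient, and one must show that (i) for all $t < 1$ the critical points stay non-degenerate with distinct values and there remain exactly $14$ of them, while (ii) at $t=1$ exactly six of the critical values run off to infinity, leaving the eight critical points of (a deformation of) $T_{3,3,3}$ — note $\mu(T_{3,3,3}) = 3+3+3-1 = 8$. For step (i) I would compute the critical-point equations $\partial_x M = \partial_y M = \partial_z M = 0$ explicitly; because these are polynomial equations, non-degeneracy and distinctness of critical values fail only on a proper subvariety of the $t$-line, so after a further arbitrarily small generic perturbation (permitted by Theorem~\ref{th:tougeron} and the discussion of Morsifications in Section~\ref{sec:PicardLefschetz}) one arranges these conditions for all $t \in [0,1]$. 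For step (ii) one tracks the six ``extra'' critical points: the idea is that $M(\,\cdot\,;0)$ is a \emph{generalized} Milnor fibre setup (Section~\ref{sec:generalizedMilnorfibre}) rather than the Morsification of a single singularity — the seven-critical-point $\check m$ is not itself a Morsification of $D_4$ but of something with extra critical points ``at infinity'' relative to $T_{3,3,3}$ — and one shows that the coordinates of six of the fourteen critical points have modulus tending to $\infty$ as $t\to 1$ while their critical values likewise diverge. This is the deformation-argument philosophy flagged in the introduction: embed the Milnor fibre of $T_{3,3,3}$ into the generalized Milnor fibre of a function $g(x,y) + z^d$.

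The fourth bullet is then essentially bookkeeping: one exhibits $\check m(x,y)$ with the stated divide, confirms it has exactly seven critical points (three saddles at the crossings, and four extrema, one per region, by the A'Campo region count), and checks the zero locus satisfies the divide axioms of Section~\ref{sec:ACampo} (ordinary double points, connectedness, the region-intersection condition); writing $M(x,y,z;0) = \check m(x,y) + Q(z)$ with $Q$ a chosen Morsification of $z^3$ closes it. The main obstacle I expect is step (ii) — rigorously controlling which six critical points escape to infinity and that they do so for the critical \emph{values} as well, i.e.\ proving that the limit $t\to 1$ really does discard exactly the right six and retains a deformation of $T_{3,3,3}$ with its eight vanishing cycles. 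One clean way to organize this: reverse the logic and \emph{start} from $T_{3,3,3}$, deform it (e.g.\ via $x^3 + y^3 + z^3 + axyz \rightsquigarrow x^3 + y^3 + z^3 + axyz + s(\text{terms})$) so as to pull six critical points in from infinity, identify the resulting $14$-critical-point function with $\check m + Q$ after a change of coordinates and rescaling, and then run the parameter backwards to define $t \mapsto M(\,\cdot\,;t)$; the escape-to-infinity statement then becomes the (more tractable) statement that a specific one-parameter family of polynomials loses degree-dominance in a controlled way. Either way, the polynomial nature of everything means all genericity hypotheses (non-degeneracy, distinct critical values, transversality to spheres of large radius) hold off a proper subvariety and can be arranged by a final small perturbation.
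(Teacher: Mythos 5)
There is a genuine gap, concentrated in your choice of $\check m$ and in the mechanism by which $M(\cdot\,;1)$ becomes a deformation of $T_{3,3,3}$. Your concrete candidate $\check m(x,y)=xy(x-y+\eta)$ is a Morsification of $D_4$ and therefore has exactly $\mu=4$ critical points (your own count $2k-r+1=4$ with one region and three crossings confirms this), which contradicts the requirement, stated in the proposition itself, that $\check m$ have \emph{seven} critical points; it also breaks your Thom--Sebastiani count $7\cdot 2=14$. The function actually needed is a degree-four polynomial --- a product of two real conics --- whose divide has two branches, four crossings and three regions ($4+3=7$ critical points), and a key feature is that this $\check m$ is \emph{not} the Morsification of any isolated plane curve singularity (the only singularities with $\mu=7$ are $A_7$, $D_7$, $E_7$, and the divide's intersection form has nontrivial nullspace); one works with its generalized Milnor fibre. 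You gesture at this later ("not itself a Morsification of $D_4$"), but your explicit example is inconsistent with it.

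Second, the ansatz $M=\check m(x,y)+z^3+t\,c\,xyz+\cdots$ has no mechanism for turning the quartic-in-$(x,y)$ function $\check m$ into a deformation of $x^3+y^3+z^3+axyz$ at $t=1$: the $x^2y^2$ term of $\check m$ survives, so $M(\cdot\,;1)$ is not a Morsification of $T_{3,3,3}$ and the six-points-to-infinity count has no reason to hold. The construction that works takes
$M(x,y,z;t)=\check m(x,y)+2(3z+txy)^2+8iz^2+\tfrac{16i}{3}z^3$ with $\check m=\tilde m -2xy$ chosen so that the $2t^2x^2y^2$ term produced by the square cancels the $-2x^2y^2$ term of $\check m$ exactly at $t=1$, leaving $x^3+y^3+12xyz+(\text{terms in }z)+\delta(x,y)$, visibly a Morsification of $T_{3,3,3}$; the cross term $12txyz$ is what switches on the $axyz$ deformation. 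The escape to infinity of the remaining six critical values is then verified by explicitly tracking the critical values of this family (in practice, numerically). Your "reverse the logic" suggestion would still need this cancellation device to identify the $14$-point function with a Thom--Sebastiani sum, and your appeal to genericity off a proper subvariety handles nondegeneracy and distinctness but not the count of which critical points survive.
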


This family will be constructed further down. Given this, we  proceed in several steps. We start by combining the techniques of Sections \ref{sec:ACampo} and \ref{sec:Gabrielovcyclic} to give an explicit description of vanishing paths and cycles in the generalized Milnor fibre of $\check{m}+Q$; the vanishing cycles are given by matching paths in an auxiliary Lefschetz fibration.

For all $t<1$, the generalized Milnor fibres (as defined in Section \ref{sec:generalizedMilnorfibre}) of $M(x,y,z;t)$ are all exact symplectomorphic to one another.  Moreover, 
we get an exact embedding of the Milnor fibre of $T_{3,3,3}$, say $\mathcal{T}_{3,3,3}$, into this space. 
In particular, the Floer cohomology (or more generally, higher $A_\infty$--products) between any two objects in the Fukaya category of $\mathcal{T}_{3,3,3}$ can be computed inside the generalized Milnor fibre of $\check{m}+Q$ instead. 

We want to realise a distinguished collection of vanishing cycles for $T_{3,3,3}$ as a subset of a collection of distinguished vanishing cycles of the generalized Milnor fibre. (This is completely analogous to the process with Milnor fibres of two singularities, one of which is adjacent to the other.) Pick vanishing paths for $M(x,y,z;0)$; as $t$ increases, these deform to vanishing paths for  $M(x,y,z;t)$, any $t<1$. To get vanishing cycles with the desired property, we just need to pick paths that also deform to vanishing paths for $t=1$ (that is, the paths for the eight critical values that remain at $t=1$ do not get `broken' as the six other critical values go off to infinity). We pick such paths.

 We then work backwards: starting with the \emph{known} configuration of vanishing cycles for $M(x,y,z;0)$ (as matching paths in an auxiliary fibration), together with the corresponding vanishing paths, we make mutations to get to the configuration of vanishing paths that is compatible with the deformation in $t$. We track these mutations in the auxiliary fibration. This gives us a description of the vanishing cycles for $T_{3,3,3}$ as matching paths in the auxiliary Lefschetz fibration. Finally, we make a few further mutations, modelled on the case of $T_{p,q,2}$, to get to a nicer collection of matching paths, which is a basis giving Gabrielov's intersection form (Figure \ref{fig:Tpqr'}).

\begin{remark}
We use a deformation argument for $T_{3,3,3}$, even though one could get a description of its Milnor fibre and vanishing cycles using solely A'Campo and Gabrielov's techniques: one representative for $T_{3,3,3}$ is $x^3+ y ^3+ z^3$. The reasons for using a deformation are two-fold: one the one hand, it readily gives a configuration in which we can perform surgery to get an exact torus, and on the other hand, it allows us to extend to the case of higher $r$.
\end{remark}

\subsubsection{A technical lemma}

We shall make repeated use of the following lemma, which is really an observation:

\begin{lemma}\label{th:trivialisotopy}
Suppose that two matching paths intersect in a point  (as in the left-hand side of Figure \ref{fig:trivialisotopy}), and that after Hamiltonian isotopy, the corresponding vanishing cycles do not intersect in the fibre above that point. Then we can make a local change to one of the matching paths so that they no longer intersect, as in the right-hand side of Figure \ref{fig:trivialisotopy}. The resulting matching cycle is Hamiltonian isotopic to the original one.
\begin{figure}[htb]
\begin{center}
\includegraphics [scale=0.85]{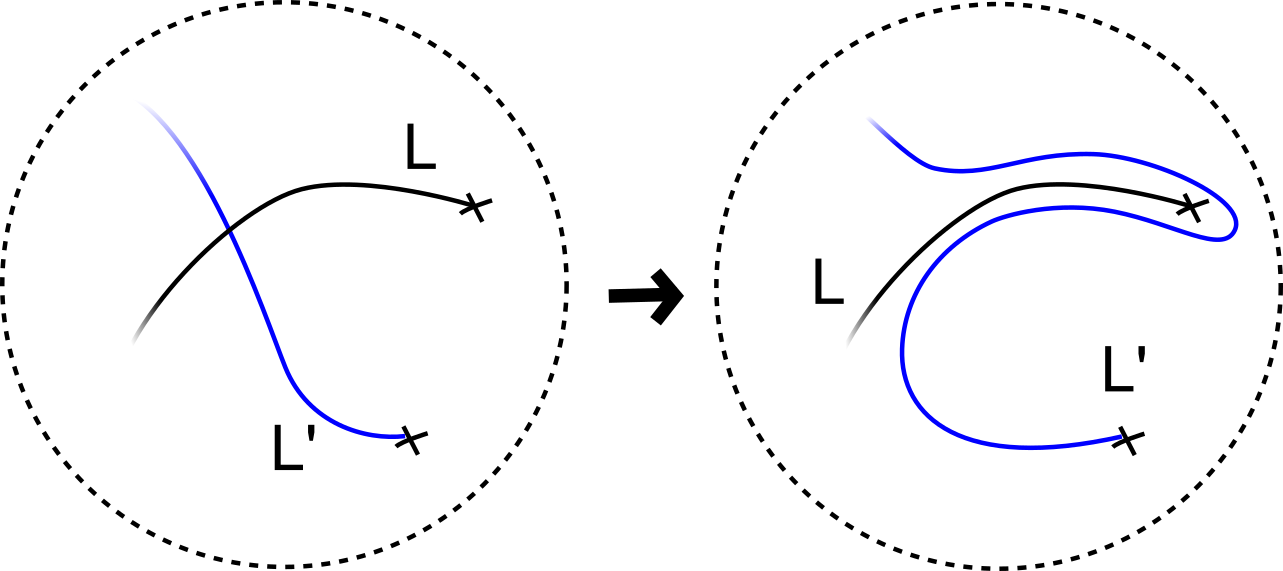}
% or
\caption{Local change of matching paths for vacuous intersections
}
\label{fig:trivialisotopy}
\end{center}
\end{figure}

\end{lemma}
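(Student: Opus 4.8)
The plan is to localise around the crossing point, put the fibration in product form there, and realise the path modification by a compactly supported Hamiltonian isotopy of the total space. Write $\gamma,\gamma'$ for the two matching paths, $L,L'$ for the corresponding matching cycles, and $p$ for the point where $\gamma$ and $\gamma'$ meet transversally. Since matching paths run between critical values, which lie in the interior of the base, $p$ is a regular value in the interior, so there is a ball $B_r(p)$ whose closure contains no critical value; the spheres $L\cap\Sigma_p$ and $L'\cap\Sigma_p$ (where $\Sigma_p$ is the fibre over $p$) are, up to Hamiltonian isotopy in $\Sigma_p$, the ``corresponding vanishing cycles'' appearing in the statement.

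First I would normalise the geometry near $\Sigma_p$. By Lemmas \ref{th:changeomegatoproduct} and \ref{th:changejtoproduct} (and the remark in Section \ref{sec:moregeneralLefschetzfibrations} on more general Lefschetz fibrations), after an essentially local change of $\omega$ and of $J$ -- which by Corollary \ref{th:movevcycle} does not change the Hamiltonian isotopy classes of $L$ and $L'$ -- we may assume that over $B_{r/2}(p)$ the fibration is the product $\Sigma_p\times B_{r/2}(p)$, with product symplectic form and product almost complex structure; here the auxiliary compact $K\subset\Sigma_p$ is taken to contain $L\cap\Sigma_p$, $L'\cap\Sigma_p$ and the disjoint representatives provided by the hypothesis. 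In this product, after a further fibrewise Hamiltonian isotopy (again harmless, by Corollary \ref{th:movevcycle}), the portions of the matching cycles over $B_{r/2}(p)$ are the products $(L\cap\Sigma_p)\times(\gamma\cap B_{r/2}(p))$ and $(L'\cap\Sigma_p)\times(\gamma'\cap B_{r/2}(p))$. We now invoke the hypothesis to make $L\cap\Sigma_p$ and $L'\cap\Sigma_p$ disjoint in $\Sigma_p$; then $L$ and $L'$ are disjoint over $B_{r/2}(p)$, being products of disjoint spheres with arcs meeting only at $p$.

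Next I would modify the path. Inside $B_{r/2}(p)$ replace the arc $\gamma'\cap B_{r/2}(p)$ by an arc $\widetilde\gamma'\cap B_{r/2}(p)$ with the same endpoints on $\partial B_{r/2}(p)$, isotopic to it rel endpoints, disjoint from $\gamma$, and cutting $B_{r/2}(p)$ into two regions of the same areas; leave $\gamma'$ unchanged outside. Because $B_{r/2}(p)$ is a disc, there is an area-preserving isotopy compactly supported in its interior taking $\gamma'\cap B_{r/2}(p)$ to $\widetilde\gamma'\cap B_{r/2}(p)$, and being compactly supported and area-preserving on a disc it is Hamiltonian. Its product with the identity on $\Sigma_p$, extended by the identity, is a compactly supported Hamiltonian isotopy $\Psi_t$ of the whole total space; $\Psi_1(L')$ equals $L'$ outside $\pi^{-1}(B_{r/2}(p))$, and equals $(L'\cap\Sigma_p)\times(\widetilde\gamma'\cap B_{r/2}(p))$ over $B_{r/2}(p)$, i.e.\ it is precisely the matching cycle of $\widetilde\gamma'$. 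Since $\widetilde\gamma'$ is isotopic to $\gamma'$ rel critical values it is again a matching path, and undoing the essentially local changes shows its matching cycle is Hamiltonian isotopic to $L'$.

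The argument is really an observation; the only care needed is the bookkeeping, namely checking that after the essentially local changes the matching cycles are genuine products near $\Sigma_p$, and that $\Psi_1(L')$ is the matching cycle of $\widetilde\gamma'$ rather than some sphere merely isotopic to it. This is the same manoeuvre already used in Section \ref{sec:matchingmutations} to describe Dehn twists of matching cycles by fibred automorphisms; cf.\ \cite[Remark 16.14]{Seidel08}. The disjointness hypothesis is essential: the crossing of $\gamma$ and $\gamma'$ encodes the intersection of $L$ and $L'$ over $p$, and only when that intersection can be cancelled in the fibre can the crossing be removed by a local change of paths.
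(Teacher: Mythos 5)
Your proof is correct. The paper gives no argument for this lemma at all (it is introduced as ``really an observation''), and what you have written is the natural elaboration using exactly the paper's own toolkit: normalising to a product symplectic form and almost complex structure near the fibre over the crossing via Lemmas \ref{th:changeomegatoproduct} and \ref{th:changejtoproduct}, realising the fibrewise displacement by an essentially local change of the form as in Corollary \ref{th:movevcycle}, and then pushing the arc off by a compactly supported area-preserving (hence Hamiltonian) isotopy of the base disc crossed with the identity on the fibre. The only phrase worth tightening is ``two regions of the same areas'': you mean the same areas as those cut out by the original arc $\gamma'\cap B_{r/2}(p)$, which is precisely the condition needed for the base isotopy to exist, and such an arc disjoint from $\gamma$ clearly exists by compensating the displaced area elsewhere in the disc.
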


\subsubsection{Initial configuration}

Consider the function
\bq
\tilde{m}(x,y) = 2 \big( (x+0.25)^2 -0.5(y+0.25)-2  \big)  \big(  0.5(x+0.25) +2 -(y+0.25)^2 \big)
\eq
Note that $\frac{1}{2} \tilde{m}$ is a representative for $h_{3,3}$, following the notation of Section \ref{sec:Tpq2}. 
This has seven critical points. The real locus of $\{ \tilde{m}(x,y) =0 \}$ is given in Figure \ref{fig:checkmxy}.
\begin{figure}[htb]
\begin{center}
\includegraphics [scale=0.30]{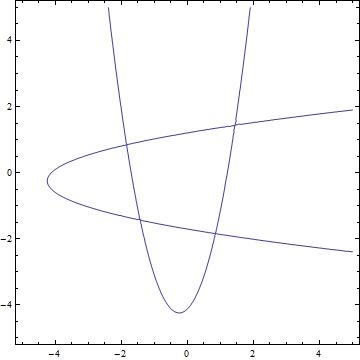}
% or
\caption{ Real locus of $\{ \tilde{m}(x,y) =0 \}$ 
}
\label{fig:checkmxy}
\end{center}
\end{figure}
There are three critical values: a minimum, zero (multiplicity four) and a maximum (multiplicity two). Note that we've arranged for the minimum to be at $(0,0)$.  It realises the `kernel' of the good real deformations we were considering for $T_{p,q,2}$, with the caveat that $\tilde{m}$ is actually \emph{not} the deformation of \emph{any} isolated singularity. (One way to see this is to use the fact that the only singularities with Milnor number seven are $A_7$, $D_7$ and $E_7$; the intersection form associated to the divide of $\tilde{m}$ has non-trivial nullspace, so it is none of these. ) 

Consider a small deformation of $\tilde{m}$, say $m$. It defines a Lefschetz fibration 
\bq
m : V \subset \C^2 \to \C
\eq
with seven critical points and values ($V$ is a suitably chosen large open set). We can still use A'Campo's work to get the topological type of the fibre, and vanishing cycles and paths (see Section \ref{sec:generalizedextensions}). The fibre is a twice--punctured genus three surface; if we choose, as for $T_{p,q,2}$, a regular value of the form $-i \eta$ ($\eta \in \R_+$), and vanishing paths given by straight lines, the vanishing cycles are precisely the coloured cycles in Figure \ref{fig:RS245ACampo} (labelled $a$ through $g$). 

Now let $\tilde{Q}(z)$ be a Morsification of the function $Q(z)=z^3$. As  in Section \ref{sec:Gabrielovcyclic}, we get a Lefschetz fibration
\bq
m+ \tilde{Q}: V' \subset \C^3 \to \C
\eq
with $14$ critical points and values (w.l.o.g.~distinct). We can apply the symplectic version of Gabrielov's techniques to understand vanishing paths and cycles for this. 
To make calculations a little cleaner (with symmetry considerations), we will use the deformation of $\tilde{m}$ given by:
\bq
\check{m}(x,y) = \tilde{m}(x,y) - 2 xy.
\eq
This has seven critical points, and four critical values. One can check that in ascending real order, they correspond to $a$, then $b$ and $c$, then $d$ and $e$, then $f$ and $g$. The minimum -- corresponding to $a$ -- is still $(0,0)$. 
Now consider the function
$$
M(x,y,z) = \check{m}(x,y) +( 18 + 8i) z^2 +  \frac{16i}{3} z^3.
$$
We have chosen a different deformation for $Q$ than in Section \ref{sec:cyclicsymmetry}, so that it would have critical values with different imaginary parts; as the critical values of $\check{m}(x,y)$ are all real, this helps with visualization. 

Critical values are given in Figure \ref{fig:T333atstart}.
The critical values were plotted using the Mathematica code in Appendix \ref{ap:mathematicacode}.
 Vanishing paths given by Gabrielov's algorithm are also on that figure; these were superimposed onto the plot produced by Mathematica.
The yellow--and--purple paths (third and fourth) each give two vanishing paths, though of course the corresponding vanishing cycles do not intersect, so the order does not matter. The same goes for the green (fifth and sixth) and orange (seventh and eighth) paths. 
Note that 
\begin{itemize}

\item
 We have  made trivial mutations to pick the `other' lexicographic ordering between vanishing cycles of $m$ and $Q$ -- as described in Remark \ref{rmk:changinglexicographicordering}, as this was graphically cleaner.

\item Instead of picking a `central' base point with imaginary value roughly between those of the two critical values of $\tilde{Q}$, we have picked one with a smaller imaginary value. This will help make matters graphically cleaner at the next step.

\end{itemize}

\begin{figure}[htb]
\begin{center}
\includegraphics[scale=0.30]{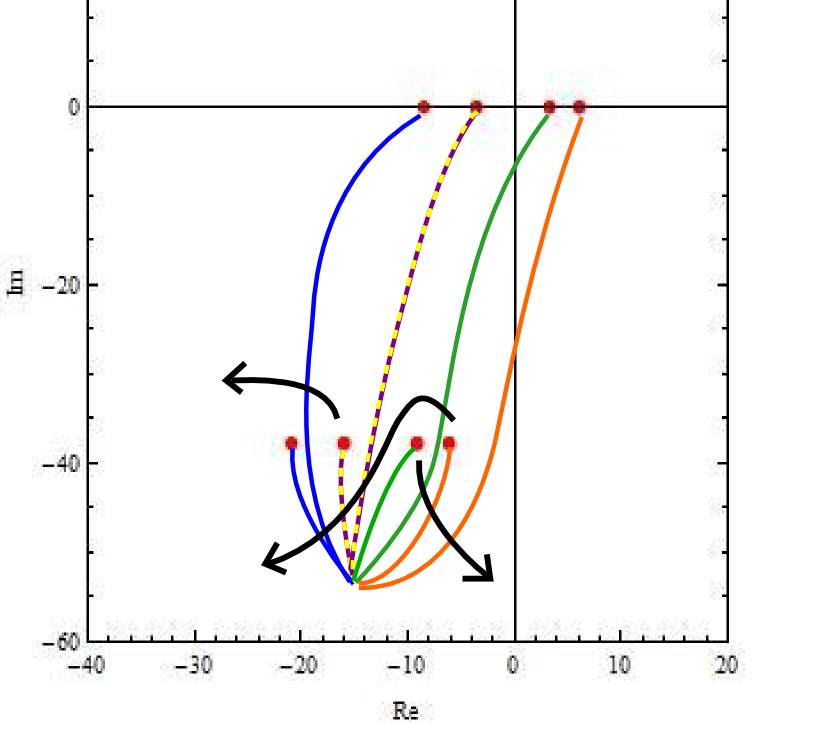}
% or [scale=0.85]
\caption{ Critical values and vanishing paths for $M(x,y,z)$, with behaviour of the critical values under deformation.}
\label{fig:T333atstart}
\end{center}
\end{figure}

We label the vanishing cycles in increasing order as $(a,1), (a,2), (b,1), \ldots, (g,2)$, consistently with the letter labelling previously used for the kernel. Note $(a,1)$ and $(a,2)$ -- both associated to minima of $\check{m}$ -- correspond to critical points of the form $(0,0, \, \cdot \,)$. By construction, the critical points and values associated to $(a,1)$ and $(a,2)$ remain fixed as $t$ varies.

\subsubsection{Deformation to $T_{3,3,3}$}

Consider the deformation
\bq
M(x,y,z; t) = \check{m}(x,y) + 2(3z+txy)^2 + 8iz^2+ \frac{16i}{3} z^3.
\eq
for $t \in [0,1]$. For $t=0$, this is simply $M(x,y,z)$. For $t=1$, the $-2x^2y^2$ term in $\check{m}(x,y)$ is cancelled by $2t^2x^2y^2$; moreover, one can check  that 
\bq
M(x,y,z;1) = x^3+y^3 + 12xyz + 8iz^2+ 18z^2 + \frac{16i}{3} z^3 + \delta(x,y)
\eq
for some function $\delta(x,y)$. One can check that $M(x,y,z;1)$, a function with 8 critical points, and five critical values,   is a Morsification of $x^3+y^3+z^3+12xyz$ (a representative of $T_{3,3,3}$), as follows: consider the function  
\bq
L(x,y,x;t) = x^3+y^3+12xyz+8iz^2+ 18z^2+\frac{16i}{3}z^2 + (1-t) \delta(x,y).
\eq
As $t$ varies between zero and one, the critical values get deformed smoothly (none escapes to infinity, and none comes in from infinity). These can be plotted using the Mathematica code of Appendix \ref{ap:B}.

For each $t$,  $M(x,y,z;t)$ has only non-degenerate critical points. As $t$ increases from $0$ to $1$, six of the critical points of $M(x,y,z;t)$ go off to infinity; they correspond to three double critical values. The exit paths for the critical values are given in Figure \ref{fig:T333atstart}; to check this is correct, the reader might want to use a computer software program. Mathematica code can be found in Appendix \ref{ap:mathematicacode}. 

\subsubsection{Vanishing paths compatible with the deformation}

We change the distinguished collection of vanishing paths of $M(x,y,z;0)$ so as to have paths that deform to vanishing paths for $M(x,y,z;t)$ for all $t$. (As mentioned before, this just means the paths must, after isotopy, avoid the trajectories of the critical values that go off to infinity.) We choose the collection given by Figure \ref{fig:T333later}.
\begin{figure}[htb]
\begin{center}
\includegraphics[scale=0.30]{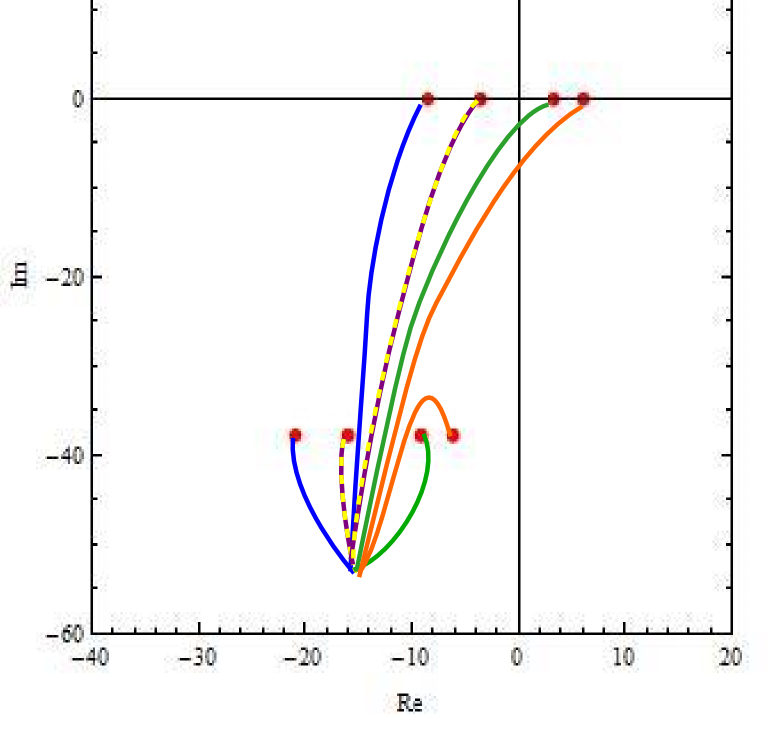}
\caption{New distinguished collection of vanishing paths for $M(x,y,z;0)$
}
\label{fig:T333later}
\end{center}
\end{figure}
This can be obtained from the previous distinguished collection through a series of mutations. The non-trivial ones are
\begin{eqnarray}
(d,2)  & \big(  \mapsto (d,2)^1 = \tau_{(e,1)} (d,2) \big) &\mapsto (d,2)^2= \tau_{(d,1)} (d,2)' 
 \\
(e,2) & \mapsto (e,2)^1= \tau_{(e,1)} (e,2) & \big( \mapsto (e,2)^2 = \tau_{(d,1)} (e,2)' \big)
\\
(f,2) & \big( \mapsto (f,2)^1 = \tau_{(g,1)} (f,2)  \big) & \mapsto (f,2)^2 = \tau_{(f,1)} (f,2)^1  
\\
(g,2) & \mapsto (g,2)^1 = \tau_{(g,1)} (g,2) &  \big( \mapsto (g,2)^2 = \tau_{(f,1)} (g,2)^1 \big)
 \\
(f,2)^2  &  \big( \mapsto (f,2)^3= \tau_{(e,1)} (f,2)^2 \big)  & \big( \mapsto (f,2)^4= \tau_{(d,1)} (f,2)^3 \big)
\\
(g,2)^2  & \big( \mapsto (g,2)^3= \tau_{(e,1)} (g,2)^2 \big) & \big( \mapsto (g,2)^4= \tau_{(d,1)} (g,2)^3 \big)
 \\
(f,1) & \big( \mapsto (f,1)^1= \tau_{(e,1)} (f,1)  \big) & \mapsto (f,1)^2= \tau_{(d,1)} (f,1)^1
 \\
(g,1) & \mapsto (g,1)^1= \tau_{(e,1)} (g,1)& \big( \mapsto (g,1)^2= \tau_{(d,1)} (g,1)^1 \big)
\\
(a,2) & \big( \mapsto (a,2)^1 = \tau^{-1}_{(b,1)}(a,2) \big) & \big( \mapsto (a,2)^2 = \tau^{-1}_{(c,1)}(a,2)^1 \big)
\end{eqnarray}\label{eq:T333mutations}

The critical values that do not go to infinity are critical values for the Lefschetz fibration associated to $M(x,y,z;1)$. We re-label the corresponding distinguished collection of vanishing paths, as follows (with order):
$$
\square = (a,1) \quad a=(a,2) \quad b=(b,2) \quad c=(c,2) \quad
d=(d,2)^2 \quad e=(e,2)^2 \quad f=(f,2)^4 \quad g=(g,2)^4
$$

\subsubsection{Vanishing cycles associated to the deformation--compatible vanishing paths}\label{sec:vcyclesforT333a}
Starting with the configuration of matching cycles which we know to be associated to the collection of vanishing paths of Figure \ref{fig:T333atstart} (the ones we get using techniques from Section \ref{sec:Gabrielovcyclic}), similar to Figure \ref{fig:joincyclic}, we can perform the mutations of \ref{eq:T333mutations}, and track the corresponding matching cycles. The result is in Figure \ref{fig:T333laterplane}. We have deleted the matching paths corresponding to critical values that go off to infinity.
\begin{figure}[htb]
\begin{center}
\includegraphics[scale=1]{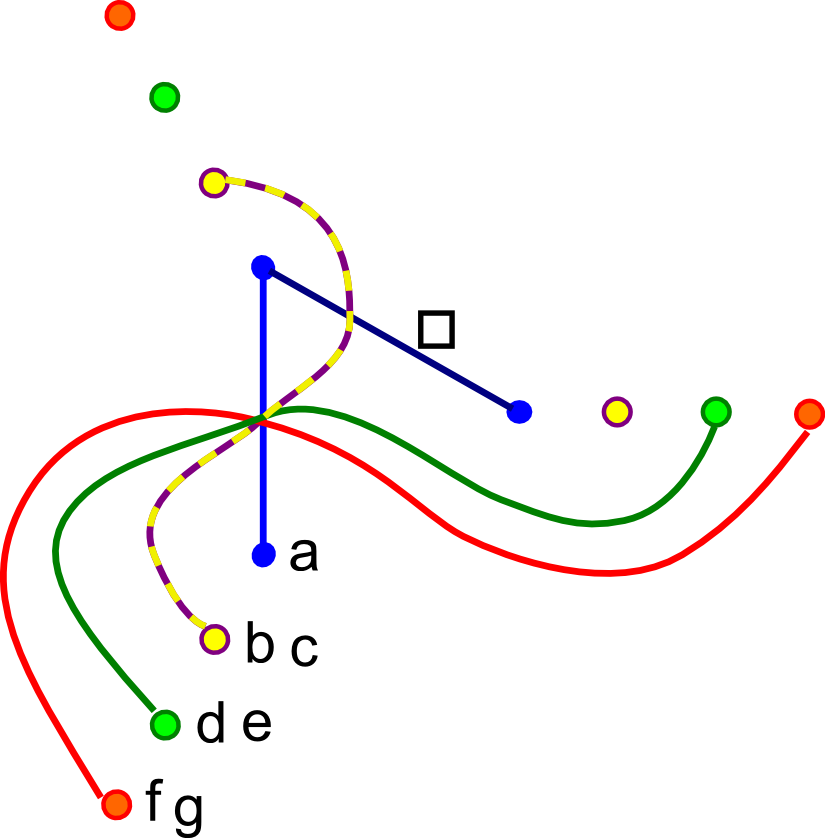}
% or [scale=0.85]
\caption{Matching paths for $T_{3,3,3}$. The colours codings are as before (note yellow/purple, green and orange each represent two paths), and dark blue corresponds to the cycle $\square$.
}
\label{fig:T333laterplane}
\end{center}
\end{figure}
\subsubsection{Further mutations}\label{sec:vcyclesforT333b}
We perform further mutations on this distinguished basis for $T_{3,3,3}$, inspired by the mutations  made earlier for $T_{p,q,2}$. They are as follows (as always, trivial mutations are in parenthesis):
\begin{eqnarray}
 f  \mapsto ( f^1 = \tau_e f ) \mapsto f^2 = \tau_d f^1  \\
g  \mapsto g^1 = \tau_e g   \\
\square  \mapsto ( \square^1 = \tau_e \square ) \mapsto (\square^2 = \tau_{g^1} \square^1 )
\mapsto ( \square^3 = \tau_d \square^2 )   \mapsto ( \square^4 = \tau_{f^2} \square^3 ) \\
\mapsto  \square^5 =   \tau_c \square^4 \\
b  \mapsto b^1 = \tau_a b \mapsto b^2 = \tau_e b^1 
\mapsto ( b^3 = \tau_{g^1} b^2 )
\mapsto b^4 = \tau_d b^3 
\mapsto ( b^5 = \tau_{f^2} b^4 ) \\
\mapsto b^6 = \tau_c b^5
\end{eqnarray}
These are exactly the mutations as for $T_{p,q,2}$, with the mutation of $\square$ added in.
Set 
\begin{displaymath}
A=a \quad B = b^6 \quad
R_2 = \square^5 \quad R_1 = c \quad P_2 = f^2 \quad P_1 = d \quad 
Q_2 = g^1 \quad Q_1 = e.
\end{displaymath}
These are an ordered basis of vanishing cycles. 
After also making simplifying isotopies as in Lemma  \ref{th:trivialisotopy}, the resulting configuration of matching cycles is given by Figure \ref{fig:T333endplane}, where we have performed an isotopy of the base for further clarity. We can add trivial mutations (analogously to the end of Section \ref{sec:spqmutations}) so that the order of the vanishing paths is that of the right-hand side of  Figure \ref{fig:T333endplane}. 
\begin{figure}[htb]
\begin{center}
\includegraphics[scale=1]{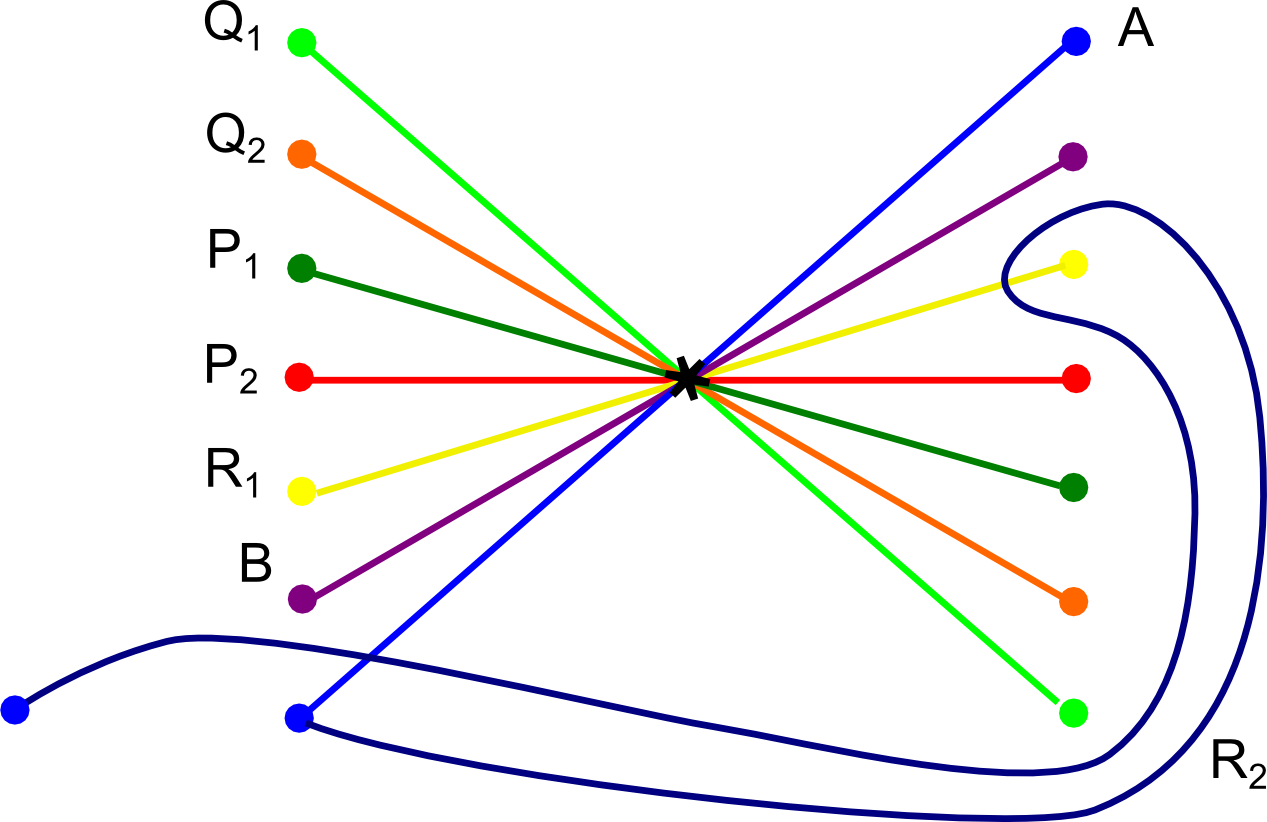}
% or [scale=0.85]
\caption{Matching paths for $T_{3,3,3}$, after mutations and isotopies.
}
\label{fig:T333endplane}
\end{center}
\end{figure}
We can arrange for seven of the matching paths to intersect in exactly one point, $\star$. In the fibre above $\star$ (a twice-puncture genus three Riemann surface), the vanishing cycles that are restrictions of the seven matching paths intersect precisely as in the $T_{p,q,2}$ case, i.e.~Figure \ref{fig:RS245ACampo}. One can track Dehn twists and check that $R_1$ and $R_2$ intersect exactly in one point. Similarly, $A$ and $R_2$ intersect at two points, with opposite orientations. In particular, we have a configuration of vanishing paths recovering Gabrielov's presentation of the intersection form in Figure \ref{fig:Tpqr'}.

\subsubsection{Isotopy to make $A$ and $R_2$ disjoint}

\begin{lemma}\label{th:AR2disjoint}
After Lagrangian isotopy, $A$ and $R_2$ can be arranged to be disjoint (without affecting intersections with any of the other vanishing cycles). 
\end{lemma}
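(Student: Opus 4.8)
The plan is to realise the two intersection points of $A$ and $R_2$ as the vertices of an embedded bigon contained in a single fibre of the auxiliary Lefschetz fibration, and then to cancel them by a finger move performed fibrewise.

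First I would locate the intersection points. Both $A$ and $R_2$ are matching cycles, so each point of $A\cap R_2$ lies over a base point at which their matching paths cross, and at that point it lies on the two corresponding vanishing circles in that fibre. From the configuration of matching paths in Figure \ref{fig:T333endplane} the matching paths of $A$ and $R_2$ cross transversally in a single base point $p_0$ (if they met in more than one point, a preliminary modification of the matching path of $R_2$ of the kind permitted by Lemma \ref{th:trivialisotopy}, which does not touch the other vanishing cycles, reduces us to this case). Hence both points of $A\cap R_2$ lie in the single fibre $\Sigma_0=\pi^{-1}(p_0)$, where $A$ and $R_2$ restrict to embedded circles $\alpha$ and $\rho$ meeting transversally in exactly two points of opposite sign.

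The crux of the argument is then the following geometric claim, to be read off the explicit picture: $\alpha$ and $\rho$ cobound an embedded bigon $D\subset\Sigma_0$, and $D$ is disjoint from the restrictions to $\Sigma_0$ of all the other vanishing cycles. Note that the cancelling signs only give $[\alpha]\cdot[\rho]=0$ and do not on their own produce a bigon; this has to be verified by tracking through the Dehn twists of Section \ref{sec:vcyclesforT333b} (equivalently, by adding the curve $R_2$ to the configuration of Figure \ref{fig:RS245ACampo} and inspecting). Granting the claim, I would isotope $\rho$ across $D$ inside $\Sigma_0$ by an isotopy supported in an arbitrarily small neighbourhood of $D$; since that neighbourhood is disjoint from the other vanishing circles, no intersection with them is created or destroyed, and both points of $\alpha\cap\rho$ disappear.

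Finally I would promote this to an ambient Lagrangian isotopy. Over a short sub-interval of the matching path of $R_2$ around $p_0$ the matching cycle $R_2$ is a product — an interval's worth of the vanishing circle $\rho$ — so, after making the symplectic form and almost complex structure a product near $\Sigma_0$ as in Lemmas \ref{th:changeomegatoproduct} and \ref{th:changejtoproduct}, a fibrewise isotopy of $\rho$ supported away from the ends of that interval extends to a Lagrangian isotopy of $R_2$ in the total space, supported over a small disc about $p_0$. Shrinking this disc if necessary, the support stays disjoint from the matching cycles $B,R_1,P_1,P_2,Q_1,Q_2$, so the intersections with those are unaffected, while the new sphere is disjoint from $A$. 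Since $R_2\cong S^2$ has $H^1=0$, this Lagrangian isotopy is automatically Hamiltonian, which is what is used elsewhere; and the identical local picture near $A$ and $R_2$ shows the same works for the general $T_{p,q,r}$. I expect the main obstacle to be the verification that $\alpha$ and $\rho$ bound a bigon disjoint from the other curves: this is the only step that genuinely requires inspecting the explicit configuration rather than reasoning homologically, and one must take care that the bigon can be chosen to miss the other vanishing cycles.
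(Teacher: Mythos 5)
Your overall instinct --- get both intersection points of $A$ and $R_2$ into a single fibre of a Lefschetz fibration, find a bigon there, and cancel it fibrewise --- is the right one, but the step on which everything rests fails: in the fibration $\pi$ of Proposition \ref{th:Tpqr} the matching paths of $A$ and $R_2$ do \emph{not} cross in a single base point, and the two points of $A\cap R_2$ lie in two \emph{different} fibres, one over each crossing of the matching paths. Your fallback via Lemma \ref{th:trivialisotopy} cannot repair this: that lemma only removes a crossing of matching paths at which the two vanishing circles are \emph{already disjoint} in the fibre above it (a vacuous crossing), whereas here each of the two crossings contributes one genuine intersection point, so neither can be removed and they cannot be merged. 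One way to see that the single-fibre picture is impossible is the paper's local model: near $A\cup R_1\cup R_2$ the geometry is that of the $D_4$ Milnor fibre with the fibration $\Pi_1$ whose fibres are once-punctured tori with vanishing cycles the standard $a$ and $b$ curves; two such circles meet in at most one point per fibre, so the two intersections of $\alpha$ (modelling $A$) and $\gamma$ (modelling $R_2$) are forced into distinct fibres.

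The paper's proof is precisely the device needed to rescue your strategy: it replaces $\Pi_1$ by a \emph{second} Lefschetz fibration $\Pi_2$ on the same $D_4$ local model (projecting to $z$ rather than $x$), in which all four model matching paths can be made to pass through one point, so that the curves $\alpha'|_S,\gamma'|_S$ modelling $A$ and $R_2$ do become two circles in a single fibre $S$ (a three-punctured torus) meeting in two points of opposite sign. Even there, the existence of cancelling bigons is not read off a picture but deduced from the classification of simple closed curves on the torus (zero algebraic intersection plus non-contractibility forces the curves to be isotopic in the closed torus, hence to bound bigons in $S$), and the isotopy is made Hamiltonian by an area argument using the punctures of $S$ lying between the two curves. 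Your closing observation that a Lagrangian isotopy of $R_2\cong S^2$ is automatically Hamiltonian because $H^1(S^2)=0$ is correct and would simplify that last step, and your use of the product structure near the fibre to extend a fibrewise isotopy to the total space matches the paper; but without the change of fibration the bigon you propose to cancel does not exist in any fibre of $\pi$, so as written the argument does not go through.
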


\begin{proof}
We start by describing a local model for the the intersections of $A$, $R_1$ and $R_2$.
Consider the hypersurface $\{ (x,y,z) \, | \, x^3+y^3+z^2 =1   \} $. This is the Milnor fibre of $D_4$. It can be viewed as the total space of a Lefschetz fibration in many different ways. Consider
\begin{eqnarray}
\Pi_1: &\{ (x,y,z) \, | \, x^3+f(y,z) =1   \}   & \to \C \\
& (x,y,z) \to x
\end{eqnarray}
where $f(y,z)$ is a Morsification of $y^3+z^2$. There are six critical values; the generic fibre is a once-punctured torus, with vanishing cycles given by the standard $a$ and $b$ curves. See Figure \ref{fig:D4model}.
\begin{figure}[htb]
\begin{center}
\includegraphics[scale=0.75]{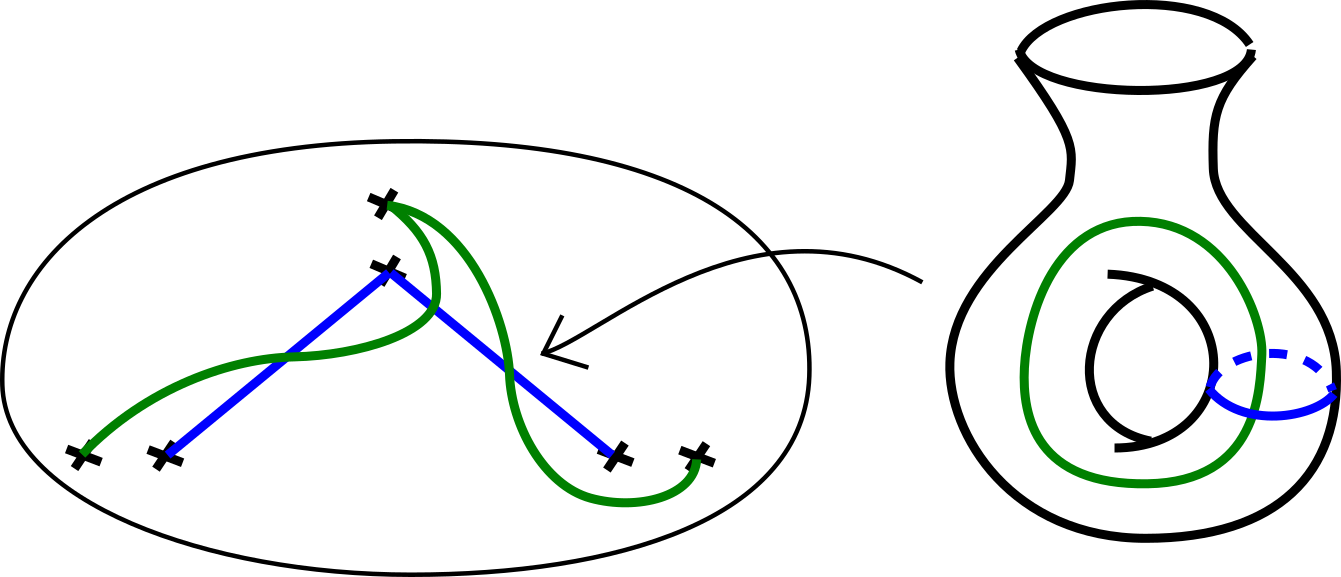}
% or [scale=0.85]
\caption{Lefschetz fibration $\Pi_1$ on the Milnor fibre of $D_4$: base and fibre.
}
\label{fig:D4model}
\end{center}
\end{figure}

Matching paths corresponding to a distinguished collection of vanishing cycles for $D_4$ are given by Figure \ref{fig:AR2disjoint1} (this is the collection given by Section \ref{sec:Gabrielovcyclic} techniques after one mutation). Curves $\a$, $\b$ and $\gamma$ on the figure give a local model for the intersections of $A$, $R_1$ and $R_2$. 
\begin{figure}[htb]
\begin{center}
\includegraphics[scale=0.75]{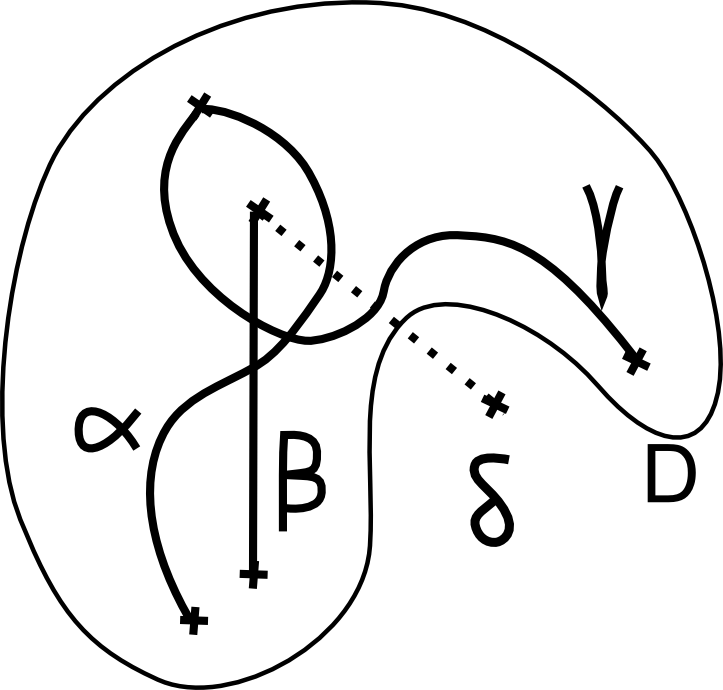}
% or [scale=0.85]
\caption{First Lefschetz fibration for a local model for the intersections of $A$ (which corresponds to $\a$), $R_1$ ($\b$) and $R_2$ ($\gamma$).
}
\label{fig:AR2disjoint1}
\end{center}
\end{figure}
Now consider a different Lefschetz fibration, with, again, the Milnor fibre of $D_4$ (with three variables) as total space:
\begin{eqnarray}
\Pi_2: &\{ (x,y,z) \, | \, g(x,y)+z^2 =1   \}   & \to \C \\
& (x,y,z) \to z
\end{eqnarray}
where $g(x,y)$ is a Morsification of $x^3+y^3$.This has eight critical values; matching paths can be arranged to intersect in exactly one point. The fibre above that point is  the Milnor fibre of $x^3+y^3$ ($D_4$ with two variables); let's call is $S$. It is a three-punctured torus (and a distinguished collection of vanishing cycles is given by the `standard' $D_4$ configuration of $S^1$'s on that torus).
Also, we know by Lemma \ref{th:mutationseq} that through some (a priori unknown) sequence of mutations, we can get the same configuration of vanishing cycles as in Figure \ref{fig:AR2disjoint1}.
(By ``same'', we mean that there is an exact symplectomorphism between the two representatives of the Milnor fibre that takes one ordered collection of vanishing cycles to the other, possibly after Lagrangian isotopies of some of the cycles.) 
The sequence of mutations modifies the distinguished collection of vanishing cycles. On the other hand, one can keep track of the effects of mutations (i.e.~the change in the distinguished collection) in each of our Lefschetz fibration models for the $D_4$ Milnor fibre. In both bases, using Section \ref{sec:matchingmutations}, matching paths get taken to different matching paths. In the case of $\Pi_2$, where the matching paths initially only intersect at one point, this remains true. 

The vanishing cycles corresponding to $\alpha$, $\beta$, $\gamma$ and $\delta$ are given by matching paths as in Figure \ref{fig:AR2disjoint2}, where they are labelled as $\alpha'$, $\beta'$, $\gamma'$ and $\delta'$. 

\begin{figure}[htb]
\begin{center}
\includegraphics[scale=0.75]{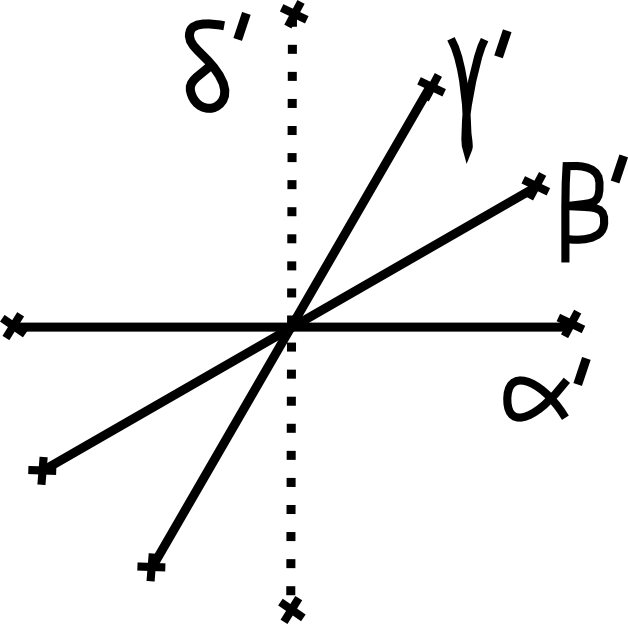}
% or [scale=0.85]
\caption{Second Lefschetz fibration for a local model for the intersections of $A$ (which corresponds to $\a'$), $R_1$ ($\b'$) and $R_2$ ($\gamma'$).
}
\label{fig:AR2disjoint2}
\end{center}
\end{figure}
In particular, we can arrange for the matching paths for $\a', \ldots, \delta'$ to meet in exactly one point (w.l.o.g., $\Pi_2(S)$). 
The restrictions of the vanishing cycles $\alpha', \ldots, \delta'$ to $S$ are themselves vanishing cycles for the singularity $g$. 
Consider the simple closed curves $\a'|_S, \ldots, \delta'|_S$ on the three-punctured torus $S$. None can be contractible in the closure of $S$ to a torus (this is true of any vanishing cycle for $g$, e.g.~because of exactness). 
By construction, the \emph{signed} intersection number between $\a'|_S$ and $\gamma'|_S$ is zero: 
$\a$ and $\a'$ are Lagrangian isotopic, and $\gamma$ and $\gamma'$ are too; $\a$ and $\gamma$ have homological intersection zero, so $\a'$ and $\gamma$, which only meet on $S$, must also have intersection zero. 
Thus they must be isotopic in the closure of $S$,
using the fact that said closure is a torus.
 (They are not isotopic in $S$ itself: they are vanishing cycles for different critical values.) Hence there exists an isotopy of $S$ such that $\a'|_S$ and the image of $\gamma'|_S$ are disjoint: such an isotopy certainly exists in the closure of $S$, and we can arrange for it to have support on $S$. 
W.l.o.g.~$\a'|_S$ and $\gamma'|_S$ intersect transversally. To displace $\gamma'_S$ from $\a'|_S$, we need to (possibly iteratively) cancel out finitely many discs between them, each with finite symplectic area. In order for the isotopy to be Hamiltonian, we need to compensate for these. As $\a'|_S$ and $\gamma'|_S$ are not isotopic in $S$ (in particular, there are punctures ``between'' them on $S$), and the symplectic area of any subset of $S$ containing a puncture is infinite, there must be a compactly-supported \emph{Hamiltonian} isotopy of $S$ after which $\a'|_S$ and the image of $\gamma'|_S$ are disjoint.  
 Extending this over the pre-image of a neighbourhood of $\Pi_2(S)$, we see that there is a compactly supported Hamiltonian isotopy such that $\a'$ and the image of $\gamma'$ do not intersect. 
Thus the same is true of $\a$ and $\gamma$. Moreover, we can arrange for such an isotopy to have support in the preimage of the region $D$ in Figure \ref{fig:AR2disjoint1} (this equivalent to asking that none of the Lagrangian isotopic copies of $\gamma$ intersect a fixed point of $\delta$). 
In such a situation, the isotopy is contained to the local model. This implies that after a Hamiltonian isotopy of $R_2$, $A$ and $R_2$ can be made disjoint; the image of $R_2$ still intersects $R_1$ at one point. The other vanishing cycles lied away from this local model, so intersections with them are unchanged. 
\end{proof}

\subsection{Case of a general $T_{p,q,r}$}\label{sec:Tpqr}

\begin{proposition}\label{th:Tpqr}

 Let $\mathcal{T}_{p,q,r}$ denote  the Milnor fibre of $T_{p,q,r}$. Assume our subscripts are ordered so that $p \geq 3$, $q \geq 3$, and $r \geq 2$. The space $\mathcal{T}_{p,q,r}$ can be described as the corner-smoothing of the total space of a Lefschetz fibration $\pi$, whose fibre is the same as the Milnor fibre of $\sigma_{p,q}$ (a Riemann surface), and with $2p+2q+r$ critical points and values. 
 Of these critical points, $2(p+q+1)$ of them correspond to critical points for the description of $\mathcal{T}_{p,q,2}$ as the total space of a Lefschetz fibration. There are $p+q+1$ corresponding vanishing cycles, which are given by matching paths for that Lefschetz fibration; they all intersect in one point, $\star$. 
 
In the fibre above $\star$, which we shall call $M_\star$, the matching paths restrict to the configuration of vanishing cycles for $\sigma_{p,q}$ that we had already met (e.g.~Figure \ref{fig:RS245Mutated}). 

 As for the remaining $r-2$ critical points, there is an $A_{r-2}$--type chain of matching paths between them; these give the remaining vanishing cycles for $\mathcal{T}_{p,q,r}$. The case of $p= 3$, $q= 4$ and $r=5$ is given in Figure \ref{fig:matchingTpqr}.  
For $r=6$, the chain would be extended by the matching path for $R_5$, which intersects only the matching path for $R_4$, at their shared critical value, and so on. 

Moreover, in all cases, there is a Hamiltonian isotopy such that the image of $R_2$ does not intersect $A$, and its intersections with other cycles are unchanged.
 
\begin{figure}[htb]
\begin{center}
\includegraphics[scale=0.85]{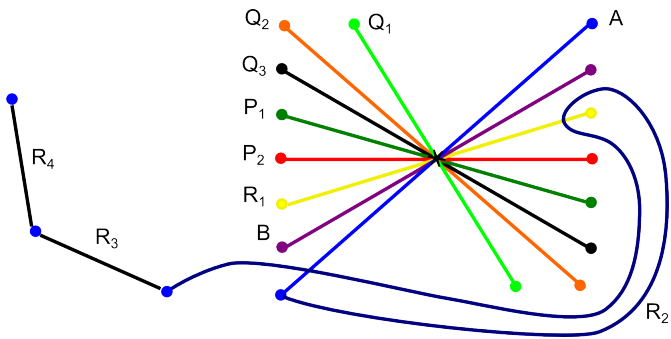}
% or [scale=0.85]
\caption{ Matching paths giving a distinguished configuration of vanishing cycles for $T_{3,4,5}$; the labelling respects the previous ones, with analogous ordering.
}
\label{fig:matchingTpqr}
\end{center}
\end{figure}
 
\end{proposition}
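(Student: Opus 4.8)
The argument splits according to whether $r=2$ or $r\ge 3$. For $r=2$ the statement is exactly what Section \ref{sec:Tpq2} establishes: A'Campo's algorithm (in the generalized form of Section \ref{sec:ACampo}) applied to the good real deformation $h_{p,q}$ of $\sigma_{p,q}$ produces the Riemann surface $M_\star$ together with the configuration of Figure \ref{fig:RS245Mutated}, and the Thom--Sebastiani/Gabrielov step of Section \ref{sec:Gabrielovcyclic} with $Q(z)=z^2$ realizes the $p+q+1$ vanishing cycles of $\mathcal{T}_{p,q,2}$ as matching paths in the $z$-plane, which after isotopy meet in a single point; the cycle $R_2$ does not exist, so the final clause is vacuous. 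So assume $r\ge 3$, where the plan is to run the deformation scheme of Section \ref{sec:T333} verbatim, with $\check m_{3,3}$ replaced by a real deformation $\check m_{p,q}$ of $\sigma_{p,q}$ realizing the divide of Figure \ref{fig:Tpq2Morse} (and, for notational convenience, perturbed by a lower-order term so that, as in the passage from $\tilde m$ to $\check m$ in Section \ref{sec:T333}, its $p+q+1$ critical values split off a minimum ``$a$'' followed by the pairs $b,c$; $d,e$; $f,g$ of the ``kernel'' and the two $A_m$-chains), and with $z^3$ replaced by $z^r$. Concretely, I would produce a polynomial family $M_{p,q,r}(x,y,z;t)$, $t\in[0,1]$, of shape
\[
M_{p,q,r}(x,y,z;t)=\check m_{p,q}(x,y)+c(\gamma z+t\,xy)^2+\big(\tilde Q_r(z)-c\gamma^2 z^2\big),
\]
with $\tilde Q_r$ a Morsification of $z^r$ and $c,\gamma$ chosen so that at $t=1$ the induced $x^2y^2$-term cancels that of $\check m_{p,q}$ while a nonzero $xyz$-term survives, arranged so that $M_{p,q,r}(\cdot\,;t)$ is Morse with $(r-1)(p+q+1)$ critical points for $t<1$; that $M_{p,q,r}(\cdot\,;1)$ is Morse with $p+q+r-1$ critical points and, after deforming away the lower-order remainder as in the family $L$ of Section \ref{sec:T333} (invoking Theorem \ref{th:tougeron} to absorb the remaining terms of $\check m_{p,q}$), is a Morsification of a representative of $T_{p,q,r}$; and so that as $t\to 1$ exactly $(r-1)(p+q+1)-(p+q+r-1)$ critical points escape to infinity along prescribed non-crossing arcs, with the $r-1$ lifts of the fixed critical point $(0,0,\,\cdot\,)$ of ``$a$'' staying put.

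Granting this family, the rest follows the $T_{3,3,3}$ template. For $t<1$ the generalized Milnor fibres (Section \ref{sec:generalizedMilnorfibre}) of $M_{p,q,r}(\cdot\,;t)$ are mutually exact symplectomorphic by symplectic parallel transport over the connected complement of the discriminant (Lemma \ref{lem:symplectic_parallel}), and $\mathcal{T}_{p,q,r}$ embeds exactly into each; the Lefschetz fibration $\pi$ is the projection to $z$, whose fibre is the Milnor fibre of $\check m_{p,q}$, i.e. of $\sigma_{p,q}$. One then chooses a distinguished collection of vanishing paths for $M_{p,q,r}(\cdot\,;0)$ that deforms to one for all $t\le1$ (i.e. avoiding the escape arcs), describes the vanishing cycles of $M_{p,q,r}(\cdot\,;0)$ as matching paths using A'Campo for $\check m_{p,q}$ and the cyclically symmetric Gabrielov picture of Section \ref{sec:cyclicsymmetry} for the $+z^r$, and transports this description along the mutation sequence carrying the standard collection to the deformation-compatible one, tracking matching paths by Section \ref{sec:matchingmutations} (Figures \ref{fig:matchingmutation} and \ref{fig:matchingmutationends}) and simplifying by Lemma \ref{th:trivialisotopy}. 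Deleting the escaped paths and performing the ``cleanup'' mutations of Sections \ref{sec:Tpq2mutations} and \ref{sec:vcyclesforT333b} --- with one extra block of $z$-level mutations for each critical value beyond the $T_{3,3,3}$ case, which merely lengthens the $R$-chain --- produces Figure \ref{fig:matchingTpqr}: $p+q+1$ matching paths through a common point $\star$ whose restrictions to $M_\star$ give the $\sigma_{p,q}$-configuration of Figure \ref{fig:RS245Mutated}, together with the $A_{r-2}$-chain $R_2, R_3, \ldots, R_{r-1}$ coming from the surviving $z$-lifts of ``$a$''. Counting Dehn twists gives $R_1\cdot R_2=1$ and shows that $A$ meets $R_2$ in two points of opposite sign, so the configuration recovers Gabrielov's Dynkin diagram (Figure \ref{fig:Tpqr'}); finally, the $A,R_1,R_2$ intersection pattern is the $D_4$ local model of Lemma \ref{th:AR2disjoint}, whose proof is entirely local and hence applies for every $r\ge 3$, giving the Hamiltonian isotopy that makes $A$ and $R_2$ disjoint without altering other intersections.

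The principal obstacle is the construction of the family $M_{p,q,r}(x,y,z;t)$: one needs it to be Morse with the correct count of critical points at every $t<1$, to degenerate at $t=1$ to a small perturbation of a genuine representative of $T_{p,q,r}$ (and not of $x^p+y^q+z^r$, since for $r\ge3$ the term $x^{p-2}y^{q-2}$ of $\check m_{p,q}$ and the $z$-only Morsification terms must be shown absorbable or cancellable), and to have precisely the expected critical points run off to infinity along non-crossing arcs --- the $T_{3,3,3}$ case already shows this requires a hands-on choice of coefficients and an explicit (computer-assisted) verification of the escape trajectories, and here the choices must be made uniformly in $(p,q,r)$. A secondary difficulty is combinatorial: the mutation sequence grows with $p+q+r$, so one must isolate its recursive pattern --- the essential (``kernel'') mutations are those of Section \ref{sec:vcyclesforT333b}, while everything else consists of trivial mutations running along the $P$-, $Q$- and $R$-chains --- rather than treating each triple $(p,q,r)$ separately.
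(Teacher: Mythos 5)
Your overall architecture matches the paper's: handle $r=2$ via Sections \ref{sec:Tpq2}--\ref{sec:Tpq2vcycles}, and for $r\ge 3$ run a deformation argument modelled on $T_{3,3,3}$, describe the $t=0$ configuration by A'Campo plus the cyclic Gabrielov picture, choose deformation-compatible vanishing paths, track mutations of matching paths, and finish with the local $D_4$ model for the $A$, $R_1$, $R_2$ intersections (which is indeed local and carries over). However, there is a genuine gap at exactly the point you flag as ``the principal obstacle'': your single one-parameter family $M_{p,q,r}(x,y,z;t)$ leaves unresolved the identification of which $(r-1)(p+q+1)-(p+q+r-1)$ critical points escape to infinity as $t\to 1$, and along which arcs, for a \emph{general} triple $(p,q,r)$. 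Asserting that the coefficients can be chosen ``uniformly'' and that the escape trajectories can be verified by computer does not constitute a proof, and a case-by-case computation over all $(p,q,r)$ is not feasible.

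The paper closes this gap with a structural device you are missing: it enlarges the family with extra deformation parameters. For $T_{3,3,r}$ it uses $M_r(x,y,z;t;l)=h_{3,3}+(3z+txy)^2+\widetilde Q_3(z)+(1-l)Q_r(z)$, where increasing $l$ realises the adjacency $T_{3,3,r}\to T_{3,3,3}$ while fixing the critical points $C_{(\alpha,j)}$ with $j=r-2,r-1$, and the $a$-column $C_{(a,j)}$ is pinned at $x=y=0$ for all $t$; intersecting these two pieces of information identifies the surviving critical points and reduces the escape-trajectory analysis to the single explicit $T_{3,3,3}$ computation. For general $(p,q,r)$ it uses a three-parameter family $N_{p,q,r}(x,y,z;t,l,\mu)$ interpolating between $h_{p,q}$ and $h_{3,3}$, so that the $l$- and $\mu$-deformations realise the adjacencies $T_{p,q,r}\to T_{p,q,2}$ and $T_{p,q,r}\to T_{3,3,r}$ respectively, each with a known set of surviving critical points; the surviving set for $T_{p,q,r}$ is then read off as the union of the two dashed boxes in Figure \ref{fig:Tpqr-end}, and deformation-compatible vanishing paths are inherited from the two already-solved cases (together with the observation that the cycles $C_{(a,j)}$, $j\le r-3$, do not intersect any of the escaping cycles, so their paths can be adjusted by trivial mutations). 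You would need to supply this reduction-via-adjacencies, or an equivalent mechanism, for your argument to go through; without it the claim that the correct critical points survive and that non-crossing escape arcs exist is unsubstantiated for all but finitely many $(p,q,r)$. A secondary omission: for $(p,q)\in\{(3,3),(3,4),(3,5)\}$ there is no isolated singularity $\sigma_{p,q}$, so one must work with generalized Milnor fibres of purpose-built divides $h_{3,3},h_{3,4},h_{3,5}$, a point your write-up does not address.
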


The reader might have noticed that for the cases $(p,q) = (3,3)$, $(3,4)$ and $(3,5)$, there is no such isolated singularity as $\sigma_{p,q}$. What do we mean by the ``Milnor fibre'' of $\sigma_{p,q}$ in this case? We take the generalized Milnor fibre of functions constructed so as to have analogous features. We already did such a thing for the case $(p,q)=(3,3)$: use the generalized Milnor fibre of $h_{3,3} := \frac{1}{2} \check{m}$ (see also Figure \ref{fig:checkmxy}). For $(3,4)$,  take the generalized Milnor fibre of $h_{3,4}$,  a real deformation of $(x^2-y^2)(x-y^2)$ with real zero locus given by Figure  \ref{fig:h3435}.   Similarly, for  $h_{3,5}$, use a real deformation of $(x^2-y^3)(x-y^2)$ with real zero locus given by Figure \ref{fig:h3435}.  Notice that these fit into the pattern of the functions $h_{p,q}$ of Section \ref{sec:Tpq2}. 
 \begin{figure}[htb]
 \begin{center}
\includegraphics[scale=0.85]{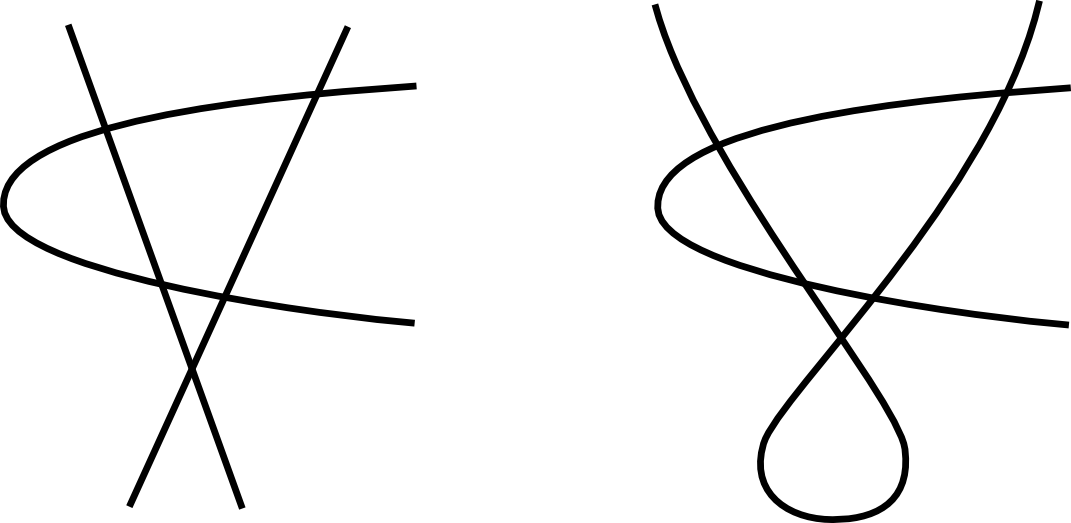}
% or [scale=0.85]
\caption{Real zero locus of  $h_{3,4}$ (left) and $h_{3,5}$ (right).
}
\end{center}
\label{fig:h3435}
\end{figure} 
(Compare with Figure \ref{fig:Tpq2Morse}.) As with $\sigma_{p,q}$ and the functions $h_{p,q}$ of Section \ref{sec:Tpq2}, A'Campo's algorithm will give a Riemann surface with vanishing cycles consisting of seven simple closed curves  -- the `kernel' introduced in Section \ref{sec:Tpq2} -- and chains of the appropriate length attached to it. 

\begin{proof} For $T_{p,q,2}$ and $T_{3,3,3}$, this agrees with the description that we already have. We will first show that the theorem holds for $T_{3,3,r}$, with $r \geq 4$, by building on the work of the previous section (describing $T_{3,3,3}$). We will then combine this with what we know about $T_{p,q,2}$ to get the description for a general $T_{p,q,r}$. 

We make the following preliminary choices and additional assumptions:

\begin{itemize}
\item Let $Q_3(z) = 9z^2 + 4iz^2 + \frac{8i}{3} z^3$, and $\widetilde{Q}_3(z) = 4iz^2 + \frac{8i}{3} z^3$. Thus in the notation of Section \ref{sec:T333}, we have 
\bq
M(x,y,z) = \check{m}(x,y) + 2Q_3(z)
= \check{m}(x,y) + 2(3z)^2 + 2\widetilde{Q}_3(z).
\eq

\item For $r>3$, let $Q_r(z)$ be a polynomial of degree $r$, with no constant term, such that $2$ of its critical points are the same as those of $Q_3(z)$. Assume $Q_{3}(z) + t Q_r(z)$ is Morse for all $t \in [0,1]$. 

\item Recall that for every pair $(p,q)$, the function $h_{p,q}$ is a good real deformation  of a function of the form 
\bq
(x^{p-2} - y^2) ( x^2 - c y^{q-2})
\eq
for some constant $c$. We further assume that $h_{p,q}$ is also a polynomial, and that the coefficient of $x^2y^2$ is still $-1$. For $h_{3,3}$ we use instead $0.5 \check{m}$.

\item For every pair $(p,q)$, we already have that $h_{p,q}$ has a real minimum to which the vanishing cycle $a$ (one of the seven in the `kernel') is associated. Assume that the corresponding critical point is $(x,y)=(0,0)$. (Note that this is already the case for $h_{3,3}$.)

\item Our arguments will use some one-parameter families of polynomials involving that $h_{p,q}$ and $Q_r$ (the parameter lies in $[0,1]$). One can check that we can arrange for the $h_{p,q}$ and $Q_r$ to be such that these deformations are through Morse functions.
\end{itemize}
(Without being truly restrictive, these are all somewhat technical assumptions. They are principally designed to allow us to easily track critical points as we deform polynomials.)

  We'll start by studying $\mathcal{T}_{3,3,r}$. The function $Q_3(z) + Q_r(z)$ has $r-1$ critical points. Label them as $z_1, \ldots, z_{r-1}$, where $z_{r-1}=0$, and $z_{r-2}$ is the other critical point of $Q_3$. Pick any collection of vanishing paths giving that order. The function $h_{3,3}(x,y)$ has seven critical points; choose as ordered collection of vanishing paths and labels $a, \ldots, g$ following Section \ref{sec:T333}. The function 
     \begin{equation}
      h_{3,3}(x,y) + Q_3(z) + Q_r(z)
     \end{equation}
   has $7(r-1)$ critical points. From Gabrielov's algorithm, we get a distinguished collection of vanishing paths, together with a description of the corresponding vanishing cycles as matching cycles in an auxiliary Lefschetz fibration. (The fibre of this Lefschetz fibration is the generalized Milnor fibre of $\sigma_{3,3}$.) Label the vanishing cycles as $(\alpha, j)$, where $\alpha = a, b, \ldots , g$, and $j=1, \ldots, r-1$, and the corresponding critical points as $C_{(\a, j)}$. This is a  generalization of the scenario considered in Section \ref{sec:T333}, where we had $r=3$.
The left-hand side of Figure \ref{fig:T33r} gives, schematically, the configuration of critical points vanishing paths.
\begin{figure}[htb]
\begin{center}
\includegraphics [scale=0.32]{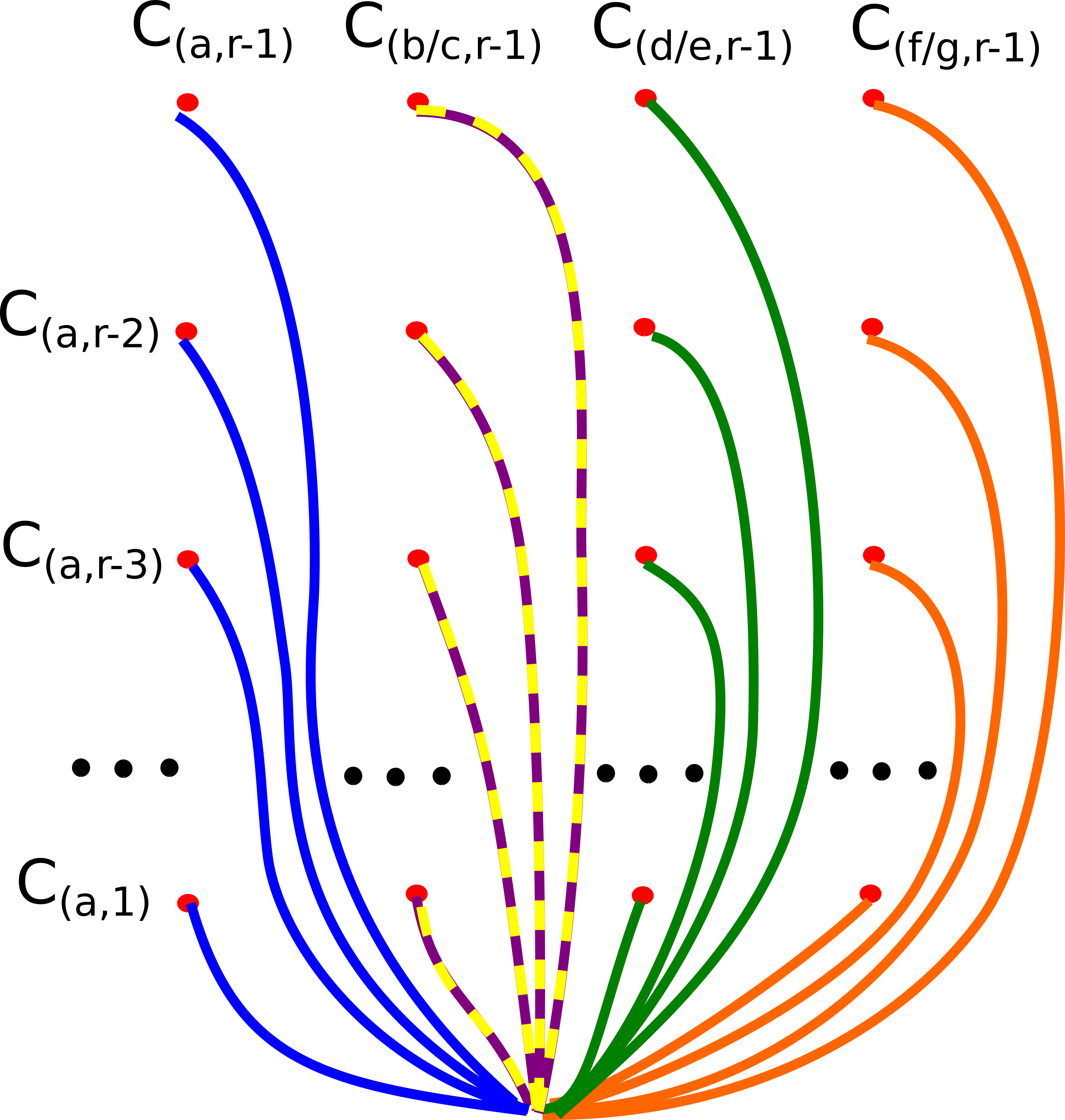} \qquad
\includegraphics[scale=0.32]{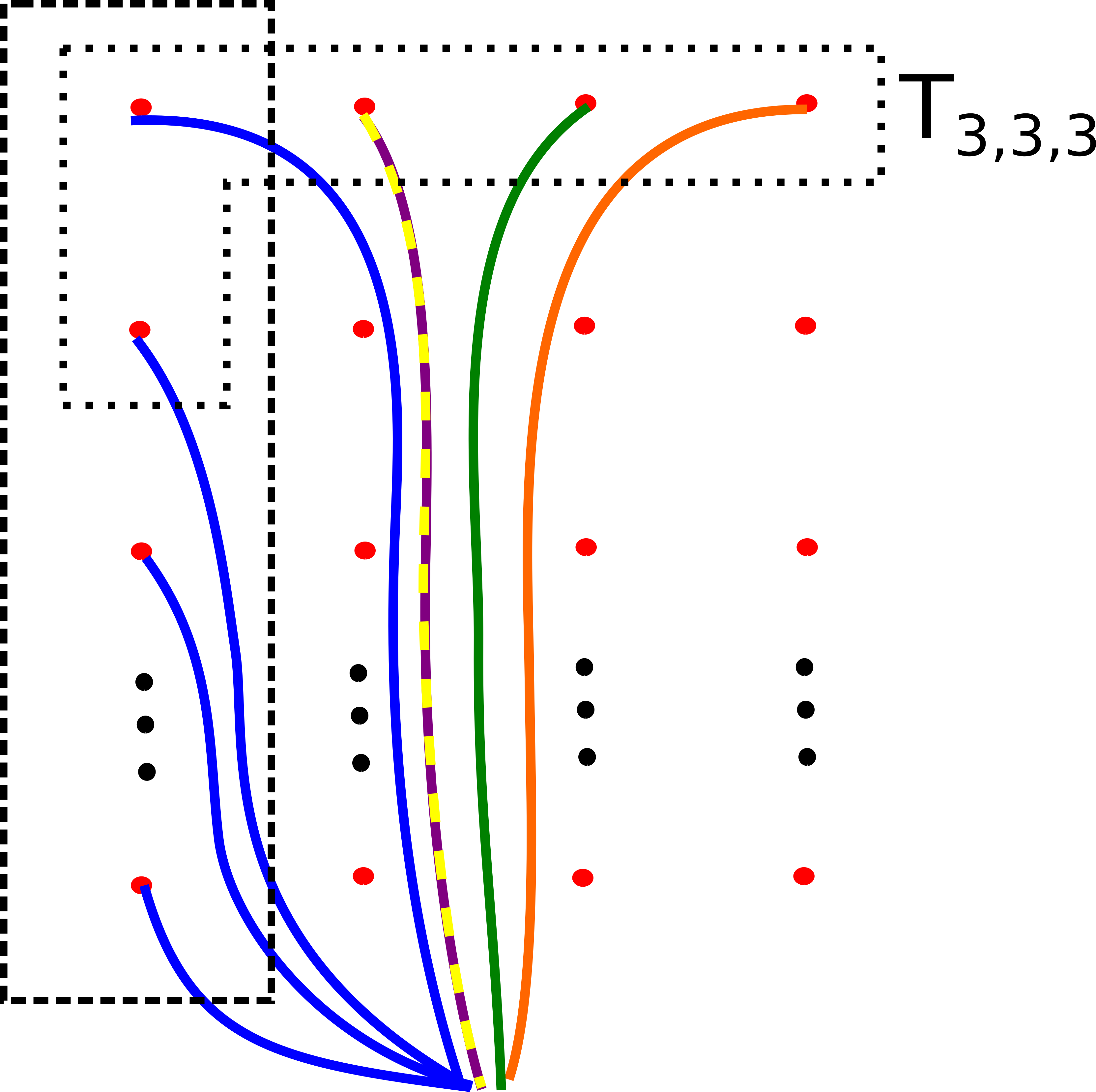}
\caption{Finding vanishing paths for $T_{3,3,r}$: on the left-hand side, the initial configuration ($t=l=0$); on the right-hand side, vanishing paths for the critical points that survive.}
\label{fig:T33r}
\end{center}
\end{figure}
 Now consider the function
\begin{equation}
   M_r(x,y,z;t;l) = h_{3,3}(x,y) + (3z+txy)^2 + \widetilde{Q}_3(z) + (1-l) Q_r(z)
\end{equation}
   where $t, l \in [0,1]$. Fixing $l=0$ and deforming $t$ from 0 to 1, we get (near the origin) a Morsification of $T_{3,3,r}$. We would like to know which of the critical points of $M_r(x,y,z;0;0)$ contribute to this. We expect $p+q+r-1=5+r$ of those points in total. 
First, note that they must include $C_{(a,j)}$ for all $j=1, \ldots, r-1$, as those critical points, whose $x$ and $y$ coordinates are zero, remain fixed as we increase $t$. These points are in the left-hand column of the right-hand half of 
Figure \ref{fig:T33r}, in a dashed box.
 One the other hand, suppose you start with $M_r(x,y,z;0;0)$, and increase $l$ to 1. 
By construction, this fixes $C_{(\a, j)}$ for all $\a =a, \ldots, g$, and $j=r-2, r-1$.  Moreover, we already understand the deformation from $M_r(x,y,z;0;1)$ to $M_r(x,y,z;1;1)$ given by increasing $t$, as it is precisely what we studied in Section \ref{sec:T333}. In particular, the critical points that are not sent to infinity are the images of $C_{(a, r-2)}$, $C_{(a,r-1)}$,  $C_{(b,r-1)}$, $C_{(c,r-1)}$, \ldots, $C_{(g,r-1)}$ (the other dashed part of the right-hand diagram in Figure \ref{fig:T33r}), 
and the generalized Milnor fibre of $M_r(x,y,z; 1;1)$ is $\mathcal{T}_{3,3,3}$.    
 On the other hand, notice that deforming  from $M_r(x,y,z;1;0)$ to $M_r(x,y,z;1;1)$ realizes the adjacency $T_{3,3,r} \to T_{3,3,3}$.  Putting all of this together, we see that the Milnor fibre $\mathcal{T}_{3,3,r}$ is a subset of the generalized Milnor fibre of $M(x,y,z;0,0)$, and that its critical points correspond to $C_{(a,1)}$, $C_{(a,2)}$, \ldots, $C_{(a,r-2)}$, $C_{(a,r-1)}$,  $C_{(b,r-1)}$, \ldots, $C_{(g,r-1)}$. 
 
We need to find vanishing paths that are compatible with the deformations.
  Start with $C_{(a,r-2)}$, $C_{(a,r-1)}$,  $C_{(b,r-1)}$, \ldots, $C_{(g,r-1)}$. From the study of $T_{3,3,3}$, we already know what vanishing paths to choose in order to avoid the exit trajectories of $C_{(b,r-2)}$, $C_{(c, r-2)}$, \ldots, $C_{(g,r-2)}$. Moreover, notice that none of the cycles $C_{(a,r-2)}$, $C_{(a,r-1)}$,  $C_{(b,r-1)}$, \ldots, $C_{(g,r-1)}$, have intersection points any of the other cycles $C_{(\a, j)}$ with $j \leq r-3$ and whose critical values exit the picture. (This can just be read off from the Gabrielov description, see Section \ref{sec:Gabrielovcyclic}.) 
  Thus, after the mutations that we found for the $T_{3,3,3}$ case (Section \ref{sec:T333}) and maybe some extra trivial mutations, we can get vanishing paths for  $C_{(a,r-2)}$, $C_{(a,r-1)}$,  $C_{(b,r-1)}$, \ldots, $C_{(g,r-1)}$ avoiding \emph{all} exit trajectories. Their order will still follow that for $T_{3,3,3}$. As for $C_{(a,1)}, \ldots, C_{(a,r-3)}$, one can simply start with the vanishing paths given by the Gabrielov configurations, and modify them all by the same sequence of (trivial) mutations as the one that is applied to $C_{(a,r-2)}$.  (There are different ways of seeing this; the simplest might be to observe that one can deform $Q_r(z)$ so that all $r-2$ first critical points almost coincide, and consider the effect for critical points and vanishing paths for $M_r(x,y,z;t;l)$.) 
Altogether, this gives the vanishing paths on the right-hand side of Figure \ref{fig:T33r}, possibly up to trivial mutations.
We're now almost done, although the description we have isn't yet as nice as the one stated in the theorem: we are at the analogue of the end of Section \ref{sec:vcyclesforT333a} for the $T_{3,3,3}$ case. To get the claimed configuration, we make some further mutations following Section \ref{sec:vcyclesforT333b} for
$T_{3,3,3}$. 
    
    We are left with the case of a `general' $T_{p,q,r}$. One can proceed similarly to before, considering the function
\begin{equation}
    N_{p,q,r}(x,y,z;t,l,\mu) = (1-\mu)h_{p,q}(x,y) +\mu h_{3,3}(x,y) 
    + (3z+txy)^2+ (1-l)Q_r(z)
  \end{equation}
    with $t, l, \mu \in [0,1]$. We can use the Gabrielov and A'Campo techniques to describe the generalized Milnor fibre of $N_{p,q,r}(x,y,z; 0;0;0)$ as the total space of a Lefschetz fibration, with fibre the Milnor fibre of $\sigma_{p,q}$. The critical points are represented by red and green dots in Figure \ref{fig:Tpqr-end}. For $t=1$, $l=\mu=0$, we get a Morsification of $T_{p,q,r}$. (This is where the assumption about the coefficient of $x^2y^2$ in the expression for $h_{p,q}$ comes in, as the two terms involving $x^2y^2$ in $N_{p,q,r}(x,y,z;1;0;0)$ cancel out.) Fixing $t=1, \mu=0$ and deforming $l$ from 0 to 1  realises the adjacency $T_{p,q,r} \to T_{p,q,2}$, or the appropriate generalization in the cases where $(p,q)=(3,4)$ or $(3,5)$. The surviving critical points are the first line in Figure \ref{fig:Tpqr-end} (in a dashed box). 
Fixing $t=1, l=0$ and deforming $\mu$ from 0 to 1 realises to adjacency $T_{p,q,r} \to T_{3,3,r}$. The surviving critical points are in the other dashed box in Figure \ref{fig:Tpqr-end}.
 Combining the information from both of these and proceeding similarly to above, we get the configuration of vanishing paths of Figure \ref{fig:Tpqr-end}, avoiding (possibly up to trivial mutations) the exit paths of the other points. We then use the same mutation sequences as before to obtain the desired description of vanishing cycles for $T_{p,q,r}$. 
\begin{figure}[htb]
\begin{center}
\includegraphics [scale=0.32]{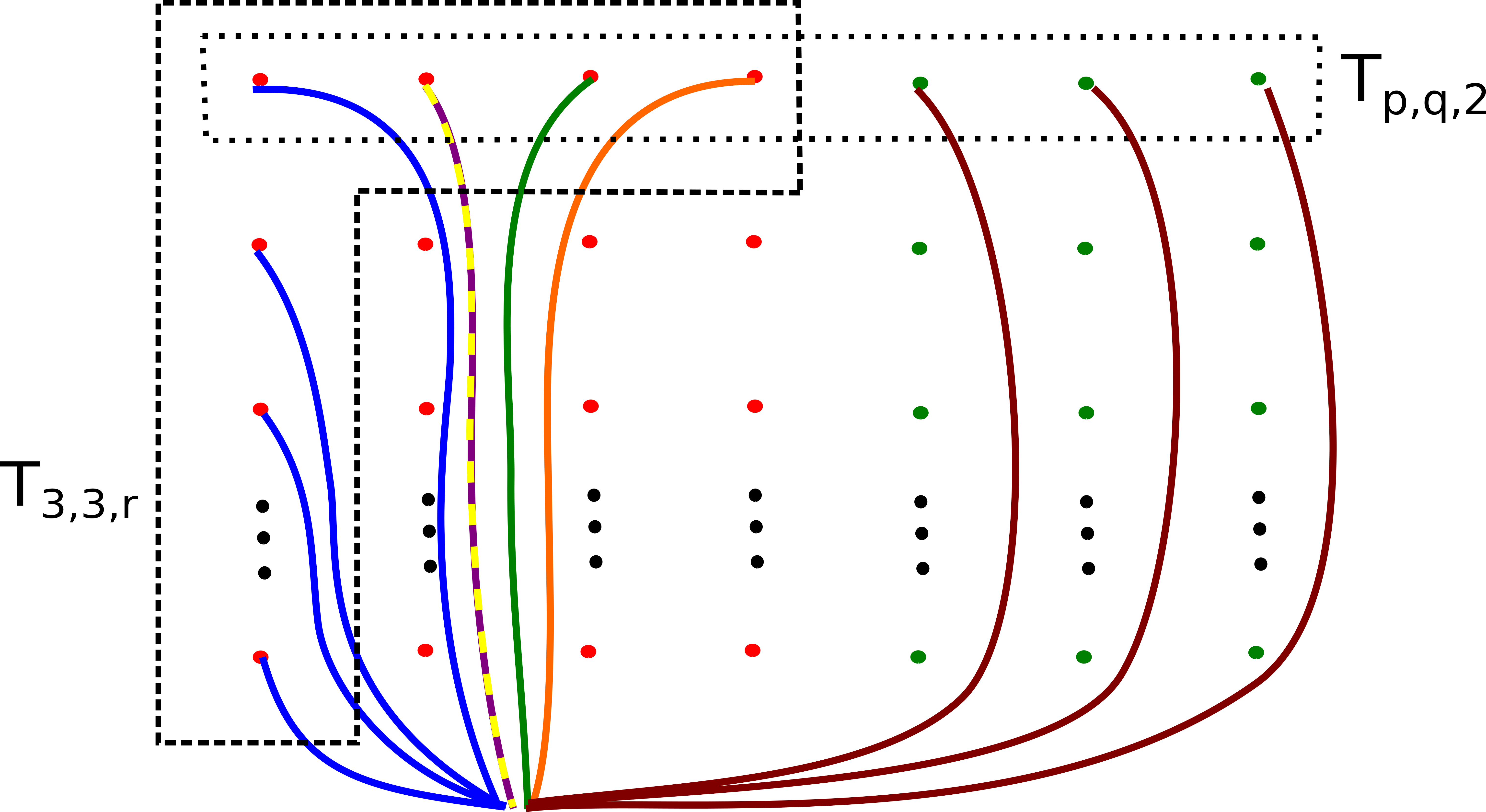} 
\caption{Finding vanishing paths for $T_{p,q,r}$.}
\label{fig:Tpqr-end}
\end{center}
\end{figure}

Finally, in all cases, the Lefschetz fibration in a neighbourhood of $R_2$ will have the same local model as before; as the proof of Lemma \ref{th:AR2disjoint} was local (and yields a compactly supported Hamiltonian isotopy), the claim about $R_2$ still holds.
\end{proof}

\subsubsection{Essentially local changes of the symplectic form and product structure near $M_\star$.}

Throughout the previous section, as well as Sections \ref {sec:Tpq2}  and \ref {sec:T333}, we considered Lefschetz fibrations $\pi: E \to B$ where $E$ is an open subset of $X$, some smooth hypersurface $X \subset \C^{3}$, $B$ is an open subset of $\C$, and $\pi$ is given by a complex polynomial. The Milnor fibre of $T_{p,q,r}$ was given either by $\mathcal{T}_{p,q,r} = E$, or by an open subset $\mathcal{T}_{p,q,r}$ of $E$. We repeatedly displaced vanishing cycles in the fibre $\pi^{-1}(\star)$ by Hamiltonian isotopies, obtained by making essentially local changes of the symplectic form. (See Lemma \ref{th:changeomega} and thereafter, as well as the remarks in Section \ref{sec:moregeneralLefschetzfibrations} about using these tools for more general Lefschetz fibrations.) One might worry about the effect of these changes on $E$ as a symplectic manifold. Thankfully, we have the following:

\begin{lemma}
Let $\o$ be the original symplectic form on $\mathcal{T}_{p,q,r}$, and $\o'$ the form after all the essentially local changes. We claim that $(\mathcal{T}_{p,q,r}, \o')$ is also a copy of the Milnor fibre of the  same singularity, possibly defined with a different holomorphic representative of the singularity, and different cutoffs. In particular, they are exact symplectomorphic when completed with cylindrical ends.
\end{lemma}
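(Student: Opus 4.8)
The plan is to peel off the two structural features of the accumulated modification of the symplectic form — a piece supported in a compact set, and a multiple of $\pi^\ast\o_b$ near infinity — and to treat each with a standard tool.

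First, I would normalise the change. Every essentially local change used in Sections~\ref{sec:Tpq2}--\ref{sec:Tpqr}, whether performed via Lemma~\ref{th:changeomega}, Lemma~\ref{th:changeomegatoproduct}, or the matching-cycle analogue of Corollary~\ref{th:movevcycle} (and including those carried out inside the auxiliary local model in the proof of Lemma~\ref{th:AR2disjoint}, which have compact support in $\mathcal{T}_{p,q,r}$), is of the form $\o \mapsto \o + d\alpha_i + c_i\,\pi^\ast\o_b$ with $\alpha_i$ a \emph{compactly supported} one-form — in the quoted lemmas it is literally $\beta H(t)\,dt$, respectively $\psi(\theta^{pr}-\theta)$ — and $c_i \ge 0$. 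Composing the finitely many of them, the constants and the one-forms add, so $\o' = \o + d\alpha + c\,\pi^\ast\o_b$ with $\alpha$ compactly supported and $c \ge 0$. I would also allow myself to enlarge $c$ at the outset: increasing the coefficient of $\pi^\ast\o_b$ is itself an essentially local change, and it is what will make the interpolating forms below non-degenerate — precisely the role of the large constant in Lemma~\ref{th:changeomega}.

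Second, I would check that $(\mathcal{T}_{p,q,r},\,\o + c\,\pi^\ast\o_b)$ is \emph{already} a copy of a Milnor fibre of the same singularity. Here the fibration $\pi$ is the projection of $\C^3$ to one of its coordinates, say $z$ (or is conjugate by a linear automorphism to such a projection), so $\pi^\ast\o_b = \tfrac{i}{2}\,dz\wedge d\bar z$ and $\o + c\,\pi^\ast\o_b = \Lambda^\ast\o_{\mathrm{std}}$, where $\Lambda(x,y,z) = (x,y,\lambda z)$ for the appropriate $\lambda > 0$ (cf.\ the Claim preceding Lemma~\ref{th:changeomega}). Writing the ambient hypersurface as $X = \{\widetilde f = \epsilon\}$ for a Morsification $\widetilde f$ of some representative of $T_{p,q,r}$, the biholomorphism $\Lambda$ carries $X$ onto the $\epsilon$-level set of $\widetilde f\circ\Lambda^{-1}$ — a Morsification of $\widetilde f\circ\Lambda^{-1}$, which represents the same singularity — and carries the cut-off locus $\{h_A\le\delta\}$ onto $\{h_{A\Lambda^{-1}}\le\delta\}$. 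By Lemma~\ref{th:Milnorfibreindepchoices} together with Lemma~\ref{lem:fibre_Morse} (in the cases where $\sigma_{p,q}$ is not an honest singularity one uses the generalized--Milnor--fibre analogues of Section~\ref{sec:generalizedMilnorfibre}, and if $\mathcal{T}_{p,q,r}$ is only an open subset of the Lefschetz total space $E$ one argues with $E$ and restricts), $\Lambda$ thus identifies $(\mathcal{T}_{p,q,r},\,\o+c\,\pi^\ast\o_b)$ with a copy of the Milnor fibre of $T_{p,q,r}$, carrying its standard exact Kähler structure, for the representative $\widetilde f\circ\Lambda^{-1}$ and cut-off $h_{A\Lambda^{-1}}$.

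Finally, I would run a compactly supported Moser argument along $\o_t := \o + c\,\pi^\ast\o_b + t\,d\alpha$, $t\in[0,1]$: each $\o_t$ restricts to the fibrewise symplectic form on every fibre of $\pi$, and — with $c$ chosen large, by the estimate behind Lemma~\ref{th:changeomega} — is non-degenerate, while $\tfrac{d}{dt}\o_t = d\alpha$ has the compactly supported primitive $\alpha$; this yields a compactly supported symplectomorphism $\phi$ with $\phi^\ast\o' = \o+c\,\pi^\ast\o_b$, which is moreover exact since the relevant Liouville primitives differ by $\alpha$ up to an exact term (and one can adjust the compatible $J$ alongside, by Lemma~\ref{th:changej}). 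Composing $\phi$ with the identification of the previous step shows $(\mathcal{T}_{p,q,r},\o')$ is a copy of the Milnor fibre of $T_{p,q,r}$ in the stated sense, and after completing with cylindrical ends, invoking Lemmas~\ref{th:Milnorfibreindepchoices} and~\ref{th:Milnorfibreindepreparametrisation} makes it exact symplectomorphic to $M_{T_{p,q,r}}$. I expect the main obstacle to be the bookkeeping in the first step — ensuring the accumulated correction really has a compactly supported primitive apart from the single $c\,\pi^\ast\o_b$ term, so that Moser can be run with compact support — together with the non-degeneracy of the path $\o_t$, which forces one to have taken (or to enlarge) the constant $c$ appropriately.
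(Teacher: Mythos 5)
Your proposal is correct and follows essentially the same route as the paper: the compactly supported exact part of the change is absorbed by a Moser argument, and the $c\,\pi^\ast\o_b$ part is recognised as the restriction of the K\"ahler form pulled back under the linear rescaling $(x,y,z)\mapsto(x,y,\lambda z)$, hence a holomorphic reparametrization handled by Lemma~\ref{th:Milnorfibreindepreparametrisation}. You simply supply more detail than the paper does (the bookkeeping of the accumulated correction, the non-degeneracy of the interpolating family, and the order in which the two steps are composed), all of which is consistent with the paper's terser argument.
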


\begin{proof} Changes by compactly supported one-forms are taken care of by a Moser argument. 
Observe that in the cases we are concerned about, $\pi: E \to B$ is always given by projecting to the third complex coordinate, $z$. Recall that $\o$, the symplectic form on $E$, is the restriction of usual Kaehler form on $\C^3$: 
\bq
\o_{\C^{3}} = \frac{i}{4} d d^c \,\big( ||x||^2 + ||y||^2 + ||z||^2  \big).
\eq
This means that $\o + c \pi^\ast \o_b$ is given by the restriction of the Kaehler form
\bq
\o_{\C^{3}} = \frac{i}{4} d d^c \, \big( ||x||^2 + ||y||^2 + ||c' z||^2  \big)
\eq
for some constant $c'$. This is the same effect as a holomorphic reparametrization, which does not affect completed Milnor fibres, by Lemma \ref{th:Milnorfibreindepreparametrisation}.
\end{proof}

With the preceding observation under our belt, using Lemma \ref{th:changeomegatoproduct},  we can assume that the description of the Milnor fibre $\mathcal{T}_{p,q,r}$ given by Proposition \ref{th:Tpqr} has the following additional feature: 

\begin{assumption}\label{ass:productnhood}
Fix a large compact subset of $M_\star$, say $K$, such that each of the vanishing cycles on $M_\star$ are contained in the interior of $K$. Fix $r>0$ such that the closure of $B_r(\star)$ does not contain any critical values. We assume that near $K \subset M_\star$ our symplectic form restricts to `product' symplectic form, say $\o_{pr}$, following the description of Lemma \ref{th:changeomegatoproduct}. Moreover, we assume that all the matching paths through $\star$ are given by straight-line segments  inside $B_{r/2}(\star)$, and that no other matching paths intersect $B_{r}(\star)$. 
\end{assumption}

We modify our original $\o$--compatible almost complex structure $J$ following Lemmas \ref{th:changej} and \ref{th:changejtoproduct}. The result in an $\o_{pr}$--compatible almost complex structure, say $J_{pr}$, which on a neighbourhood of $K$ is given by the product of a complex form on fibres and a complex form on the base.

%%%%%%%%%%%%%%%%%%%%%%%%%%%%%%%%%%%%
%%%%%%%%%%%%%%%%%%%%%%%%%%%%%%%%%%%%

%\input{torusconstruction.tex} 

\section{Torus construction}\label{sec:torusconstruction}

\subsection{Lagrangian surgery}\label{sec:lagrangiansurgery}

We shall use the operation of Lagrangian surgery, introduced by Polterovich  \cite{Polterovich}. We use the same version as \cite[Appendix A]{Seidel99}; note that this construction allows more general cut-off functions, and has the opposite ordering convention to Polterovich's. 
Suppose you have two Lagrangians $L_1$ and $L_2$ that intersect transversally at one point $u$. Fix an order of $L_1$ and $L_2$, say $(L_1, L_2)$. Lagrangian surgery is a local procedure for obtaining a new Lagrangian $L_1 \# L_2$, which agrees with the union of $L_1$ and $L_2$ outside an arbitrarily small neighbourhood of $u$.
Pick a Darboux neighbourhood of $u$, say $\phi: U \ni u \to \R^4$ such that
\begin{itemize}
\item $\phi(u)=0$;
\item $\phi(L_1 \cap U ) = \big( \R \times \{0 \} \times \R \times \{0 \} \big) \cap \phi(U)$;
\item $\phi (L_2 \cap U ) = \big( \{0 \} \times \R \times \{0 \} \times \R \big) \cap \phi(U)$.
\end{itemize}
(See \cite[Section 4]{Polterovich}.)
Let $h: \R \to \R^2$ be a smooth embedding such that $\text{Im}(h)$ agrees with $\R_+ \times \{ 0\} \cup \{ 0 \} \times  \R_-$ outside $\phi(U)$, and such that there is no $z \in \R^2$ such that both $z$ and $-z$ lie in $\text{Im}(h)$. 
The Lagrangian handle associated to $h$ is
\bq
H = \{ (x \, \text{cos } t , y \, \text{cos } t, x \, \text{sin }t , y \, \text{sin } t) \, | \, (x,y) \in \text{Im}(h), t \in S^1 \}.
\eq
Now $L_1 \# L_2$ is defined by replacing the neighbourhood of $u$ with $H$:
\bq
L_1 \# L_2 := \big( ( L_1 \cup L_2 ) \backslash U \big) \cup \phi^{-1} \big(H \cap \phi(U) \big).
\eq
Up to Lagrangian isotopy, this only depends on our choice of ordering of $L_1$ and $L_2$. Nevertheless, define the \emph{parameter} of the surgery to be the area $\e$ between the image of $h$ and the union of the real and imaginary axes. (This will matter later for exactness.) Note $\e$ could be negative. Also, note that the intersection of the Lagrangian handle $H$ with the symplectic subspace $\R^2 \times \{ (0,0 )\}$ is $\{ \pm(x,y,0,0) | (x,y) \in \text{Im} (h) \} $ -- this will be used for e.g.~Figure \ref{fig:RSlocalfourdiscs} later.

\subsection{Main construction}

The vanishing cycles $A$ and $B$ intersect at two points, $u$ and $v$. 
As the signs of the two intersection points a, performing Lagrangian surgeries at both $u$ and $v$ gives a Lagrangian torus. This does not depend on the order chosen for each of the surgeries; on the other hand, if the two orientations differed, the result of the surgeries would always be a Klein bottle.

Choose two Darboux charts for each of $u$ and $v$ (i.e.~four charts in total): one for the surgery with order $(A,B)$, and one for the surgery with order $(B,A)$. 
By Assumption \ref{ass:productnhood} (product symplectic form near $M_\star$), we can take our Darboux charts at $u$ to be given by the product of a Darboux chart for some open neighbourhood of $u$ in $M_\star$ with a Darboux chart for a neighbourhood of $\pi(u)$ in $B_r(\star)$. We order our coordinates so that open subsets of fibres correspond to subsets of $\R^2 \times \{ (a,b)\}$, and lifts of open neighbourhoods of $\star$ to subsets of $\{ (0,0) \} \times \R^2$. (In particular, $\pi$ is projection to the final two coordinates.) We pick similar Darboux charts for $v$.

\subsubsection{Exactness}

Define $f_A$ to be a smooth function on $A|_{M_\star}$ such that $df_A = i^\ast _{A|M_\star} \theta$, and similarly for
  $f_B$. 
  
  \begin{proposition}
  Lagrangian surgeries on $u$ and $v$ with the same ordering of $A$ and $B$, and the same parameter $\e_u=\e_v=\e$, produce  an exact torus if and only if 
  \bq
  f_A (u)- f_B(u) = f_A(v)- f_B(v).
  \eq
  \end{proposition}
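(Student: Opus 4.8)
The idea is to compute the class $[\theta|_T] \in H^1(T;\mathbb R)$ of the primitive $\theta$ restricted to the surgered torus, and show it vanishes precisely under the stated condition. We fix the ordering $(A,B)$ and surgery parameter $\varepsilon$ at both $u$ and $v$. The torus $T$ is built from $A \setminus (U_u \cup U_v)$, $B \setminus (U_u \cup U_v)$, and the two Lagrangian handles $H_u$, $H_v$ inserted at $u$ and $v$. Since $A$ and $B$ are Lagrangian spheres in a four-dimensional Milnor fibre with $n = 2$, they are exact: $[\theta|_A] = 0$ and $[\theta|_B] = 0$, so the primitives $f_A$ (with $df_A = i_A^*\theta$) and $f_B$ exist globally on $A$ and $B$ respectively, not just on the restrictions to $M_\star$ — here I would note that by Assumption \ref{ass:productnhood} and the product description, $\theta$ restricted to the relevant charts is a product, and $f_A|_{M_\star}$, $f_B|_{M_\star}$ determine the relevant values up to the (irrelevant, since $n=2$) base contribution, or simply work with the globally defined primitives on the spheres. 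The homology $H_1(T;\mathbb Z)$ is generated by two loops: one, call it $c_A$, obtained by taking an arc in $A$ from (near) $u$ to (near) $v$ and closing it up through the handle $H_v$ back along... — more precisely the standard picture: after surgery at the two points of a two-point intersection with matching signs, $H_1(T)$ has a basis consisting of (i) a "small" loop $\mu$ that is the core circle $S^1$ of one handle, say $H_u$, and (ii) a loop $\lambda$ that runs along an arc of $A$ from $u$ to $v$, through $H_v$, back along an arc of $B$ from $v$ to $u$, and through $H_u$.

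**Key steps.** First I would set up these two generators of $H_1(T)$ explicitly and compute $\int_\mu \theta$ and $\int_\lambda \theta$. For the handle circle $\mu$: the Lagrangian handle $H$ in the local Darboux model is $\{(x\cos t, y\cos t, x\sin t, y\sin t)\}$, and the $S^1$-orbit $t \mapsto (x\cos t,\dots)$ for fixed $(x,y) \in \mathrm{Im}(h)$ with $(x,y)\neq 0$ bounds a disc (a cone) in $\mathbb R^4$ over which $\omega = d\theta$ integrates to the enclosed area, which by the definition of the surgery parameter is (a fixed multiple of) $\varepsilon$ — independent of which intersection point. Actually the core circle is contractible in the ambient manifold (it bounds the Lagrangian handle disc union a disc in the fibre), and the standard computation gives $\int_\mu \theta = $ (const)$\cdot \varepsilon$; crucially this is the same at $u$ and $v$ because we chose $\varepsilon_u = \varepsilon_v$. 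So the contribution of $\mu$ to $[\theta|_T]$ is automatically the same whether we build the handle at $u$ or at $v$, and we get to kill it by... — wait, $\mu$ is a genuine generator, so $\int_\mu\theta$ must itself be computed; but the handle can be chosen (by choosing $h$, keeping $\varepsilon$) so that $\int_\mu \theta = 0$ when $\varepsilon = 0$? No — I'd instead observe that $[\theta|_T](\mu)$ depends only on $\varepsilon$ and can be made zero by the right choice of handle profile for a given $\varepsilon$, OR, cleaner: the statement as phrased fixes $\varepsilon_u = \varepsilon_v = \varepsilon$ and asserts an iff with a condition not involving $\varepsilon$, which means $[\theta|_T](\mu)$ must vanish identically for the right handle — so I should check that the version of Lagrangian surgery used (\cite{Seidel99}, Appendix A) has $\int_\mu\theta$ equal to exactly the signed area $\varepsilon$ and then the torus is exact in the $\mu$-direction iff... no. Let me restructure: the honest computation is $[\theta|_T](\mu) = \varepsilon$ and $[\theta|_T](\lambda) = (f_A(v) - f_A(u)) + (\text{handle term at } v) - (f_B(v) - f_B(u)) - (\text{handle term at } u) + c\varepsilon$; with $\varepsilon_u = \varepsilon_v$ the handle terms cancel and one is left with a condition on the $f$'s plus an $\varepsilon$ term, and exactness of the *torus* as stated presumably also requires taking $\varepsilon$ such that the $\mu$-period vanishes, OR the proposition is implicitly also choosing the handle profile. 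I would track the constants carefully following \cite{Seidel99} and present: $[\theta|_T]$ vanishes iff both periods vanish; the $\mu$-period is controlled by $\varepsilon$ alone and handled by the "only if ... parameter $\varepsilon_u = \varepsilon_v$" setup (choosing $h$ appropriately for that common $\varepsilon$), and the $\lambda$-period reduces, after the equal-parameter cancellation, exactly to $f_A(u) - f_B(u) = f_A(v) - f_B(v)$.

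**Main obstacle.** The delicate point is bookkeeping the contributions of $\theta$ along the two handles and getting the signs and the interplay with $\varepsilon$ exactly right — in particular confirming that with the \emph{same} ordering $(A,B)$ and the \emph{same} parameter at both points, the two handle contributions to the $\lambda$-period cancel (this uses that near $u$ and $v$ the local models are genuinely isomorphic, which is where Assumption \ref{ass:productnhood} and the product structure near $M_\star$ enter, guaranteeing we may take literally the same Darboux model at both points). Once that cancellation is established, the $\lambda$-period is $\big(f_A(v)-f_B(v)\big)-\big(f_A(u)-f_B(u)\big)$ up to an overall sign and an $\varepsilon$-term that is common to both generators and dealt with by the parameter choice, giving the stated iff. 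I would also remark that changing the orderings at $u$ and $v$ to be opposite would flip a sign in one handle term and obstruct the cancellation — consistent with the paper's emphasis on using the same ordering — but since the statement fixes the same ordering, I would not belabor this. Finally I would note that once $[\theta|_T] = 0$, $T$ is exact by definition (the primitive $\theta|_T$ has a global primitive function), completing the proof.
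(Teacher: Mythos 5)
Your skeleton --- identify two generating loops of $H_1(T)$, a meridian $\mu$ and a longitude $\lambda$, and compute the periods of $\theta$ over each --- is exactly the paper's approach, and your treatment of $\lambda$ (integrate the primitives $f_A$, $f_B$ along arcs from $u$ to $v$, with handle corrections $\pm\epsilon_u$, $\mp\epsilon_v$ that cancel when the two parameters agree, leaving $\big(f_A(v)-f_B(v)\big)-\big(f_A(u)-f_B(u)\big)$) is the computation in the paper. The genuine gap is the meridian. After much hedging you settle on $[\theta|_T](\mu)=\epsilon$ and propose to absorb this by ``the right choice of handle profile'' or an implicit constraint on $\epsilon$; as you yourself observe, that cannot be reconciled with an iff whose right-hand side does not mention $\epsilon$, and it would render the proposition false as stated. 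The resolution you are missing is that $\int_\mu\theta=0$ unconditionally: the meridian over $(x_0,y_0)\in\mathrm{Im}(h)$ is the circle $t\mapsto(x_0\cos t,\,y_0\cos t,\,x_0\sin t,\,y_0\sin t)$, and the cone over it with vertex at the origin, $(s,t)\mapsto(sx_0\cos t,\,sy_0\cos t,\,sx_0\sin t,\,sy_0\sin t)$, is a \emph{Lagrangian} disc (a direct check with $\omega=dx_1\wedge dx_2+dx_3\wedge dx_4$, the form for which $L_1$, $L_2$ are Lagrangian and $\R^2\times\{(0,0)\}$ is symplectic; alternatively, slide the meridian along the handle to a point of $\mathrm{Im}(h)$ on the real axis, where the circle lies inside the Lagrangian plane $L_1$ and bounds a round disc there). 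By Stokes the $\mu$-period equals the symplectic area of that disc, which vanishes. This is precisely the one-line observation the paper makes (``the meridional $S^1$ bounds a Lagrangian disc'').

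Concretely, your error is a conflation of two different areas: the surgery parameter $\epsilon$ is the area enclosed between $\mathrm{Im}(h)$ and the axes in the \emph{symplectic} plane $\R^2\times\{(0,0)\}$ (the first two coordinates), whereas the disc bounding the meridian lives in a \emph{Lagrangian} cone and has symplectic area zero. The parameter $\epsilon$ enters only through the longitude, via the correction terms for traversing the two handles, and drops out there because $\epsilon_u=\epsilon_v$. Once the meridian period is correctly identified as zero, exactness of $T$ is governed entirely by the $\lambda$-period and the stated iff follows; the rest of your argument then goes through.
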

  
  \begin{proof}
  It's equivalent to find conditions under which $\theta$ integrates to zero about two distinct primitive simple closed curves. The meridional $S^1$ of the torus (which vanishes when deforming back to $A\cup B$) bounds a Lagrangian disc (which can be viewed by shrinking the $S^1$ in the handle $H$ of the surgery), so $\theta$ integrates to zero by Stokes' theorem. It remains to check the other direction. Let $S^1_l$ be any such curve.  It can be taken to be inside $M_\star$  near the surgery region.
 Assume we choose the surgery order $(A,B)$. Integrating along a curve that goes from $u$ to $v$ on $A$, then $v$ to $u$ on $B$, and correcting for the surgery, we find that
  \bq
  \int_{S^1_l} \theta = f_B(u)-f_B(v) + \e_u + f_A(v)-f_A(u) -\e_v.
  \eq
  In the case where $\e_u=\e_v=\e$, setting this expression equal to zero yields the desired result.
  \end{proof}
  
  Let $r_A$ and $r_A$ be the intersection points of $R_1$ with, respectively, $A$ and $B$. 
  Let $D_1$ and $D_2$ be the holomorphic discs between $u$, $r_A$ and $r_B$, and $v$, $r_A$ and $r_B$ (shaded in Figure \ref{fig:RS2localdiscs}).  

\begin{figure}[htb]
\begin{center}
\includegraphics [scale=0.85]{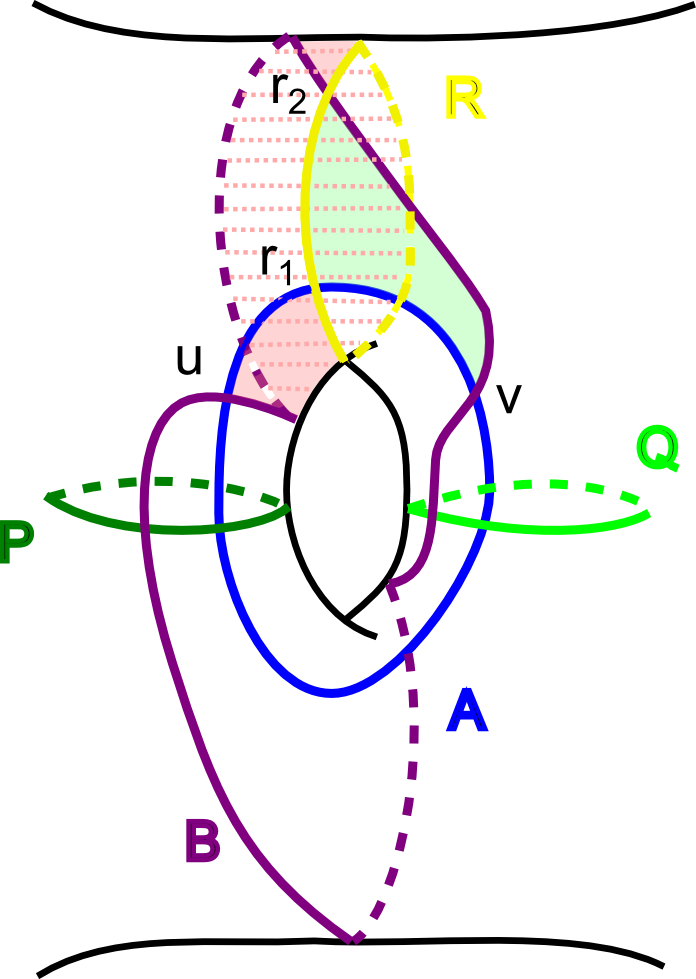}
\caption{Discs $D_1$ and $D_2$.}
\label{fig:RS2localdiscs}
\end{center}
\end{figure}
    Using Stokes' theorem, we get the following:
  
   \begin{corollary}
  Lagrangian surgery on $u$ and $v$ with same orderings, and same surgery parameter $\e$, produces an exact torus if and only if the discs $D_1$ and $D_2$ have the same symplectic area.  
  \end{corollary}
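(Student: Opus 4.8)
The plan is to read this off immediately from the preceding Proposition together with Stokes' theorem. By that Proposition, since both surgeries are performed with the ordering $(A,B)$ and with the common parameter $\e_u = \e_v = \e$, the surgered surface is an exact torus if and only if $f_A(u) - f_B(u) = f_A(v) - f_B(v)$. So it suffices to show that
\[
\left(f_A(u) - f_B(u)\right) - \left(f_A(v) - f_B(v)\right) = \int_{D_1}\omega - \int_{D_2}\omega ,
\]
up to an overall sign depending only on an orientation convention, which is irrelevant to the stated equivalence.

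Both $D_1$ and $D_2$ are discs in the Riemann surface $M_\star$ with boundary on $A\cup B\cup R_1$; concretely $\partial D_1$ consists of an arc of $A$ from $r_A$ to $u$, an arc of $B$ from $u$ to $r_B$, and an arc of $R_1$ from $r_B$ back to $r_A$, and likewise for $\partial D_2$ with $v$ in place of $u$. As is visible in Figure \ref{fig:RS2localdiscs}, the two holomorphic triangles can be taken to share the same $R_1$-arc between $r_B$ and $r_A$ (the part of $R_1\cap M_\star$ cut out between $A$ and $B$). Since $\omega|_{M_\star} = d(\theta|_{M_\star})$, Stokes' theorem gives $\int_{D_i}\omega = \int_{\partial D_i}\theta$, with the boundary orientations of $D_1$ and $D_2$ fixed compatibly. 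On the $A$-arcs $\theta = df_A$, on the $B$-arcs $\theta = df_B$, and on the shared $R_1$-arc -- a contractible segment, so $\iota^\ast\theta$ has a primitive $f_{R_1}$ along it -- the contribution is $f_{R_1}(r_A)-f_{R_1}(r_B)$, the same for both discs. Adding up,
\[
\int_{D_1}\omega = \left(f_A(u)-f_A(r_A)\right) + \left(f_B(r_B)-f_B(u)\right) + \left(f_{R_1}(r_A)-f_{R_1}(r_B)\right),
\]
and the analogous formula holds for $\int_{D_2}\omega$ with $v$ replacing $u$. Subtracting, every term involving $r_A$, $r_B$ or $f_{R_1}$ cancels and we are left with exactly the displayed identity. (Equivalently, one may glue $D_1$ to $-D_2$ along the common $R_1$-arc to obtain a chain whose boundary lies entirely on $A\cup B$, and apply Stokes using only $f_A$ and $f_B$, bypassing the auxiliary primitive $f_{R_1}$ altogether.) Combining with the Proposition then gives: $D_1$ and $D_2$ have equal symplectic area $\iff f_A(u)-f_B(u)=f_A(v)-f_B(v) \iff$ the surgered torus is exact.

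The only point requiring genuine care is the orientation/bookkeeping: one must choose the boundary orientations of $D_1$ and $D_2$ and the shared $R_1$-segment so that the endpoint contributions at $r_A$ and $r_B$ and the $R_1$-integral really are identical for the two discs and thus drop out of the difference; and one must confirm that the two holomorphic triangles in the configuration of Figure \ref{fig:RS2localdiscs} can indeed be realised with a common $R_1$-side (which is clear from that picture, since $R_1$ crosses the region bounded by the relevant arcs of $A$ and $B$ exactly at $r_A$ and $r_B$ and so divides it into the two triangles $D_1$ and $D_2$). Beyond that, everything is a one-line application of Stokes' theorem, and I do not anticipate any serious obstacle.
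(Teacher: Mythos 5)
Your argument is correct and is precisely the paper's (whose entire proof is the phrase ``Using Stokes' theorem''): you reduce to the preceding Proposition by applying Stokes to $\partial D_1$ and $\partial D_2$ and cancelling the common $R_1$-contribution. The one genuine subtlety, which your explicit boundary parametrisations do handle correctly, is that both triangles must traverse the shared $R_1$-arc in the \emph{same} direction (from $r_B$ to $r_A$, forced since both have their output corner at $r_A$), so that the $R_1$-, $f_A(r_A)$- and $f_B(r_B)$-terms cancel in the \emph{difference} of the areas rather than in their sum; note that the picture suggested by your closing parenthesis --- a single region bounded by arcs of $A$ and $B$ that $R_1$ cuts into the two triangles --- would give opposite traversals of the shared arc (and hence cancellation only in the sum of two positive areas, an absurdity), and is in any case impossible here because $u$ and $v$ are intersection points of the same sign, so no such bigon exists.
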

  
Notice that after exact Hamiltonian isotopy of the vanishing cycles on the Riemann surface $M_\star$, we can arrange for the two holomorphic discs $D_1$ and $D_2$ to have the same symplectic areas, while keeping minimal intersection between the curves. Thus, after an essentially local change of the symplectic form, we can arrange for this to be the case.
We shall assume hereafter that these areas are equal, for any of the $T_{p,q,r}$. 
  
 \subsubsection{Maslov class}
 
 \begin{lemma}\cite[Lemma 2.14]{Seidel00}
 Suppose you have two graded Lagrangians $\tilde{L}_0$ and $\tilde{L}_1$ which intersect transversally  at a single point $x$. If  we have
 \bq
 \tilde{I} (\tilde{L}_0, \tilde{L}_1; x) = 1
 \eq
 then there is a grading on $L_0 \# L_1$ which agrees with $\tilde{L}_0$ on $\tilde{L}_0 \cap (L_0 \# L_1)$, and with   $\tilde{L}_1$ on $\tilde{L}_1 \cap (L_0 \# L_1)$.
 \end{lemma}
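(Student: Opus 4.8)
The plan is to reduce the statement to the local situation near the surgery point $x$, since the grading of $L_0 \# L_1$ agrees with that of $L_0$ and $L_1$ away from an arbitrarily small neighbourhood of $x$, and away from there no new grading information is needed. So everything happens inside a Darboux chart where $L_0 = \R \times \{0\} \times \R \times \{0\}$, $L_1 = \{0\} \times \R \times \{0\} \times \R$, and $L_0 \# L_1$ is built from the Lagrangian handle $H$ associated to a smooth embedding $h: \R \to \R^2$ interpolating between the positive $x$-axis and the negative $y$-axis. The graded Lagrangian structure is a lift of the Gauss map to the universal cover $\widetilde{LGr}$; concretely, on a path in $LGr(\C^2)$ it is recorded by the phase function, i.e.\ (a continuous choice of) the argument of the determinant squared, or equivalently a real-valued function along the handle.

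First I would recall, following \cite[Section 2]{Seidel00}, that for the two transverse linear Lagrangian planes $\R^2$ and $i\R^2$ in $\C^2$, the index $\widetilde{I}(\widetilde{L}_0,\widetilde{L}_1;x)$ is determined by the difference of the two phases together with the path in $LGr$ that rotates $\R^2$ to $i\R^2$ through the family $e^{i\theta t}\R^2$, $t\in[0,1]$; the normalisation is such that $\widetilde{I} = 1$ corresponds to the rotation being exactly by angle $+\pi/2$ in the appropriate sense (the ``positive half-turn'' grading shift by one). Then I would compute the Gauss map of the Lagrangian handle $H$ explicitly: parametrising $H$ by $(x,y)\in\mathrm{Im}(h)$ and $t\in S^1$ as in Section \ref{sec:lagrangiansurgery}, one finds that the tangent plane of $H$ at such a point, as an element of $LGr(\C^2)$, traces out exactly a half-turn in $LGr$ as $(x,y)$ runs along $\mathrm{Im}(h)$ from the $\R_+\times\{0\}$ end to the $\{0\}\times\R_-$ end — this is the standard computation that the connect sum inserts a ``quarter-turn'' on each of the two sheets, total half-turn, matching the local behaviour of a Dehn twist / Lagrangian handle.

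The key step is then to check that the hypothesis $\widetilde{I}(\widetilde{L}_0,\widetilde{L}_1;x) = 1$ is precisely the condition that this half-turn traced by the Gauss map of $H$ can be lifted, consistently, to a path in $\widetilde{LGr}$ that starts at $\widetilde{L}_0|_x$ and ends at $\widetilde{L}_1|_x$. Unpacked: the grading on $L_0 \# L_1$ is specified on the complement of the handle by $\widetilde{L}_0$ on the $L_0$-part and $\widetilde{L}_1$ on the $L_1$-part; for these to glue across the handle we need the lift of the Gauss-map path of $H$ determined by the boundary value $\widetilde{L}_0|_{\partial}$ to coincide, at the other end, with $\widetilde{L}_1|_{\partial}$. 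The difference between ``where the lift actually lands'' and ``$\widetilde{L}_1|_x$'' is an integer, and a direct comparison of the half-turn of $H$'s Gauss map with the linear model path $e^{i\theta t}\R^2$ used to define $\widetilde{I}$ shows this integer is $\widetilde{I}(\widetilde{L}_0,\widetilde{L}_1;x) - 1$. Hence it vanishes exactly when $\widetilde{I} = 1$, which gives the desired grading; one checks continuity/consistency of the lift over all of $S^1$ (the $t$-circle) is automatic because $SO(2)\subset U(2)$ acts by special unitaries and hence does not change the phase.

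The main obstacle I anticipate is purely bookkeeping: getting the orientation and sign conventions to line up so that the relevant integer is $\widetilde{I} - 1$ rather than, say, $-\widetilde{I}$ or $\widetilde{I}+1$ — i.e.\ confirming that ``$\widetilde{I}=1$'' (and not some other value) is the good one, given this paper's conventions for the ordering $(L_0,L_1)$ in the surgery (which is noted to be opposite to Polterovich's) and for the grading/Maslov index normalisation inherited from \cite{Seidel00, Seidel08}. Since this is exactly \cite[Lemma 2.14]{Seidel00} transported to the present setting, the cleanest route is to cite that lemma directly for the linear/local model and then note that (a) the statement is local near $x$, (b) our Darboux chart and handle $H$ realise precisely that local model, and (c) the grading away from $x$ is unchanged; I would only spell out the Gauss-map-of-$H$ computation to the extent needed to make the application of \cite[Lemma 2.14]{Seidel00} transparent.
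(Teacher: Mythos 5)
The paper offers no proof of this statement at all: it is quoted with the citation \cite[Lemma 2.14]{Seidel00} and used as a black box, which is exactly where your proposal ends up (reduce to the local Darboux/handle model, then invoke Seidel's lemma). Your sketch of the underlying argument — that the grading question is local, that the handle's Gauss map traces a definite path in $LGr(\C^2)$ between the two planes, and that $\tilde{I}=1$ is precisely the condition for the lift starting at $\widetilde{L}_0|_x$ to land on $\widetilde{L}_1|_x$ — is a faithful outline of the proof in \cite{Seidel00}, with the sign bookkeeping correctly flagged as the only delicate point.
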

 
 As a  corollary, we get:
 
 \begin{corollary}\label{th:Maslovsurgery}
 Suppose you have two Lagrangians $L_0$ and $L_1$ with vanishing Maslov classes, and which intersect transversally at two points $u$ and $v$, with agreeing orientations. Let $\Sigma$ be the result of the Lagrangian surgeries at $u$ and $v$, with same order $(L_0, L_1)$. Then $\Sigma$ has vanishing Maslov class if
 \bq
  \tilde{I} (\tilde{L}_0, \tilde{L}_1; u) - \tilde{I} (\tilde{L}_0, \tilde{L}_1; v)  =0.
 \eq
(The left hand side is independent of the choices of graded lifts $\tilde{L}_0$ and $\tilde{L}_1$.)
 \end{corollary}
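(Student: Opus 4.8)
The plan is to build an explicit graded lift of $\Sigma$ out of graded lifts of $L_0$ and $L_1$; since the Maslov class of a compact Lagrangian is precisely the obstruction to the existence of such a lift, this will prove the claim. First I would fix graded lifts $\tilde{L}_0$ and $\tilde{L}_1$, which exist because $L_0$ and $L_1$ have vanishing Maslov class. Any two graded lifts of $L_i$ differ by a constant integer shift of the grading function, and under such a shift every transverse index $\tilde{I}(\tilde{L}_0,\tilde{L}_1;x)$ changes by an integer that depends only on the shift, not on the point $x$. Hence $\tilde{I}(\tilde{L}_0,\tilde{L}_1;u)-\tilde{I}(\tilde{L}_0,\tilde{L}_1;v)$ is independent of the choice of lifts; this is the assertion of the parenthetical remark, and it makes the hypothesis meaningful.

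Using the hypothesis, write $\tilde{I}(\tilde{L}_0,\tilde{L}_1;u)=\tilde{I}(\tilde{L}_0,\tilde{L}_1;v)=:d$ and replace $\tilde{L}_1$ by the appropriate integer shift, so that for this single choice of lifts one has $\tilde{I}(\tilde{L}_0,\tilde{L}_1;u)=\tilde{I}(\tilde{L}_0,\tilde{L}_1;v)=1$ (there is no parity constraint, as the shift is by an arbitrary integer). Now apply the preceding lemma at $u$: the construction there is local, carried out in the Darboux chart used for the surgery, and it produces a grading on the piece $H_u \subset \Sigma$ coming from the surgery at $u$, agreeing near the edge of the surgery region with $\tilde{L}_0$ along the $L_0$-sheet and with $\tilde{L}_1$ along the $L_1$-sheet. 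Running the same argument at $v$ with the \emph{same} pair $(\tilde{L}_0,\tilde{L}_1)$ --- legitimate because the index is also $1$ at $v$ --- gives a grading on the corresponding piece $H_v$.

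It remains to patch: outside the two disjoint surgery regions $\Sigma$ agrees with $L_0\cup L_1$, so I grade those portions by the restrictions of $\tilde{L}_0$ and $\tilde{L}_1$ and grade $H_u$ and $H_v$ by the gradings just constructed; by construction these agree along the gluing circles, hence assemble into a global graded lift $\tilde{\Sigma}$, and $\Sigma$ has vanishing Maslov class. The main --- in fact only --- subtle point, and the one place the hypothesis is used, is this compatibility: the normalization to index $1$ must hold at $u$ and at $v$ for one and the same pair of graded lifts, since those lifts are what the two handle-gradings have to match, and that is exactly what $\tilde{I}(\tilde{L}_0,\tilde{L}_1;u)-\tilde{I}(\tilde{L}_0,\tilde{L}_1;v)=0$ provides. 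Existence of the lifts, the shift formula for the index, the local application of the preceding lemma, and the final gluing are all routine.
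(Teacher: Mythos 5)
Your argument is correct and is exactly the intended derivation: the paper states this as an immediate consequence of the preceding lemma (Seidel's Lemma 2.14 on gradings of surgeries at a single index-one point) without writing out details, and your normalization of the lifts so that both indices equal $1$, followed by applying the lemma at $u$ and $v$ with the same pair of lifts and patching, is the standard way to fill them in. The only implicit assumption (shared with the paper) is that $L_0$ and $L_1$ are connected, so that any two graded lifts differ by a single overall integer shift and the difference of indices is genuinely lift-independent; this holds in every application here since the $L_i$ are vanishing cycles.
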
 
 
  In particular, we get the following result.
 
 \begin{lemma}
 Suppose we do Lagrangian surgery at $u$ and $v$ with the same order for $A$ and $B$. Then the resulting torus has vanishing Maslov class. 
 \end{lemma}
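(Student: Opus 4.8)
The plan is to apply Corollary \ref{th:Maslovsurgery} with $L_0 = A$, $L_1 = B$, so that it suffices to prove
\[
\widetilde{I}(\widetilde{A}, \widetilde{B}; u) - \widetilde{I}(\widetilde{A}, \widetilde{B}; v) = 0,
\]
for some (equivalently any) choice of graded lifts. Both $A$ and $B$ are vanishing cycles, hence Lagrangian spheres, which have vanishing Maslov class, so graded lifts exist; moreover the difference of local indices at the two intersection points is independent of the lifts chosen, so we are free to pick whichever is most convenient. The two intersection points $u$ and $v$ both lie in the fibre $M_\star$ over $\star$ (indeed $A$ and $B$ are matching cycles through $\star$, restricting to the curves of Figure \ref{fig:RS245Mutated} / Figure \ref{fig:fibreforT345} in $M_\star$), so the computation of these local indices can be carried out inside $M_\star$ together with the base direction.

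The key step is a reduction to a local Morse-index computation. By Assumption \ref{ass:productnhood}, near $K \subset M_\star$ the symplectic form and almost complex structure are products, and the matching paths through $\star$ are straight-line segments inside $B_{r/2}(\star)$; in particular, near each of $u$ and $v$, the pair $(A,B)$ looks like the product of the pair of arcs $(A|_{M_\star}, B|_{M_\star})$ in the surface with a pair of transverse straight lines in the base. The local index $\widetilde{I}(\widetilde{A},\widetilde{B};\cdot)$ is additive under such products, and the base contribution is the same at $u$ and $v$ (the two matching paths cross at the same angle near $\star$). So the difference $\widetilde{I}(\widetilde{A},\widetilde{B};u) - \widetilde{I}(\widetilde{A},\widetilde{B};v)$ reduces to the analogous difference computed for the two curves $A|_{M_\star}$ and $B|_{M_\star}$ on the Riemann surface $M_\star$, at the two points where they cross.

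Finally, I would compute this surface-level difference directly. On a Riemann surface (real dimension $2$), a choice of graded lift of a curve is just a choice of lift of its Gauss map to $\R$, and the index $\widetilde{I}$ at a transverse intersection of two graded curves is determined (modulo $2$ it is the Floer degree, which here is forced by the orientation agreement to be the same at $u$ and $v$) by how the lift of the tangent line of $A|_{M_\star}$ sits relative to that of $B|_{M_\star}$. Since $A$ and $B$ are exact Lagrangians, hence cannot bound in $M_\star$, and since the two intersection points have \emph{agreeing} orientation, the two local contributions differ by the winding of the tangent direction of $A|_{M_\star}$ (relative to that of $B|_{M_\star}$) along the arc of $A|_{M_\star}$ from $u$ to $v$, minus the corresponding winding along $B|_{M_\star}$; tracking the curves in the explicit picture of Figure \ref{fig:RS245Mutated}, these windings cancel, giving difference zero. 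Equivalently, one can observe that $A$ and $B$ cobound an immersed annulus (or two bigons) in $M_\star$ whose corners are exactly $u$ and $v$, and a Maslov-index/Gauss-Bonnet count for this region shows the two corner indices agree. The main obstacle is making the product-decomposition argument clean — i.e.\ verifying that the base directions genuinely contribute equally at $u$ and $v$ under Assumption \ref{ass:productnhood}, and that $\widetilde{I}$ is additive across the product splitting — and then carefully reading off the vanishing of the relative winding from the explicit surface configuration; both are routine but require care with orientation and grading conventions as in \cite[Section 2]{Seidel00}.
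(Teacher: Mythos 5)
Your proposal is correct and follows essentially the same route as the paper: apply Corollary \ref{th:Maslovsurgery}, use the product structure near $M_\star$ (Assumption \ref{ass:productnhood}) to split the index computation into base and fibre contributions, observe the base contributions at $u$ and $v$ coincide, and verify the fibre-level contribution vanishes by inspecting the explicit configuration of $A|_{M_\star}$ and $B|_{M_\star}$. The only cosmetic difference is that the paper phrases the last step as arranging the two paths of tangent lines along $M_\star$ to be everywhere transverse (so the relative Maslov index of the paths $\Gamma_1,\Gamma_2$ has no crossings), whereas you phrase it as a cancellation of relative windings --- these are equivalent.
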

  
 \begin{proof}
 Pick a path $\gamma_1$  from  $u$ to $v$ along $A$, and $\gamma_2$  from  $u$ to $v$ along $B$. We  choose the ones that lie on the Riemann surface $M_\star$, and go through $r_A$ or $r_B$ exactly once. 
Deform a trivialization of the tangent bundle of the Milnor fibre so that when restricted to the tangent space of $\pi^{-1}(B_\star)$, for some small open neighbourhood of $\star$, it is given by the product of the standard trivialization of the tangent space of the base with the restriction of the original trivialization to the tangent space of$M_\star$. 

Let $\mathcal{L}(2)$ be the Grassmannian of Lagrangian planes in $\R^4$. The path $\gamma_i$ induces a path $\Gamma_i$ in $\mathcal{L}(2)$.  By Corollary \ref{th:Maslovsurgery}, it's enough to show that the Maslov index of $\Gamma_1$ relative to $\Gamma_2$ vanishes. The two base components of the Lagrangians are always transverse.
Given a vanishing cycle on $M_\star$ (and the same trivialization of $TM_\star$ as above), the associated path in the Grassmanian of Lagrangian planes in $\R^2$ must be zero; by inspection, e.g.~further deforming the trivialization using parallel copies of $A$ and $R$ on a neighbourhood of $R$  (where we are using the notation of Figure \ref{fig:RS2localdiscs}), we see that we can arrange for the fibrewise components of the $\Gamma_i$ to also always be transverse.
In particular, the relative Maslov index of the paths that we are considering vanishes.
 \end{proof}

\subsubsection{Conclusion}

Putting together the results from the previous subsections, we now see that we have:
\begin{theorem}\label{th:toriTpqr}
There exists an exact Lagrangian torus $T$ of vanishing Maslov class in the Milnor fibre of any  $T_{p,q,r}$ singularity. Its homology class is given by the difference of the classes of the vanishing cycles $A$ and $B$.
\end{theorem}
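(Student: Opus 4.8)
The plan is to assemble the ingredients prepared in the three preceding subsections (Lagrangian surgery, exactness, Maslov class). By Proposition \ref{th:Tpqr}, in the Milnor fibre $\mathcal{T}_{p,q,r}$ the vanishing cycles $A$ and $B$ are Lagrangian spheres meeting transversally at exactly two points $u$ and $v$ with agreeing orientations, and one may take these intersection points to lie in $M_\star$. I would first fix an ordering, say $(A,B)$, and perform Lagrangian surgery at both $u$ and $v$ with a common surgery parameter $\e$; because the two intersection points carry the same sign, the local handles glue so that the outcome $\Sigma$ is an orientable closed surface (if the signs differed one would instead be forced into a Klein bottle, as noted in the Main Construction). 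To see $\Sigma$ is a torus, note that away from small balls around $u$ and $v$ it coincides with $A\cup B$, and the handles connect the two spheres compatibly with orientations precisely because the two intersection signs agree; this gives $[\Sigma]=[A]-[B]$ in homology, whence $[\Sigma]\cdot[\Sigma]=[A]\cdot[A]-2[A]\cdot[B]+[B]\cdot[B]=-2-2(\pm 2)-2$, and the only sign consistent with $[\Sigma]\cdot[\Sigma]=-\chi(\Sigma)$ for a closed orientable surface is the one forcing $\chi(\Sigma)=0$, i.e.\ $\Sigma$ is a torus $T$.

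Next I would invoke exactness. By the Corollary in the Exactness subsection, $T$ is exact if and only if the two holomorphic triangles $D_1$ and $D_2$ in $M_\star$ (with vertices $u,r_A,r_B$ and $v,r_A,r_B$ respectively) have equal symplectic area. As observed there, a compactly supported exact Hamiltonian isotopy of the vanishing cycles on $M_\star$, realised through an essentially local change of the symplectic form (Corollary \ref{th:movevcycle} together with the remarks of Section \ref{sec:moregeneralLefschetzfibrations}), can be arranged so that these two areas coincide while intersections stay minimal; by the Lemma of Section \ref{sec:Tpqr} such a change yields a copy of the Milnor fibre of the \emph{same} singularity, exact symplectomorphic to the original one after completion, so no generality is lost. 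Hence we may assume the areas of $D_1$ and $D_2$ agree, and then $T$ is exact.

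Finally I would record that $A$ and $B$ have vanishing Maslov class (they are Lagrangian spheres in a $4$-manifold) and apply the Lemma in the Maslov subsection: its proof compares the relative Maslov index of the two arcs from $u$ to $v$, one along $A$ and one along $B$, chosen inside $M_\star$ through $r_A$ or $r_B$; with a trivialisation that is a product near $\pi^{-1}(B_\star)$, the base components of the associated Lagrangian planes are everywhere transverse, and after a further deformation of the trivialisation near $R_1$ the fibrewise components are transverse too, so the relative Maslov index vanishes; by Corollary \ref{th:Maslovsurgery} the surgered torus $T$ has vanishing Maslov class. Combining the torus topology, the homology computation $[T]=[A]-[B]$, exactness, and the vanishing Maslov class gives the theorem. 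The only genuinely subtle point is the area-matching of $D_1$ and $D_2$ by an essentially local change of symplectic form together with the verification that this does not alter the symplectomorphism type of the Milnor fibre; that step has, however, already been carried out in the Exactness subsection, so the proof here is essentially a citation of the preceding results, with the homology-class assertion deserving the bulk of the written argument since it is the one claim not spelled out verbatim beforehand.
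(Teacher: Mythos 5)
Your proposal is correct and follows essentially the same route as the paper, whose proof of this theorem is literally an assembly of the preceding subsections: the surgery construction at $u$ and $v$ with matching order and parameter, the area-matching of $D_1$ and $D_2$ via an essentially local change of the symplectic form for exactness, and the relative-Maslov-index lemma for the grading. Your added verification of the homology class and Euler characteristic via $[T]\cdot[T]=-\chi(T)$ is a reasonable elaboration of a point the paper leaves implicit.
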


As a corollary, we obtain:

\begin{theorem}\label{th:tori}
The Milnor fibre of any positive modality isolated hypersurface singularity of three variables contains an exact Lagrangian torus, primitive in homology, and with vanishing Maslov class.
\end{theorem}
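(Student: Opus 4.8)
The plan is to deduce Theorem \ref{th:tori} from Theorem \ref{th:toriTpqr} together with the adjacency machinery already assembled in Section \ref{sec:singularity}. The key point is that a positive modality singularity, while not necessarily one of the $T_{p,q,r}$, is \emph{adjacent} to one of them, and adjacency gives an exact symplectic embedding of Milnor fibres that carries the torus along.

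First I would recall the relevant structural facts. By Corollary \ref{th:adjacenttoparabolic}, any positive modality isolated hypersurface singularity in three variables is adjacent to at least one of the parabolic singularities $T_{3,3,3}$, $T_{4,4,2}$, $T_{6,3,2}$; in particular it is adjacent to some $T_{p,q,r}$. Fix such a $g = T_{p,q,r}$ with our given singularity $f$ adjacent to $g$. By Lemma \ref{th:adjacentembedding} there is an exact symplectic embedding $\iota$ of a non-completed Milnor fibre of $g$ into the completed Milnor fibre $M_f$, and, using the Liouville flows at both ends as noted after Lemma \ref{th:adjacentembeddingcycles}, we may take $\iota$ to be defined on any prescribed compact subset of the completed Milnor fibre of $g$.

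Next I would apply Theorem \ref{th:toriTpqr}: the Milnor fibre of $g = T_{p,q,r}$ contains an exact Lagrangian torus $T$ with vanishing Maslov class, whose homology class is $[A]-[B]$ for the vanishing cycles $A$, $B$. Since $T$ is compact, it lies in some compact subset of the completed Milnor fibre of $g$, so $\iota(T) \subset M_f$ is defined. I then check that the three required properties pass through $\iota$. Exactness: an exact symplectic embedding satisfies $\iota^\ast(\theta_f) = \theta_g + dh$ with $h$ compactly supported, so $[\iota^\ast \theta_f|_{\iota(T)}] = [\theta_g|_T] = 0 \in H^1(T)$, hence $\iota(T)$ is exact. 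Maslov class: the Milnor fibres all have $c_1 = 0$, an exact symplectic embedding pulls back a compatible almost complex structure (up to homotopy) and hence the trivialization of the canonical bundle, so the Maslov class of $\iota(T)$ is the image of that of $T$, namely zero. Primitivity in homology: here I would invoke Lemma \ref{th:adjacentembeddingcycles}, which says that under $\iota$ the vanishing cycles of $g$ map to Hamiltonian displacements of vanishing cycles of $f$; these vanishing cycles form part of a distinguished basis of $H_2(M_f)$, so $[A]$ and $[B]$ map to distinct basis elements (or at worst to classes whose difference is still primitive), whence $[\iota(T)] = \iota_\ast([A]-[B])$ is a primitive class in $H_2(M_f)$.

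The main obstacle — and the one point deserving genuine care rather than a one-line citation — is the primitivity claim. Adjacency hands us that vanishing cycles go to vanishing cycles, and Gabrielov's description of the intersection form (Section \ref{sec:GabrielovDynkin}, Figure \ref{fig:Tpqr'}) tells us $[A]$ and $[B]$ are distinct members of a distinguished basis for $T_{p,q,r}$, so $[A]-[B]$ is primitive there; but one must confirm that the images $\iota_\ast[A]$, $\iota_\ast[B]$ remain part of (or extend to) a distinguished basis of $H_2(M_f)$, which is exactly the content of the remark following Lemma \ref{th:adjacentembeddingcycles} about enriching the parallel-transport fibration with the Morsification parameter so that vanishing cycles are tracked coherently. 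With that in hand, $\iota_\ast([A]-[B])$ is primitive in $H_2(M_f;\Z)$, completing the proof. I would close by noting that the same argument, using the higher-dimensional adjacency statement and the $S^1 \times S^{n-1}$ construction sketched in Section \ref{sec:higherdimensions}, yields the corresponding Proposition in all dimensions.
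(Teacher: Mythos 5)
Your proposal is correct and follows essentially the same route as the paper: deduce the result from Theorem \ref{th:toriTpqr} via Corollary \ref{th:adjacenttoparabolic} and the exact embedding of Lemma \ref{th:adjacentembedding}, with primitivity tracked through Lemma \ref{th:adjacentembeddingcycles} and the Maslov class argued from the restriction of a grading/trivialization of $M_f$ to $M_g$ (the paper phrases this via lifts of the Lagrangian Grassmannian, but the content is the same, resting on the fact that vanishing of the Maslov class of $T$ is insensitive to the choice of lift).
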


\begin{proof}
This follows from Theorem \ref{th:toriTpqr}, together with Theorem  \ref{th:adjacenttoparabolic} (any positive modality singularity is adjacent to a parabolic singularity, which are the three simplest of the $T_{p,q,r}$), and Lemma \ref{th:adjacentembedding} (if a singularity $f$  is adjacent to another one, say $g$,  there is an exact embedding of their Milnor fibres: $M_g \hookrightarrow M_f$). 
The homology class of the torus $T$ in $\mathcal{T}_{p,q,r}$ is the difference of the classes of two vanishing cycles. This remains true under the embedding given by adjacency (Lemma \ref{th:adjacentembeddingcycles}). Thus all the tori we construct have primitive homology classes.
This leaves the Maslov class claim. 
Let $f$ be positive modality singularity under consideration, and $g$ a parabolic singularity that it is adjacent to.
As $c_1(M_f)=0$, there is a lift from Lagrangian Grassmanian $LGr(M_f)$ to the Grassmanian of graded Lagrangian planes, $\widetilde{LGr}(M_f)$. 
This restricts to a lift from  $LGr(M_g)$ to $\widetilde{LGr}(M_g)$. Consider any closed path on $T$. This gives a path $\gamma: S^1 \to LGr(M_g)$. 
As $T$ has Maslov class zero in $M_g$, $\gamma$ lifts to a closed path $ S^1 \to \widetilde{LGr}(M_g)$. 
 (Of course, the lift of $LGr(M_g)$ to $\widetilde{LGr}(M_g)$ might be different from the one
  coming from trivialization of $T(M_g)$ used to calculate the Maslov class of $T$ -- but $\gamma$ will lift to a closed path for \emph{any} lift of $LGr(M_g)$ to $\widetilde{LGr}(M_g)$.) 
 Thus the image of $T$ in $M_f$ also has vanishing Maslov class.
\end{proof}

\subsection{A local model for the Lagrangian surgeries}\label{sec:localmodel}

For some Floer--theoretic computations, it will later be useful to have the following local model for the Lagrangian surgeries we make. 
Consider the Lefschetz fibration
\begin{eqnarray}
\chi: & \mathcal{C}:=\{ (x,y,z) \in \C^2 \times \C^\ast \, | \, x^2+y^2+z^2=1\} & \to \C^\ast \\
& (x,y,z) &\mapsto z.
\end{eqnarray}
The smooth fibre is a cone. There are two critical values, $z= \pm 1$, and the fibre above each of them is the union of two lines.  Equip this with with the exact symplectic form associated to the plurisubharmonic function
\bq
h(x,y,z) = |x|^2 + |y|^2 + (\text{log}|z|)^2 .
\eq
% \red{double check}. done
In particular, we have
\bq
\omega = \frac{i}{2} \Big(  dx \wedge d \bar{x} +  dy \wedge d \bar{y}  + \frac{ dz \wedge d \bar{z}} {z \bar{z}}\Big).
\eq
(The reader might want to think of the $z$ coordinate as an infinite annulus. Setting $z = e^{s+it}$, the $z$--term of $\omega$ is  $ds\wedge dt$, the standard symplectic form on $T^\ast S^1$.) 
Also, the $z$--terms of $\theta$ (say $\theta_z$) are a multiple of $\text{log} |z|$. In particular, 
\begin{equation} \label{eq:exactnessforlocalmodel}
\int_{|z|=1} \theta_z =0.
\end{equation}
The two unit half-circles in $\C^\ast$ give matching paths between $z=1$ and $z=-1$. (See Figure \ref{fig:localmodel}.)
\begin{figure}[htb]
\begin{center}
\includegraphics[scale=0.75]{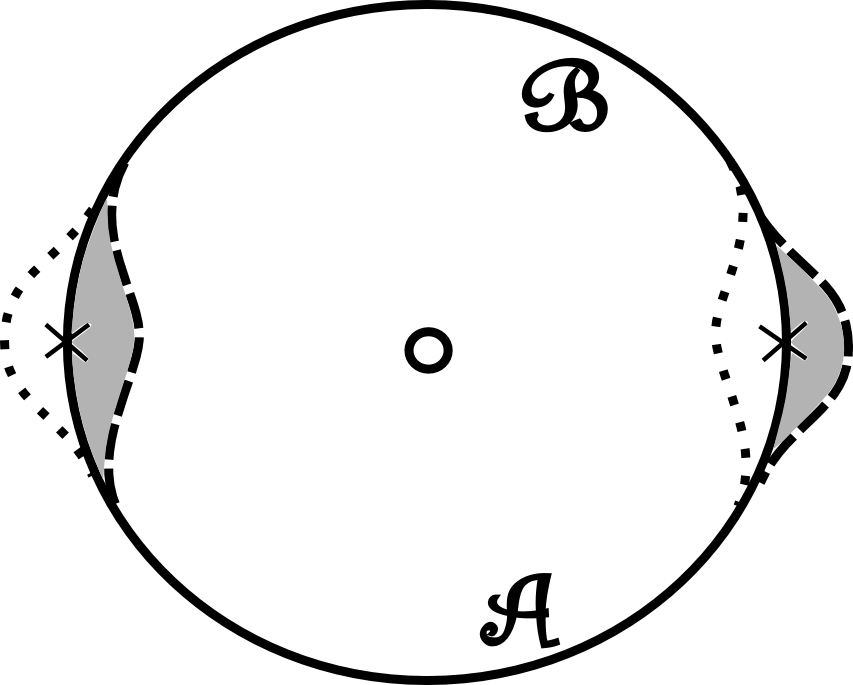}
% or 
\caption{Local model for Lagrangian surgeries on the intersections of $A$ and $B$.
}
\label{fig:localmodel}
\end{center}
\end{figure}
Using symplectic parallel transport, to each of these corresponds a Lagrangian sphere in $\mathcal{C}$.   (See e.g.~\cite{KhovanovSeidel}.) By for instance symmetry considerations, we do not even need to modify the symplectic form to get these  -- see e.g.\cite{Auroux07}, Section 5.1, for a similar argument. Call them $\mathcal{A}$ and $\mathcal{B}$. A Darboux-type argument gives:

\begin{proposition} There is an exact symplectomorphism from an open neighbourhood of the union of $A$ and $B$ to an open neighbourhood of the union of $\mathcal{A}$ and $\mathcal{B}$, mapping $A$ to  $\mathcal{A}$ and $B$ to  $\mathcal{B}$. 
\end{proposition}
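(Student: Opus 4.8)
The plan is to prove this as a relative Darboux--Weinstein neighbourhood statement. The key point is that both $A\cup B\subset\mathcal{T}_{p,q,r}$ and $\mathcal{A}\cup\mathcal{B}\subset\mathcal{C}$ realise the \emph{same} local combinatorial model: a pair of Lagrangian $2$--spheres meeting transversally at exactly two points, with the two local intersection signs equal. For $A$ and $B$ this is built into Proposition~\ref{th:Tpqr} and Figure~\ref{fig:matchingTpqr}, and it is what makes the surgery of Section~\ref{sec:torusconstruction} produce a torus rather than a Klein bottle. For $\mathcal{A}$ and $\mathcal{B}$ it must be checked: the two unit half--circles are matching paths sharing \emph{both} endpoints $z=\pm1$, so $\mathcal{A}\cap\mathcal{B}$ consists precisely of the two critical points of $\chi$; that these intersections are transverse and carry equal signs is an $A_1$ local computation at the nondegenerate critical points of $x^2+y^2+z^2$ (the same local picture as in the discussion of Section~\ref{sec:matchingmutations} for matching paths meeting at an endpoint). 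Granted that, the surgeries $A\#B$ and $\mathcal{A}\#\mathcal{B}$ will correspond under the symplectomorphism we produce, since surgery is a local operation.

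First I would produce a diffeomorphism of neighbourhoods. Away from small balls around the two intersection points, the Lagrangian (Weinstein) neighbourhood theorem identifies a neighbourhood of $A$ (resp.\ $B$) with a disc subbundle of $T^\ast A\cong T^\ast S^2$ (resp.\ $T^\ast B$), and similarly for $\mathcal{A}$, $\mathcal{B}$; near each intersection point a neighbourhood of the transverse pair of Lagrangian planes is linearly standard, which is exactly the Darboux normalisation already used in Section~\ref{sec:lagrangiansurgery}. Patching these charts over the annular overlap regions with cut--off functions gives a diffeomorphism $\Psi_0$ from a neighbourhood $N_0\supset A\cup B$ onto a neighbourhood $N_1\supset\mathcal{A}\cup\mathcal{B}$ with $\Psi_0(A)=\mathcal{A}$ and $\Psi_0(B)=\mathcal{B}$; after a further smooth modification supported near $A\cup B$ (a pointwise linear normalisation along the two spheres, agreeing with the identity near the two intersection points where everything is already standard) one may also assume $\Psi_0^\ast\omega_{\mathcal{C}}$ and $\omega$ coincide as bilinear forms at every point of $A\cup B$. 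Then a parametrised Moser argument finishes the symplectic part: the path $\omega_t=(1-t)\,\omega+t\,\Psi_0^\ast\omega_{\mathcal{C}}$ is nondegenerate on a neighbourhood of $A\cup B$, the difference $\Psi_0^\ast\omega_{\mathcal{C}}-\omega$ vanishes along $A\cup B$, so a relative Poincaré lemma provides a primitive vanishing on $A\cup B$; the resulting Moser flow fixes $A\cup B$ pointwise, and composing $\Psi_0$ with it yields a symplectomorphism $\Psi$ of smaller neighbourhoods with $\Psi(A)=\mathcal{A}$, $\Psi(B)=\mathcal{B}$.

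It remains to make $\Psi$ exact. Since $\Psi^\ast\omega_{\mathcal{C}}=\omega$, the $1$--form $\Psi^\ast\theta_{\mathcal{C}}-\theta$ is closed; a regular neighbourhood of $A\cup B$ deformation retracts onto $A\cup B$, which is homotopy equivalent to $S^2\vee S^2\vee S^1$, so $H^1\cong\mathbb{Z}$ is generated by the loop running from one intersection point to the other along $A$ and back along $B$. This is exactly the curve $S^1_l$ of Section~\ref{sec:torusconstruction}, and the class $[\Psi^\ast\theta_{\mathcal{C}}-\theta]$ is detected by $\int_{\Psi(S^1_l)}\theta_{\mathcal{C}}-\int_{S^1_l}\theta$. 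The first integral vanishes by \eqref{eq:exactnessforlocalmodel} together with the symmetry of the local model (the projection of $\Psi(S^1_l)$ to the base traces the full unit circle $|z|=1$), and the second vanishes by the area normalisation of Section~\ref{sec:torusconstruction}: equality of the symplectic areas of the discs $D_1$ and $D_2$ is precisely the condition that $\theta$ integrates to zero over $S^1_l$. Hence $\Psi^\ast\theta_{\mathcal{C}}-\theta$ is exact and $\Psi$ is an exact symplectomorphism. I expect the main obstacle to be organising the Weinstein charts along the two spheres and the Darboux charts at the two intersection points into a single coherent diffeomorphism and arranging the pointwise matching of the two symplectic forms along $A\cup B$; once that local bookkeeping is in place, the $A_1$ sign computation, the Moser homotopy, and the $H^1$ check are all routine.
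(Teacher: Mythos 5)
Your proposal is correct and is essentially the paper's argument written out in full: the paper disposes of the symplectomorphism with the phrase ``a Darboux-type argument'' (i.e.\ exactly the Weinstein-charts-plus-Moser normalisation you describe) and of exactness by noting that Equation \eqref{eq:exactnessforlocalmodel} forces the two intersection points to have equal action, which is precisely your vanishing-period computation on the generator of $H^1$ of the plumbed neighbourhood. The only content you add beyond the paper is making explicit the equal-action check on the Milnor-fibre side (via the equal areas of $D_1$ and $D_2$) and the sign check for $\mathcal{A}\cap\mathcal{B}$, both of which are consistent with what the paper arranges elsewhere.
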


(Exactness follows from Equation \ref{eq:exactnessforlocalmodel}, which ensures both intersection points have equal action.)
Near $1$ or $-1$, replace the union of the two matching paths by a curve segment avoiding the singular value (in each case, either to its left or its right). 
These are the dotted and dashed segments on our figure. The result, an $S^1$ in the base, gives a Lagrangian torus in the total space by taking the symplectic parallel transport of the vanishing cycle around $S^1$ (with no need to modify the symplectic form, again by symmetry considerations). 
 Up to Lagrangian isotopy, there are four choices. 
 If you make the perturbations  towards to same side (either left or right -- both dashes or both dots), the result will be exact if and only if the two displaced areas in the base agree. 
 (These are the shaded regions in our figure. Their areas are calculated with respect to the symplectic form given by the $z$--terms of $\omega$.) One can check that:

\begin{proposition}
The four Lagrangian tori obtained by surgery on $u$ and $v$ corresponds to the four matching tori described above. Moreover, the displaced area and the surgery parameter are monotone continuous functions of each other (cf \cite[Section 2.2]{LekiliMaydanskiy}). 
\end{proposition}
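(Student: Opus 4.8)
The plan is to localise the whole statement near the two intersection points $u$ and $v$ and to read off both assertions from the standard local model of a Lefschetz fibration near a node. First I would note that Lagrangian surgery at $u$ (resp.\ $v$) modifies $\mathcal{A}\cup\mathcal{B}$ only inside an arbitrarily small Darboux ball around $u$ (resp.\ $v$), while the passage from the two half-circle matching paths to one of the four perturbed circles in the base modifies the associated parallel-transport Lagrangian only inside the preimage of a small disc about the critical value $1$ (resp.\ $-1$), which can be arranged to lie inside that same Darboux ball. Hence it suffices to prove a purely local statement near $z=1$: Lagrangian surgery of the two Lefschetz thimbles of $\mathcal{A}$ and $\mathcal{B}$ in the order $(\mathcal{A},\mathcal{B})$ (resp.\ $(\mathcal{B},\mathcal{A})$) produces, up to Lagrangian isotopy supported in that ball, the parallel-transport Lagrangian over the arc obtained by joining the two half-paths into one smooth arc skirting $1$ on one side (resp.\ the other); and the surgery parameter $\e$ and the signed area swept by this detour, measured with the $z$--part of $\o$, are related by a smooth strictly monotone function $a$. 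Granting this, carrying out the construction simultaneously at $u$ and at $v$ yields the bijection between the four surgered tori and the four matching tori, since the two choices of order at $u$ and the two at $v$ are independent.

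For the local identification I would put $\chi$ into standard form: near $z=1$, after a local symplectomorphism, $\chi$ is the standard one-variable Lefschetz model $(a,b,c)\mapsto c$ on $\{a^2+b^2+c^2=\text{const}\}$ near one of its critical points, with fibre a cylinder $\simeq T^\ast S^1$ and vanishing cycle the collapsing circle. In this model $\mathcal{A}$ and $\mathcal{B}$ are the two Lefschetz discs over the two half-paths that approach $1$; gluing them along the circle in the basepoint fibre and then resolving the node by the handle $H$ of \cite{Polterovich}, \cite[Appendix~A]{Seidel99} is given by an explicit formula, and one checks in these coordinates --- exactly as in the $A_1$ computations of \cite[Section~6]{Thomas-Yau} and \cite[Section~2.2]{LekiliMaydanskiy} --- that the result is the parallel transport of the vanishing cycle over a smooth detour arc around $c=0$, the side of the detour being determined by the order of the surgery, and that a handle of parameter $\e$ produces a detour enclosing area $a(\e)$ with $a$ smooth and $a'>0$. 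Equation \ref{eq:exactnessforlocalmodel} (vanishing of the $z$--part of $\theta$ on $|z|=1$) is what forces both intersection points to have the same action, so that $a$ is genuinely the same function at $u$ and at $v$; alternatively one may invoke the symmetry $z\mapsto 1/\bar z$ of the model, which exchanges the fibres over $1$ and $-1$.

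Feeding this back: the four surgered tori are identified, up to Lagrangian isotopy, with the four matching tori; and since $T$ is exact iff $\e_u=\e_v$ (with the same surgery order) while the same-side matching torus is exact iff its two displaced areas agree, strict monotonicity of $a$ shows that these two one-parameter families of exact representatives coincide, which is the `moreover' clause. Continuity and monotonicity of $\e\mapsto a(\e)$ are then exactly the last displayed claim.

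I expect the main obstacle to be precisely the local identification in the second paragraph: one must track how the surgery handle $H$, which is built inside an abstract Darboux chart with no \emph{a priori} relation to the fibration $\chi$, actually sits inside the total space and projects to the base, and verify that this projection is Lagrangian isotopic, rel the boundary of the ball, to the parallel transport of the vanishing cycle over a detoured arc. This is where the explicit parametrisation of \cite[Section~2.2]{LekiliMaydanskiy} does the real work; once it is in place, the strict monotonicity $a'(\e)>0$ and the continuity are immediate, since to leading order the swept area is a positive multiple of $\e$ and the full dependence is monotone throughout the relevant range.
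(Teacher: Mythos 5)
Your proposal is correct and is essentially the verification that the paper delegates to its citation: the text offers nothing beyond ``one can check'' together with the reference to \cite{LekiliMaydanskiy}, and your reduction to the standard local model near each critical value, followed by the explicit identification of the surgery handle with the union of vanishing cycles over the image of $\mathrm{Im}(h)$ under the squaring map (whence the detour arc, its side, and the monotone area--parameter relation), is exactly the intended check. One minor slip: the symmetry of the local model exchanging the two critical values is $z \mapsto -z$ rather than $z \mapsto 1/\bar{z}$ (which fixes both $\pm 1$); this does not matter, since your primary argument via Equation \ref{eq:exactnessforlocalmodel} is the one the paper itself uses for the equal-action statement.
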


\subsection{Tori in parabolic singularities}

\subsubsection{Main result and discussion}

\begin{proposition} \cite{Gabrielov1}
The three parabolic singularities have a semi-definite intersection form, with a two-dimensional nullspace.
\end{proposition}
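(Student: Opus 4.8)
The plan is to reduce the statement to a short, finite linear-algebra check using the explicit intersection matrices already at our disposal. The three parabolic singularities are $T_{3,3,3}$, $T_{4,4,2}$ and $T_{6,3,2}$; since $\mu(T_{p,q,r}) = p+q+r-1$, their Milnor numbers are $8$, $9$ and $10$. By Gabrielov's computation of the intersection form --- which Proposition~\ref{th:Tpqr} recovers geometrically, with Dynkin diagram as in Figure~\ref{fig:Tpqr'} --- each case comes with a concrete symmetric integral Gram matrix $G$ in the distinguished basis $\{A, B, P_\bullet, Q_\bullet, R_\bullet\}$: every diagonal entry is $-2$ (each vanishing cycle is a Lagrangian $2$-sphere, so $L\cdot L = -\chi(S^2) = -2$), a full edge contributes $\pm 1$, a double dashed edge contributes $\mp 2$ (in particular the $A$--$B$ entry), and the $A$--$R_2$ entry is $0$, the two geometric intersection points having opposite sign (cf.~Lemma~\ref{th:AR2disjoint}). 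So the whole statement is about three explicit matrices of sizes $8$, $9$, $10$.

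The first step is semi-definiteness. I would simply diagonalise each $G$ over $\mathbb{Q}$ --- symmetric Gaussian elimination / completing the square --- and observe that no positive pivot ever appears, so the signature is $(0, \mu - 2)$ and the form is negative semi-definite. One can package this more structurally: mutations of the distinguished collection act on the Milnor lattice by the Picard--Lefschetz reflections (Section~\ref{sec:PicardLefschetz}), hence by congruences of $G$, and one can bring each $G$ to $(-E_n) \oplus \langle 0\rangle \oplus \langle 0\rangle$ with $n = 6, 7, 8$ respectively --- matching Arnol'd's notation $\widetilde E_6, \widetilde E_7, \widetilde E_8$ --- after which negative semi-definiteness follows from the positive-definiteness of the $E_n$ Cartan form; but for the present purpose the direct diagonalisation is quickest.

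The second step is the dimension of the nullspace. Here I would check directly that $\operatorname{rank} G = \mu - 2$: namely $\det G = 0$, while some $(\mu - 2)\times(\mu - 2)$ principal minor of $G$ is non-zero (for instance the block spanned by a sub-collection of vanishing cycles on which, by the previous step, the form is negative definite and hence nondegenerate). It then suffices to exhibit two $\mathbb{Z}$-independent primitive vectors $v_1, v_2 \in \mathbb{Z}^\mu$ killed by $G$; these can be read off the matrix as explicit positive integral combinations of vanishing cycles (the ``imaginary root'' classes of the affine sub-diagram). Combined with the rank count, $v_1$ and $v_2$ span the radical, so the nullspace is exactly two-dimensional.

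Finally, the main point to stress: there is no conceptual obstacle --- the result is Gabrielov's \cite{Gabrielov1, Gabrielov2} and the proof is a verification on three small matrices. The only genuine care is bookkeeping: transcribing $G$ faithfully from Figure~\ref{fig:Tpqr'} (diagonal $-2$, full edge $+1$, double dashed edge $-2$, no $A$--$R_2$ edge), and, should one prefer the $E_n$ route, producing the explicit mutation sequence realising the congruence. One should also note that ``semi-definite'' here means negative semi-definite, which is already forced by the $-2$ on the diagonal of $G$.
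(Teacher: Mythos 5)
Your plan is correct, but note first that the paper does not actually prove this proposition: it is quoted from Gabrielov \cite{Gabrielov1, Gabrielov2}, and the surrounding text only recovers the Gram matrix of Figure \ref{fig:Tpqr'} geometrically (Proposition \ref{th:Tpqr}, Lemma \ref{th:AR2disjoint}). Your proposal supplies a self-contained verification, and each step is sound: the three matrices (sizes $8$, $9$, $10$) are explicit; $[A]-[B]$ is manifestly in the radical since $A$ and $B$ have the same pairings with every basis vector; the second null vector is the imaginary root of the affine sub-diagram obtained after quotienting (cf.\ Figure \ref{fig:T632}); and the rank lower bound $\mu-2$ follows from exhibiting a negative definite principal $E_n$-block, e.g.\ by deleting $B$ and the last vertex of the $P$-arm, which leaves the $E_6$, $E_7$, $E_8$ tree in the three cases respectively -- this also repairs the mild circularity in your phrase ``by the previous step,'' since definiteness of $E_n$ is classical and need not be re-derived from the semi-definiteness of $G$. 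It is worth pointing out that the paper itself contains a more conceptual route to the nullspace statement, which you could have invoked instead of matrix algebra: Proposition \ref{th:nullclasses} identifies the radical with the image of $H_2(\partial M^c)\to H_2(M)$, and for the parabolic fibres, compactified to del Pezzo surfaces with a smooth elliptic anticanonical divisor $D$ at infinity, the Gysin sequence gives $H_2(\partial M^c)\cong H_1(D)\cong \Z^2$; that argument explains \emph{why} the corank is two, whereas your computation merely certifies it. Either way the statement holds, and your only remaining obligation would be the actual bookkeeping of the three matrices.
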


So far, we've been considering the null-class given by $[5]-[4]$ in the notation of the Dynkin diagram of Figure \ref{fig:Tpqr'} -- that is, the class $[A]-[B]$ in the notation of e.g.~Section \ref{sec:Tpqrvcycles}. 
There is a second independent class with a nice description in terms of the Dynkin diagram  \ref{fig:Tpqr'}: after quotienting out the class $[A]-[B]$,
it is given by a weighted combination of vertices of the quotient diagram. See Figure \ref{fig:T632}  for the case of $T_{6,3,2}$. (While the reader might find the weights instructive, we shall not use them to prove the theorem below.)

\begin{figure}[htb]
\begin{center}
\includegraphics[scale=0.7]{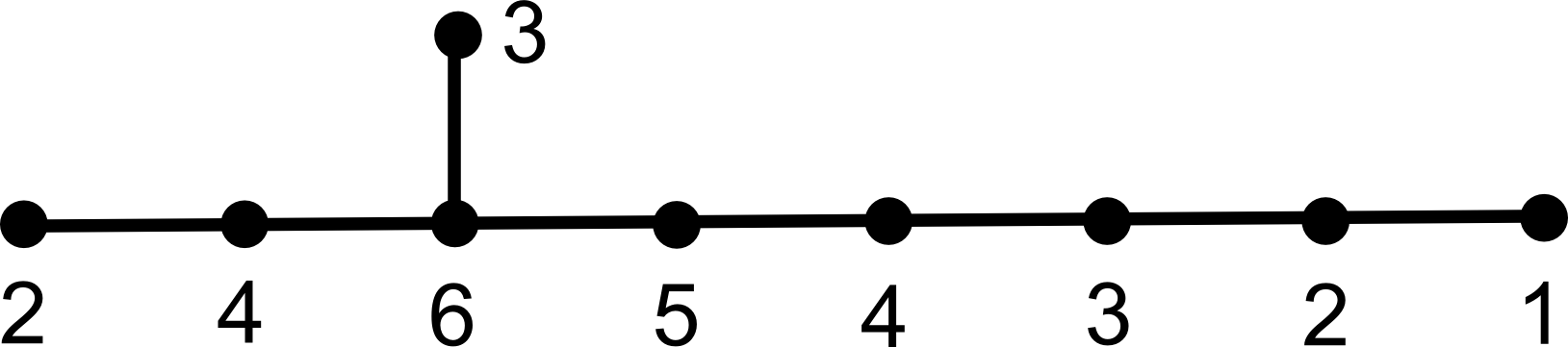}
\caption{Second null-class for the intersection form of $T_{6,3,2}$. The integers denote weights.
}
\label{fig:T632}
\end{center}
\end{figure}

Was the null-class that we constructed our torus in special in any way? 

\begin{thm}\label{th:allparabolic}
For parabolic singularities, there is an exact Lagrangian torus with vanishing Maslov class in each primitive homology class in the null-space of the intersection form.
\end{thm}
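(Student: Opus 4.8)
The plan is to exhibit, for each of the three parabolic singularities $T_{3,3,3}$, $T_{4,4,2}$, $T_{6,3,2}$, an explicit second exact Lagrangian torus representing (a generator of) the second null-class, and then to observe that the mapping class group of the Milnor fibre (generated by Dehn twists in vanishing cycles, hence acting on homology through the Weyl-type group of the intersection lattice) acts transitively enough on primitive null-classes to conclude. First I would recall the structure of the null-space: for a parabolic singularity the intersection form on $\Z^\mu$ is semi-definite with a rank-two radical, spanned by $[A]-[B]$ together with a second class $\delta$ described in Figure \ref{fig:T632} as a weighted sum of vanishing cycles in the quotient Dynkin diagram. Any primitive class in the radical is of the form $m([A]-[B]) + n\delta$ with $\gcd(m,n)=1$. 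The key algebraic input is that the subgroup of $GL(H_2(\mathcal{T}_{p,q,r}))$ generated by the transvections $\tau_{L_i}$ (Picard--Lefschetz: $v \mapsto v + (v\cdot L_i)L_i$, up to sign) preserves the radical and acts on it; for the three parabolic cases one checks directly (a finite computation with the diagrams of Figure \ref{fig:Tpqr'}) that this action on the primitive classes of the radical is transitive. This reduces Theorem \ref{th:allparabolic} to producing a torus in a single well-chosen null-class, since Dehn twists are symplectomorphisms of $\mathcal{T}_{p,q,r}$ and send exact Lagrangian tori of vanishing Maslov class to exact Lagrangian tori of vanishing Maslov class (the Maslov class is preserved because $c_1=0$, exactly as in the proof of Theorem \ref{th:tori}).

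Next I would produce the torus representing $[A]-[B]$: this is already Theorem \ref{th:toriTpqr}. To get a torus in $\delta$ (modulo $[A]-[B]$), I would find, inside the configuration of vanishing cycles of Proposition \ref{th:Tpqr} as realized on the Riemann surface $M_\star$ (together with the $R_i$-chain in the base direction), a second pair of Lagrangian spheres $A', B'$ among the $P_i, Q_i, R_i$ (or small modifications thereof obtained by further mutations) which intersect transversally in exactly two points with agreeing orientations, and whose homology difference $[A']-[B']$ lies in the radical and is not proportional to $[A]-[B]$ — this is precisely what the weighted-vertex description of Figure \ref{fig:T632} is telling us can be arranged. Concretely, for $T_{6,3,2}$ one reads off from Figure \ref{fig:Tpqr'} that a suitable integer combination of chain vertices, after a sequence of Dehn twists, becomes representable as the difference of two vanishing cycles meeting in two points; analogous statements hold for $T_{3,3,3}$ and $T_{4,4,2}$. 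Then the entire torus construction of Section \ref{sec:torusconstruction} applies verbatim to $A', B'$: Lagrangian surgery at the two intersection points produces a Lagrangian torus, exactness is arranged by an essentially local change of the symplectic form making the two relevant holomorphic bigons have equal area (as in the Corollary following Figure \ref{fig:RS2localdiscs}), and vanishing of the Maslov class follows from Corollary \ref{th:Maslovsurgery} and the trivialization argument of Section \ref{sec:torusconstruction}, all of which are purely local near $M_\star$.

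Finally I would assemble: given an arbitrary primitive null-class $c = m([A]-[B]) + n\delta$, use the transitivity of the Dehn-twist action on the radical to find a symplectomorphism $\phi$ of $\mathcal{T}_{p,q,r}$ (a composition of twists $\tau_{L_i}$) with $\phi_*([A]-[B]) = c$ or $\phi_*(\delta')=c$ for the appropriate base torus; then $\phi(T)$ is the desired exact Lagrangian torus of vanishing Maslov class in class $c$. I expect the main obstacle to be the second step — verifying that the action of the Picard--Lefschetz transvections on the two-dimensional radical is transitive on primitive vectors for each of the three parabolic lattices, and equivalently exhibiting a concrete pair $A', B'$ (after an explicit mutation sequence) realizing the second null-class as a difference of two spheres meeting twice with coherent orientation. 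This is a bounded but somewhat delicate bookkeeping task, made tractable by the very explicit Lefschetz-fibration picture of Proposition \ref{th:Tpqr} and by the fact that one only has three singularities to check; everything downstream (surgery, exactness, Maslov vanishing, transport along adjacencies) is already in hand from Sections \ref{sec:vcycles}--\ref{sec:torusconstruction}.
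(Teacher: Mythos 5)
There is a fatal gap at the very first step of your reduction. The Picard--Lefschetz transvection associated to a vanishing cycle $L$ acts on homology by $v \mapsto v + (v\cdot L)\,L$ (up to sign). If $v$ lies in the radical of the intersection form, then by definition $v\cdot L = 0$ for \emph{every} class $L$, so every transvection fixes $v$. Hence the group generated by Dehn twists in vanishing cycles acts \emph{trivially} on the null-space, not transitively on its primitive vectors: the "finite computation" you defer to would in fact show that $[A]-[B]$ cannot be moved to any other class by these symplectomorphisms. This kills the reduction to one or two base tori, and it is precisely the reason the paper cannot use the monodromy group of the singularity and must manufacture symplectomorphisms from elsewhere.

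The paper's actual route is quite different: it compactifies each parabolic Milnor fibre to a del Pezzo surface $S$ with a smooth anticanonical elliptic curve $D$ at infinity, identifies the null-space of the intersection form with $H_2(\partial M^c)\cong H_1(D)$ (Proposition \ref{th:nullclasses} plus a Gysin-sequence argument), and then realizes an arbitrary element of $SL_2(\Z)$ as the monodromy of a loop of cubic curves in $\P^2$ carrying along $n$ points in general position (Lemmas \ref{th:makeloop} and \ref{th:gnlloop}). Blowing up fibrewise gives an $S^1$-family of del Pezzo surfaces, hence of Milnor fibres, whose monodromy acts on the null-space by the chosen $SL_2(\Z)$ element; since $SL_2(\Z)$ is transitive on primitive vectors of $\Z^2$, a Moser-type argument (corrected by the Liouville flow so the isotopy stays defined) transports the single torus $T$ of Theorem \ref{th:toriTpqr} to an exact Maslov-zero torus in every primitive null-class. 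Your second idea --- directly exhibiting a pair $A', B'$ realizing the class $\delta$ as a difference of two spheres meeting twice coherently --- is plausible for that one extra class, but even if carried out it would produce tori in only finitely many classes, and without a group acting nontrivially on the radical there is no mechanism to reach the remaining infinitely many primitive classes $m([A]-[B])+n\delta$.
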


This means that there are tori in all the topologically permissible primitive homology classes for parabolic singularities. 
The case of non-primitive classes remains open.

The proof presented here uses compactifications of the three relevant Milnor fibres to del Pezzo surfaces, which might also prove of independent interest.

\subsubsection{Points in almost general position and del Pezzo surfaces}

 Let us start with some preliminaries. 
 
  \begin{definition}
  A del Pezzo surface is a smooth projective surface  with an ample anti-canonical bundle.  The rank of a del Pezzo is the self-intersection of its anti-canonical class. 
    \end{definition} 
    
 To check that a surface is del Pezzo, it is enough to calculate the self-intersection of an anti-canonical divisor $D$, and to check that for any irreducible curve $C$, $D\cdot C >0$ (Nakai--Moishezon criterion). Moreover, there is a short classification of these surfaces by rank, using the following notion:

\begin{definition}\label{def:gnlposition}
A collection of points $\Gamma$ in $\P^2$ are in general position if
no three points lie on a line; no six points lie on a conic; and
 there does not exist a cubic curve passing through seven of the points and with a double point at the eighth. (See e.g.~\cite[Expos\'e II]{777}.)
\end{definition}
Note that there can be at most eight points in general position in $\P^2$. We shall use:

\begin{theorem}\cite[Expos\'e II, Theor\`eme 1]{777}
Any rank one del Pezzo surface is the blow-up of $\P^2$ at eight points in general position; rank two, seven points; and rank three, six points. 
\end{theorem}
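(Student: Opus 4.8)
The plan is to prove the classification in both directions, using the structure theory of rational surfaces together with one adjunction-based principle: \emph{a del Pezzo surface contains no irreducible curve of self-intersection $\le -2$}. This principle follows at once from adjunction. For an irreducible curve $C$ on $X$ one has $K_X \cdot C = 2g(C) - 2 - C^2$, so ampleness of $-K_X$ (which forces $K_X \cdot C < 0$) gives $C^2 > 2g(C) - 2 \ge -2$, hence $C^2 \ge -1$. I will use this repeatedly.

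First I would establish the forward direction: let $X$ be a del Pezzo surface of rank $d \in \{1,2,3\}$, so $-K_X$ is ample and $K_X^2 = d$. Since $-K_X$ is ample, Kodaira vanishing gives $h^1(X,\mathcal{O}_X) = h^2(X,\mathcal{O}_X) = 0$, while $h^0(X, 2K_X) = 0$ because $-2K_X$ is ample; thus $q = P_2 = 0$ and Castelnuovo's criterion shows $X$ is rational. By the structure theory of rational surfaces, $X$ carries a birational morphism to a minimal rational surface, which is $\P^2$ or a Hirzebruch surface $\mathbb{F}_n$. The $\mathbb{F}_n$ with $n \ge 2$ are excluded: such a surface contains a section of self-intersection $-n$, and its proper transform under any blow-up still has self-intersection $\le -2$, contradicting the principle above. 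The remaining minimal models $\P^2$, $\mathbb{F}_0 = \P^1 \times \P^1$ and $\mathbb{F}_1$ all present $X$ as a blow-up $\pi\colon X \to \P^2$ (using that $\mathbb{F}_1$ is the one-point blow-up of $\P^2$, and that the one-point blow-up of $\P^1 \times \P^1$ equals the two-point blow-up of $\P^2$). Since each blow-up drops $K^2$ by one and $K_{\P^2}^2 = 9$, the number of points blown up is $9 - d$, giving $8$, $7$, $6$ for $d = 1, 2, 3$.

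Next I would pin down that the points are in general position, by noting that the degenerate configurations forbidden in Definition \ref{def:gnlposition} are exactly the ones producing $(-2)$-curves on $X$. A line through three of the points has proper transform of class $H - E_i - E_j - E_k$ with self-intersection $1 - 3 = -2$; a conic through six points gives a class of square $4 - 6 = -2$; a cubic through seven with a node at the eighth gives $3H - \sum_{i=1}^7 E_i - 2E_8$ of square $9 - 7 - 4 = -2$. By the principle none of these can be represented by an irreducible curve, and likewise the points must be distinct with none infinitely near (else an exceptional curve acquires self-intersection $\le -2$); hence the points are in general position. For the converse, starting from the blow-up of $\P^2$ at $9 - d$ points in general position I would verify the Nakai--Moishezon criterion: $(-K_X)^2 = 9 - (9-d) = d > 0$, and $-K_X \cdot C > 0$ for every irreducible curve $C \equiv aH - \sum b_i E_i$. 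This reduces to checking the finitely many classes with $C^2 < 0$ and $-K_X \cdot C \le 0$ and ruling them out; concretely, general position forces the only negative irreducible curves to be $(-1)$-curves, on which $-K_X \cdot C = 1 > 0$, so $-K_X$ is ample.

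The hard part will be the converse's case analysis: the combinatorial bookkeeping of effective classes $aH - \sum b_i E_i$ and the systematic verification that each general-position hypothesis eliminates every candidate irreducible curve with $-K_X \cdot C \le 0$. The forward direction is comparatively formal once the no-$(\le -2)$-curve principle and the rationality reduction are in hand; it is the exhaustive exclusion of potential $-K_X$-nonpositive curves that carries the technical weight.
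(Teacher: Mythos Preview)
The paper does not prove this theorem; it is quoted from \cite[Expos\'e II, Th\'eor\`eme 1]{777} and invoked as a black box, so there is no argument in the paper to compare your proposal against.

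Your sketch is a correct outline of the standard classification (as in the cited reference, or Manin's \emph{Cubic Forms}, or Dolgachev's \emph{Classical Algebraic Geometry}): rationality via Castelnuovo from $q=P_2=0$, reduction to a minimal rational model, exclusion of $\mathbb{F}_n$ for $n\ge 2$ via the no-$(\le -2)$-curve principle, and identification of the general-position constraints with the absence of the specific $(-2)$-classes you list. One place worth tightening is the passage through $\mathbb{F}_0$: you should note explicitly that since $d\le 3<8=K_{\mathbb{F}_0}^2$ the morphism $X\to\mathbb{F}_0$ involves at least one blow-up, so one may redirect via $\mathrm{Bl}_p(\mathbb{F}_0)\cong\mathrm{Bl}_{q,r}(\P^2)$ to get a birational morphism $X\to\P^2$; the resulting centres are a priori possibly infinitely near, and it is your subsequent no-$(-2)$-curve argument that forces them to be distinct points of $\P^2$. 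In the adjunction step, $g(C)$ should be the arithmetic genus $p_a(C)$, though since $p_a\ge 0$ for irreducible curves your inequality is unaffected.
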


Additionally, we shall make use of the following results about points in general position.

\begin{lemma}\cite[Expos\'e III, Theor\`eme 1]{777}
Consider any collection of points in $\P^2$ in general position. Then there is a smooth cubic curve that contains all of them.
\end{lemma}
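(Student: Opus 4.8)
The strategy is to take the linear system of cubic curves through the given points and show, via Bertini's theorem together with the special geometry of plane cubics through points in general position, that its general member is smooth.

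First I would reduce to the case of exactly eight points. There are at most eight points in general position, and any smaller collection in general position can be enlarged to one of eight points in general position: a new point only has to avoid a proper Zariski-closed ``bad'' locus of $\P^2$, namely the finitely many lines through pairs, conics through quintuples, and loci of singular points of cubics through septuples of the points already selected. So fix eight points $p_1, \dots, p_8$ in general position. Cubic forms on $\P^2$ form a $10$-dimensional vector space, and passing through a point imposes one linear condition, so the linear system $\mathcal L$ of cubics through $p_1,\dots,p_8$ has (projective) dimension $\geq 1$; choose a pencil $\mathcal P \subseteq \mathcal L$. Every member of $\mathcal P$ passes through all eight points.

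Next, observe that every member $C$ of $\mathcal P$ is \emph{irreducible}: a reducible plane cubic is a union of a line and a conic or of three lines, and by general position a line contains at most two of the $p_i$ and a conic at most five, so no such curve contains all eight. Moreover an irreducible plane cubic has at most one singular point (two singular points $s \neq s'$ would force the line through them to meet $C$ with multiplicity $\geq 2+2 = 4 > 3$, hence to be a component of $C$, contradicting irreducibility). Now pick two distinct members $C_1, C_2 \in \mathcal P$; having no common component, they meet in a length-nine scheme by B\'ezout, supported on $p_1,\dots,p_8$ and at most one further point $p_9$, and this support is exactly the base locus of $\mathcal P$. By Bertini's theorem, the general member of $\mathcal P$ is smooth away from this base locus.

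Finally I would rule out singularities at the base points. No base point $p$ can be a singular point of both $C_1$ and $C_2$: if it were, the local intersection multiplicity $(C_1\cdot C_2)_p$ would be $\geq 4$, while each of the other ($\geq 7$) base points contributes at least $1$, forcing a total $> 9$, a contradiction. Hence at every base point at least one of $C_1, C_2$ is smooth; writing $\mathcal P$ locally as $\lambda f + \mu g$, the linear part $\lambda\,\ell_f + \mu\,\ell_g$ at such a point is nonzero for all but at most one value of $[\lambda:\mu]$, so at most one member of $\mathcal P$ is singular there. There being at most nine base points, at most nine members of $\mathcal P$ fail to be smooth at a base point, so combined with Bertini the general member of $\mathcal P$ is smooth everywhere; it is a smooth cubic through $p_1,\dots,p_8$, hence through the original points. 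The main obstacle is this last step: Bertini controls smoothness only away from the base locus, and the pencil of cubics through eight general points unavoidably acquires a ninth base point, so one genuinely needs the irreducibility of the members, the ``at most one singular point'' property, and the B\'ezout bookkeeping to conclude smoothness at the base points too; a secondary point requiring care is the reduction from fewer than eight points, i.e.\ verifying that general position can always be preserved when adding a point.
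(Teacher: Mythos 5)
Your argument is correct, and it is a genuinely different route from the paper's, for the simple reason that the paper gives no proof at all here: the lemma is quoted verbatim from \cite{777} (Expos\'e III, Th\'eor\`eme 1). Your pencil-plus-Bertini argument is a clean self-contained substitute, and its core is exactly right: every member of a pencil of cubics through eight points in general position is irreducible (a line carries at most two of the points, a conic at most five, so no line-plus-conic or triple of lines works), an irreducible cubic has at most one singular point, and the B\'ezout count of the length-nine base scheme forbids a base point from being singular on two distinct members; hence at most one member of the pencil degenerates at each of the at most nine base points, which is precisely what is needed to supplement Bertini on the base locus. What your approach buys is independence from the reference; what the citation buys is not having to worry about the one genuinely delicate step, discussed next.

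That step is the reduction to eight points, which you yourself flag. When you adjoin an eighth point $p$ to seven points $p_1,\dots,p_7$ already in general position, the third clause of Definition \ref{def:gnlposition} contributes \emph{two} kinds of bad locus for $p$, and your parenthetical list only names one of them. The first is the set of points that occur as double points of some cubic through $p_1,\dots,p_7$; this is (contained in) the Jacobian curve of the net of cubics through the seven points, hence a proper closed subset. The second, which you omit, is: for each $i$, the union of all cubics passing through the six points $p_j$, $j\neq i$, and having a double point at $p_i$ (the new point $p$ must not lie on any such cubic). For this union to be a proper closed subset of $\P^2$ you must check that the linear system of cubics through six of the $p_j$ with a double point at the seventh has projective dimension zero --- if it had positive dimension, its members would sweep out all of $\P^2$, since passing through a given point is a single linear condition. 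This does hold: two independent members with no common component would meet in length at least $6+4=10>9$, contradicting B\'ezout, and a common fixed line or conic component is excluded by a short case analysis using ``no three collinear, no six on a conic.'' With that check supplied, the reduction --- and hence the whole proof --- goes through.
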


Conversely, fix any smooth cubic curve $E$ in $\P^2$, and an integer $n$, $1\leq n \leq 8$. Consider the collection of all sets of $n$ distinct (unordered) points on $E$. This is an algebraic variety: $(E^n-V)/Sym^n$, where $V$ is a closed subvariety of $E^n$. 
\begin{lemma}
The collection of sets of $n$ points on $E$ in  general position, say $E^n_{gp}$, is an open, connected, non-empty subvariety of  $(E^n-V)/Sym^n$.
\end{lemma}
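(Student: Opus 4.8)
The plan is to establish three things about $E^n_{gp}$ as a subvariety of $(E^n - V)/Sym^n$: that it is open, that it is non-empty, and that it is connected.

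For \emph{openness}, the key observation is that the general position conditions of Definition~\ref{def:gnlposition} are each \emph{closed} conditions on the configuration of points. Indeed, ``three points lie on a line'' is the vanishing of a $3\times 3$ determinant in the homogeneous coordinates of the points; ``six points lie on a conic'' is the condition that a certain $6\times 6$ matrix (rows indexed by points, columns by the six monomials of degree two) has vanishing determinant; and ``a cubic passes through seven points with a double point at the eighth'' is, for each choice of which seven of the eight points lie on the cubic, the condition that a linear system of equations (seven passage conditions plus three double-point conditions, in the ten-dimensional space of cubics) has a nonzero solution, i.e.\ the vanishing of all maximal minors of a $10\times 10$ matrix. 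So the locus of configurations failing general position is a finite union of Zariski-closed subsets of $(E^n - V)/Sym^n$, whence its complement $E^n_{gp}$ is open. One must check this descends correctly through the quotient by $Sym^n$ and that one works inside $E^n - V$ so that the points are genuinely distinct; both are routine.

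For \emph{non-emptiness}, the quickest route is to invoke the del Pezzo classification already cited: a rank $9-n$ del Pezzo surface is the blow-up of $\P^2$ at $n$ points in general position (for $n \le 8$), and such surfaces exist; moreover, by the Lemma of \cite[Expos\'e III, Th\'eor\`eme 1]{777} cited just above, \emph{any} collection of points in general position lies on a smooth cubic. So start from any $n$ points in general position in $\P^2$, pass to a smooth cubic $E_0$ through them to see $E^n_{gp}(E_0)$ is non-empty, and then use the fact that all smooth cubics in $\P^2$ are projectively equivalent (more precisely, act on $\P^2$ by $PGL_3$ to move $E_0$ to $E$; projective transformations preserve all the general-position conditions) to conclude $E^n_{gp}(E) \ne \emptyset$ for our fixed $E$. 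Alternatively one can argue directly that a generic $n$-tuple on $E$ is in general position, but piggy-backing on the cited results is cleaner.

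For \emph{connectedness}, which I expect to be the main obstacle, the idea is that $E^n_{gp}$ is a Zariski-open dense subset of the irreducible variety $(E^n - V)/Sym^n$ (irreducible because $E^n$ is irreducible, $V$ is a proper closed subset, and the quotient of an irreducible variety by a finite group is irreducible), and a non-empty Zariski-open subset of an irreducible variety is connected (even irreducible). The one thing that genuinely needs checking is that $E^n_{gp}$ is \emph{non-empty} — which is exactly the previous paragraph — and that the bad locus is a \emph{proper} closed subset, i.e.\ that one cannot have all configurations fail general position; but that too follows from non-emptiness. So the logical skeleton is: $(E^n - V)/Sym^n$ is irreducible; $E^n_{gp}$ is open in it (paragraph on openness); $E^n_{gp}$ is non-empty (paragraph on non-emptiness); hence $E^n_{gp}$ is dense, irreducible, and in particular connected. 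If one wants connectedness in the stronger analytic/Euclidean topology, note that a smooth connected complex variety is connected in the classical topology, and $E^n_{gp}$ is smooth (it is open in the smooth locus, and $E$ and hence $E^n$ is smooth, the $Sym^n$-quotient of a smooth curve power being smooth away from the big diagonal, which we have removed as part of $V$). The subtlety to watch is ensuring the excised locus $V$ is chosen to contain the diagonals so that $Sym^n$ acts freely there and the quotient stays smooth; this is presumably the intent of the definition of $V$ in the sentence preceding the lemma.
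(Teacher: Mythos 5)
Your openness and connectedness arguments are fine and follow the same skeleton as the paper's (one-sentence) proof: the failure loci are $Sym^n$-invariant proper closed subvarieties of $E^n$, so their complement is a non-empty Zariski-open subset of an irreducible variety, hence connected. The problem is in your non-emptiness step. You assert that ``all smooth cubics in $\P^2$ are projectively equivalent'' and propose to move a cubic $E_0$ containing a general-position configuration onto the given $E$ by an element of $PGL_3$. This is false: two smooth plane cubics are projectively equivalent if and only if they are isomorphic as abstract curves, i.e.\ have the same $j$-invariant, and there is no reason the cubic $E_0$ supplied by the cited lemma is isomorphic to the fixed $E$. So the step ``act on $\P^2$ by $PGL_3$ to move $E_0$ to $E$'' fails, and with it your preferred route to non-emptiness.

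The fix is exactly the ``alternative'' you mention and then set aside: show directly that each general-position condition cuts out a \emph{proper} closed subvariety of $E^n$ (this is precisely what the paper asserts). Properness is elementary via B\'ezout: a line meets the cubic $E$ in only three points, so for two fixed points of $E$ a generic third point of $E$ does not lie on the line they span; a conic meets $E$ in six points, so a generic sixth point avoids the residual intersection of the conic through five given points; and the linear system of cubics through seven points with an assigned double point at an eighth imposes ten conditions on the ten-dimensional space of cubic forms, so is generically empty. Once each bad locus is proper and closed, $E^n_{gp}$ is the complement of a finite union of proper closed subvarieties of the irreducible variety $E^n$, which gives non-emptiness, density, and connectedness all at once, and your openness and quotient remarks go through unchanged.
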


\begin{proof} Points satisfying each of the conditions in Definition \ref{def:gnlposition} give proper closed subvarieties of $E^n$, invariant under the action of $Sym^n$. 
\end{proof}

We shall actually need a version of this for a one-parameter family of elliptic curves. 

\begin{definition}
Let $\mathcal{E}$ be the collection of all smooth cubic curves in $\P^2$ (that is, the space of coefficients, with the discriminant, corresponding to singular curves, removed).
\end{definition}

As a smooth complex variety, $\E$ has a natural smooth structure. Let $\lambda: S^1 \to \mathcal{E}$ be any smooth loop (we use the parametrization $S^1=\R/\Z$). Suppose that you have a collection $\Gamma_0$ of $n$ points in general position on $\lambda(0)$. 

\begin{lemma}\label{th:gnlloop}
We can find a smooth map 
\bq
\Gamma: \bigsqcup_{n} \, [0,1] \to \P^2
\eq such that $\Gamma(\{0 \}, \ldots \{0\})=\Gamma(\{1 \}, \ldots \{1\}) = \Gamma_0$  (set-wise), and $\Gamma(\{t \}, \ldots \{t\})$  is a collection of points in general position in the cubic curve $\lambda(t)$ for every $t$.
\end{lemma}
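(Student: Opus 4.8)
The plan is to lift the loop $\lambda$ and the initial configuration $\Gamma_0$ to a suitable incidence variety and argue that the fiberwise ``general position'' locus is an open dense subset over which the projection is a fibration; then a path-lifting argument closes up the loop. Concretely, first I would form the incidence variety
\bq
\mathcal{X} = \{ (t, p_1, \ldots, p_n) \in S^1 \times (\P^2)^n \, : \, p_i \in \lambda(t) \text{ for all } i \}
\eq
(here I am using ordered points; one can pass to the symmetric quotient at the end, or simply keep the ordering since $\Gamma_0$ comes with one). The projection $\rho: \mathcal{X} \to S^1$ is a smooth proper submersion with fiber $\lambda(t)^n$, an $n$-fold product of elliptic curves, hence a smooth fiber bundle over $S^1$ (Ehresmann). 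Inside $\mathcal{X}$ sits the closed subset $Z$ of configurations that \emph{fail} the general position conditions of Definition \ref{def:gnlposition}; by the last Lemma before the statement, in each fiber $\lambda(t)^n$ the bad locus is a proper closed subvariety, so $\mathcal{X} \setminus Z$ meets every fiber in a connected, open, dense subset.

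Next I would upgrade this to a statement about the complement bundle: $\rho: \mathcal{X} \setminus Z \to S^1$ is a submersion with connected fibers, and since $S^1$ is one-dimensional and everything is real-algebraic (or real-analytic) in the parameter $t$, one can arrange $Z$ to be a subset whose intersection with $\mathcal{X}$ is, fiberwise, of positive real codimension and varies nicely; the key point is that $\mathcal{X}\setminus Z \to S^1$ still admits local trivializations, or at least that it is a fiber bundle with path-connected fiber. Granting that, the associated path-lifting property applies: given the point $\Gamma_0 \in \rho^{-1}(0) \setminus Z$, the loop $\lambda$ (i.e. the identity loop on the base $S^1$) lifts to a path
\bq
\widetilde{\Gamma}: [0,1] \to \mathcal{X} \setminus Z
\eq
with $\widetilde{\Gamma}(0) = \Gamma_0$. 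This path is exactly the desired $\Gamma$ in coordinates: $\widetilde{\Gamma}(t) = (t, \Gamma(\{t\},\ldots,\{t\}))$, automatically a collection of general-position points on $\lambda(t)$ for every $t$, and smooth in $t$.

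The loose end is that the lift $\widetilde{\Gamma}(1)$ need not equal $\Gamma_0$, only lie in the same fiber $\rho^{-1}(0)\setminus Z = \{0\}\times(\lambda(0)^n \setminus Z_0)$, which is connected. To fix this, after lifting I would concatenate $\widetilde{\Gamma}$ with a short path within the single fiber $\lambda(0)^n \setminus Z_0$ from $\widetilde{\Gamma}(1)$ back to $\Gamma_0$ — such a path exists by connectedness of $\lambda(0)^n \setminus Z_0$ and can be taken smooth and constant near its endpoints. Reparametrizing (and, if one insists on genuine periodicity rather than a based loop, absorbing the correction by a small perturbation of $\lambda$ near $t=0,1$ or simply working with the disjoint-union-of-intervals formulation as in the statement, where only the \emph{set-wise} equality $\Gamma(\{0\},\ldots)=\Gamma(\{1\},\ldots)=\Gamma_0$ is required) gives the claimed $\Gamma$. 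Finally, smoothness of $\Gamma$ as a map out of $\bigsqcup_n [0,1]$ follows because $\widetilde\Gamma$ is a smooth section of the bundle $\mathcal{X} \to S^1$ restricted over $[0,1]$, and the concatenating correction was chosen smooth with matching jets.

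The main obstacle is the second paragraph: justifying that $\mathcal{X}\setminus Z \to S^1$ genuinely has the homotopy lifting (or path-lifting) property with connected fibers, i.e. that removing the fiberwise-bad locus $Z$ does not destroy the fibration structure or disconnect fibers. The fiberwise connectedness is handed to us by the Lemma immediately preceding the statement (general-position configurations on a fixed cubic form a connected variety), so what remains is a mild transversality/stratification argument: the conditions defining $Z$ (collinearity of triples, six on a conic, the cubic-with-a-node condition) are real-algebraic in $(t,p_1,\ldots,p_n)$, cut out a closed set of everywhere-positive codimension in the total space, and one can appeal to Thom's isotopy lemma or simply to the fact that a smooth fiber bundle over $S^1$ minus a closed ``vertical'' subset with connected complements in fibers still admits a continuous — indeed smooth — section extending a given one, via the usual Serre-fibration-style argument on $[0,1]$. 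Once that is in hand the rest is bookkeeping.
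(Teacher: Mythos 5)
Your argument is correct and is essentially the paper's own proof in fibration language: the paper likewise extends $\Gamma_0$ to a path of $n$ points over the loop of cubics, observes that paths landing in the fiberwise general-position locus are dense and open in the space of all such paths (using the preceding lemma for fiberwise openness and density), and closes up at $t=1$ using connectedness of $E^n_{gp}$. The ``main obstacle'' you flag is resolved exactly as you suspect and more simply than Thom isotopy: the fiberwise bad locus is a proper closed complex subvariety of each fiber $\lambda(t)^n$, hence of real codimension at least two in the total space $\mathcal{X}$, so a generic perturbation (rel $t=0$) of any smooth section over $[0,1]$ misses it.
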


\begin{proof} Starting with $\Gamma_0$, an extension to a path of $n$ points certainly exists. Also, for any cubic $E$, $(E^n-V)/Sym^n$ is a smooth complex manifold, and $E^n_{gp}$ a (Zariski) open subvariety. Thus, given any path of cubic curves, the space of paths of $n$ points in almost general position is a (topologically) dense open subset of the space of paths of $n$ points. Moreover, in the case of a loop of cubic curves, as $E^n_{gp}$ is connected, we can arrange for the set of $n$ points at $t=1$ to match the initial one.
\end{proof}

\subsubsection{Compactification to del Pezzo surfaces}

Each of the parabolic singularities 
 has an isolated singularity at the origin, and no other singular values. Thus by Remark \ref{rk:weightedcutoffs},   the Milnor fibre of $x^3+y^3+z^3$ is represented by the hypersurface 
 \bq x^3+y^3+z^3+1=0
 \eq
 and similarly for the other two.

\begin{prop} We have that:
\begin{itemize}
\item $x^3+y^3+z^3+1=0$ compactifies to a rank three del Pezzo surface inside $\P^3$.

\item  $x^4+y^4+z^2+1=0$ compactifies to a rank two del Pezzo surface inside the weighted projective space $\P^3(1,1,2,1)$.

\item $x^6+y^3+z^2+1=0$ compactifies to a rank one del Pezzo surface inside the weighted projective space $\P^3(1,2,3,1)$.

\end{itemize}
In each case, the anti-canonical divisor given by intersecting with the hyperplane at infinity is a smooth elliptic curve.
\end{prop}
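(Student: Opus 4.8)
The plan is to verify each of the three compactifications directly, since in each case the affine equation is weighted-homogeneous of the same degree as the ambient weighted projective space is "Calabi--Yau", so the natural projective closure is anti-canonically embedded. First I would set up the weighted projective space $\P^3(a_0,a_1,a_2,a_3)$ with the appropriate weights: $(1,1,1,1)$ for $x^3+y^3+z^3+1$, $(1,1,2,1)$ for $x^4+y^4+z^2+1$, and $(1,2,3,1)$ for $x^6+y^3+z^2+1$, introducing a homogenizing variable $w$ of weight $1$ in each case. The homogenized equations are $x^3+y^3+z^3+w^3=0$ in $\P^3$; $x^4+y^4+z^2+w^4=0$ in $\P^3(1,1,2,1)$ (here $z$ has weight $2$ so $z^2$ has degree $4$); and $x^6+y^3+z^2+w^6=0$ in $\P^3(1,2,3,1)$ (with $y$ of weight $2$, $z$ of weight $3$, so all monomials have degree $6$). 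In each case the degree $d$ of the hypersurface equals $\sum_i a_i$, which by the adjunction formula for (quasi-smooth) hypersurfaces in weighted projective space gives $K_X = \mathcal{O}_X(d - \sum a_i) = \mathcal{O}_X$ \emph{shifted}; more precisely $K_X = \mathcal{O}_X(-\sum a_i + d)|_X$, wait — one must be careful: the correct statement is that for a hypersurface $X$ of degree $d$ in $\P(a_0,\dots,a_3)$, $K_X = \mathcal{O}_X\!\left(d - \textstyle\sum_i a_i\right)$, so I actually want $d < \sum a_i$ for $-K_X$ ample. Let me reconsider: the three target ranks are $3,2,1$, i.e.\ $(-K_X)^2 = 3,2,1$, so these are genuinely Fano, and the anti-canonical bundle is $\mathcal{O}_X(\sum a_i - d)$ restricted, which is $\mathcal{O}_X(1)$ in the first case ($\sum = 4$, $d=3$), $\mathcal{O}_X(1)$ in the second ($\sum = 5$, $d = 4$), and $\mathcal{O}_X(1)$ in the third ($\sum = 6$, $d = 6$)? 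That gives $d = \sum$ in the last case, which would be $K_X = \mathcal{O}_X$ — a K3-type surface, not del Pezzo.

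So I must go back and recompute the weights. For $x^6 + y^3 + z^2 + 1$ to be the right compactification with $(-K)^2 = 1$, I should weight-homogenize so that the surface is a \emph{sextic in} $\P^3(1,2,3,1)$? Then $\sum a_i = 7$, $d = 6$, and $-K_X = \mathcal{O}_X(1)$, which restricted to the surface is ample; and $(-K_X)^2$ computed via the projection formula in weighted projective space gives $\frac{d \cdot (\sum a_i - d)^2}{a_0 a_1 a_2 a_3} = \frac{6 \cdot 1}{1\cdot 2 \cdot 3 \cdot 1} = 1$. Good — that matches rank one. Similarly for $\P^3(1,1,2,1)$: $\sum a_i = 5$, $d = 4$, and $(-K_X)^2 = \frac{4 \cdot 1^2}{1 \cdot 1 \cdot 2 \cdot 1} = 2$, matching rank two. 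And for $\P^3 = \P^3(1,1,1,1)$: $\sum a_i = 4$, $d = 3$, $(-K_X)^2 = \frac{3 \cdot 1^2}{1} = 3$, matching rank three. So the homogenized equations to use are $x^3+y^3+z^3+w^3 = 0$, $x^4+y^4+z^2+w^4 = 0$, $x^6+y^3+z^2+w^6 = 0$ in the respective spaces, where the weights are as stated in the Proposition. The plan is then: (i) check quasi-smoothness of each hypersurface, i.e.\ that the affine cone minus the origin is smooth, which is a short Jacobian-criterion computation — the partials are monomials $3x^2, 3y^2, \ldots$ etc.\ and one checks they don't simultaneously vanish away from the forbidden locus, also checking that the surface avoids the singular points of the ambient weighted projective space (the coordinate points where the orbifold locus sits); (ii) invoke the adjunction formula for quasi-smooth hypersurfaces to get $-K_X = \mathcal{O}_X(1)$ and hence ampleness, and compute $(-K_X)^2$ as above to identify the rank; (iii) identify the hyperplane-at-infinity divisor $D = \{w = 0\} \cap X$: this is the plane cubic $x^3+y^3+z^3 = 0$ in $\P^2$, the curve $x^4 + y^4 + z^2 = 0$ in $\P^2(1,1,2)$, and $x^6+y^3+z^2=0$ in $\P^2(1,2,3)$, each of which one checks is smooth (again Jacobian criterion, and for the weighted cases checking it misses the orbifold points) hence a smooth genus-one curve, i.e.\ an elliptic curve; and (iv) note $D \in |-K_X|$ since $D = X \cap \{w=0\}$ and $\{w = 0\}$ cuts out $\mathcal{O}_X(1) = -K_X$.

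The main obstacle I anticipate is the bookkeeping around the two genuinely weighted cases: one must be careful that the hypersurface is \emph{quasi-smooth} (smooth as a cone) rather than merely smooth as a variety, that it does not pass through the singular points of the weighted projective space (the points $[0:0:1:0] \in \P^3(1,1,2,1)$ and $[0:1:0:0], [0:0:1:0] \in \P^3(1,2,3,1)$), and that the adjunction and intersection-number formulas one applies are the correct weighted versions (e.g.\ $\mathcal{O}_X(1)^2 = d / (a_0 a_1 a_2 a_3)$ as a $\mathbb Q$-number, which must come out integral because $X$ is actually smooth). For instance in $\P^3(1,2,3,1)$, at the point $[0:1:0:0]$ (which is a $\mathbb Z/2$ quotient singularity of the ambient space) the equation $x^6+y^3+z^2+w^6=0$ becomes, after dehomogenizing, $x^6 + 1 + z^2 + w^6$ in the local chart, which is nonzero at the origin of that chart, so $X$ avoids it; similarly one checks $[0:0:1:0]$. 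Once these local checks are done, the Nakai--Moishezon positivity $-K_X \cdot C > 0$ for every irreducible curve $C$ is automatic from $-K_X = \mathcal{O}_X(1)$ being the restriction of an ample line bundle on the ambient weighted projective space (which is projectively normal, so $\mathcal{O}(1)$ is ample there), so ampleness of $-K_X$ follows and the surfaces are del Pezzo of the claimed ranks, with the stated anti-canonical elliptic curves at infinity.
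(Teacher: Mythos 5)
Your proposal is correct, modulo the arithmetic detour in the middle (you first miscount $\sum a_i$ for $\P^3(1,2,3,1)$ before correcting it to $7$), but it takes a genuinely different route from the paper in the two weighted cases. You invoke the general machinery of quasi-smooth hypersurfaces in weighted projective space: adjunction $K_X=\mathcal{O}_X(d-\sum_i a_i)$, the intersection formula $\mathcal{O}_X(1)^2=d/\prod_i a_i$, and ampleness of $\mathcal{O}(1)$ on the ambient space restricted to a hypersurface missing the orbifold points. The paper instead argues by hand: it identifies the divisor $D$ at infinity as anticanonical by exhibiting the explicit holomorphic volume form $dx\wedge dy\wedge dz/df$ trivialising the canonical bundle of the affine complement; it gets the Nakai--Moishezon positivity $D\cdot C>0$ from the fact that the complement of $D$ is affine (so every irreducible curve meets $D$) together with positivity of intersections of complex curves; and in the two weighted cases it computes $K_X$ and $D\cdot D$ via explicit double covers of $\P^2$ (projecting away from the weight-$2$ or weight-$3$ coordinate, branched over a quartic, resp.\ two lines) together with Riemann--Hurwitz, rather than weighted adjunction. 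Your approach is more uniform---one formula handles all three cases---at the cost of citing the weighted-projective adjunction and degree formulas and doing the quasi-smoothness and orbifold-point checks carefully, which you do correctly sketch; the paper's approach is more elementary and self-contained but case-by-case. Both correctly identify the curve at infinity as a smooth elliptic curve, the paper doing so for the weighted cases via branched coverings by the Fermat quartic and sextic plus Riemann--Hurwitz for curves, where you would use the weighted degree--genus computation.
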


\begin{proof}

{\bf Case of (3,3,3).}
Compactify $x^3+y^3+z^3+1=0$ inside $\P^3$ to $x^3+y^3+z^3+w^3=0$, say $\overline{P}_8$. The divisor  at infinity (i.e.~the intersection with $w=0$), say $D$, is an elliptic curve: a cubic inside $\P^2$. It is an anti-canonical divisor: the form defined by 
\bq \label{eq:trivialcanonicalbundle}
\Omega \wedge df= {dx \wedge dy \wedge dz} \eq
where $f(x,y,z)=x^3+y^3+z^3$, trivialises the canonical bundle of its complement.
As $D$ is anticanonical, we have that 
 $D \cdot D =3$. As the complement if $D$ is affine, any irreducible curve $C$ in $\overline{P}_8$ must intersect $D$. By positivity of intersection, we have  that  $ D \cdot C >0 $.

{\bf Case of (4,4,2).}
Compactify $x^4+y^4+z^2+1=0$ inside the weighted projective space $\P^3(1,1,2,1)$ to $x^4+y^4+z^2+w^4=0$, say $\overline{X}_9$. This avoids the singular points of the weighted projective space. Again, the intersection with $w=0$ is an anticanonical divisor, say $D$ (because the complement, an affine hypersurface, has trivial canonical bundle -- one can still use Equation \ref{eq:trivialcanonicalbundle}). It's also an elliptic curve; one can for instance use the fact that it is branch-double-covered by the quartic $x^4+y^4+z^4$ in $\P^2$, together with the Riemann-Hurwitz theorem for curves. We claim the total space is a del Pezzo surface of rank 2. Here is an elementary way of computing the self-intersection of $D$: consider the map 
$ \pi: \P^3(1,1,2,1) \backslash [0:0:1:0] \to \P^2$ given by $[x:y:z:w] \mapsto [x:y:w]$. The surface $\overline{X}_9$ double covers $\P^2$, with branching over a quartic curve $Q$. 
By the generalized Riemann--Hurwitz theorem, 
\bq
K_{\overline{X}_9} = \pi^\ast K_{\P^2} + R
\eq where $R$ is the
ramification divisor upstairs. Notice that $R$ has the same support as $\pi^\ast Q$, but different multiplicity: $\pi^\ast Q = 2R$. Thus 
\bq
2K_{\overline{X}_9} = \pi^\ast(2 K_{\P^2} + Q) = \pi^\ast 
\mathcal{O} (-2)
\eq
Thus $-K_{\overline{X}_9}$ is ample, and \bq D \cdot D = \frac{1}{4}\text{ deg}(\pi) \times 4 = 2.\eq

{\bf Case of (6,3,2).} Similarly, compactify $x^6+y^3+z^2+1=0$ inside the weighted projective space $\P^3(1,2,3,1)$ to $x^6+y^3+z^2+w^6=0$, say $\overline{J}_{10}$. The anti-canonical divisor given by the intersection with $w=0$, say $D$, is still an elliptic curve (for instance, one can consider its branch-covering by the sextic $x^6+y^6+z^6$ in $\P^2$). The total space is a del Pezzo surface of rank 1. Why? You can use a different auxiliary projection to $\P^2$: the map $\pi: \P^3(1,2,3,1) \backslash [0:0:1:0] \to \P^2$ given by $[x:y:z:w] \mapsto [x:y^2:w]$. The surface $\overline{J}_{10}$ double covers $\P^2$, with branching over two copies of $\P^1$. Proceeding similarly to the case of $\overline{X}_9$, we find that $K_{\overline{J}_{10}} = \pi^\ast \mathcal{O}(-1)$.
\end{proof}

\subsubsection{Variation operator and the nullspace of the intersection form}

Our next ingredient is a characterization on the nullspace of the intersection form. 
Let $S$ be the compactification of a parabolic Milnor fibre to a del Pezzo surface, $D$ the preferred anti-canonical divisor in $S$, and $M^c:=S \backslash \nu(D)$ the complement of a neighbourhood of $D$ (that is, the compact version of the Milnor fibre $M=S \backslash D$).

\begin{prop}\label{th:nullclasses}
Let $i: H_2(\partial M^c) \to H_2(M)$ be the map induced by inclusion. It is injective, and the classes in the nullspace of the intersection form are precisely its image. 
\end{prop}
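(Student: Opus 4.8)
The plan is to use the long exact sequence of the pair $(M, \partial M^c)$ together with the fact that, since $M$ is a Milnor fibre, it is homotopy-equivalent to a wedge of $2$-spheres, so that $H_2(M) \cong \Z^\mu$ is free and $H_i(M) = 0$ for $i \neq 0, 2$; in particular $H_1(M) = 0$ and $H_3(M) = 0$. Also $M^c$ is a deformation retract of $M$, so $H_\ast(M^c) \cong H_\ast(M)$. First I would invoke Poincar\'e--Lefschetz duality on the compact manifold with boundary $M^c$: $H_k(M^c, \partial M^c) \cong H^{4-k}(M^c)$, and the intersection form on $H_2(M^c) \cong H_2(M)$ is, up to sign, the composition $H_2(M^c) \to H_2(M^c, \partial M^c) \cong H^2(M^c) \to \mathrm{Hom}(H_2(M^c), \Z)$, where the first map is the natural one from the long exact sequence of the pair. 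Concretely, a class $\alpha \in H_2(M^c)$ lies in the nullspace of the intersection form if and only if its image in $H_2(M^c, \partial M^c)$ is zero (torsion-freeness of $H_2$ is what lets us pass between ``pairs trivially with everything'' and ``is zero'', via the universal coefficient theorem, since $H_1(M^c)=0$ means there is no $\mathrm{Ext}$ term).

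Next I would write down the relevant segment of the long exact sequence of the pair $(M^c, \partial M^c)$:
\[
H_3(M^c, \partial M^c) \to H_2(\partial M^c) \xrightarrow{\ i\ } H_2(M^c) \xrightarrow{\ j\ } H_2(M^c, \partial M^c) \to H_1(\partial M^c) \to H_1(M^c).
\]
By Lefschetz duality $H_3(M^c, \partial M^c) \cong H^1(M^c) \cong \mathrm{Hom}(H_1(M^c), \Z) = 0$ since $H_1(M^c) = H_1(M) = 0$. Hence $i$ is injective, which is the first assertion. For the second, the exactness at $H_2(M^c)$ gives $\ker j = \mathrm{im}\, i$, so it remains to identify $\ker j$ with the nullspace of the intersection form. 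By the previous paragraph, $\ker j$ is exactly the set of classes pairing to zero with all of $H_2(M^c, \partial M^c)$ under the Lefschetz duality pairing; and since $H_2(M^c)$ is free of finite rank and the duality pairing $H_2(M^c) \times H_2(M^c, \partial M^c) \to \Z$ is (after the UCT identification, again using $H_1 = 0$) a perfect pairing, $\ker j$ coincides with the radical of the bilinear form obtained by restricting along $i_\ast^{-1}$ — i.e.\ with $\{\alpha : \alpha \cdot \beta = 0 \ \forall \beta \in H_2(M^c)\}$, which is the nullspace of the intersection form. I would phrase the sign bookkeeping so that it matches the orientation conventions fixed in the introduction ($L \cdot L = -\chi(L)$).

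The main obstacle is not any single hard step but getting the duality identifications precisely right: one must be careful that ``the intersection form on $H_2(M)$'' as used by Gabrielov (the one whose Dynkin diagram appears in Figure \ref{fig:Tpqr'}) really is the pairing $H_2(M^c) \times H_2(M^c) \to \Z$ that factors through $H_2(M^c, \partial M^c)$, rather than, say, a pairing defined via compactly supported cohomology on the open $M$ — these agree because $M^c \hookrightarrow M$ is a homotopy equivalence and $M$ is the interior of $M^c$ (or has a collar), so $H^\ast_c(M) \cong H^\ast(M^c, \partial M^c)$. Once that identification is in place, everything reduces to the exact-sequence computation above, and the ranks come out right: $H_2(\partial M^c)$ for a parabolic has rank $2$ (the boundary is a $T^2$-bundle or more precisely the unit cotangent-type boundary whose $H_2$ is two-dimensional, matching the two-dimensional nullspace), and rank $1$ for the hyperbolics. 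I would also remark that injectivity of $i$ together with the rank count immediately recovers the known dimension of the nullspace, giving an internal consistency check.
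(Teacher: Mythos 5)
Your proposal is correct and follows essentially the same route as the paper: reduce to the compact model $M^c$, get injectivity from the long exact sequence of $(M^c,\partial M^c)$ together with $H_3(M^c,\partial M^c)\cong H^1(M^c)=0$, and identify the nullspace with $\ker\bigl(H_2(M^c)\to H_2(M^c,\partial M^c)\bigr)=\mathrm{im}\,i$ via Lefschetz duality. Your extra care with the universal coefficient theorem (using $H_1(M^c)=0$ to make the duality pairing perfect) just makes explicit a point the paper leaves implicit.
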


\begin{proof}

The space $M^c$ is a retract of $M$ (which itself can be obtained by gluing a half-infinite cylinder to the boundary of $M^c$). Thus it is equivalent to prove this for $i:  H_2(\partial M^c) \to H_2(M^c)$. Injectivity follows from the long exact sequence of the pair $(M^c, \partial M^c)$, together with the fact that $ H_3 (M^c, \partial M^c)  \cong H^1(M^c) = 0$. 
For $\a \in H_2 (M^c)$, the form $\langle \cdot, \a \rangle \in Hom (H_2(M^c), \Z)$ is given by the image of $\a$ under the standard maps 
\bq
H_2 (M^c) \to H_2 (M^c, \partial M^c) \cong (H_2 (M^c))^\ast.
\eq
Using the long exact sequence of the pair $(M^c, \partial M^c)$ again, we see that $\langle \cdot, i(\b) \rangle$ vanishes for any $\b \in H_2 (\partial M^c)$. 
Moreover, the same long exact sequence shows that if the form $\langle \cdot, \a \rangle$ is identically zero, then there must exist $\beta \in H_2 (\partial M^c)$ such that $i (\beta) = \a$. 
\end{proof}

\subsubsection{Monodromy of loops of smooth cubics curves}
Here is the final ingredient.

\begin{lemma}\label{th:makeloop}
Fix a loop $\lambda: S^1 \to \mathcal{E}$, and let $E=\lambda(0)$. The isomorphism $H_1(E) \to H_1(E)$ that this induces can correspond to any element  $\gamma \in SL_2(\Z)$. 
\end{lemma}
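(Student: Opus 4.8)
The plan is to realize an arbitrary $\gamma \in SL_2(\Z)$ as the monodromy of a suitable loop of smooth cubics by exploiting the $j$-line and the classical modular interpretation of $\mathcal E$. Concretely, recall that $\mathcal E$, the space of smooth cubics in $\P^2$ up to nothing (i.e.\ the actual space of coefficients minus the discriminant), fibres over the $j$-line $\C = \mathbb A^1_j$ via the $j$-invariant, with fibre over a generic $j$ a $PGL_3(\C)$-orbit (the choice of a projective embedding of the curve, modulo automorphisms). The fundamental group of the base of the universal family of elliptic curves — the modular curve, or equivalently $\mathbb A^1_j$ with appropriate orbifold structure at $j=0$ and $j=1728$ — surjects onto $SL_2(\Z)$ (or $PSL_2(\Z)$, with the $\pm 1$ handled by a loop in the $PGL_3$ direction or by passing to an honest cover). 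So the first step is to fix $E = \lambda(0)$, pick a base point $j_0 = j(E)$ in the $j$-line away from $0$ and $1728$, and invoke the surjection $\pi_1(\mathbb A^1_j \setminus \{0, 1728\}, j_0) \twoheadrightarrow SL_2(\Z)$ coming from the monodromy of the universal family (this is the Legendre family / the standard fact that $SL_2(\Z)$ is generated by the monodromy around $j=0$, $j=1728$ and $j=\infty$).

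Second, I would lift such a loop in the $j$-line to a loop in $\mathcal E$. Since the map $\mathcal E \to \mathbb A^1_j$ is a smooth surjective morphism with connected fibres (the fibres being $PGL_3(\C)$-orbits, hence irreducible), any loop downstairs based at $j_0$ lifts to a path upstairs based at $E$; to close it up into an actual loop at $E$, concatenate with a path inside the fibre over $j_0$ connecting the endpoint of the lift back to $E$ — possible because that fibre is connected. Changing the endpoint by a path inside a single fibre does not change the induced monodromy on $H_1$ of the family restricted to the loop, because the fibre over $j_0$ carries a canonical identification of the underlying elliptic curves (they are all projectively equivalent via the $PGL_3$-action, which acts trivially on $H_1$ up to the isotopy used). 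Hence the resulting loop $\lambda$ in $\mathcal E$ based at $E$ has monodromy on $H_1(E)$ equal to the prescribed $\gamma$.

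Third, I need to be slightly careful about $SL_2(\Z)$ versus $PSL_2(\Z)$: the geometric monodromy of a family of elliptic curves a priori lands in $SL_2(\Z)$ (it preserves the intersection form, i.e.\ the symplectic form on $H_1$, which in rank two is exactly $SL_2$), and the full $SL_2(\Z)$ — including $-\mathrm{Id}$ — is realized because the elliptic involution, or a suitable loop, contributes the central element; alternatively one simply notes that the monodromy representation of the universal family over the modular curve (with level structure removed) is the standard isomorphism with $SL_2(\Z)$, so surjectivity onto all of $SL_2(\Z)$ is automatic. This bookkeeping — making sure one gets $SL_2(\Z)$ on the nose and that the chosen loop avoids the discriminant and the orbifold points — is the one genuinely fiddly point; everything else is soft topology (lifting loops along a fibration with connected fibres) plus a citation of the classical computation of the monodromy of the Legendre/universal family of elliptic curves. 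I expect the main obstacle to be pinning down cleanly that the monodromy on $H_1$ is unaffected by the choice of closing-up path inside a fixed fibre of $\mathcal E \to \mathbb A^1_j$, which amounts to observing that the $PGL_3(\C)$-action is connected and acts on homology through a path-connected group, hence trivially on $H_1$ up to the canonical isotopy.
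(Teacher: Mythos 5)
Your route is genuinely different from the paper's. The paper argues directly on the upper half-plane: a point $\tau$ together with three ordered marked points determines a degree-three line bundle with an ordered basis of sections, hence a plane cubic; replacing $(\tau, p,q,r)$ by $(\gamma\tau, \gamma p, \gamma q, \gamma r)$ gives the \emph{same} cubic, so a path from $\tau$ to $\gamma\tau$ in this contractible parameter space descends to a loop in $\E$ whose monodromy is $\gamma$ by construction. You instead try to reduce to the classical monodromy computation for a standard family of elliptic curves and then lift through the fibration $\E \to \{j\text{-line}\}$ with connected $PGL_3(\C)$-orbit fibres. Your strategy is viable and, once repaired, arguably outsources more of the work to a citable classical fact; the paper's is more self-contained and sidesteps all coarse-moduli issues.

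That said, two steps as written do not hold up. First, there is no universal family over the $j$-line (precisely because of the automorphisms you are trying to wave away), so ``the monodromy of the universal family'' over $\C_j \setminus \{0,1728\}$ is not defined, and the Legendre family, which \emph{is} an honest family, has monodromy only the congruence subgroup $\Gamma(2)$, not all of $SL_2(\Z)$. The clean fix is to use the Weierstrass family $y^2z = 4x^3 - g_2xz^2 - g_3z^3$ over $\{(g_2,g_3) : g_2^3 - 27g_3^2 \neq 0\}$: this maps to $\E$ compatibly with the tautological family of cubics, and its monodromy is the classical surjection $B_3 \twoheadrightarrow SL_2(\Z)$ (this also produces $-\mathrm{Id}$, e.g.\ from the loop $\theta \mapsto (e^{-4i\theta}g_2, e^{-6i\theta}g_3)$, $\theta \in [0,\pi]$), so surjectivity of the monodromy of $\pi_1(\E, E)$ follows with no lifting argument at all. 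Second, your claim that closing up inside a fibre of $j$ does not change the monodromy is not quite right: a path in a $PGL_3(\C)$-orbit from $E'$ back to $E$ corresponds to a path in $PGL_3(\C)$ from the identity to some $g$ with $gE' = E$, and two such choices differ by an element of the (finite, nontrivial) projective stabilizer of $E$, whose image in $\mathrm{Aut}(H_1(E))$ is $\{\pm \mathrm{Id}\}$ -- the elliptic involution is realized projectively for every plane embedding. So the closing-up path can flip the sign of the monodromy. This ambiguity is harmless for surjectivity onto $SL_2(\Z)$, but you should either note that or avoid the lifting step altogether via the Weierstrass family as above. (Path-lifting for the non-proper submersion $\E \to \C_j\setminus\{0,1728\}$ also needs a word -- local sections exist and can be patched -- but that is routine.)
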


\begin{proof} This follows from the usual action of $SL_2(\Z)$ on the upper-half plane. As the space of cubic curves is path-connected, it is enough to understand this for any fixed cubic curve. Pick a point $\tau$ in the upper-half plane. It determines an (abstract) elliptic curve $F$. Three distinct ordered marked points (say $p$, $q$ and $r$) in the fundamental domain associated to $\tau$ specify an embedding into $\P^2$: they determine a degree three line bundle on $F$ with a preferred ordered basis of global sections. 
(Note that if $E$, with the data of its embedding into $\P^2$ -- rather than as an abstract curve -- has e.g.~a single triple intersection point with the hyperplane $\{ x=0 \}$, that embedding cannot arise from this construction.)
 Remember $\gamma$ is any element of $SL_2(\Z)$. 
We would get the same cubic in $\P^2$ by using $\gamma (\tau)$ and marked points $\gamma(p)$, $\gamma(q)$ and $\gamma(r)$. Now pick a path $\tau(t)$ in the upper-plane between $\tau(0)=\tau$ and $\tau(1)=\gamma(\tau)$, and paths of points $p(t)$, $q(t)$ and $r(t)$ in the fundamental domain of $\tau(t)$, with similar conditions. This determines a path of cubic curves in $\P^2$; by construction, it has the required property.
\end{proof}

\begin{remark} There is more to the homotopy group $\pi_1(\E)$ than $SL_2(\Z)$: for instance, there is a semi-direct product with $\Z/3 \times \Z/3$ (think about which triples of points determine the same embedding), which doesn't get detected by the action on $H_1$. For a full description of $\pi_1(\mathcal{E})$, see \cite{Lonne}. 
\end{remark}

\subsubsection{Conclusion of argument}

As before, let $S$ be a del Pezzo surface  that is the compactification of a parabolic Milnor fibre, and let $D$ be the preferred anti-canonical divisor in $S$. Blow down six, seven or eight  (say $n$) exceptional curves on $S$ to get to $\P^2$. The exceptional curves blow down to a collection of $n$ points in almost general position in $\P^2$, say $\Gamma$.
 Moreover, $D$ is the proper transform on a cubic curve $E$ in $\P^2$ passing through all points of $\Gamma$.
 Now given any element $\gamma \in SL_2(\Z)$, by Lemma \ref{th:makeloop}, we can find a loop $\lambda: S^1 \to \E$ with $\lambda(0)=E$ such that the resulting isomorphism $H_1(E) \to H_1(E)$ is given by $\gamma$. Moreover, by Lemma \ref{th:gnlloop}, we can smoothly extend $\Gamma$ to a one-parameter family of $n$ points in almost general position $\Gamma(t) \subset \lambda(t)$. 

Now consider $S^1 \times \P^2$. In $\{t \} \times \P^2$, blow up $\Gamma(t)$. The result is a smooth $S^1$--bundle with fibre a del Pezzo surface of fixed rank. Moreover, each fibre, say $S(t)$, comes with a complex structure and favourite anti-canonical divisor, say $D(t)$, which is the proper transform of the cubic $\lambda(t)$. Notice that $S(0)=S(1)=S$, and similarly with $D$. Call this fibre bundle $\mathcal{S}$.

Choosing smooth families of global sections (holomorphic above each point in $S^1$), we can construct an embedding $\mathcal{S} \subset S^1 \times \P^k$ such that $S(t) \subset \{ t \} \times \P^k$ is a projective embedding determined by the square of the anti-canonical bundle. Moreover, composing with a loop in $PGL_{k+1}(\C)$, we can assume that the image of $D(t)$ is always the intersection of $Im(S(t))$ with the hyperplane $x_{k+1}=0$.  Removing these, we get a smooth $S^1$--bundle $\M$ with fibres diffeomorphic to $M$, the Milnor fibre we started with. Moreover, each fibre $M(t)$ comes equipped with an embedding to $\C^k$; in particular, it comes with a favourite exact symplectic form, inherited from the Kaehler form on $\C^{k}$. By construction, $M(0)=M(1)$ is just the Milnor fibre $M$ we started with (as an exact symplectic manifold). 

\begin{claim} 
The monodromy map $f: M \to M$ of $\mathcal{M}$ acts on the nullspace of the intersection form by our chosen element $\gamma \in SL_2(\Z)$.
\end{claim}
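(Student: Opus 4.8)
The plan is to identify the monodromy action on $H_2(M)$ with a geometric action on the boundary $\partial M^c$, and then trace everything back to the loop $\lambda$ of cubic curves. First I would invoke Proposition \ref{th:nullclasses}: the nullspace of the intersection form on $H_2(M)$ is exactly the image of $i: H_2(\partial M^c) \to H_2(M)$, and $i$ is injective. Since the monodromy $f: M \to M$ can be taken to restrict to a diffeomorphism of $M^c$ (the fibre bundle $\mathcal{M}$ was built with a well-defined boundary, coming from a tubular neighbourhood of the divisors $D(t)$), $f$ induces a map on $H_2(\partial M^c)$ compatible with $i$. So it suffices to compute the action of $f$ on $H_2(\partial M^c)$.

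The key geometric point is that $\partial M^c$ is (diffeomorphic to) the boundary of a disc bundle over the elliptic curve $D = E$ inside $S$; concretely it is a circle bundle over $E$ whose Euler number is the self-intersection $D \cdot D = 9-n$. Its second homology is generated by the $S^1$-orbit of a class coming from $H_1(E)$ together with the fibre-circle-sweep classes. Under this identification, the monodromy of the family $\mathcal{S}$ near the divisor is governed by the monodromy of the family of elliptic curves $t \mapsto D(t) \cong \lambda(t)$, since near $D(t)$ the del Pezzo degenerates trivially (the blow-up points $\Gamma(t)$ stay away from a neighbourhood of the divisor, so the boundary circle bundle varies only through the variation of its base $\lambda(t)$). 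Hence the action of $f$ on $H_2(\partial M^c)$ is the one induced by the monodromy isomorphism $H_1(E) \to H_1(E)$ of the loop $\lambda$, which by Lemma \ref{th:makeloop} was arranged to be the prescribed $\gamma \in SL_2(\Z)$. Pushing forward through $i$, we conclude that $f$ acts on the nullspace of the intersection form by $\gamma$.

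I would present this carefully by (i) fixing an explicit tubular neighbourhood $\nu(D(t))$ varying smoothly in $t$, so that $\partial M^c$ and the identification $\partial M^c \cong$ (unit circle bundle of $\nu(D)_{|D}$) are canonical; (ii) computing $H_2$ of this circle bundle via the Gysin sequence, isolating the sub-lattice that maps isomorphically (by $i$) onto the nullspace — this is where Proposition \ref{th:nullclasses} does the real work, matching $H_2(\partial M^c)$ with rank-$2$ nullspace; (iii) observing that the bundle $\mathcal{S} \to S^1$, restricted to a neighbourhood of the divisors, is isomorphic to the unit-disc-bundle version of the universal family of cubics pulled back by $\lambda$, because the blow-up locus $\Gamma(t)$ and the divisor $D(t)$ are disjoint in a uniform way; (iv) concluding that the monodromy on this piece of $H_2$ equals the monodromy on $H_1$ of the elliptic-curve family, i.e.\ $\gamma$.

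The main obstacle I anticipate is step (iii): making precise that the symplectic/diffeomorphism monodromy of $\mathcal{M}$ restricted to the boundary is \emph{exactly} the topological monodromy of the family of cubics $\lambda$, with no extra contribution from the blow-ups or from the choice of projective embeddings and the compensating loop in $PGL_{k+1}(\C)$. One must check that the loop in $PGL_{k+1}(\C)$ used to normalise the position of $D(t)$ does not introduce an extra twist on $H_2(\partial M^c)$ — intuitively it cannot, since it acts through linear maps preserving the hyperplane $x_{k+1}=0$ and restricts near $D(t)$ to something homotopically trivial on the relevant homology, but this needs a clean argument. Once that identification is nailed down, the rest is formal bookkeeping with the long exact sequences already set up in Proposition \ref{th:nullclasses}.
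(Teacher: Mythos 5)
Your proposal follows essentially the same route as the paper: identify the nullspace with $H_2(\partial M^c)$ via Proposition \ref{th:nullclasses}, identify $H_2(\partial M^c)$ with $H_1(D)$ using the circle-bundle structure over the elliptic divisor (the paper does this with the Gysin sequence and the nonvanishing Euler class, plus Poincar\'e duality), and observe that the monodromy on $H_1(D)$ is $\gamma$ by the construction of the loop $\lambda$. The only difference is one of emphasis: you flag the naturality of this identification under the monodromy (the $PGL_{k+1}(\C)$ normalisation) as a point needing care, which the paper dispatches with ``by construction''.
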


\begin{proof} 
By construction, we know that $\gamma$ gives a map $H_1(E) \to H_1(E)$; within the fibre bundle $\mathcal{S}$, one can consider the smooth automorphism of $D$ obtained by following the path of favourite anti-canonical divisors $D(t)$. By construction, the action on first homology is also given by  $\gamma$. Now consider the compact version of the Milnor fibre $M^c:= S \backslash \nu(D)$. (This is the same notation as Proposition \ref{th:nullclasses}.) We claim that there is a natural isomorphism $H_1(D) \cong H_2(\partial M^c)$. Consider the Gysin sequence associated to $D$ and $\partial M^c$. We get that
\bq
0 \to H^1 (D) \to H^1(\partial M^c) \to H^0(D) \to H^2(D) \to \ldots
\eq
We know that $H^0 (D) \cong \Z \cong H^2 (D)$, and that the above map $H^0(D) \to H^2(D)$, given by cupping with the Euler class, is injective. Thus there is a natural isomorphism $H^1(D) \cong H^1 (\partial M^c)$, and, by applying Poincar\'e duality on both sides, a natural isomorphism $H_1(D) \cong H_2 (\partial M^c)$. 

In particular, the monodromy action on $H_1(D)$ is the same as the monodromy action on $H_2(\partial M^c)$. By 
Proposition \ref{th:nullclasses}, we are done. 
\end{proof}

The final step is to show that we can use this monodromy to take the exact Lagrangian torus that we already have to another one. Of course, the monodromy is only a smooth map; we need to proceed with a little caution.

\begin{claim} Start with the exact Lagrangian torus $T \subset M$ of Theorem \ref{th:tori}. We claim we can find an exact Lagrangian torus $T'$ that is isotopic to $f(T)$, and has vanishing Maslov class.
\end{claim}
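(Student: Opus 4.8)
The plan is to take the smooth monodromy diffeomorphism $f\colon M\to M$ coming from the $S^1$-bundle $\mathcal{M}$ and replace it by a \emph{symplectic} diffeomorphism isotopic to it, then apply that symplectic map to $T$ and correct exactness. First I would note that each fibre $M(t)$ carries an exact symplectic form $\omega_t$ (the restriction of the Kaehler form of $\C^k$), and that these vary smoothly in $t$. By a parametrized Moser argument — possible because all the $\omega_t$ represent the same class, indeed are all exact, on the fixed smooth manifold $M$, and because the family is compactly controlled near the (cylindrical) end coming from the fixed divisor structure at infinity — one can build a smooth family of diffeomorphisms $\psi_t\colon M\to M$ with $\psi_0=\mathrm{id}$ and $\psi_t^\ast\omega_t=\omega_0$. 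Setting $g:=f\circ\psi_1^{-1}$ (or the appropriate composition so that source and target forms match) gives a genuine exact symplectomorphism $g\colon (M,\omega_0)\to(M,\omega_0)$ that is smoothly isotopic to $f$; in particular $g$ acts on $H_2(M)$, and hence on the nullspace of the intersection form, exactly as $f$ does, i.e.\ by $\gamma$.

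Next I would set $T'' := g(T)$. Since $g$ is an exact symplectomorphism and $T$ is an exact Lagrangian with vanishing Maslov class, $T''$ is automatically an exact Lagrangian torus: $g^\ast\theta_0=\theta_0+dh$ for some compactly supported $h$, so the pullback of $\theta_0$ to $T''$ is exact because it is on $T$. The Maslov class is likewise preserved, since $c_1(M)=0$ and $g$ being symplectic respects the grading structure up to a shift that does not affect vanishing of the Maslov class of a Lagrangian (one can argue exactly as in the proof of Theorem~\ref{th:tori}: any closed loop on $T''$ pulls back to a closed loop on $T$, which lifts to $\widetilde{LGr}(M)$, and this property is independent of the chosen lift). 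Finally $T''$ is isotopic to $f(T)$ because $g$ is smoothly isotopic to $f$, and its homology class is $\gamma$ applied to $[T]=[A]-[B]$, which is again a primitive class in the nullspace; by Theorem~\ref{th:makeloop} ranging over $\gamma\in SL_2(\Z)$ we hit every primitive class. So the desired $T'$ is $T''$, and one renames it.

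The main obstacle is the parametrized Moser step in the non-compact setting: one must ensure that the family $\psi_t$ is compactly supported, or at least well behaved at infinity, so that exactness of $T''$ and the identification $M(0)=M(1)=M$ as exact symplectic manifolds are genuinely respected. This requires checking that the forms $\omega_t$ all agree with a fixed model on the cylindrical end — which holds here because the divisor $D(t)$ at infinity, and the way it sits inside the projective embedding, varies through a loop that returns to its starting configuration, and the Liouville structure near the end is determined by that data. Once the family of primitives $\theta_t$ with $\theta_t-\theta_0$ compactly supported is in hand, the standard Moser homotopy $\frac{d}{dt}\psi_t=X_t\circ\psi_t$ with $\iota_{X_t}\omega_t=-\dot\theta_t$ produces compactly supported $X_t$, hence compactly supported $\psi_t$, and the argument closes. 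Everything else is a formal consequence of $g$ being an exact symplectomorphism isotopic to $f$ together with the homological computation already carried out.
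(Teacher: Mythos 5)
Your overall strategy coincides with the paper's: both arguments pull the fibrewise forms back to the fixed manifold via a trivialization $\psi(t)$ of $\mathcal{M}$, obtaining a family $\omega_t=\psi(t)^\ast\omega(t)$ interpolating between $\omega$ and $f^\ast\omega$, and then run a Moser-type argument to transport $T$. The genuine difference is in how the non-compactness of $M$ is handled, and this is also where your write-up is thinnest. You want to normalize the trivialization so that $\theta_t-\theta_0$ is compactly supported, making the Moser field $X_t$ compactly supported and hence integrable; but for an arbitrary smooth trivialization of the $S^1$-bundle there is no reason for $\psi(t)^\ast\omega(t)$ to agree with $\omega_0$ near infinity at \emph{intermediate} times $t$ (your appeal to the loop of divisors returning to its starting configuration only controls $t=0,1$). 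Arranging this requires choosing $\psi(t)$ compatibly with the Liouville structures at the ends — e.g.\ completing the $M(t)$ and matching the cylindrical ends via their Liouville flows — which is standard but is real work, essentially the triviality of a family of Liouville manifolds as an exact symplectic fibration. The paper sidesteps this entirely: it accepts that the Moser field $X(t)$ need not be complete and instead flows along $X(t)-(\psi^{-1})_\ast\big(cZ(t)\big)$ for a large constant $c$, where $Z(t)$ is the Liouville field of $\omega(t)$; this combined field points inwards along the boundaries of a suitable compact exhaustion, so its flow is defined on a compact piece containing $T$, still carries exact Lagrangians to exact Lagrangians, and yields a torus $T'$ exact for $\omega_1=f^\ast\omega$, after which one takes $f(T')$. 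The paper's route avoids trivializing the family at infinity at the cost of producing only the image torus; your route, once the compact-support step is actually carried out, gives the stronger conclusion that $f$ is smoothly isotopic to a global exact symplectomorphism. The remaining steps of your argument (exactness and Maslov class preserved under an exact symplectomorphism, isotopy to $f(T)$) are correct and match the paper's.
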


\begin{proof}  Let $\omega(t)$ be the Kaehler symplectic form on $M(t)$, and set $\omega = \omega(0) = \omega(1)$ on $M$. 
Pick a smooth family 
\bq
\psi: [0,1]\times M \to \mathcal{M}
\eq such that
$\psi(t)$ is a diffeomorphism $M \to M(t)$, with $\psi(0)=\text{Id}$ and $\psi(1)=f$. 

Now $\omega_t:= (\psi(t))^\ast \omega(t)$ is a smooth one-parameter family of symplectic forms on $M$ ($t\in [0,1]$) such that $\omega_0 = \omega(0)$ and $\omega_1 = f^\ast (\omega)$. We can associate to this a Moser vector-field $X(t)$ on $M$, whose flow, \emph{when defined}, will preserve exact Lagrangians. 

One each fibre $M(t)$, the Liouville vector field $Z(t)$ of $\omega(t)$ is just the restriction of the gradient vector field of $\sum |x_n|^2$ (the square of the distance function). In particular, for any sufficiently large $R$, $M(t)$ is transverse to the sphere $S(R)$, and $Z(t)$ points outwards on the boundary of $M(t)^R := M(t) \cap B_0(R)$ for all $t \in [0,1]$.

Pick an increasing smooth  function $\sigma: [0,1] \to \R_+$ such that $\sigma(0)=0$ and for any $t_1 < t_2$,
\bq
(\psi(t_1))^{-1} \big(M(t_1)^{\sigma(t_1)+R} \big) \subset 
(\psi(t_2))^{-1} \big(M(t_2)^{\sigma(t_2)+R} \big) .
\eq
We claim that one can find sufficiently large constants $R$ and $c$ such that $T \subset \text{Int} (M(0)^R)$, and the vector field 
\bq
X(t) - (\psi^{-1})_\ast \big( c Z(t) \big)
\eq
points inwards along the boundary of $M(t)^r$ for all $r$ with $R \leq r \leq R+\sigma(1)$, and all $t$. (Why? Such constants certainly exist for any $t$; now use uniform continuity.)

Now consider the \emph{time-dependent} vector field $X(t) - (\psi^{-1})_\ast \big( c Z(t) \big)$. By construction, its flow is defined for any starting point in the interior of $M(0)^R$. Let $T'$ be the image of $T$; it is an exact Lagrangian with respect to $\omega_1$ (with the pull-back primitive one-form). Now $f(T')$  is an exact Lagrangian with respect to the original symplectic structure, and isotopic to $f(T)$. Finally, $T'$ has vanishing Maslov class (as do the images of $T$ at each time $t$), and so $f(T')$ has too. 
\end{proof}

This concludes the proof of Theorem \ref{th:allparabolic}. 

%%%%%%%%%%%%%%%%%%%%%%%%%%%%%%%%%%%%
%%%%%%%%%%%%%%%%%%%%%%%%%%%%%%%%%%%%

 %\input{Fukayaunimodal.tex} 
 
\section{The Fukaya category of unimodal Milnor fibres} \label{sec:Fukayaunimodal}

\subsection{Statements of results}

\subsubsection{Preferred torus}

For the remainder of this section, we focus on the torus $T$ obtained by making both surgeries at $u$ and $v$ with the order $(A,B)$, with equal parameters $\e$. The restriction of $T$ to the fibre $M_\star$ is given in Figure \ref{fig:RSlocalfourdiscs}.

\begin{figure}[htb]
\begin{center}
\includegraphics[scale=1.1]{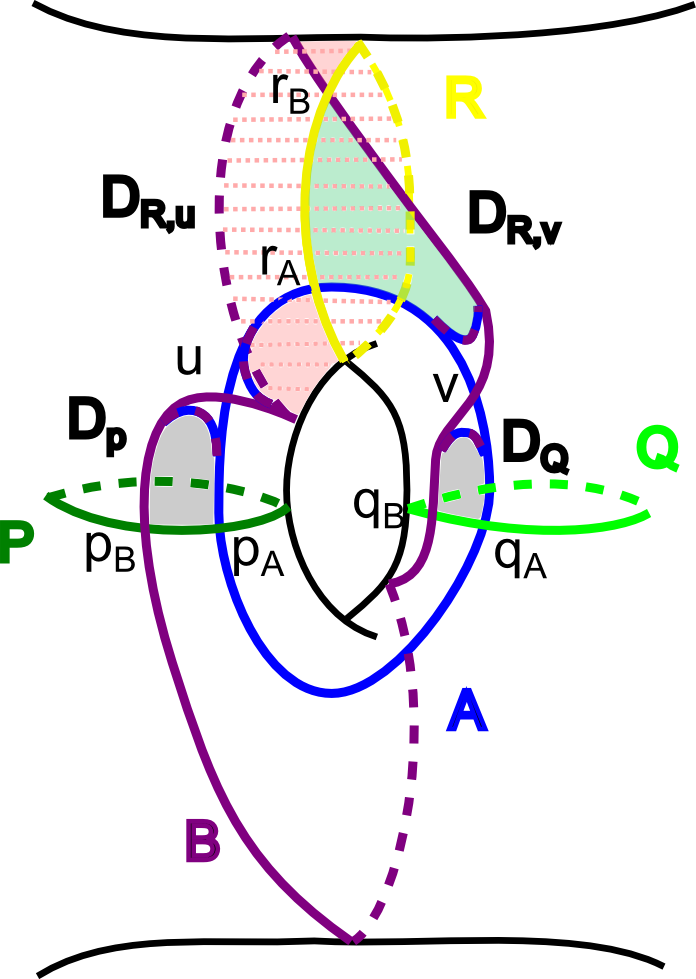}
\caption{Intersection of $T$ and $M_\star$: union of the $A$ and $B$ curves, with surgeries at $u$ and $v$. The intersection points with $P$ are $p_A$ and $p_B$, similarly for $Q$ and $R$. The shaded and dotted holomorphic discs are labelled $D_P$, $D_Q$, $D_{R,u}$ and $D_{R,v}$. 
}
\label{fig:RSlocalfourdiscs}
\end{center}
\end{figure}

In particular, there is a total of four visible holomorphic discs between $T$ and $P=P_1$, $Q=Q_1$ and $R=R_1$. We label these as in Figure \ref{fig:RSlocalfourdiscs}. They shall turn out to be the only discs. Recall that to identify the space of equivalence classes of pairs (spin structure, flat complex line bundles) on $T$ with $(\C^\ast)^2$, it is enough to pick a basis of $H_1(T)$. We do so as follows:
\begin{itemize}
\item $T$ has a preferred meridional direction (which vanishes when deforming to $A\cup B$). We take this as the first coordinate. 
\item For the second coordinate, we need to choose a longitudinal $S^1$. We choose the $S^1$ component of $T|_{M_\star}$ that is parallel to $R|_{M_\star}$ (top component in Figure  \ref{fig:RSlocalfourdiscs}).
\end{itemize}

\subsubsection{Floer cohomologies}

\begin{proposition}\label{th:floercohomology} Pick any lift $T$ to the Lagrangian Grassmanian of $\mathcal{T}_{p,q,r}$, say $\widetilde{T}$; similarly for  $A$, $B$ and $R_1$. 
The Floer cohomology between the associated branes is as follows:
\bq
HF^\ast \big( (\widetilde{T}, (\a,\b)), \widetilde{A} \text{ or } \widetilde{B} \big) =
\begin{cases}
 0 & \a \neq 1 \\
 H^{\ast+k} (S^1) & \a =1
\end{cases}
\eq
\bq
HF^\ast \big( ( \widetilde{T}, (\a,\b)), \widetilde{R_1} \big) =
\begin{cases}
 0 & \b \neq 1 \\
 H^{\ast+k} (S^1) & \b =1
\end{cases}
\eq
where the grading shift $k$ depends on the choices of absolute gradings. (For $\Z_2$--gradings, forget about the choices of lifts, and take $k=0$.)
Floer cohomology groups with all other vanishing cycles are identically zero.
\end{proposition}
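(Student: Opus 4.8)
The plan is to reduce every Floer computation to the Riemann surface $M_\star$, using the convexity principle (Abouzaid's lemma, quoted in Section~\ref{sec:Fukayabackground}) and the product structure of the symplectic form and almost complex structure near $M_\star$ guaranteed by Assumption~\ref{ass:productnhood} and the discussion following it. First I would recall that all the Lagrangians in play --- $T$, the vanishing cycles $A$, $B$, $R_1$, and all the other $P_i$, $Q_j$, $R_k$ --- can, after the essentially local changes of symplectic form already made in Section~\ref{sec:vcycles}, be taken to lie in (or intersect) the fibre $M_\star$ in the explicit configuration of Figure~\ref{fig:RSlocalfourdiscs} and Figure~\ref{fig:matchingTpqr}. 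For the matching cycles other than those meeting $\star$, and for the $R_k$ with $k\geq 3$, one checks directly from the matching-path picture that they are disjoint from $T$ (recall $T$ is built from $A$ and $B$, which after Lemma~\ref{th:AR2disjoint} we may take disjoint from $R_2$, and the surgery is supported near $u,v\in M_\star$); hence the corresponding Floer groups vanish for the trivial reason that there are no generators. So the content is in $A$, $B$ and $R_1$.

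Next I would set up the fibre computation. Because $\pi$ is $(J_{pr},i)$-holomorphic and $J_{pr}$ is a product near $M_\star$, any $J_{pr}$-holomorphic strip with boundary on $T$ and on one of $A$, $B$, $R_1$ projects to a holomorphic strip in the base $B_r(\star)$ with boundary on the images of the matching paths; since those images are straight-line segments through $\star$ by Assumption~\ref{ass:productnhood}, the base projection is constant, so the strip lives entirely in the fibre $M_\star$. (Here I would invoke the convexity lemma to guarantee the strip does not escape $M_\star$, where the complex structure is of contact type, and also to guarantee it stays inside $\mathcal{T}_{p,q,r}$ inside the larger ambient hypersurface.) Thus $HF^\ast(T, A)$, etc., is computed by the Floer complex of the pair of curves on the surface $M_\star$: $T|_{M_\star} = A|_{M_\star}\,\#\,B|_{M_\star}$ (with the two surgeries at $u,v$) against $A|_{M_\star}$, $B|_{M_\star}$, or $R|_{M_\star}$. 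From Figure~\ref{fig:RSlocalfourdiscs} one reads off exactly two intersection points of $T|_{M_\star}$ with $A$ (namely the two residues of the surgery arcs near $u$ and $v$ on the $A$-side, equivalently $p_A$-type points), two with $B$, and two with $R$ ($r_A$ and $r_B$). For curves on a surface Floer cohomology of two transverse simple closed curves in minimal position (which, by Proposition~\ref{th:minimalintersection} and the bookkeeping of Section~\ref{sec:vcycles}, we may assume) has rank equal to the geometric intersection number, so each of these three Floer groups has total rank $2$ --- i.e. is $H^\ast(S^1)$ up to shift --- \emph{before} twisting by the flat line bundle.

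Then I would put in the line bundle and spin-structure data. With the basis of $H_1(T)$ fixed as in Section~\ref{sec:Fukayaunimodal} (meridian first, the $R$-parallel longitude second) and the corresponding favourite spin structure, the differential on a rank-$2$ Floer complex gets twisted by holonomy factors as in Equation~\ref{eq:Floer_differential}. For the pair $(T,A)$: the two visible holomorphic discs are $D_P$-type and, more relevantly, the pair $D_{R,u}, D_{R,v}$ of Figure~\ref{fig:RSlocalfourdiscs}; more precisely the two generators of $CF(T,A)$ are connected by exactly two holomorphic bigons, and the loop formed by their boundaries is homotopic in $T$ to the meridional $S^1$ (it sweeps across the surgery handle). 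Hence the Floer differential is (up to sign) $1 - \alpha$ where $\alpha$ is the meridional holonomy; this is an isomorphism iff $\alpha\neq 1$, giving $HF=0$ for $\alpha\neq1$ and $HF=H^\ast(S^1)$ (up to the grading shift $k$) for $\alpha=1$. The identical argument for $B$ gives the same answer with the same $\alpha$, because the two bigons there again bound a meridional loop. For the pair $(T,R_1)$ the two generators are $r_A$ and $r_B$, joined by the two bigons $D_{R,u}$ and $D_{R,v}$; here the loop traced out is homotopic to the chosen longitudinal $S^1$ (the component of $T|_{M_\star}$ parallel to $R$), so the differential is $1-\beta$ with $\beta$ the longitudinal holonomy, giving the stated dichotomy in $\beta$. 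The grading shift $k$ is simply the absolute index $\widetilde I(\widetilde T,\widetilde A;x)$ (resp. with $R_1$) of one of the two generators, which is well-defined mod the relative grading and is what the statement calls $k$; for $\Z/2$-gradings this is invisible.

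\textbf{Main obstacle.} The routine parts are the convexity/projection argument and the surface-level intersection counts. The genuinely delicate step --- and the one I would spend real care on --- is the sign-and-holonomy bookkeeping: verifying that the two bigons contributing to each differential carry \emph{opposite} signs in the untwisted theory (so that the untwisted differential vanishes, consistent with $HF(T,A)$ having rank $2$ when $\alpha=1$), and correctly identifying which class in $H_1(T)$ --- meridian versus longitude --- is picked up by the boundary of the relevant bigon pair, since this is exactly what distinguishes the $A,B$ case (twist by $\alpha$) from the $R_1$ case (twist by $\beta$). This requires a careful local analysis of the Lagrangian handle $H$ from Section~\ref{sec:lagrangiansurgery} and of how the discs $D_{R,u}$, $D_{R,v}$ of Figure~\ref{fig:RSlocalfourdiscs} thread through it; the local model of Section~\ref{sec:localmodel} is the right tool, since there the two surgery points and the two bigons are completely explicit and one can read off homotopy classes of boundaries and relative orientations directly. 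I would also need to double-check that no further holomorphic discs exist beyond the four visible ones --- again a convexity-plus-surface argument, using that on $M_\star$ the curves are in minimal position and any bigon must be one of the obvious ones.
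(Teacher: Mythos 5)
Your overall strategy --- push everything into the fibre $M_\star$ via the product structure and convexity, count bigons, and twist the rank-two differential by the holonomy of the flat line bundle --- is the right one and matches the paper for $R_1$. But there are two genuine gaps. First, you never treat $P_1$ and $Q_1$. These are \emph{not} disjoint from $T$: each meets $T$ in two points ($p_A\in A\cap P_1$ and $p_B\in B\cap P_1$, and similarly for $Q_1$), so their vanishing Floer cohomology is not "for the trivial reason that there are no generators." The paper's proof spends most of its effort here: there is exactly one holomorphic bigon $D_P$ (resp.\ $D_Q$) joining the two generators, it is unique, of Maslov index one, and regular, so the differential sends $p_B$ to an invertible multiple of $p_A$ and $HF(T,P_1)=0$ for \emph{every} $(\alpha,\beta)$. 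Note that your blanket appeal to "rank equals geometric intersection number for curves in minimal position" would give the wrong answer here --- $T$ and $P_1$ are precisely \emph{not} in minimal position (a single bigon between them), and $[T]\cdot[P_1]=[A]\cdot[P_1]-[B]\cdot[P_1]=0$. Since the proposition explicitly asserts vanishing for "all other vanishing cycles," this case must be argued.

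Second, your reduction of $HF(T,A)$ and $HF(T,B)$ to an intersection count in $M_\star$ does not work as stated: away from the surgery handles $T$ \emph{coincides} with $A\cup B$, so $T\cap A$ is a union of arcs, not two transverse points, and no Floer complex can be read off from Figure \ref{fig:RSlocalfourdiscs} without first perturbing. The paper resolves this with the local model of Section \ref{sec:localmodel}: in the auxiliary fibration $\chi$ on $\{x^2+y^2+z^2=1,\ z\neq 0\}$, one Hamiltonian-isotopes $T$ and $B$ so that their base curves meet in a single point $w$, whence $T\cap B$ becomes two transverse points in the fibre $\chi^{-1}(w)\cong T^\ast S^1$ (two Hamiltonian-isotopic meridians), the open mapping theorem confines all strips to that fibre, and a separate convexity argument (a contact-type tubular neighbourhood of $\mathcal{A}\cup\mathcal{B}$) shows the local-model computation equals the global one. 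You gesture at the local model only for sign bookkeeping at the very end, but it is in fact needed to set up the complex at all. Your identification of the twisting classes (meridian for $A,B$, longitude for $R_1$) and the resulting $1-\alpha$ versus $1-\beta$ differentials agrees with the paper once these two gaps are filled.
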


The proof will be given later in this section.

\subsubsection{(Split-)generation of the Fukaya category}
\begin{thm}\label{th:nofukayageneration}
The Fukaya category of the Milnor fibre of $T_{p,q,r}$ is not split-generated by any collection of vanishing cycles.
\end{thm}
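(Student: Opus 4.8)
The plan is to combine Proposition~\ref{th:floercohomology} with Lemma~\ref{th:nosplitgeneration}. The torus $T$ is the exact Lagrangian brane we will test against, and the crucial point is to exhibit a choice of decoration on $T$ that makes its self-Floer cohomology non-zero while all Floer groups with vanishing cycles vanish. First I would fix the brane structure on $T$: take $T$ with its preferred spin structure and a flat complex line bundle $E$ whose holonomy $(\alpha,\beta)$ with respect to the basis of $H_1(T)$ chosen in Section~\ref{sec:Fukayaunimodal} satisfies $\alpha\neq 1$ and $\beta\neq 1$. For such a generic choice, Proposition~\ref{th:floercohomology} gives $HF^\ast(T,A)=HF^\ast(T,B)=0$ (since $\alpha\neq 1$), $HF^\ast(T,R_1)=0$ (since $\beta\neq 1$), and $HF^\ast(T,L)=0$ for every other vanishing cycle $L$ in the distinguished collection of Proposition~\ref{th:Tpqr}.

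Next I would argue that this forces vanishing of $HF^\ast(T,L)$ for \emph{every} vanishing cycle $L$ of $T_{p,q,r}$, not just those in one distinguished collection. By Lemma~\ref{th:mutationseq}, any vanishing cycle arising from any distinguished collection is obtained from the cycles in our fixed collection by a sequence of Dehn twists (mutations). A Dehn twist $\tau_{L'}$ acts on the Fukaya category by an autoequivalence (the algebraic twist functor along $L'$), and hence $HF^\ast(T,\tau_{L'}L)$ fits into an exact triangle built from $HF^\ast(T,L)$ and $HF^\ast(T,L')$. Since both vanish for all $L,L'$ in our collection, induction on the length of the mutation sequence shows $HF^\ast(T,L)=0$ for any vanishing cycle $L$ whatsoever. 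Actually, more simply: the claim in Theorem~\ref{th:nofukayageneration} is that no collection of vanishing cycles split-generates, and any such collection, together with enough mutations, generates the same triangulated subcategory of $D^\pi\Fuk(\mathcal{T}_{p,q,r})$ that is generated by the cycles in our fixed distinguished collection; so it suffices to test against the fixed collection. In either phrasing, we reduce to: $HF^\ast(T,B_i)=0$ for $B_1,\dots,B_m$ running over (a generating set of twisted complexes built from) vanishing cycles.

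Then I would check the remaining hypothesis of Lemma~\ref{th:nosplitgeneration}, namely $HF^\ast(T,T)\neq 0$. Since $T$ is an exact Lagrangian torus with vanishing Maslov class (Theorem~\ref{th:toriTpqr}), equipped with a flat line bundle, its self-Floer cohomology is computed from the Morse--Bott model on $T\cong T^2$: the differential on $CF^\ast(T,T)\cong H^\ast(T;\C)$ counts holomorphic discs with boundary on $T$, weighted by holonomy. Exactness of $T$ rules out any such discs (a non-constant holomorphic disc would have positive symplectic area $=\int_{\partial}\theta=0$), so the differential is trivial and $HF^\ast(T,T)\cong H^\ast(T^2;\C)\neq 0$ regardless of the flat bundle. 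With all hypotheses of Lemma~\ref{th:nosplitgeneration} verified, no collection of vanishing cycles split-generates $\Fuk(\mathcal{T}_{p,q,r})$, which is Theorem~\ref{th:nofukayageneration}.

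The main obstacle is the reduction in the middle paragraph: making precise that testing against one distinguished collection of vanishing cycles suffices for the statement about \emph{any} such collection. The cleanest route is the twist-functor argument — vanishing cycles from different collections are related by the autoequivalences $\tau_{L}$, which preserve the property ``$HF^\ast(T,-)=0$'' because of the exact triangle $HF^\ast(T,L')\otimes HF^\ast(L',L)\to HF^\ast(T,\tau_{L'}L)\to HF^\ast(T,L)[1]$; once the two outer terms vanish so does the middle one. This needs Lemma~\ref{th:mutationseq} (all distinguished collections are connected by mutations) together with the compatibility of mutations with Dehn twists recorded in Section~\ref{sec:PicardLefschetz}. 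The Floer-theoretic input, Proposition~\ref{th:floercohomology}, is taken as given here; its proof, reducing the disc count to the Riemann surface $M_\star$, is carried out elsewhere in Section~\ref{sec:Fukayaunimodal}.
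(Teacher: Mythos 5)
Your proof is correct and follows essentially the same route as the paper: Proposition~\ref{th:floercohomology} applied to $T$ with a generic flat line bundle, Lemma~\ref{th:nosplitgeneration}, and $HF^\ast(T,T)\cong H^\ast(T^2)\neq 0$ (the paper obtains this last point by citing a PSS-type argument of Albers, while you invoke exactness of $T$ to exclude non-constant discs; both are fine). The one place you go beyond the paper is the middle paragraph reducing an arbitrary collection of vanishing cycles to the fixed distinguished one via Lemma~\ref{th:mutationseq} and the exact triangle for Dehn twists --- the paper leaves this step implicit, simply citing Proposition~\ref{th:floercohomology}, and your argument correctly fills it in.
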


\begin{proof}This is a consequence of Proposition \ref{th:floercohomology} and Lemma \ref{th:nosplitgeneration}, together with the fact that 
\bq
HF^\ast \big( (\widetilde{T}, (\a, \b)) , ( \widetilde{T}, (\a, \b)) \big) \cong HF^\ast \big( (\widetilde{T}, (1, 1)) , (\widetilde{T}, (1, 1)) \big) \cong H^\ast (T^2) \neq 0 \eq
where the first isomorphism follows from definitions, and the second one from a Puinikhin--Salamon--Schwatz type argument; see e.g.~work of Albers \cite{Albers}. The $\Z_2$--graded case is similar.
\end{proof}

In particular, Seidel's result (Theorem \ref{th:Seidelweighted}) is strict. 

\begin{remark}  There are versions of Theorem \ref{th:nofukayageneration} for the following `flavours' of Fukaya categories:
\begin{itemize}
\item $\Z_2$--graded, where objects are pairs $(L, \mathfrak{s}_L)$, with $L$ compact orientable exact Lagrangian and $\mathfrak{s}_L$ a spin structure on it. 
\item absolutely $\Z$--graded, where objects are pairs $(\widetilde{L}, \mathfrak{s}_L)$, with $L$ compact orientable exact Lagrangian of Maslov class zero, $\widetilde{L}$ a lift of $L$ to the Lagrangian Grassmanian and $\mathfrak{s}_L$ a spin structure on $L$.
\end{itemize}
With our notation, in the case of $T$, this is the same as restricting ourselves to the cases where $\a = \pm 1$, $\beta = \pm 1$. 
\end{remark}

\subsection{Floer cohomology of $T$ with the vanishing cycles $P_1$, $Q_1$ and $R_1$}

Here we prove Proposition \ref{th:floercohomology} when the vanishing cycle involved is one of $P_i$, $Q_j$ or $R_k$. The result is immediate unless $i$, $j$ or $k$ is one.

Recall  that on an open neighbourhood $U$ of $K$, some large compact subset in $M_\star$, we have a product symplectic form $\o_{pr}$ and compactible almost complex structure $J_{pr}$. (The set $U$ is contained in $\pi^{-1}(B_{r/2}(\star))$. See Assumption \ref{ass:productnhood}.) The Darboux charts about $u$ and $v$ in which we perform Lagrangian surgeries are both contained in $U$.
With respect to these choices, there are four visible holomorphic discs between $T$ and these vanishing cycles: one with $P_1$, one with $Q_1$, and two with $R_1$. See Figure \ref{fig:RSlocalfourdiscs}. 
To get the claimed Floer cohomology computations, we will show that with respect to this almost complex structure:
\begin{itemize}
\item these are the only discs between $T$ and $P_1$, $Q_1$ or $R_1$;
\item the discs all have Maslov index one;
\item the discs are regular.
\end{itemize}
We will address these points one by one. Why is this enough? It is certainly the case for $P_1$ and $Q_1$, where there is only one disc. 
In the case of $R_1$,  the two discs contribute 
\bq
(1-\beta)r_A
\eq
to the differential
$
\partial r_B \in CF \big( R_1, (T, (\alpha, \beta) ) \big).
$

\subsubsection{Uniqueness of discs}\label{sec:uniqueness}
Give $\R \times [0,1] \subset \C$ the usual complex structure $i$, and consider a holomorphic map $\sigma: \R \times [0,1] \to \mathcal{T}_{p,q,r}$, such that
\begin{itemize}
\item $\R \times \{0\}$ maps to $P_1$ and $\R \times \{1\}$ maps to $T$ (Lagrangian boundary conditions)
\item $\text{lim}_{s \to -\infty} \sigma(s, \cdot) = p_1$ and $\text{lim}_{s \to -\infty} \sigma(s, \cdot) = p_2$ (asymptotic conditions). 
\end{itemize}
These are precisely the maps used to count the coefficient of $p_1$ in $\partial p_2 \in CF \big(P_1, ( T, (\a, \b) \big)$. The disc $D_P$ gives one such map (or, rather, a one parameter family of maps), say $\tau$. We would like to argue that there are no other maps. 

Suppose we have such a map $\sigma$.
First, using the open mapping theorem, we see that $\text{Im}(\sigma)$ must lie in $\pi^{-1}(B_{r/2}(\star))$.
 Also, as they only depend on $p_1$ and $p_2$, the symplectic areas for $\sigma$ and $\tau$ agree:
\begin{equation}\label{eq:areasagree}
\int_{\R \times [0,1]} \sigma^\ast \o_{pr} = \int_{\R \times [0,1]} \tau^\ast \o_{pr}.
\end{equation}
   Let $\pi_2: U \to M_\star$ be the projection to the fibre above $\star$. By construction, this is $(J_{pr}, J_\star)$--holomorphic, where $J_\star$ is the complex structure on $M_\star$. Also, $\pi_2$ projects $A$ onto $A|_{M_\star}$, and similarly for $B$ and $P_1$. Now notice that
\begin{equation}
\int_{\R \times [0,1]} \sigma^\ast \o_{pr} \geq \int_{\R \times [0,1]} (\pi_2 \circ \sigma)^\ast \o_{M_\star}.
\end{equation}
Moreover, $\pi_2 \circ \sigma: \R \times [0,1] \to M_\star$ is $(i, J_\star)$--holomorphic. By topological considerations, we see that $\mathit{Im} (\pi_2 \circ \sigma)$ must \emph{contain} $D_P$. More precisely, it must agree with $D_P$ everywhere apart from the neighbourhood $U\subset M_\star$ of $u$ where the surgery is performed. In $U$,  the boundary of $\mathit{Im} (\pi_2 \circ \sigma)$ must be some curve in the image of the handle $H$ under the projection $\pi_2$. In particular, the curve belonging to $T$, which already lies in $M_\star$, is the option that gives the smallest area.  Moreover, it can only be realised as a portion of the boundary of $\mathit{Im}(\pi_2 \circ \sigma)$ if the corresponding boundary portion of $\mathit{Im}(\sigma)$ already lies in $M_\star$. Thus the previous equation can be strengthened to:
\begin{equation}
\int_{\R \times [0,1]} \sigma^\ast \o_{pr} \geq \int_{\R \times [0,1]} (\pi_2 \circ \sigma)^\ast \o_{M_\star} \geq  \int_{\R \times [0,1]} \tau^\ast \o_{pr}
\end{equation}
The first equality holds if and only if $\text{Im } \sigma \subset M_\star$, which implies that the two equalities if and only if $\sigma$ and $\tau$ are the same map up to translation of the real coordinate on $\R \times [0,1]$. Equation \ref{eq:areasagree} implies that this must be the case.

The case of $D_Q$ is completely analogous. For $D_{R,u}$ and $D_{R,v}$, one proceeds similarly to show that the discs must lie in $M_\star$, though there are then two possibilities. (Both of them have the same symplectic area: we meet again our assumptions on the symplectic area of the discs $D_1$ and $D_2$, and the equality of the surgery parameters, which allowed exactness of $T$ in the first place.)

\subsubsection{Maslov index of discs}
Pick a trivialisation of the tangent space of $\T_{p,q,r}$ that on $\pi^{-1}\big(B_{r/2}(\star) \big)$, is given by the product of a trivialization of the tangent space of $M_\star$ with the standard trivialization of the base. Let us start by calculating the Maslov index of $D_P$.

Let $\g_1: [0,1] \to \T_{p,q,r}$ be the path from $p_A$ to $p_B$ on $P$ along the boundary of $D_P$, and $\g_2: [0,1] \to \T_{p,q,r}$ be the path from  $p_A$ to $p_B$ on $T$ along the boundary of $D_P$. These determine paths $\Gamma_i: [0,1] \to \mathcal{L}(2)$, where $\mathcal{L}(2)$ is the Lagrangian Grassmannian. 
The fibre-wise components of these paths contribute $+1$ to the Maslov index of $\Gamma_2$ relative to $\Gamma_1$. 

It remains to show that the base components of the paths do not contribute anything. Recall the Lagrangian handle $H$ (Section \ref{sec:lagrangiansurgery}) is given by 
\bq
H = \{ (x \, \text{cos } t , y \, \text{cos } t, x \, \text{sin }t , y \, \text{sin } t) \, | \, (x,y) \in \text{Im}(h), t \in S^1 \} \subset \R^4.
\eq
Under the identifications described in that section, $\gamma_2$ corresponds to the path
\bq
\{ (x , y , 0, 0) \, | \, (x,y) \in \text{Im}(h), t \in S^1 \}.
\eq
Tangent vectors to $H$ along $\gamma_2$ are given by $(t_1, t_2, 0,0)$, the extension of the tangent vector to $\text{Im}(h)$, and $(0,0,x,y)$. Thus the corresponding path in the Lagrangian Grassmannian of the base is described by Figure \ref{fig:baselagrangianpath}. It contributes zero to the relative Maslov index.

\begin{figure}[htb]
\begin{center}
\includegraphics[scale=0.85]{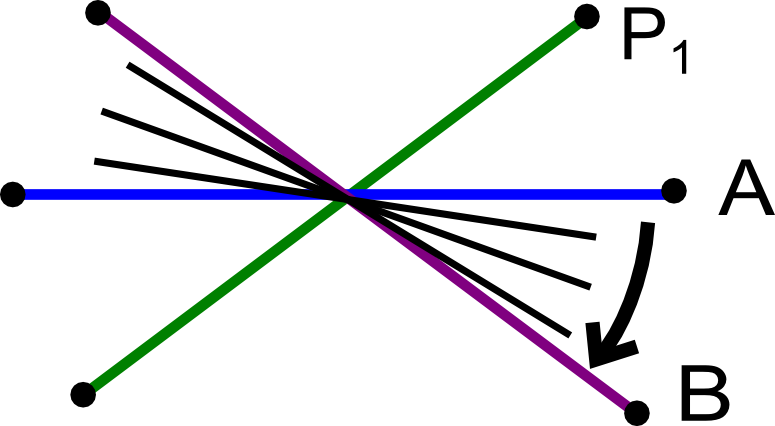}
\caption{Path of Lagrangian lines determined by $\gamma_2$, and Lagrangian line for $P_1$.}
\label{fig:baselagrangianpath}
\end{center}
\end{figure}
The other three discs are completely analogous. 

\subsubsection{Regularity of discs} Let $M$ denote $\mathcal{T}_{p,q,r}$, $L_0$ denote $P_1$ (or, depending on the disc we are considering, $Q_1$ or $R_1$),  and $L_1$ the torus $T$.
Let $R = \R \times [0,1]$, and fix $\sigma$ as above. The corresponding linearised Cauchy--Riemann operator $D_\sigma$ is a Fredholm operator:
\begin{multline}
D_\sigma: \, W^{k,p} \big(R \, ;\, \sigma^\ast (TM), \sigma^\ast (TL_0), \sigma^\ast(TL_1) \big) \\
\to W^{k-1,p} \big(R  \, ;  \, T^\ast R \otimes \sigma^\ast (TM), T^\ast R \otimes \sigma^\ast (TL_0), T^\ast R \otimes \sigma^\ast(TL_1) \big)
\end{multline}
for some $k$ and $p$. We want to show that $\sigma$ is regular, that is, that the operator $D_\sigma$ is onto. From our Maslov index calculation, we already know that this operator has index one. Thus it's enough to show that the kernel of $D_\sigma$ is one-dimensional. This means it must correspond to translations in the $\R$--direction of the domain of $R$, and nothing else. 
 Recall $D_\sigma$ is defined by extending an operator
\begin{multline}
D_\sigma: \, C^\infty \big(R \, ; \, \sigma^\ast (TM), \sigma^\ast (TL_0), \sigma^\ast(TL_1) \big) \\
\to C^\infty \big(R \, ; \, T^\ast R \otimes \sigma^\ast (TM), T^\ast R \otimes \sigma^\ast (TL_0), T^\ast R \otimes \sigma^\ast(TL_1) \big).
\end{multline}The kernel of the operator on the smooth spaces is the same as the kernel on the completion. (Why? This follows from boundedness in the injective case. In general, notice that the kernel in the $C^\infty$ space, which is finite dimensional, is already closed in the Sobolev norm. The question then reduces to the injective case.)
Consider a smooth one parameter family of $(j, J_{pr})$--holomorphic maps:
\bq
\sigma_t: \R \times [0,1] \to M = \mathcal{T}_{p,q,r}
\eq
such that $t\in (-1,1)$,  $\sigma_0 = \sigma$, and each $\sigma_t$ has the same boundary and asymptotic conditions as $\sigma$. Let $X$ be the vector field such that
\bq
\sigma_t = \sigma_0 + tX + \text{higher order terms in }t.
\eq
It is an element of 
$ C^\infty \big( R; \sigma^\ast (TM), \sigma^\ast (TL_0), \sigma^\ast(TL_1) \big)
$. By construction, it belongs to the kernel of $D_\sigma$. 
Moreover, using exponential maps, one can construct a one-parametre family $\sigma_t$ such that any smooth element of the kernel arises in this way. The uniqueness argument of Section \ref{sec:uniqueness} show that the map
\bq
\pi \circ \sigma_t: R \times (-1,1) \to \C
\eq
is the constant map to $\star$. Thus 
\bq
0 = \frac{d}{dt} (\pi \circ \sigma_t) = D \pi \Big( \frac{d}{dt} \sigma_t \Big)  = D\pi ( X).
\eq
This means the horizontal component of $X$ always vanishes. It remains to understand its vertical component. For this, we just use so called `automatic regularity' for discs on Riemann surfaces (see e.g.~\cite[Section 13a]{Seidel08}). 

\subsection{Floer cohomology of $T$ with the vanishing cycles $A$ and $B$}

To complete the proof of Proposition \ref{th:floercohomology}, it remains to consider the case where the vanishing cycle involved is $A$ or $B$. We will use the local model for our construction described in section \ref{sec:localmodel}. 
Recall we use the Lefschetz fibration:
\begin{eqnarray}
\chi: & {\mathcal{C}}:=\{ (x,y,z) \in \C^2\times \C^\ast \, | \, x^2+y^2+z^2=1\} & \to \C^\ast \\
& (x,y,z) &\mapsto z.
\end{eqnarray}
equipped with the standard symplectic structure on $\C^2 \times T^\ast S^1$, where $T^\ast S^1$ is identified with $\C^\ast$ via $(p,q) \mapsto e^{p+iq}$. 
 The cases of $A$ and $B$ are analogous; let us compute Floer cohomology between $T$ and $B$. 

After Hamiltonian isotopy, we can arrange for the curve defining $T$, and the matching path for $B$, to intersect in exactly one point in $\C^\ast$, say $w$, as in Figure \ref{fig:localmodelagain}. 

\begin{figure}[htb]
\begin{center}
\includegraphics[scale=0.85]{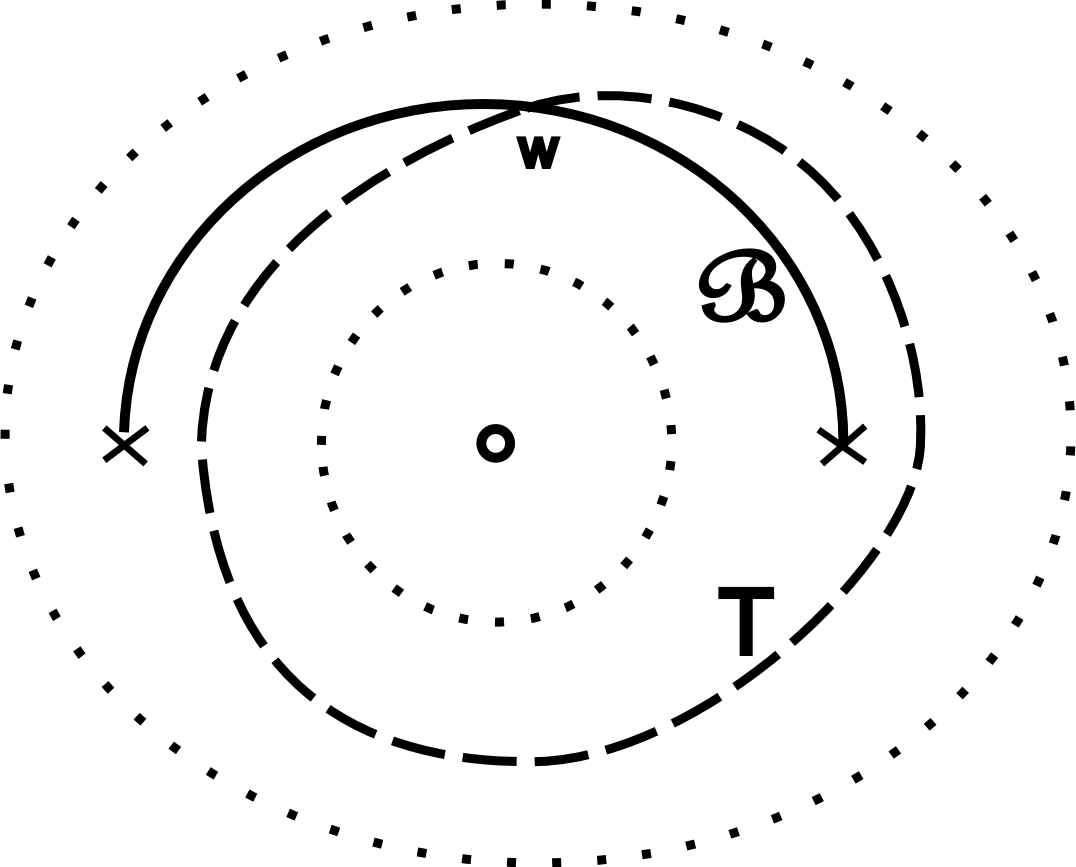}
% or 
\caption{Local model for the intersection of $T$ and $B$: base. 
}
\label{fig:localmodelagain}
\end{center}
\end{figure}
After a further Hamiltonian isotopy (with an essentially local change of the symplectic form, for instance), we can assume that on $\chi^{-1}(w)$, which is a copy of $T^\ast S^1$, $T$ and $B$ restrict to two meridional $S^1$'s that intersect in two points, and are Hamiltonian deformations of each other. There are two `immediate' holomorphic discs between the two intersection points, say $b_1$ and $b_2$, corresponding to the two closed regions bounded by the union of the two $S^1$'s. As before, we want to prove that these are the only holomorphic discs.

Consider the complex structure $J$ on $\mathcal{C}$ inherited from the standard complex structure on $\C^2 \times T^\ast S^1$. Choose  a tubular neighbourhood of the Lagrangian spheres $\mathcal{A} \cup \mathcal{B}$, say $\nu$, such that $\partial \nu$ is of contact type, and $J$ is of contact type near $\partial \nu$. (One can for instance use the dotted concentric circles of Figure \ref{fig:localmodelagain}. We assume that $T \subset \nu$.) This pulls back to an almost complex structure on a tubular neighbourhood of $A \cup B$, compatible with $\omega$. Moreover, we can extend it to an $\o$--compatible complex structure on the whole of $\T_{p,q,r}$, such that the extension also agrees with the usual complex structure outside a compact set. By convexity considerations, no holomorphic disc involved in the differential of the Floer cohomology between $B$ and $T$ can leave the tubular neighbourhood of $A \cup B$. In particular, the Floer cohomology in the local model agrees with the Floer cohomology in the total space.

Suppose we have a holomorphic map
\bq
\phi: \R \times [0,1 ] \to \mathcal{C}
\eq
with Lagrangian boundary conditions given by $T$ and $B$, and asymptotics $b_1$ and $b_2$. First, by the open mapping theorem, we must have that $\chi \circ \phi =0$. Thus the image of $\phi$ must lie in $\chi^{-1}(w)$. By topological considerations, the only possibilities are the two discs that we already had. One can then check that they have Maslov index one, and, for instance using analogous arguments to the previous case, they are regular.

\section{Mirror symmetry for $T_{p,q,r}$} \label{sec:mirrorsymmetry}

This section contains the proof of Theorem \ref{th:mirrorsymmetry}, announced in the introduction:
\begin{theorem} \label{th:mirrorsymmetry}
There is an equivalence 
\bq
D^b Fuk^{\to} (T_{p,q,r}) \cong D^b Coh (\P^1_{p,q,r})
\eq
where the left-hand side is the bounded derived directed Fukaya category of the singularity $T_{p,q,r}$, and the right-hand side is the bounded derived category of coherent sheaves on an orbifold $\P^1$, with orbifold points of isotropies $1/p$, $1/q$ and $1/r$.
\end{theorem}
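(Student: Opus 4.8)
The plan is to prove the equivalence by exhibiting on both sides a \emph{full exceptional collection} whose endomorphism $A_\infty$-algebra (with all higher products) coincides, and then invoking the standard fact that a pretriangulated $A_\infty$-category generated by a full exceptional collection is determined up to equivalence by the directed $A_\infty$-algebra of that collection (cf.~\cite[Chapter I]{Seidel08} and the tilting description of $D^b\mathrm{Coh}(\P^1_{p,q,r})$). On the $B$-side, the orbifold $\P^1_{p,q,r}$ is well known to carry a tilting bundle whose endomorphism algebra is the path algebra of the \emph{canonical quiver} $\Lambda_{p,q,r}$: three $A$-type arms (of lengths $p-1$, $q-1$, $r-1$) emanating from a common source and terminating at a common sink, with the single commutativity/weighted relation among the three composite paths. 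This follows from Geigle--Lenzing's theory of weighted projective lines; I would simply cite it and record the quiver with relations explicitly. The task then reduces to computing $D^b\mathcal{F}uk^{\to}(T_{p,q,r})$ and matching it to (the derived category of modules over) $\Lambda_{p,q,r}$.

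First I would invoke Proposition~\ref{th:Tpqr}: the distinguished collection of vanishing cycles for $T_{p,q,r}$ consists of the cycles $A,B,R_1,\dots,R_{r-1},P_1,\dots,P_{p-1},Q_1,\dots,Q_{p-1}$ realised as matching paths that all (save the $R_i$-tail) pass through $\star$, with intersection pattern given by Gabrielov's Dynkin diagram (Figure~\ref{fig:Tpqr'}). The directed Fukaya category $\mathcal{F}uk^{\to}$ has these as an ordered exceptional collection by construction, so $D^b\mathcal{F}uk^{\to}(T_{p,q,r})$ is generated by them. The morphism spaces between consecutive (in the directed sense) vanishing cycles are computed by intersection numbers in the fibre $M_\star$, exactly as in the Floer computations of Section~\ref{sec:Fukayaunimodal} and the convexity argument of Abouzaid's lemma: each $\mathrm{hom}$ is at most one-dimensional and sits in a single degree, and the underlying quiver of nonzero morphisms is precisely three $A$-type chains $P_1\!-\!\cdots\!-\!P_{p-1}$, $Q_1\!-\!\cdots$, $R_1\!-\!\cdots$ joined at the two ends by $A$ and $B$ — i.e.~after mutating $A$ and $B$ into the "source/sink" positions, one gets a quiver isomorphic to $\Lambda_{p,q,r}$. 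The $A_\infty$-products are then pinned down by a count of holomorphic polygons on the Riemann surface $M_\star$; the only composite for which a nontrivial product can occur is the one running along all three arms from $A$ to $B$ (or the difference of the three such composites), and one shows the relevant triangle/polygon counts produce exactly the weighted commutativity relation of $\Lambda_{p,q,r}$, with the weight $a$ matching the orbifold structure constant. By Tougeron sufficiency and Lemma~\ref{th:indepofa} the $A_\infty$-structure is formal / rigid enough that after passing to $D^b$ the higher products beyond $\mu^2$ contribute nothing new, so $D^b\mathcal{F}uk^{\to}(T_{p,q,r})\cong D^b(\mathrm{mod}\text{-}\Lambda_{p,q,r})\cong D^b\mathrm{Coh}(\P^1_{p,q,r})$.

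The hard part will be the holomorphic polygon count establishing that the single $A_\infty$-relation among the three arm-composites is exactly the weighted relation (with the correct coefficient $a$), rather than merely \emph{some} nonzero multiple or — worse — the zero relation, which would give a different (non-equivalent) algebra. Concretely one must: (i) identify the unique immersed polygon in $M_\star$ with boundary on $A, P_1,\dots,P_{p-1},B,Q_{q-1},\dots$ etc.~contributing to the triple product $\mu^{p+q+r-2}$ (or reorganise the collection so this becomes a low-valence product after exceptional mutations), using the explicit matching-path picture of Figure~\ref{fig:matchingTpqr}; (ii) check signs and the spin-structure/orientation contributions as in Section~\ref{sec:Fukayabackground}; (iii) rule out additional polygons via the same convexity and area arguments used in Section~\ref{sec:Fukayaunimodal} (holomorphic maps with boundary in $M_\star$ stay in $M_\star$, and a minimal-area/topological argument leaves only the expected polygon). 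A mild additional subtlety is that the "source" and "sink" of $\Lambda_{p,q,r}$ do not literally correspond to $A$ and $B$ in the given ordering — one first performs a controlled sequence of exceptional mutations (paralleling the mutation sequences of Sections~\ref{sec:T333}–\ref{sec:Tpqr}, which are tracked concretely on matching paths) to bring the collection into the "star" shape of $\Lambda_{p,q,r}$; mutation is an autoequivalence of $D^b$, so this is harmless but must be spelled out. Everything else — generation, one-dimensionality of homs, the $B$-side tilting description — is standard and can be cited.
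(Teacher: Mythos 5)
Your overall strategy (compute the directed algebra of the distinguished collection from Proposition \ref{th:Tpqr} and match it against a quiver presentation of $D^b\mathrm{Coh}(\P^1_{p,q,r})$) is the right one, but the specific algebra you claim to find on the Fukaya side is not what the geometry produces, and this creates two concrete gaps. First, the quiver of the directed category is not a ``source-to-sink star'': $A$ and $B$ intersect \emph{each other} in two points, so $\mathrm{hom}(A,B)$ is two-dimensional (contradicting your ``each hom is at most one-dimensional''), and both $A$ and $B$ attach to the \emph{same} ends $P_1,Q_1,R_1$ of the three arms. The natural target is therefore not the canonical algebra $\Lambda_{p,q,r}$ but the ``squid''-type algebra $\mathcal{A}$ of Chen--Krause, whose defining relations $b_1a_2=b_2a_1=b_3(a_1-a_2)=0$ come from exactly four embedded triangles in $M_\star$, i.e.\ four $\mu^2$-products $\mu^2(p_B,u)=p_A$, $\mu^2(q_B,v)=q_A$, $\mu^2(r_B,u)=\mu^2(r_B,v)=r_A$. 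Your proposed ``hard part'' --- a high-valence polygon count along all three arms producing a weighted commutativity relation with coefficient $a$ --- is not the computation that arises; indeed the Milnor fibre is independent of $a$ (Lemma \ref{th:indepofa}) and the orbifold $\P^1_{p,q,r}$ has no modulus, so there is no weight to match. The appeal to Tougeron sufficiency to get formality of the $A_\infty$-structure is also a non sequitur: sufficiency of jets concerns the holomorphic classification of germs and says nothing about higher products in the Fukaya category.

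Second, and independently of which $B$-side presentation you use, the arms do \emph{not} match directly: since $P_i\cap P_{i+2}=\emptyset$, one has $\mathrm{hom}(P_i,P_{i+2})=0$ in the directed category, so consecutive arm morphisms compose to zero, whereas in both the squid and the canonical algebra the arm compositions are nonzero. Your proposal silently identifies the two. Fixing this requires either replacing $P_1,\dots,P_{p-1}$ (and likewise $Q_j$, $R_k$) by the iterated cones $P'_i$ inside $Tw\,\Fuk^{\to}$ and checking that their endomorphism algebra has the correct one-dimensional homs generated by the projections $\Pi_i$, or carrying out an explicit mutation sequence --- either way this is a necessary step, not a cosmetic one, and without it the two algebras you are comparing are genuinely non-isomorphic (one has nilpotent arms of nilpotency degree two, the other does not).
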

To obtain this result, we  compare presentations of the two categories. The category of coherent sheaves on an orbifold $\P^1$, and its bounded derived extension, were already understood. We shall use the same definitions as \cite{ChenKrause}, and their description of the derived category $D^b \text{Coh} (\P^1_{p,q,r})$ (see below). Using our description of $\mathcal{T}_{p,q,r}$, we are able to calculate the bounded derived directed Fukaya category of $T_{p,q,r}$, and show that it is isomorphic to $D^b \text{Coh} (\P^1_{p,q,r})$. 

\subsection{The derived category of coherent sheaves on an orbifold $\P^1$}

We take the following description from \cite[Section 6.9]{ChenKrause}: there is an equivalence
\bq
D^b \text{Coh} (\P^1_{p,q,r}) \cong D^b(\text{mod} \mathcal{A})
\eq
where $\mathcal{A}$ is the finite dimensional associative algebra given by the following quiver:
\bq
\xymatrix{
 & & \bullet \ar[r]^{x_1} & \bullet \ar[r]^{x_2} & \bullet \ldots \bullet \ar[r]^{x_{p-2}} & \bullet \\
\bullet  \ar@/_/[r]_{a_2} \ar@/^/[r]^{a_1}  
& \bullet \ar[ur]^{b_1}  \ar[r]^{b_2} \ar[dr]_{b_3}
&
\bullet \ar[r]^{y_1} & \bullet \ar[r]^{y_2} & \bullet \ldots \bullet \ar[r]^{y_{q-2}} & \bullet \\
& & \bullet \ar[r]^{z_1} & \bullet \ar[r]^{z_2} & \bullet \ldots  \bullet \ar[r]^{z_{r-2}} & \bullet \\
}
\eq
modulo the relations 
\bq
b_1 \circ a_2 = 0 \qquad b_2 \circ a_1 = 0 \qquad b_3 \circ (a_1 - a_2) =0
\eq
(When comparing with \cite[Section 6.9]{ChenKrause}, note that we've assumed that the three orbifold points lie at $[0;1]$, $[1;0]$ and $[1;1]$.)
Note each vertex also comes with the identity morphism, which we suppress from the notation. 

\subsection{The derived directed Fukaya category of $T_{p,q,r}$}
Given any singularity $f$, one can associate to it an $A_\infty$ category, $D^b Fuk^{\to} (f)$. For a detailed introduction, see \cite[Chapter 3]{Seidel08}. Here's a definition. Suppose we have already fixed universal choices of regular perturbation data to define the Fukaya category of the Milnor fibre of $f$. (This is the version of the Fukaya category that we have been considering so far.) Start with a distinguished basis of vanishing cycles for $f$, say $V_1, \ldots, V_\mu$. (Here we use not a cyclic ordering, but some fixed choice of absolute ordering that agrees with it.)  The directed Fukaya category associated to our basis has objects the $V_i$. Morphism spaces are as follows:
\bq
hom(V_i, V_j) = 
\begin{cases}
0 & i > j \\
\C e_i & i = j \\
CF^\ast (V_i, V_j) & i < j
\end{cases}
\eq
 It is strictly unital. 
Suppose that $a_k \in CF(V_{k_1}, V_{k_2})$, with $k_1 <k_2$ for all $k$. Then the $A_\infty$--product 
\bq
 \mu^d (a_0, a_1, \ldots, a_d)
\eq
is just given by the $A_\infty$--product in the Fukaya category of the Milnor fibre. In all other cases, apart from the $\mu^2$ products implied by unitality, the $A_\infty$--products vanish. While each of the directed Fukaya categories depends a priori on the choice of distinguished basis, we get an invariant of the singularity when we pass to the bounded derived completion \cite[Theorem 18.24]{Seidel08}.

Let's compute this in the case of $T_{p,q,r}$. We'll use the geometric description \ref{th:Tpqr}, and the following ordered basis of vanishing cycles:
\bq
A, B, P_1, \ldots, P_{p-1}, Q_1, \ldots, Q_{q-1}, R_1, \ldots, R_{r-1}.
\eq
(This can be obtained from the previous order through trivial mutations.) 
By general considerations, the $A_\infty$--category $D^b \Fuk^{\to} (T_{p,q,r})$ is isomorphic to $D^b ( \text{mod} (\mathcal{B}))$, where $\mathcal{B}$ is the endomorphism algebra of 
$$
A \oplus B \oplus P_1 \oplus \ldots \oplus R_{r-1}.
$$
(This is proved in \cite{Bondal}; see \cite[Theorem 8.34]{Huybrechts} for an exposition.)
We shall first describe this algebra.
What are the $hom$ spaces between vanishing cycles? $A$ and $B$ intersect each of $P_1$, $Q_1$ and $R_1$ in one point, which lies on $M_\ast$. Label them as before as $p_A \in CF^\ast (A, P_1)$, and similarly with $p_B, q_q$, \ldots $r_B$ in Figure \ref{fig:RSfourlocaldiscsHMS}. $A$ and $B$ intersect each other twice, giving generators $u, v \in CF^\ast (A,B)$. 
\begin{figure}[htb]
\begin{center}x
\includegraphics[scale=1.1]{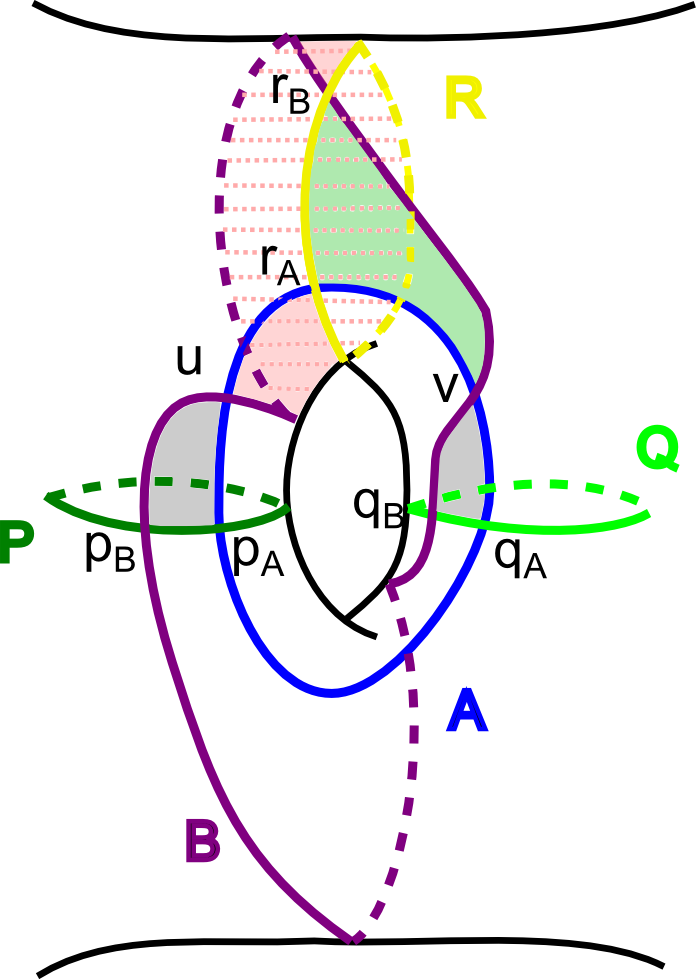}
% or [scale=0.85]
\caption{Fibre $M_\star$, restricted to a neighbourhood of $A$ and $B$.
}
\label{fig:RSfourlocaldiscsHMS}
\end{center}
\end{figure}
Aside from these, there is one intersection point between $P_i$ and $P_{i+1}$, for each $i=1, \ldots, p-2$, say $p_i \in CF^\ast (P_i, P_{i+1})$, and similarly for the $Q_i$ and $R_i$. There are no other intersection points between vanishing cycles. 

  To calculate $\mathcal{B}$, we need the $A_\infty$--morphisms between the vanishing cycles.
We'll use the same symplectic form and almost complex structure as in Section \ref{sec:Fukayaunimodal}. There are four holomorphic discs in Figure \ref{fig:RSfourlocaldiscsHMS}. Using arguments completely analogous to Section \ref{sec:Fukayaunimodal}, we see that they are unique, of Maslov index zero, and regular. The intersections do not allow for $A_\infty$--morphisms between any other collection of vanishing cycles (even disregarding our choice of ordering). Thus the almost-complex structure we chose is regular. We have the following $A_\infty$--products:
\bq
\mu^2(p_B , u) = p_A \qquad \mu^2(q_B , v) = q_A \qquad \mu^2(r_B , u) = r_A \qquad \mu^2(r_B , v) = r_A
\eq (Note that we can always change signs of the generators so that these hold precisely.)
  Thus $\mathcal{B}$ is the 
  finite dimensional associative algebra given by the directed quiver:
  \bq
\xymatrix{
 & & \bullet_{P_1} \ar[r]^{p_1} & \bullet_{P_2} \ar[r]^{p_2} & \bullet \ldots  \bullet\ar[r]^-{p_{p-2}} & \bullet_{P_{r-1}} \\
\bullet_A  \ar@/_/[r]_{v} \ar@/^/[r]^{u}  
& \bullet_B \ar[ur]^{p_B}  \ar[r]^{q_B} \ar[dr]_{r_B}
&
\bullet_{Q_1} \ar[r]^{q_1} & \bullet_{Q_2} \ar[r]^{q_2} & \bullet \ldots \bullet \ar[r]^-{q_{q-2}} & \bullet_{Q_{q-1}} \\
& & \bullet_{R_1} \ar[r]^{r_1} & \bullet_{R_2} \ar[r]^{r_2} & \bullet \ldots \bullet \ar[r]^-{r_{r-2}} & \bullet_{R_{r-1}} \\
}
\eq
modulo the equations 
\bq
p_B \circ v = 0 \qquad q_B \circ u =0 \qquad r_B \circ (u-v)=0
\eq
and the fact that any sequence of the $p_i$, $q_i$ or $r_i$ composes to zero. (As with  $\mathcal{A}$, we suppress the identity morphism at each vertex from the notation.) Now consider the following elements of $D^b \Fuk^{\to}(T_{p,q,r})$, given as twisted complexes:
\begin{eqnarray}
\xymatrix{
P'_1 = \{ P_1 \ar[r]^-{p_1} &   P_2 \ar[r]^-{p_2} & \ldots \ar[r]^-{p_{p-2}} & P_{p-1} \} } \\
\xymatrix{
P'_2 = \{  P_1 \ar[r]^-{p_1} & \ldots \ar[r]^-{p_{p-3}} & P_{p-2} \} } \\
\ldots \quad  \nonumber \\
P'_{p-1} = P_{1}
\end{eqnarray}
and similarly for $Q'_i$ and $R'_i$. Let $\Pi_i$ be the projection $P'_{i+1} \to P'_i$. We'll use the same notation for the $Q'_i$ and $R'_i$. The collection $A$, $B$, $P'_1$, \ldots, $P'_{p-1}$, $Q'_1$, \ldots, $R'_{r-1}$ also generates $D^b \Fuk^{\to} (T_{p,q,r})$. Also, we have that:
\begin{itemize}
\item Each of the $P'_i$ has one-dimensional self cohomology. To see this, one can either perform an algebraic computation, or notice that each $P_i'$ corresponds to a vanishing cycle; for instance, $P_{p-2}'$ is the result of the Dehn twist of $P_2$ in $P_1$.
\item Morphisms between the $P_i'$'s are given by
\bq
Hom (P_i ', P_j ') = 
\begin{cases}
0 & i > j \\
\C  & i \leq j
\end{cases}
\eq
and generators are given by either the identity or by compositions $\Pi_i \circ \ldots \circ \Pi_{j-1}$. 
\end{itemize}
Thus the endomorphism algebra of 
$$
A \oplus B \oplus P'_1 \oplus \ldots \oplus R'_{r-1}
$$
is precisely $\mathcal{A}$, given this time by the algebra of the directed quiver
  \bq
\xymatrix{
 & & \bullet_{P'_1} \ar[r]^{\Pi_2} & \bullet_{P'_2} \ar[r]^{\Pi_3} & \bullet \ldots \bullet \ar[r]^-{\Pi_{r-1}} & \bullet_{P'_{r-1}} \\
\bullet_A  \ar@/_/[r]_{v} \ar@/^/[r]^{u}  
& \bullet_B \ar[ur]^{p_B}  \ar[r]^{q_B} \ar[dr]_{r_B}
&
\bullet_{Q'_1} \ar[r]^{\Pi_2} & \bullet_{Q'_2} \ar[r]^{\Pi_3} & \bullet \ldots \bullet \ar[r]^-{\Pi_{q-1}} & \bullet_{Q'_{q-1}} \\
& & \bullet_{R'_1} \ar[r]^{\Pi_2} & \bullet_{R'_2} \ar[r]^{\Pi_3} &\bullet  \ldots \bullet \ar[r]^-{\Pi_{r-1}} & \bullet_{R'_{r-1}} \\
}
\eq
   modulo the equations $p_B \circ v = 0$, $q_B \circ u =0$ and $ r_B \circ (u-v)=0$. Thus $$D^b \Fuk^{\to}(T_{p,q,r}) \cong D^b(\text{mod} \mathcal{A}).$$ This  completes the proof of Theorem \ref{th:mirrorsymmetry}.

\appendix

\section{Visualizing the singular values of $M(x,y,z;t)$}\label{ap:mathematicacode}

The following code should allow the reader to visualise the singular values of $M(x,y,z;t)$ as $t$ varies from 0 to 1. It was made and tested using Mathematica 8.0.
\begin{verbatim}
m[x_, y_] := -2((x + 0.25)^2 - 2 - 0.5 (y + 0.25)) ((y + 0.25)^2 - 2 - 
    0.5 (x + 0.25))
u = 0;
g[x_] = Integrate[x (x + 1), x];
h[x_] = (8 I)*g[x];
plots1 = Table[
   g[x_, y_, z_] := m[x, y] - 2x*y + 2*(3 z + u*x*y)^2 + 2*h[z];
    spts =  NSolve[D[g[x, y, z], x] == 0 && D[g[x, y, z], y] == 0 && 
      D[g[x, y, z], z] == 0, {x, y, z}];
   results = g[x, y, z] /. spts;
   ListPlot[{Re[#], Im[#]} & /@ results, AxesOrigin -> {0, 0}, 
    PlotRange -> {{-40, 20}, {-60, 20}}, ImagePadding -> 40, 
    AspectRatio -> 1, Frame -> True, 
    FrameLabel -> {{Im, None}, {Re, "complex plane"}}, 
    PlotStyle -> Directive[Red, PointSize[.02]]], {u, 0, 1, 0.02}];
ListAnimate[plots1]
\end{verbatim}

\section{Viewing $M(x,y,z;1)$ as a deformation of $T_{3,3,3}$}\label{ap:B}

The following code should allow the reader to check that $M(x,y,z;1)$ is indeed a deformation of $N(x,y,z) = x^3+y^3+12xyz+8iz^2+\frac{16i}{3}z^2$. We have that
\bq
M(x,y,z;1) = N(x,y,z) + \delta(x,y)
\eq
The code plots the critical values of the function
\bq
L(x,y,x;t) = N(x,y,z) + (1-t) \delta(x,y)
\eq
We see that $t$ varies between zero and one, the critical values get deformed continuously (none escapes to infinity, and none comes in from infinity). 
\begin{verbatim}
m[x_, y_] := -2 ((x + 0.25)^2 - 2 - 0.5 (y + 0.25)) ((y + 0.25)^2 - 
    2 - 0.5 (x + 0.25))
u = 0;
g[x_] = Integrate[x (x + 1), x];
h[x_] = (8 I)*g[x];
g[x_, y_, z_] = m[x, y] - 2 x*y + 2*(3 z + x*y)^2 + 2*h[z];
ExpandAll[g[x, y, z]]
d[x_, y_, z_] = g[x, y, z] - x^3 - y^3 - 12*x*y*z - 2*h[z];
ExpandAll[d[x, y, z]]
plots1 = Table[L[x_, y_, z_] := g[x, y, z] - t*d[x, y, z];
   spts = 
    NSolve[D[L[x, y, z], x] == 0 && D[L[x, y, z], y] == 0 && 
      D[L[x, y, z], z] == 0, {x, y, z}];
   results = L[x, y, z] /. spts;
   ListPlot[{Re[#], Im[#]} & /@ results, AxesOrigin -> {0, 0}, 
    PlotRange -> {{-40, 20}, {-60, 20}}, ImagePadding -> 40, 
    AspectRatio -> 1, Frame -> True, 
    FrameLabel -> {{Im, None}, {Re, "complex plane"}}, 
    PlotStyle -> Directive[Red, PointSize[.02]]], {t, 0, 1, 0.02}];
ListAnimate[plots1]
\end{verbatim}

\small

\end{document}